\documentclass[11pt]{article}
\usepackage[a4paper, margin=0.75in]{geometry}
\usepackage{xcolor}

\usepackage{natbib}                 
\usepackage[colorlinks=true, linkcolor=black, citecolor=blue, urlcolor=black]{hyperref}
\usepackage{bookmark}

\usepackage{enumitem}
\usepackage{graphicx}
\usepackage{subcaption}
\usepackage{mathtools}
\usepackage{amsmath, amssymb, amsthm,amsfonts}
\usepackage{bbm}
\usepackage{stackengine}
\usepackage{calc}
\usepackage{circledtext}
\usepackage{float}
\usepackage{mathrsfs}
\usepackage{braket}
\usepackage{booktabs}
\usepackage{caption}
\usepackage{tikz}
\usepackage{threeparttable}
\usetikzlibrary{matrix, positioning}

\newcommand{\Var}{{\rm Var}}

\DeclareMathOperator{\Tr}{Tr}

\DeclareMathOperator{\KL}{KL}
\DeclareMathOperator{\TV}{TV}
\DeclareMathOperator{\Ber}{Ber}
\newcommand*\circled[1]{\tikz[baseline=(char.base)]{
            \node[shape=circle,draw,inner sep=1pt] (char) {#1};}}

\allowdisplaybreaks
\newenvironment{proofof}[1]{%
  \begin{proof}[Proof of Theorem~\ref{#1}]%
}{%
  \end{proof}%
}
\newcommand{\indep}{\perp \!\!\! \perp}

\DeclareMathOperator*{\argmin}{arg\,min}

\newtheorem{theorem}{Theorem}
\newtheorem{lemma}{Lemma}
\newtheorem{proposition}{Proposition}
\newtheorem{assumption}{Assumption}
\newtheorem{example}{Example}
\title{Semi-supervised linear regression with missing covariates}
\author{Benedict M. Risebrow and Thomas B. Berrett\\ Department of Statistics, University of Warwick}
\begin{document}

\maketitle

\begin{abstract}
    Missing values in datasets are common in applied statistics. For regression problems, theoretical work thus far has largely considered the issue of missing covariates as distinct from missing responses. However, in practice, many datasets have both forms of missingness. Motivated by this gap, we study linear regression with a labelled dataset containing missing covariates, potentially alongside an unlabelled dataset. We consider both structured (blockwise-missing) and unstructured missingness patterns, along with sparse and non-sparse regression parameters. For the non-sparse case, we provide an estimator based on imputing the missing data combined with a reweighting step. For the high-dimensional sparse case, we use a modified version of the Dantzig selector. We provide non-asymptotic upper bounds on the risk of both procedures. These are matched by several new minimax lower bounds, demonstrating the rate optimality of our estimators. Notably, even when the linear model is well-specified, our results characterise substantial differences in the minimax rates when unlabelled data is present relative to the fully supervised setting. Particular consequences of our sparse and non-sparse results include the first matching upper and lower bounds on the minimax rate for the supervised setting when either unstructured or structured missingness is present. Our theory is coupled with extensive simulations and a semi-synthetic application to the California housing dataset.
\end{abstract}

\section{Introduction}\label{sec:Introduction}

\subsection{Motivation}\label{sec:motivations}
Linear regression is one of the most extensively studied problems in statistics, with a large number of proposed procedures for estimating the parameters from complete observations. In practice, a common feature of datasets is that some or all of the samples have missing values for variables of interest~\citep{tsiatis2006semiparametric}. Missingness in the response or the covariates renders most existing methods inapplicable or greatly reduces their efficacy. It is therefore of both practical and theoretical interest to develop methods that perform well in the presence of missing data.
Classical causes of missing data include non-response and technical malfunctions~\citep{bower2017addressing}. For example, in surveys, subjects may choose not to answer some of the questions~\citep{brick1996handling}. Typically, these sources of missingness lead to \textit{unstructured} or \textit{sporadic} patterns in the observed dataset, where the occurrence of missing values across different variables is largely uncorrelated.

A more recent phenomenon arises from attempts to merge existing datasets with different variables recorded in each. This leads to a \textit{blockwise-missing} or \textit{structured} pattern. A common cause of this is multimodal data, with a motivating example being the ADNI dataset~\citep{mueller2005alzheimer}, which aims to understand Alzheimer's disease. In this collection of studies, variables are organised by their measurement source, such as magnetic resonance imaging and positron emission tomography. Some of the component studies only recorded variables from a subset of the sources, leading to a blockwise-missing setup upon merging. What distinguishes this setting from that of the previous paragraph is the strong correlation between the occurrence of missing values across different variables. Such blockwise-missing structures are becoming increasingly common in healthcare data. The MIMIC datasets~\citep{gehrmann2018comparing} aggregate data across a hospital, with subsets of clinical tests being ordered for patients based on their symptoms. In electronic health record data, privacy policies differ by institution~\citep{keshta2021security}, leading to structured missingness when the available data is combined. A non-medical example is the Forest Inventory and Analysis database~\citep{tinkham2018applications}. This records variables on various plots of land in the United States, with a subset of expensive-to-collect variables being recorded for only a fraction of the plots. These blockwise-missing settings present new challenges relative to the unstructured setting~\citep{mitra2023learning}. For example, imputation schemes designed for sporadic missingness often perform poorly. 

Often, in datasets suitable for regression, recording the response variable can be costly or time-consuming~\citep{jiao2024learning}. This occurs in clinical settings when determining disease status requires expert judgement~\citep{gehrmann2018comparing}. Many---sometimes the vast majority---of the observations in such a dataset do not have their response variable recorded, leading to a \textit{semi-supervised} dataset~\citep{chapelle2006semisupervised}.

In the existing literature, most works on regression problems consider either missing responses or missing covariates separately, without exploring how unlabelled data can be leveraged to improve estimation when the labelled dataset contains missing covariates. Our goal is to introduce powerful methodology for linear regression and to characterise the fundamental statistical limits of estimation in this setting from a minimax perspective. We seek to answer the following questions:
\begin{itemize}
    \item How can unlabelled data be used when labelled samples have missing covariates?
    \item How does the missingness pattern within the covariates affect the performance of an optimal procedure?
    \item Do procedures exist that are optimal for both structured and unstructured missingness?
\end{itemize}

\subsection{Formal setting}\label{sec:Formal Setting}
We begin by defining the distribution of $(X,Y)\in \mathbb{R}^p\times\mathbb{R}$ at the population level. We denote the marginal distribution on $\mathbb{R}^p$ of our covariates $X$ by $P_{X}$. We assume that the covariates have second moment matrix $\Sigma$, i.e., $\mathbb{E}\left[XX^T\right]=\Sigma\in\mathbb{R}^{p\times p}$. To simplify the presentation, we will often refer to this as the covariance matrix. Our response $Y$ is distributed conditionally on the covariates $X$ according to the following well-specified linear model:
\begin{align}
\label{eq:datamech1}
    Y= X^T\beta^{*} + \epsilon, & \text{  where  } \mathbb{E}\left[\epsilon|X\right] =0 \text{ and }\Var\left[\epsilon|X\right] = \sigma^2.
\end{align}
Here $\beta^* \in \mathbb{R}^p$ and $\epsilon$ is the noise random variable. We denote by $P_{XY}$ the joint distribution of $(X,Y)$ when generated as above. For $a \in \mathbb{N}$, denote by $[a]$ the set $\{1,\ldots,a\}$ and, for $O\subseteq [p]$, define $P_{X_{O}Y}$ to be the marginal distribution of $(X_{O},Y)$ when $(X,Y)$ is distributed according to $P_{XY}$.

We assume that we have $K$ missingness patterns, where in the $k^{th}$ pattern we observe only the covariates indexed by $O_{k} \subseteq [p]$ and $Y$. We denote the corresponding missing variables by $M_{k}=[p]\setminus O_{k}$. Our labelled dataset is denoted by $\mathcal{L}=\{\mathcal{D}_{k}\}_{k=1}^K$, where $\mathcal{D}_{k}$ is the data in the $k^{th}$ missingness pattern. The $\{\mathcal{D}_{k}\}_{k=1}^K$ are assumed to be jointly independent, and each $\mathcal{D}_{k}$ consists of $n_{k}$ copies of $(X_{O_{k}},Y)$ drawn i.i.d. from the distribution $P_{X_{O_{k}}Y}$. The latter assumption is equivalent to the MCAR hypothesis~\citep{little2019statistical}, whereby the missingness is assumed to be independent of the data. We denote the size of the labelled dataset by $n_{\mathcal{L}}=\sum_{k=1}^Kn_{k}$. We assume throughout that each variable is observed at least once, i.e., $\cup_{j=1}^K O_{j} = [p]$. 

Alongside this labelled dataset $\mathcal{L}$, we have an unlabelled dataset $\mathcal{U}$. In the \textit{ideal semi-supervised} (ISS) setting, we know the covariance $\Sigma$ of the covariates, so $\mathcal{U}$ consists of this matrix. This is a relaxation of the standard definition of ISS, where we would assume knowledge of $P_{X}$. We say we are in the \textit{ordinary semi-supervised} (OSS) setting if we have an independent unlabelled dataset of size $N$ drawn i.i.d.~from $P_{X}$; in this setting $\mathcal{U} = \{X_{i}\}_{i=n_{\mathcal{L}}+1}^{n_{\mathcal{L}}+N}$. Again, drawing the unlabelled data from $P_{X}$ is equivalent to the MCAR hypothesis. For simplicity, we assume that the unlabelled samples have all covariates observed, but our estimators and rates can be modified in a straightforward way to allow missingness in the unlabelled data. It is rare in practice to know $\Sigma$, so the ISS case is primarily introduced to provide theoretical insight into the OSS case. We will also be interested in the supervised case where $N=0$. 

Given the data $\mathcal{L}\cup\mathcal{U}$, our goal is to estimate $\beta^*$. We work in a minimax setting, where we define the minimax risk over the class of data-generating mechanisms $\mathcal{P}$ and potential estimators $\hat{\Theta}$ via
\begin{equation}\label{eq:minimax risk def}
        \mathcal{M} = \inf_{\hat{\beta}\in\hat{\Theta}} \sup_{P \in \mathcal{P}} \mathbb{E} \left[ \|\hat{\beta} - \beta^*\|_{2}^2\right],
\end{equation}
 where $\mathcal{P}$ and $\hat{\Theta}$ are to be specified below. Our goal is to derive estimators whose worst-case performance matches \eqref{eq:minimax risk def} up to constant factors. 
 
 We can equivalently list our observations as $\{((X_{i})_{O_{\Xi(i)}},Y_{i})\}_{i=1}^{n_{\mathcal{L}}}$, where $\Xi : [n_{\mathcal{L}}] \rightarrow [K]$ and $\Xi(i)$ is the missingness pattern of the $i^{th}$ observation. We define for $k \in [K]$, $\mathcal{I}_{k}=\{i\in[n_{\mathcal{L}}]:((X_{i})_{O_{\Xi(i)}},Y_{i})\in\mathcal{D}_{k}\}$. These are the indices of the observations in $\mathcal{D}_{k}$. Further define $\mathcal{I}_{\mathcal{L}}=\cup_{k \in [K]} \mathcal{I}_k$ as the indices of the labelled data and $\mathcal{I}_{\mathcal{U}}$ as the indices of the unlabelled data.
 We now introduce a convenient reparametrisation of the missingness pattern as follows. We assume that the covariate indices are partitioned as $
    [p] = \cup_{i=1}^L L_{i},$
where each observation pattern $O_{j}$ is the union of some of the $L_{i}$. There are multiple ways of doing this, and we choose the unique way that minimises $L$. Here, $\{L_{i}\}_{i=1}^L$ are the disjoint \textit{modalities}  and $L$ denotes the number of modalities. For example, in the ADNI dataset mentioned in Section~\ref{sec:motivations} the modalities are the different sources of data, whereas in an unstructured setting we might have a modality for each variable. We further define modality and cross-modality sample sizes as follows:
\begin{align*}
    h_{i} = \sum_{j=1}^{n_{\mathcal{L}}}\mathbbm{1}_{\left\{L_{i} \subseteq O_{\Xi(j)}\right\}} \text{ for } i \in [L], \quad \quad \quad n_{g_{1},g_{2}} = \sum_{i=1}^{n_{\mathcal{L}}}\mathbbm{1}_{\{L_{g_{1}}\cup L_{g_{2}} \subseteq O_{\Xi(i)}\}} \text{ for } g_{1}, g_{2} \in [L].
\end{align*}

 We now give two important, contrasting examples of this setting that we will return to throughout the paper. 
  \begin{example}\label{examp:unstructured pattern}
     For the \textbf{unstructured pattern}, we assume there exists $\rho \in (0,1)$ and $C_{\rho}>0$ such that for all distinct $(g,h) \in [L]^2$, $\rho n_{\mathcal{L}} \leq h_{g} \leq C_{\rho}\rho n_{\mathcal{L}}$ and $\rho^2n_{\mathcal{L}}\leq n_{g,h} \leq C_{\rho}\rho^2n_{\mathcal{L}}$. This incorporates unstructured missingness, which could arise when each variable is missing with fixed probability independently of everything else. \cite{wang2019rate} and \cite{loh2011high} study this setting in the context of high-dimensional linear regression with fully labelled data.
 \end{example}
 The above example, although primarily describing unstructured missingness, includes certain structured patterns. For example, let $L=K=3$, $O_{1}=L_{1}\cup L_{2}$, $ O_{2}=L_{1}\cup L_{3}$, $O_{3}=L_{2}\cup L_{3}$ and $n_{1}=n_{2}=n_{3}$. This pattern satisfies the above condition with $\rho = \frac{\sqrt{3}}{3}$ and $C_{\rho}=\frac{2\sqrt{3}}{3}$.
 To contrast Example \ref{examp:unstructured pattern}, we give the following structured example.
 \begin{example}\label{examp:simple monotonic pattern}
     The \textbf{simple monotonic pattern} is defined to have two missingness patterns with $O_{1}=[p]$ and $O_{2}=[p-p_{0}]$, i.e., $n_{1}$ of our labelled samples have all variables observed and $n_{2}$ of our labelled samples have the final $p_{0}$ variables missing. Here, we choose $L_{1}=[p-p_{0}]$ and $L_{2}=[p]\setminus[p-p_{0}]$. In a clinical setting, the $p_{0}$ variables could be expensive to collect and so are missing for a large number of labelled samples. This is the primary setting of~\citet{robins1994estimation}, which does not leverage unlabelled data.
 \end{example}

  Both of these patterns describe the labelled portion of the data and will be considered alongside unlabelled data or knowledge of the covariance matrix $\Sigma$. 

\subsection{Existing work}\label{sec: Existing work}

Classical approaches to missing data are summarised in~\citet{little2019statistical}. Two such techniques are imputation~\citep{van2011mice} and the expectation-maximisation (EM) algorithm~\citep{dempster1977maximum}. Imputation---the process of filling in missing values---has received recent methodological attention, with~\citet{naf2024good} and~\citet{shadbahr2023impact} providing new ways of ranking different imputation methods. Nonetheless, there are few theoretical guarantees regarding the effect of imputation on the downstream statistical task. The recent work of~\citet{tan2025integrated} proposes a matrix completion-based imputation approach for dealing with both structured and unstructured missingness. For the EM algorithm,~\citet{balakrishnan2017statistical} provide guarantees for several procedures in this framework.

In the past decade, there has been a resurgence in the development of statistical methodology with missing data, often focussing on high-dimensional and non-parametric settings. Theory now exists for tasks including covariance matrix estimation~\citep{lounici2014high}, high-dimensional principal component analysis~\citep{zhu2022high}, high-dimensional linear classification~\citep{tony2019high}, non-parametric classification~\citep{sell2024nonparametric}, functional estimation~\citep{berrett2024efficient}, change-point detection~\citep{xie2012change} and the estimation of U-statistics~\citep{kim2024semi}.

 Blockwise missingness, described in Section \ref{sec:motivations}, has recently been the focus of much research~\citep{mitra2023learning}. Recent tasks tackled in this setting include: nearest neighbour classification~\citep{yu2022integrative}; failure time modelling with censored outcomes~\citep{wang2022regularized}; combining AI models to estimate parameters~\citep{xu2025blockwise}; extending the framework of prediction powered inference under blockwise missingness~\citep{zhao2025imputation}; performing multi-task learning~\citep{sui2025multi} and variable selection in longitudinal studies~\citep{ouyang2024imputation}.

Moving our focus to linear regression, several works have addressed this problem, primarily focussing on the MCAR case. Within a purely semi-supervised setup (where covariates are fully observed),~\citet{azriel2022semi} and~\citet{chakrabortty2018efficient} develop efficient low-dimensional estimators. Notably, their results show that the linear model must be misspecified in order for unlabelled data to aid in estimating $\beta^*$.~\citet{deng2024optimal} derive a rate-optimal estimator for the high-dimensional sparse case, illustrating that unlabelled data can improve the rate of convergence for particular classes of misspecified linear models. \citet{chen2025semi} further demonstrate that, in high dimensions, using unlabelled data can improve the robustness and efficiency of an estimator even when the model is well-specified. Of particular interest in the semi-supervised setting is the concept of \textit{safety}. Broadly speaking, a semi-supervised estimator is deemed safe if it always performs at least as well as the best supervised estimator. \citet{chakrabortty2018efficient} and~\citet{deng2024optimal} discuss adaptations of their method to ensure safety. 

Focussing on missingness exclusively in the covariates, recent work has studied unstructured missingness patterns for the high-dimensional sparse MCAR case. For the unstructured pattern (Example \ref{examp:unstructured pattern}),~\citet{loh2011high} propose a non-convex procedure via a LASSO-style estimator, obtaining an upper bound on the minimax risk of order $\frac{s\log(p)}{\rho^4n_{\mathcal{L}}}$\footnote{We ignore the dependence of the rate on the other parameters here for simplicity. Here $\|\beta^*\|_{0} \leq s$.}. This rate was improved to $\frac{s\log(p)}{\rho^2n_{\mathcal{L}}}$ in~\citet{wang2019rate} and to $\frac{s\log(p)}{\rho n_{\mathcal{L}}}$ when the covariance is assumed to be known.~\citet{chandrasekher2020imputation} achieve a similar rate under Gaussian assumptions via imputation, assuming that $\mathbb{E}\left[X_{O}|X_{O^c}\right]$ is known for each $O\subseteq[p]$. This unstructured assumption is rather limiting and excludes the blockwise-missing patterns described in Section \ref{sec:motivations}. For a general restricted moment model that includes linear regression,~\citet{robins1994estimation} derive semiparametrically efficient estimators of parameters for the simple monotonic pattern under the MAR assumption. From a minimax perspective,~\citet{yu2020optimal} consider high-dimensional sparse MCAR estimation in a general blockwise-missing setting, with~\citet{diakite2025adapdiscom} extending this estimator to account for measurement error. An alternative approach via imputation is offered in~\citet{xue2021integrating} when the MAR assumption holds.

A natural extension of this line of work is to explore what happens when we have both missingness in the covariates alongside unlabelled data, as occurs in many of the examples described in Section \ref{sec:motivations}. Indeed, the upper bounds in~\citet{wang2019rate} mentioned in the previous paragraph differ between the ISS and supervised cases, suggesting a difference between these settings. Several very recent works have explored the effects of having missingness in both the covariates and the response.~\citet{song2024semi} consider high-dimensional linear regression with the goal of performing inference. Their results require a hard sparsity assumption on $\beta_{O}^*+\Sigma_{OO}^{-1}\Sigma_{OO^c}\beta^*_{O^c}$ for each observation pattern $O\subseteq [p]$. A similar approach for GLMs is given in~\citet{wang2025distributed} under similar assumptions. Also in a GLM setting,~\citet{li2024adaptive} provide asymptotic results for an estimator that builds upon~\citet{robins1994estimation}, which yields an improvement in efficiency when unlabelled data is present. Overall, the results of~\citet{li2024adaptive} and~\citet{wang2019rate} suggest that incorporating unlabelled data can be beneficial when missing covariates are present in the labelled data.

\subsection{Contributions and outline}\label{sec:Our contributions}

We work in the setting described in Section \ref{sec:Formal Setting} with labelled data containing missing covariates and unlabelled data. Our contributions are as follows: 
\begin{itemize}
    \item For the low-dimensional OSS problem, we introduce two very similar estimators for the blockwise-missing and unstructured settings; in both settings, we establish upper bounds on the risk and provide matching lower bounds. Notably, for the simple monotonic pattern (Example \ref{examp:simple monotonic pattern}), the unlabelled data allows for a reduction in the effective dimensionality which is not available to procedures that do not use unlabelled data. For the unstructured pattern (Example \ref{examp:unstructured pattern}), the use of unlabelled data allows for an increase in the effective observation rate from $\rho$ to $\rho^{1/2}$.
    \item The OSS estimator extends naturally to the supervised case via two-fold cross-fitting. The rate optimality of our approach is also demonstrated via lower bounds.
    \item For the high-dimensional unstructured supervised setting and its OSS counterpart, we provide a lower bound that resolves a conjecture of~\citet{wang2019rate} as a special case. We additionally extend the estimator of~\citet{wang2019rate} to the OSS setting, providing an upper bound that matches up to constant factors.
    \item For the high-dimensional blockwise-missing supervised and OSS settings, we provide an additional analysis of the above estimator and establish its rate optimality. This highlights missingness patterns for which the upper bound of~\citet{yu2020optimal} can be improved. 
\end{itemize}
The rest of the paper is organised as follows. In Section \ref{sec:Low-dimensional results}, we introduce our low-dimensional estimator in the OSS and supervised settings, providing upper and lower bounds in each setting. The proofs for this section are in Appendix \ref{sec:Low Dimensional Proofs}. In Section \ref{sec:High-dimensional results}, we introduce our high-dimensional estimator and provide upper and lower bounds in each setting. The proofs for this section are in Appendix \ref{sec:High-Dimensional Proofs}. We test the empirical performance of our estimators in Section \ref{sec:Simulations} via a simulation study. Finally, in Section \ref{sec:Real World Dataset Application}, we apply our methodology to the California housing dataset with synthetic missingness. Additional details for this section are given in Appendix \ref{Appendix:Real World Dataset Application}.

\begin{table}[H]
\centering
\caption{We summarise the minimax rates proven in this paper, without listing all the technical conditions required. In this table, we assume $\|\beta^*\|_{2}^2\leq B^2, \sigma^2 \leq R^2$ and $B^2\leq R^2$, though this latter assumption is not required for the low-dimensional structured OSS setting. In the high-dimensional setting, $\|\beta^*\|_{0}\leq s$. The unstructured missingness setting is described in Example \ref{examp:unstructured pattern}, whereas the structured case refers to the setting in which we treat $L$, the number of modalities, as constant.}
\label{tab:rates-2x2}

\renewcommand{\arraystretch}{1.3}
\resizebox{\linewidth}{!}{%
\begin{tabular}{@{}lcc@{}}
\toprule
& \textbf{Unstructured missingness} & \textbf{Structured missingness} \\
\midrule
\textbf{Low-dimensional} & $\frac{R^2p}{\rho n_{\mathcal{L}}}+\frac{B^2p}{\rho^2n_{\mathcal{L}}+N}$ & $R^2\max_{g\in[L]}\left\{\frac{|L_{g}|}{h_{g}}\right\}+B^2\max_{g,h \in [L]}\left\{\frac{|L_{g}|}{n_{g,h}+N}\right\}$ \\
\textbf{High-dimensional} & $\frac{R^2s\log(p)}{\rho n_{\mathcal{L}}}+\frac{B^2s\log(p)}{\rho^2n_{\mathcal{L}}+N}$ & $R^2s\max_{g\in[L]}\left\{\frac{\log(|L_{g}|)}{h_{g}}\right\}+B^2s\max_{g,h \in [L]}\left\{\frac{\log(|L_{g}|)}{n_{g,h}+N}\right\}$ \\
\bottomrule
\end{tabular}%
}

\end{table}

\subsection{Notation}\label{sec:notation}
 For $x \in \mathbb{R}^p$ and $q \in (0,\infty)$, let $\|x\|_{q} = \left(\sum_{i=1}^p|x_{i}|^q\right)^{\frac{1}{q}}$, $\|x\|_{\infty} = \max\left\{|x_{i}|:i \in [p]\right\}$ and $\|x\|_{0} = \sum_{i=1}^p \mathbbm{1}_{\left\{x_{i} \neq 0\right\}}$. Denote by $e_{i} \in \mathbb{R}^p$ the $i^{th}$ standard basis vector. Let $\mathcal{S}^{p-1}=\{x\in\mathbb{R}^p:\|x\|_{2}=1\}$. For a finite set $S$, denote by $|S|$ the number of elements in $S$. For $A \in \mathbb{R}^{m\times p}$, denote by $\|A\|_{\infty} = \|\mathrm{Vec}\left(A\right)\|_{\infty}$ and $\|A\|_\mathrm{op} = \sup_{x \in \mathcal{S}^{p-1}}\|Ax\|_{2}$.  When $m=p$ and $A$ is symmetric, define $\lambda_{\min}(A)$ and $\lambda_{\max}(A)$ to be the smallest and largest eigenvalues of $A$ respectively. For two matrices $A,B \in \mathbb{R}^{p\times p}$, we write $A \preceq B$ if $x^TAx \leq x^TBx$ $\forall x \in \mathbb{R}^p$. For sets $A\subseteq [p]$, $B\subseteq [q]$ and $\Omega \in \mathbb{R}^{p\times q}$, denote by $\Omega_{AB}$ the $|A|\times|B|$ submatrix of $\Omega$ defined by extracting from $\Omega$ first the rows indexed by $A$ and then the columns indexed by $B$. Further, define $\Omega_{A} \in \mathbb{R}^{|A| \times p}$ as the rows of $\Omega$ indexed by $A$. For two functions $f,g: I \rightarrow \mathbb{R}$, with $I$ an arbitrary index set, we write
$f\lesssim g$ if there exists a universal constant $C>0$ such that $f(i)\leq Cg(i)$ for all $i \in I$ and $f\gtrsim g$ if there exists a universal constant $c>0$ such that $f(i)\geq cg(i)$ for all $i \in I$. If both $f\lesssim g$ and $f\gtrsim g$, we write $f \asymp g$. Similarly, for $p \in \mathbb{N}$ and $A,B:I \rightarrow \mathbb{R}^{p\times p}$ for some index set $I$, we write $A \precsim B$ if there exists a universal constant $c>0$ such that $A(i)\preceq cB(i)$ for all $i \in I$ and $A \succsim B$ if there exists a universal constant $c>0$ such that $A(i)\succeq cB(i)$ for all $i \in I$. For parameters of our model $a_{1},a_{2},\ldots, a_{k}$, we often write $F=F(a_{1},a_{2},\ldots,a_{k})$ for some constant $F$ depending only on $a_{1},a_{2},\ldots,a_{k}$. When such constants appear, they may vary in their dependence on $a_{1},a_{2},\ldots,a_{k}$ from line to line; where compact notation is needed, the dependence on these parameters may be suppressed.  For a topological space $F$, denote by $\mathcal{B}(F)$ the Borel $\sigma$-algebra on $F$. For a vector-valued random variable $X$, let $\|X\|_{\psi_{2}}$ and $\|X\|_{\psi_{1}}$ denote the sub-Gaussian and sub-exponential norms, respectively, of $X$ in the sense of~\citet{vershynin2025HDP}. 

\section{Low-dimensional results}\label{sec:Low-dimensional results}
 
To motivate our estimator, consider the simple monotonic pattern (Example \ref{examp:simple monotonic pattern}) in the ISS setting with $X \sim N(0,\Sigma)$, $\epsilon \sim N(0,\sigma^2)$ and $\epsilon \indep X$. One natural way of estimating $\beta^*$ would be alongside $\sigma$ via maximum likelihood estimation, i.e.,~by finding $(\hat{\beta},\hat{\sigma})$ that solve
\begin{equation} \label{eq:MLE}
    \min_{\substack{\beta \in \mathbb{R}^p,\\ \sigma >0}}\left\{\sum_{i\in \mathcal{I}_{1}} \frac{(Y_{i}-X_{i}^T\beta)^2}{\sigma^2}+\sum_{i\in \mathcal{I}_{2}}\frac{ (Y_{i}-(X_{i})_{O_{2}}^T\Sigma_{O_{2}O_{2}}^{-1}\Sigma_{O_{2}}\beta)^2}{\sigma^2+\beta_{M_{2}}^TS_{M_{2}}\beta_{M_{2}}}+\log(\sigma^{2n_{1}}(\sigma^2+\beta_{M_{2}}^TS_{M_{2}}\beta_{M_{2}})^{n_{2}})\right\},
\end{equation}
where $S_{M_{2}} = \Sigma_{M_{2}M_{2}}-\Sigma_{M_{2}O_{2}}\Sigma_{O_{2}O_{2}}^{-1}\Sigma_{O_{2}M_{2}}$. This is the style of approach pursued in~\citet{balakrishnan2017statistical} via the EM algorithm. However, \eqref{eq:MLE} is non-convex and theoretical guarantees rely on the Gaussian assumptions. In the current work, we instead construct the convex relaxation of \eqref{eq:MLE},
\begin{equation*} 
    \hat{\beta} \in \argmin_{\beta \in \mathbb{R}^p}\left\{\sum_{i\in \mathcal{I}_{1}} (Y_{i}-X_{i}^T\beta)^2+D\sum_{i\in \mathcal{I}_{2}} (Y_{i}-(X_{i})_{O_{2}}^T\Sigma_{O_{2}O_{2}}^{-1}\Sigma_{O_{2}}\beta)^2\right\},
\end{equation*}
where $D\in (0,1)$ is some constant to be specified later. Equivalently, we define $\hat{\beta}$ to solve 
\begin{equation*} 
    R(\hat{\beta},D) \equiv \sum_{i\in \mathcal{I}_{1}} X_{i}(Y_{i}-X_{i}^T\hat{\beta})+D\sum_{i\in \mathcal{I}_{2}} \Sigma_{O_{2}}^T\Sigma_{O_{2}O_{2}}^{-1}(X_{i})_{O_{2}}\{Y_{i}-(X_{i})_{O_{2}}^T\Sigma_{O_{2}O_{2}}^{-1}\Sigma_{O_{2}}\hat{\beta} \}=0.
\end{equation*}
Note that we have $\mathbb{E}[ R(\beta^*,D)]=0$ for all $D\in \mathbb{R}$ and for all covariate and noise distributions, i.e.~even without Gaussian assumptions. Thus, we can expect this convex relaxation to work in wide generality.

Returning to the general OSS case, define $P_{j} = \Sigma_{O_{j}O_{j}}^{-1}\Sigma_{O_{j}} \in \mathbb{R}^{|O_{j}|\times p}$ for $j \in [K]$. Using some positive-definite estimate of the covariance $\hat{\Sigma}$, we estimate $P_{j}$ via $\hat{P}_{j}\equiv\hat{\Sigma}_{O_{j}O_{j}}^{-1}\hat{\Sigma}_{O_{j}}$ for $j \in [K]$. For a general missingness pattern, our estimator $\hat{\beta}$ of $\beta^*$ is defined by 
\begin{equation}\label{eq:OSS estimator definition initial}
    \hat{\beta} = \argmin_{\beta \in \mathbb{R}^p}\left\{ \sum_{j=1}^K\sum_{i \in \mathcal{I}_{j}}\hat{D}_{j}(Y_{i}-(X_{i})_{O_{j}}^T\hat{P}_{j}\beta)^2\right\},
\end{equation}
where $\hat{D}_{j} \in (0,1)$ are weights to be specified later. This amounts to replacing, for $O\subset [p]$, $(X_{O},\text{NA})$ by $(X_{O},\hat{\Sigma}_{MO}\hat{\Sigma}_{OO}^{-1}X_{O})$ and performing a weighted least squares procedure on this imputed data. As we see in the simulations of Section \ref{sec:Simulations}, the weighting step is important for good performance and this estimator outperforms the naive imputation approach of setting $\hat{D}_{j}=1$.

To analyse the performance of this estimator, we work under natural assumptions from the literature. 
\begin{assumption}\label{assump:small-ball}
For $X \sim P_{X}$, there exist constants $C\geq 1$ and $\chi \in (0,1]$ such that for every $\theta \in \mathbb{R}^p\setminus\left\{0\right\}$ and $t > 0$,
\begin{equation*}
    \mathbb{P}\left(|X^T\theta| \leq t\|\theta\|_{\Sigma}\right) \leq (Ct)^{\chi},
\end{equation*}
where $\|\theta\|_{A}^2 = \theta^TA\theta$ for $A \in \mathbb{R}^{p\times p}$.
\end{assumption}
This small ball condition was introduced in~\citet{mourtada2022exact} for analysing the exact risk of the least squares estimator and has since been applied more broadly~\citep{fukuchi2023demographic,mourtada2022improper}. This is particularly useful for the structured case. It states that the distribution of the covariates does not put too much mass near any hyperplane and, in particular, implies that for any $O \subseteq [p]$,
\begin{equation*}
    \mathbb{P}\left(|X_{O}^T\theta_{O}| \leq t\|\theta_{O}\|_{\Sigma_{OO}}\right) \leq (Ct)^{\chi}.
\end{equation*}

Next, we make a sub-Gaussian assumption on the covariates that is widespread in the statistics literature.
\begin{assumption} \label{assump:subgauss}
    For some $C_{X}>0$, the distribution of the covariates satisfies $\|X\|_{\psi_{2}}\leq C_{X}$.
\end{assumption}

Further, for example as in~\citet{pensia2024robust}, we require upper and lower bounds on the eigenvalues of $\Sigma$. 
\begin{assumption}\label{assump:eigen}
    There exist constants $\lambda_{-}$ and $\lambda_{+}$ such that $ 0 < \lambda_{-} \leq \lambda_{\min}(\Sigma) \leq \lambda_{\max}(\Sigma) \leq \lambda_{+} < \infty $.
\end{assumption}

\subsection{Structured Missingness}\label{sec:LD structured missingness}
In this section we consider the blockwise-missing setting where we treat $K$, the number of missingness patterns, as fixed. This is equivalent to treating $L$, the number of modalities, as fixed.
\subsubsection{Upper bound}\label{sec:Upper bound for our estimator with known covariance}
For $k \in [K]$ write $D_{k}^*=\frac{\sigma^2}{\sigma^2+(\beta^*_{M_{k}})^TS_{M_{k}}(\beta^*_{M_{k}})}$, where we recall that $S_{M_{k}} = \Sigma_{M_{k}M_{k}}-\Sigma_{M_{k}O_{k}}\Sigma_{O_{k}O_{k}}^{-1}\Sigma_{O_{k}M_{k}}$. Now, for $i \in [p]$, define
\begin{align}\label{eq:alpha i definition}
    \alpha_{i} = \sum_{k=1}^{K}D_{k}^*n_{k}\mathbbm{1}_{\{i \in O_{k}\}}.
\end{align}
We will see below that the $\{\alpha_{i}\}_{i=1}^p$ can be interpreted as effective sample sizes for each variable and that the $\{D_{k}^*\}_{k=1}^K$ are oracle choices of the weights $\{D_{k}\}_{k=1}^K$. For convenience, define $n_{\min}=\min_{k\in[K]} n_{k}$.  

The following master theorem will be used to bound rates of convergence in both OSS and supervised settings. 

\begin{theorem}\label{thm:Master OSS with estimating weights}
Assume the noise distribution satisfies $\mathbb{E}\left[\epsilon^8\right]^{\frac{1}{4}}\leq\kappa_{\epsilon}\sigma^2$, for some $\kappa_{\epsilon}\geq 1$. Further assume the distribution of covariates satisfies Assumptions \ref{assump:small-ball}, \ref{assump:subgauss} and \ref{assump:eigen}. Suppose that for some constant $F=F(\lambda_{+},\lambda_{-},C_{X},\chi,C,K,\kappa_{\epsilon})$ and for some $\zeta \!=\! \zeta(\{n_{k}\}_{k=1}^K,\{|L_{k}|\}_{k=1}^L,p,N)\leq 1$ we have
\begin{align}\label{eq:OSS master covariance evals bound}
    &\mathbb{E}\left[\frac{\max\{1,\lambda_{\max}(\hat{\Sigma})\}^{24+16K}}{\min\{1,\lambda_{\min}(\hat{\Sigma})\}^{32+16K}}\right]\leq F, \quad 
    &\mathbb{E}\left[\|\hat{\Sigma}-\Sigma\|_{\mathrm{op}}^{16}\right]^{\frac{1}{8}} \leq F\zeta.
\end{align}
We also assume that the estimate of the covariance $\hat{\Sigma}$ is symmetric, positive-definite and independent of the labelled data used in \eqref{eq:OSS estimator definition initial}. Additionally, assume that 
\begin{equation}\label{eq:OSS master weights deviation bound}
    \max_{k \in [K]} \max \left\{\mathbb{E}\left[\left(\frac{D_{k}^*}{\hat{D}_{k}}\right)^{16}\right],\mathbb{E}\left[\left(\frac{\hat{D}_{k}}{D_{k}^*}\right)^{16}\right]\right\}\leq F.
\end{equation}
Then, for some $G=G(\lambda_{+},\lambda_{-},C_{X},\chi,C,K,\kappa_{\epsilon})$ and $H=H(\lambda_{+},\lambda_{-},C_{X},\chi,C,K,\kappa_{\epsilon})$, provided $n_{\min} \geq H p$, it holds that
    \begin{equation}\label{eq:OSS master estimation error}
        \mathbb{E}\left[\|\hat{\beta}-\beta^*\|_{2}^2\right] \leq G\left(\sigma^2\sum_{i=1}^p\frac{1}{\alpha_{i}}+\|\beta^*\|_{2}^2\zeta\right).
    \end{equation}
\end{theorem}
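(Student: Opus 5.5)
We would write the estimator in closed form as $\hat\beta=\hat A^{-1}\hat b$, where
\begin{equation*}
\hat A=\sum_{k=1}^K\hat D_k\hat P_k^T\Big(\sum_{i\in\mathcal I_k}(X_i)_{O_k}(X_i)_{O_k}^T\Big)\hat P_k,\qquad \hat b=\sum_{k=1}^K\hat D_k\hat P_k^T\sum_{i\in\mathcal I_k}(X_i)_{O_k}Y_i .
\end{equation*}
For each $i\in\mathcal I_k$ we decompose $Y_i=(X_i)_{O_k}^TP_k\beta^*+\eta_i$, with $\eta_i=\epsilon_i+\{(X_i)_{M_k}-\Sigma_{M_kO_k}\Sigma_{O_kO_k}^{-1}(X_i)_{O_k}\}^T\beta^*_{M_k}$. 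Here $\mathbb E[(X_i)_{O_k}\eta_i]=0$ and $\mathbb E[\eta_i^2\mid (X_i)_{O_k}]$ is of order $\sigma^2/D_k^*$; the latter claim uses sub-Gaussianity for the fourth-moment part. This gives the error decomposition
\begin{equation*}
\hat\beta-\beta^*=\hat A^{-1}\Big[\sum_k\hat D_k\hat P_k^T\sum_{i\in\mathcal I_k}(X_i)_{O_k}\eta_i+\sum_k\hat D_k\hat P_k^T\sum_{i\in\mathcal I_k}(X_i)_{O_k}(X_i)_{O_k}^T(P_k-\hat P_k)\beta^*\Big].
\end{equation*}
The first term is a variance term, to be bounded by $\sigma^2\sum_i 1/\alpha_i$. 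The second is a covariance-estimation bias, to be bounded by $\|\beta^*\|_2^2\zeta$. Since $\|\hat P_k-P_k\|_{\mathrm{op}}\lesssim \|\hat\Sigma-\Sigma\|_{\mathrm{op}}\cdot\mathrm{poly}(\lambda_{\max}(\hat\Sigma),\lambda_{\min}(\hat\Sigma)^{-1})$, the moment conditions \eqref{eq:OSS master covariance evals bound} together with Hölder will control all powers of $\hat P_k$ and $\hat P_k-P_k$. The large exponents $24+16K$ and $32+16K$ are consumed by repeated Cauchy--Schwarz/Hölder splits across the $K$ patterns.

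The key structural step is a lower bound $\hat A\succsim A^*:=\sum_k D_k^*n_kP_k^T\Sigma_{O_kO_k}P_k$ with inverse moments controlled, in a form exploiting the modality structure. We would show, conditionally on $\hat\Sigma$ (which is independent of $\mathcal L$), that each empirical block $\frac1{n_k}\sum_{i\in\mathcal I_k}(X_i)_{O_k}(X_i)_{O_k}^T$ has smallest eigenvalue $\gtrsim\lambda_{\min}(\Sigma_{O_kO_k})$. This holds with high probability, and crucially with finite negative moments, by the small-ball Assumption~\ref{assump:small-ball} through the argument of \citet{mourtada2022exact}; it is here that $n_{\min}\ge Hp$ is used. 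Then $\hat A\succeq c\sum_k\hat D_kn_k\hat P_k^T\Sigma_{O_kO_k}\hat P_k$. Replacing $\hat D_k$ by $D_k^*$ costs factors $\hat D_k/D_k^*$, controlled by \eqref{eq:OSS master weights deviation bound}, and replacing $\hat P_k$ by $P_k$ costs spectral factors of $\hat\Sigma$.

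To reach $\sum_i1/\alpha_i$ rather than a crude $p/\min_i\alpha_i$, we note that $P_k^T\Sigma_{O_kO_k}P_k=\Sigma_{\cdot O_k}\Sigma_{O_kO_k}^{-1}\Sigma_{O_k\cdot}$. Hence $\Sigma^{-1}A^*\Sigma^{-1}=\sum_kD_k^*n_k\Sigma^{-1}\Sigma_{\cdot O_k}\Sigma_{O_kO_k}^{-1}\Sigma_{O_k\cdot}\Sigma^{-1}$, and $\Sigma^{-1}\Sigma_{\cdot O_k}=E_{O_k}$ is the coordinate embedding. This gives $A^*=\Sigma\big(\sum_kD_k^*n_kE_{O_k}\Sigma_{O_kO_k}^{-1}E_{O_k}^T\big)\Sigma\succeq\lambda_+^{-1}\Sigma\,\mathrm{diag}(\alpha_i)\,\Sigma$, using $\Sigma_{O_kO_k}^{-1}\succeq\lambda_+^{-1}I$. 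Consequently $(A^*)^{-1}\preceq\lambda_+\Sigma^{-1}\mathrm{diag}(1/\alpha_i)\Sigma^{-1}$, whose trace is $\lesssim\sum_i1/\alpha_i$.

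For the variance term, its conditional covariance given $\hat\Sigma$ and the design is $\preceq\sum_k\hat D_k^2\hat P_k^T\big(\sum_{i\in\mathcal I_k}(X_i)_{O_k}(X_i)_{O_k}^T\,\mathbb E[\eta_i^2\mid X_{O_k}]\big)\hat P_k$. Using $\hat D_k^2\cdot\sigma^2/D_k^*\approx\hat D_k\sigma^2$ up to the ratio factors, this is $\lesssim\sigma^2\hat A$ with the conditional variance bounded via the eighth-moment noise assumption and sub-Gaussianity. Heteroscedasticity is handled by Hölder with empirical fourth moments, which is why eighth and sixteenth moments appear. So $\mathbb E\|\cdot\|^2\lesssim\sigma^2\mathbb E\,\mathrm{tr}(\hat A^{-1})\lesssim\sigma^2\sum_i1/\alpha_i$.

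For the bias term, we would write it as $\hat A^{-1}\sum_k\hat D_k\hat P_k^T\hat\Gamma_k(P_k-\hat P_k)\beta^*$, where $\hat\Gamma_k$ is the empirical Gram matrix of pattern $k$. Cauchy--Schwarz in the $\hat A$-geometry gives a norm of at most $\|\hat A^{-1}\|^{1/2}_{\mathrm{op}}\big(\sum_k\hat D_kn_k\|P_k-\hat P_k\|^2\big)^{1/2}\|\beta^*\|\cdot O(1)$. The resulting $n$-ratios cancel to $O(1)$ because $\lambda_{\min}(\hat A)\gtrsim\sum_kD_k^*n_k$ restricted appropriately. Squaring gives $\|\beta^*\|_2^2\|\hat\Sigma-\Sigma\|^2_{\mathrm{op}}$, whose expectation is $\lesssim\zeta$ by \eqref{eq:OSS master covariance evals bound} combined with Hölder and the spectral moment bound. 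Rather than relying on a crude ratio, we would bound $\hat A^{-1}\hat D_k\hat P_k^T\hat\Gamma_k\hat P_k\preceq$ an $O(1)$ multiple of the identity in the appropriate sense, since each summand is dominated by $\hat A$.

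The main obstacle is the lower bound on $\hat A$ with \emph{moment} control, not merely high probability, uniformly over patterns. The negative moments of the empirical Gram matrices under only a small-ball condition must be combined with the random $\hat P_k$ and $\hat D_k$ through a chain of Hölder inequalities. We would first try the Mourtada-type bound $\mathbb E[\lambda_{\min}(\hat\Gamma_k/n_k)^{-q}]\le F$ for $n_k\ge Hp$ and fixed $q$, and then allocate exponents so that they fit within the stated powers.
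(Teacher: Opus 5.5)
There is a genuine gap. The lower bound $\hat A\succsim\hat\Sigma\,\mathrm{diag}(\hat\alpha_i)\,\hat\Sigma$ and the resulting $\mathrm{tr}(\hat A^{-1})\lesssim\sum_i1/\hat\alpha_i$ are fine if you run your identity with $\hat\Sigma$ directly. (The comparison $\hat A\succsim A^*$ itself is false when the $n_kD_k^*$ are very unequal, because $\ker\hat P_k\neq\ker P_k$.) But this bound controls neither error term at the claimed rate.

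\textbf{Bias term.} Your Cauchy--Schwarz step gives $\|\hat A^{-1}\|_{\mathrm{op}}\sum_k\hat D_kn_k\|P_k-\hat P_k\|_{\mathrm{op}}^2\|\beta^*\|_2^2$, and the ratio $\sum_kD_k^*n_k/\lambda_{\min}(\hat A)$ is unbounded. In the simple monotonic pattern $\hat P_2$ has a nontrivial kernel, so $\lambda_{\min}(\hat A)\lesssim n_1$ and you lose a factor $n_2D_2^*/n_1$. The domination $\hat D_k\hat P_k^T\hat\Gamma_k\hat P_k\preceq\hat A$ does not repair this. $M\preceq A$ gives $\|A^{-1/2}MA^{-1/2}\|_{\mathrm{op}}\le1$ but not $\|A^{-1}M\|_{\mathrm{op}}=O(1)$: take $A=\mathrm{diag}(1,\varepsilon)$ and $M=vv^T$ with $v=(1/\sqrt2,\sqrt{\varepsilon/2})$, so that $\|A^{-1}M\|_{\mathrm{op}}\asymp\varepsilon^{-1/2}$.

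The missing ingredient is the pattern-wise operator-norm bound $\|\hat A^{-1}\hat\Sigma_{O_k}^T\|_{\mathrm{op}}\lesssim1/(n_k\hat D_k)$, up to spectral factors of $\hat\Sigma$ and of the empirical Gram matrices. This is Lemma~\ref{lemma:squared matrix bound} in the paper, where $\hat B$ plays the role of your $\hat A$. It is proved by induction on $K$ via the Woodbury identity, using that the $\hat\Sigma_{O_k}^T$ are column selections of a single well-conditioned matrix with $\cup_kO_k=[p]$. With it, the bias is handled pattern by pattern, each pattern contributing $\hat D_k\cdot(n_k\hat D_k)^{-1}\cdot n_k\|P_k-\hat P_k\|_{\mathrm{op}}\|\beta^*\|_2$.

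\textbf{Variance term.} Taking the conditional covariance given $\hat\Sigma$ and the design presupposes that $\sum_{i\in\mathcal I_k}(X_i)_{O_k}\eta_i$ is centred conditionally on everything entering $\hat A$. That holds for Gaussian covariates with weights independent of $\mathcal L$, but not under the theorem's hypotheses:
\begin{itemize}
\item only $\mathbb E[X_{O_k}\eta]=0$ is available, and $\mathbb E[\eta\mid X_{O_k}]\neq0$ whenever $\mathbb E[X_{M_k}\mid X_{O_k}]$ is nonlinear, which sub-Gaussian, small-ball designs allow;
\item $\mathbb E[\eta^2\mid X_{O_k}]$ is not pointwise $O(\sigma^2/D_k^*)$;
\item $\hat D_k$ may be built from the same labelled responses, as the weights of Theorem~\ref{thm:weights initialisation guarantee} are.
\end{itemize}
Once conditioning is replaced by H\"older, as the moment hypotheses force, the $\mathrm{tr}(\hat A^{-1})$ structure is lost. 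Going through $\|\hat A^{-1}\|_{\mathrm{op}}$ then only gives $\sigma^2p/\min_i\alpha_i$.

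The paper proceeds as follows. It decomposes $\delta_i=\sum_l\hat\Sigma_{L_l}^T\tau_i^{(l)}$ by modality and uses Lemma~\ref{lemma:squared matrix bound} to get $\|\hat B^{-1}\hat\Sigma_{L_l}^T\|_{\mathrm{op}}\lesssim1/\hat\alpha^{(L_l)}$. It then separates this factor and $\hat D_k$ by Cauchy--Schwarz from $\mathbb E\|\sum_{i\in\mathcal I_k}\tau_i^{(l)}\|_2^4\le7n_k^2\,\mathbb E\|\tau_i^{(l)}\|_2^4$, a bound that needs only centring given $\hat\Sigma$. Summing gives $\sigma^2\sum_l|L_l|/\alpha^{(L_l)}\lesssim\sigma^2\sum_i1/\alpha_i$. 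This chain of H\"older splits is what the exponents $24+16K$ and $32+16K$ pay for.
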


At a high level, this theorem guarantees that if we estimate the covariance matrix at rate $\zeta$ (condition \eqref{eq:OSS master covariance evals bound}) in the squared operator norm and estimate the oracle weights up to constant factors (condition \eqref{eq:OSS master weights deviation bound}), our procedure obtains the rate displayed in \eqref{eq:OSS master estimation error}. Estimates of the covariance satisfying the above conditions are constructed in the Appendix by Lemmas~\ref{lemma:eigenval control} and~\ref{lemma:subgaussian tail control} in the OSS case and Proposition \ref{Prop:Supervised covariance blocks} in the supervised case. 

We begin by discussing the ISS case with oracle weights. Here, we can take $\zeta=0$ in \eqref{eq:OSS master estimation error}. Performing some algebra yields that, at the population level, $\mathbb{E}[(Y-X_{O_{k}}^TP_{k}\beta^*)^2]=\sigma^2+(\beta^*_{M_{k}})^TS_{M_{k}}(\beta^*_{M_{k}})$. Note that the best linear predictor, in the sense of~\citet{buja2019models}, of $Y$ onto $X_{O_{k}}$ is $X_{O_{k}}^TP_{k}\beta^*$, i.e., $P_{k}\beta^* = \argmin_{\theta \in \mathbb{R}^{|O_{k}|}}\mathbb{E}[(Y-X_{O_{k}}^T\theta)^2]$. Thus, we can see $D_{k}^*$ as the ratio of the variance inherent in a complete case relative to the variance inherent in a case in missingness pattern $k$. Therefore, $\alpha_{i}$ from \eqref{eq:alpha i definition} admits the interpretation as the effective sample size of the $i^{th}$ predictor variable across the dataset. 


We now discuss the more realistic OSS and supervised settings, supposing also that the oracle weights are unknown. There are several ways in which the estimates of the weights can be constructed. Our theoretical results clarify that they only need to be accurate up to constant factors. Provided $\|\beta^*\|_{2}^2 \lesssim \sigma^2$, a common assumption in the literature, the weights can be set to be 1. More generally, we now give an example method for constructing these weights. 

We first note that, for $\lambda>0$, replacing $\{\hat{D}_{k}\}_{k=1}^{K}$ with $\{\lambda \hat{D}_{k}\}_{k=1}^{K}$ in \eqref{eq:OSS estimator definition initial} results in the same estimator. We therefore focus on constructing estimates of $\{\sigma^2+(\beta_{M_{k}}^*)^TS_{M_{k}}(\beta_{M_{k}}^*)\}_{k=1}^K$. Fix an observation pattern $O\subseteq [p]$ and write $P=\Sigma_{OO}^{-1}\Sigma_O$. At the population level, 
\begin{align*}
    Y=X^T\beta^*+\epsilon = X_{O}^TP\beta^*+X^T\beta^*-X_{O}^TP\beta^*+\epsilon \equiv X_{O}^T\phi^* + \eta,
\end{align*}
where $\phi^*=P\beta^*$ and $\eta=X^T\beta^*-X_{O}^TP\beta^*+\epsilon$. Note that $\mathbb{E}\left[\eta X_{O}\right]=0$ and $\mathbb{E}\left[\eta^2\right]=\sigma^2+(\beta^*_{M})^TS_{M}\beta^*_{M}\equiv d$. These equations suggest estimating $d$ by performing linear regression on this missingness pattern and then taking the residual sum of squares. Denote by $X_{O}$ the portion of the design matrix observed in this missingness pattern, $Y_{O}$ the corresponding responses and $n$ the number of samples in this pattern. Set $\hat{\phi}=(X_{O}^TX_{O})^{-1}X_{O}^TY_{O}$ and $\hat{d} = \frac{1}{n}\|Y_{O}-X_{O}\hat{\phi}\|_{2}^2$. In order to get an appropriate bound in expectation, we assume knowledge of $\kappa_{U},\kappa_{L} >0$ such that $\kappa_{L} \leq \sigma^2+(\beta^*_{M})^TS_{M}\beta^*_{M} \leq \kappa_{U}$. As our theoretical guarantees show, $\kappa_{L}$ and $\kappa_{U}$ do not need to be treated as constant and can be allowed to grow or decay with $\{n_{k}\}_{k=1}^K$ and $p$. Our final estimate of $d$ is given by 
    \begin{equation*}
        \hat{d}_{F} =
    \begin{cases}
    \kappa_{L} &\text{if } \hat{d}\leq \frac{\kappa_{L}}{2},\\
    \hat{d} & \text{if } \frac{\kappa_{L}}{2}<\hat{d}< 2\kappa_{U}, \\
    \kappa_{U} & \text{if } \hat{d} \geq 2\kappa_{U}.
    \end{cases}
    \end{equation*}
    We can repeat this for each missingness pattern to obtain $\hat{d}_{F}^{(1)},\ldots,\hat{d}_{F}^{(K)}$ which estimate the $d_{k}\equiv\sigma^2+(\beta^*_{M_{k}})^TS_{M_{k}}\beta^*_{M_{k}}$. We then set the weights via $\hat{D}_{k} = 1/\hat{d}_{F}^{(k)}$. By the invariance noted earlier, this is equivalent to setting the weights as $\hat{D}_{k} = \sigma^2/\hat{d}_{F}^{(k)}$. We therefore analyse this choice of weights.  
    \begin{theorem}\label{thm:weights initialisation guarantee}
     Given Assumptions \ref{assump:small-ball}, \ref{assump:subgauss} and \ref{assump:eigen}, further assume that for $q \geq 1$ our error distribution satisfies $\mathbb{E}\left[\epsilon^{8q}\right]^{\frac{1}{4q}}\leq\kappa_{\epsilon}\sigma^{2}$. Assume that for all $k \in [K]$, $\kappa_{L} \leq d_{k}\leq \kappa_{U}$ and $n_{k} \geq \frac{24q+6|O_{k}|}{\chi}$. There exists some constant $F=F(\lambda_{+},\lambda_{-},C,\chi,q,C_{X},\kappa_{\epsilon})$ such that if  
    \begin{equation}\label{eq:weights tuning sample size}
       \max_{k \in [K]}\left\{\left(\frac{|O_{k}|}{n_{k}}\right)^{q}\left(\frac{\kappa_{U}}{\kappa_{L}}\right)^{16}\right\}\leq \frac{1}{F}, 
    \end{equation}
    then 
        \begin{equation}\label{eq:weights initialisation conclusion}
            \max_{k \in [K]}\left\{\mathbb{E}\left[\left(\frac{\hat{D}_{k}}{D_{k}^*}\right)^{16}\right]+\mathbb{E}\left[\left(\frac{D_{k}^*}{\hat{D}_{k}}\right)^{16}\right]\right\} \leq H,
        \end{equation}
        where $H$ is a universal constant. 
    \end{theorem}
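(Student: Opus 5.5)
Since $\hat D_k/D_k^* = d_k/\hat d_F^{(k)}$, it suffices to bound, for each fixed $k$, the moments $\mathbb{E}[(d_k/\hat d_F)^{16}]$ and $\mathbb{E}[(\hat d_F/d_k)^{16}]$ by a universal constant. Drop the index $k$ and write $n,O,d$. The truncation makes both ratios deterministically bounded: $\hat d_F\in[\kappa_L/2,2\kappa_U]$ (up to the boundary conventions), so $d/\hat d_F\le 2\kappa_U/\kappa_L$ and $\hat d_F/d\le 2\kappa_U/\kappa_L$. On the good event $\mathcal{E}=\{d/2\le \hat d\le 2d\}$, the truncation either leaves $\hat d$ unchanged or moves it to $\kappa_L$ or $\kappa_U$, and since $\kappa_L\le d\le\kappa_U$ this keeps the ratio within $[1/4,4]$. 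Hence
\begin{equation*}
\mathbb{E}\Big[\Big(\tfrac{d}{\hat d_F}\Big)^{16}+\Big(\tfrac{\hat d_F}{d}\Big)^{16}\Big]\le 2\cdot 4^{16}+2\Big(\tfrac{2\kappa_U}{\kappa_L}\Big)^{16}\mathbb{P}(\mathcal{E}^c).
\end{equation*}
It therefore remains to show $\mathbb{P}(|\hat d/d-1|>1/2)\le F'(|O|/n)^q$. Condition \eqref{eq:weights tuning sample size} then makes the second term at most a universal constant.

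For the concentration step, write $\hat d=\frac1n\|(I-\Pi)\eta\|_2^2=\frac1n\|\eta\|_2^2-\frac1n\eta^TX_O(X_O^TX_O)^{-1}X_O^T\eta$, where $\Pi$ is the hat matrix and $\eta_i=X_i^T\beta^*-(X_i)_O^TP\beta^*+\epsilon_i$ are i.i.d.\ with $\mathbb{E}[\eta^2]=d$ and $\mathbb{E}[\eta X_O]=0$. Bound each piece by Markov's inequality with $q$-th (or $2q$-th) moments. For the first piece, $\eta_i^2/d-1$ are centred, and $\eta$ is the sum of a sub-Gaussian part (variance $\beta_M^{*T}S_M\beta_M^*\le d$, $\psi_2$-norm controlled via $C_X,\lambda_\pm$) and $\epsilon$ with $\mathbb{E}[\epsilon^{8q}]^{1/(4q)}\le\kappa_\epsilon\sigma^2\le\kappa_\epsilon d$. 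Rosenthal's inequality therefore gives $\mathbb{E}|\frac1n\sum(\eta_i^2/d-1)|^{2q}\lesssim n^{-q}$, so $\mathbb{P}(|\frac1n\|\eta\|^2/d-1|>1/4)\lesssim n^{-q}\le(|O|/n)^q$. For the second piece, which is nonnegative, it suffices to bound $\mathbb{P}(\frac1n\eta^T\Pi\eta>d/4)$. Use
\begin{equation*}
\eta^T\Pi\eta\le \lambda_{\min}\big(\tfrac1n\Sigma_{OO}^{-1/2}X_O^TX_O\Sigma_{OO}^{-1/2}\big)^{-1}\,\tfrac1n\big\|\Sigma_{OO}^{-1/2}X_O^T\eta\big\|_2^2 .
\end{equation*}
The vector $\Sigma_{OO}^{-1/2}X_O^T\eta$ is a sum of $n$ i.i.d.\ mean-zero vectors with covariance-type second moment of trace $\lesssim |O|d$. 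Moment bounds (Rosenthal again, using the $8q$-moments of $\epsilon$ and the sub-Gaussianity of $X$) give $\mathbb{E}[(\frac1{n^2}\|\Sigma_{OO}^{-1/2}X_O^T\eta\|^2/d)^{q}]\lesssim(|O|/n)^q$.

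The main obstacle is the inverse smallest eigenvalue of the normalised Gram matrix under only the small-ball Assumption~\ref{assump:small-ball}. Here I will invoke the result of \citet{mourtada2022exact}: under the small-ball condition and $n\ge 6|O|/\chi$, one has $\mathbb{P}(\lambda_{\min}\le t)\le (C't)^{\chi n/(6)}$. Consequently $\mathbb{E}[\lambda_{\min}^{-2q}]\le C''$ as soon as $\chi n/6>2q$, which is exactly what the assumption $n\ge(24q+6|O|)/\chi$ supplies. Cauchy--Schwarz combining this with the previous moment bound then yields $\mathbb{E}[(\frac1n\eta^T\Pi\eta/d)^{q}]\lesssim(|O|/n)^q$, and Markov's inequality finishes. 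Getting the moment indices to line up with the $8q$-moment on $\epsilon$ and the sample-size condition is the delicate bookkeeping; all constants depend only on $\lambda_\pm,C,\chi,q,C_X,\kappa_\epsilon$ and are absorbed into $F$, leaving $H$ universal.
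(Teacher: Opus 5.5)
Your proposal is correct and is essentially the paper's proof. Both use the same hat-matrix decomposition of $\hat d$, Rosenthal-type moment bounds driven by the $8q$-th moment of $\epsilon$, Corollary~4 of \citet{mourtada2022exact} with Cauchy--Schwarz and Markov for the projection term, and the same split of the truncated $16$th moments over the good event and its complement using $\kappa_L\le d\le\kappa_U$ and \eqref{eq:weights tuning sample size}. Two pieces of bookkeeping need tightening. First, Cauchy--Schwarz against $\mathbb{E}[\lambda_{\min}^{-2q}]$ requires the $2q$-th moment of $\|\Sigma_{OO}^{-1/2}X_O^T\eta\|_2^2/(n^2d)$, not the $q$-th; this moment is $\lesssim (|O|/n)^{2q}$ by a vector Rosenthal inequality because $\eta$ has $8q$ moments, exactly as in the paper. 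Second, bounding $\mathbb{E}[\lambda_{\min}^{-2q}]$ uniformly in $n$ needs the margin $\chi n\ge 24q$, not merely $\chi n/6>2q$; your sample-size assumption does supply this.
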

    
    We see that $\kappa_{L}$ and $\kappa_{U}$ can vary with $\{n_{k}\}_{k=1}^K$ and $p$ while condition \eqref{eq:weights tuning sample size} still holds. However, there is a trade-off between the rate at which they can vary and the number of moments we assume. The condition $\mathbb{E}\left[\epsilon^{8q}\right]^{\frac{1}{4q}}\leq\kappa_{\epsilon}\sigma^{2}$ is similar to a kurtosis bound, but for a higher moment. Alternatives to the above procedure, which assumes a large number of cases in each missingness pattern, include fitting a complete case estimator and estimating the $D_{k}^*$ based on standard OLS theory. Another option is to obtain an initial consistent estimator by setting all the weights to be equal to one and then estimating the appropriate quantities.

Applying the above procedure to tune the weights allows us to discuss the consequences of Theorem~\ref{thm:Master OSS with estimating weights} in the context of the OSS and supervised settings. In the OSS case, where $N$ is reasonably large, we construct the estimate of the covariance $\hat{\Sigma}$ used in \eqref{eq:OSS estimator definition initial} from the unlabelled data alone. As justified by Lemmas~\ref{lemma:eigenval control} and~\ref{lemma:subgaussian tail control} in the Appendix, we can then take $\zeta = \frac{p}{N}$ in Theorem~\ref{thm:Master OSS with estimating weights} to guarantee that, under appropriate conditions,
\begin{equation*}
    \mathbb{E}\left[\|\hat{\beta}-\beta^*\|_{2}^2\right]  \leq G\left(\sigma^2\sum_{i=1}^p\frac{1}{\alpha_{i}}+\|\beta^*\|_{2}^2\frac{p}{N}\right),
\end{equation*}
for some constant $G=G(\lambda_{+},\lambda_{-},C_{X},\chi,C,K)$. When $N$ is large compared to the cross modality sample sizes and the signal-to-noise ratio, Theorem \ref{thm:LD structured lower bound} below together with Theorem \ref{thm:Master OSS with estimating weights} above show that this rate is minimax optimal. Under the assumptions common in the literature, where $\alpha_{i} \asymp \rho n_{\mathcal{L}}$ for some $\rho \in (0,1)$ and $\|\beta^*\|_{2}^2\lesssim \sigma^2$, we recover existing rates of convergence \citep{wang2019rate}. Theorem~\ref{thm:OSS balanced} and the surrounding discussion consider this in more depth. 

In the supervised case, we split the indices of the data into two folds $\mathcal{F}_{1},\mathcal{F}_{2}$ by randomly splitting each missingness pattern in half. With $\mathcal{F}_{1}$ we estimate the covariance to get $\hat{\Sigma}$. In $\mathcal{F}_{2}$ we estimate the weights by the procedure preceding Theorem~\ref{thm:weights initialisation guarantee}. We then apply an appropriate version of the estimator in \eqref{eq:OSS estimator definition initial}, with $\mathcal{I}_j$ replaced by $\mathcal{I}_j \cap \mathcal{F}_2$, on this data to get $\hat{\beta}_{2}$. We repeat this procedure with the roles of the folds reversed to obtain $\hat{\beta}_{1}$. Our final estimate is $\hat{\beta}=(\hat{\beta}_{1}+\hat{\beta}_{2})/2$. Theorem \ref{thm:Master OSS with estimating weights} combined with Proposition~\ref{Prop:Supervised covariance blocks} in the Appendix and the subsequent discussion guarantees that we can take $\zeta = \max_{(g,h) \in[L]^2}\{|L_{g}|/n_{g,h}\}$ and thus our procedure has risk upper bounded by
\begin{equation*}
    \mathbb{E}\left[\|\hat{\beta}-\beta^*\|_{2}^2\right]  \leq G\left(\sigma^2\sum_{i=1}^p\frac{1}{\alpha_{i}}+\|\beta^*\|_{2}^2\max_{(g,h) \in[L]^2}\left\{\frac{|L_{g}|}{n_{g,h}}\right\}\right).
\end{equation*}
Theorem \ref{thm:LD structured lower bound} below together with Theorem \ref{thm:Master OSS with estimating weights} above show that this rate is minimax optimal, when the signal-to-noise ratio is bounded above.
In both OSS and supervised cases, the rates decouple into an ISS term and a term that depends on our ability to estimate the covariance matrix $\Sigma$. 

\subsubsection{Lower bound}\label{sec:OSS structured lower bound}
In order to state our lower bound, we first define the notation necessary to introduce our minimax framework. For $R,B>0$, $\lambda_{-}\leq\lambda_{+}$, $\chi \in (0,1]$, $C\geq 1$, $\kappa_{\epsilon}\geq 1$ and $C_{X}$, we denote by $\mathcal{P}_{OSS}=\mathcal{P}_{OSS}(\chi,C,C_{X},\lambda_{-},\lambda_{+},R,B,\kappa_{\epsilon})$ the family of data-generating mechanisms, defined in Section \ref{sec:Formal Setting}, on the measurable space 
\begin{equation*}
    (\mathcal{X},\mathcal{A}) =\left(\prod_{k=1}^K(\mathbb{R}^{|O_{k}|+1})^{n_{k}}\otimes (\mathbb{R}^{p})^N,\mathcal{B}\left(\prod_{k=1}^K(\mathbb{R}^{|O_{k}|+1})^{n_{k}}\otimes (\mathbb{R}^{p})^N\right)\right),
\end{equation*}
under the following conditions. The distribution of the covariates satisfies Assumption \ref{assump:small-ball} with parameters $\chi$ and $C$, Assumption \ref{assump:subgauss} with parameter $C_{X}$ and Assumption \ref{assump:eigen} with parameters $\lambda_{-}$ and $\lambda_{+}$; the parameters $\sigma$ and $\beta^*$ satisfy $\|\beta^*\|_{2} \leq B$ and $\sigma \leq R$. Finally, the noise satisfies $\mathbb{E}\left[\epsilon^8\right]^{\frac{1}{4}}\leq\kappa_{\epsilon}\sigma^2$. Denote by $\mathcal{P}_{OSS}^{Gauss}$ the subset of $\mathcal{P}_{OSS}$ such that $X\sim N(0,\Sigma)$, $\epsilon\sim N(0,\sigma^2)$ and $\epsilon\indep X$.
We denote by $\hat{\Theta}_{OSS}$ the set of functions $\hat{\beta}:\mathcal{X}\rightarrow\mathbb{R}^p$ such that $x \rightarrow \|\hat{\beta}(x)-\beta^*(P)\|_{2}$ is measurable for every $P \in \mathcal{P}_{OSS}$. We define $\xi : [p] \rightarrow [L], \text{ where for } j \in L_{i}, \space \xi(j) = i$. This function maps variables to their modalities. 
\begin{theorem}\label{thm:LD structured lower bound}
    Suppose we have data drawn from \eqref{eq:datamech1} with distribution lying in $\mathcal{P}_{OSS}^{Gauss}$ satisfying $C_{X}\geq \sqrt{8\lambda_{+}/3}$, $\kappa_{\epsilon}\geq 105^{1/4}$ and $\lambda_{-}<\lambda_{+}$. 
    For each $z \in \mathbb{R}^p$ and $j \in [K]$ define $D_{j}(z) = \frac{R^2}{R^2+\|z_{M_{j}}\|_{2}^2}$ and $\alpha_{i}(z) = \sum_{j=1}^Kn_{j}D_{j}(z)\mathbbm{1}_{\left\{i \in O_{j}\right\}}$. Assume that for all $i \in [p]$
    \begin{equation}\label{eq:LD struc LB SS 1}
        \inf_{\substack{z \in \mathbb{R}^p\\
        \|z\|_{2}\leq B}}\left\{\alpha_{i}(z)\right\} \geq \frac{1}{2}\max\left\{\frac{1}{\lambda_{-}},\lambda_{-}\right\}\max\left\{\frac{R^2}{B^2}, \frac{B^2}{R^2}\right\}p,
    \end{equation}
    and that 
    \begin{equation}
    \label{eq:LD struc LB SS 2}
        \min\{1,\lambda_{+}^2\}\sum_{j \in [p]\setminus\{i\}} \min\left\{\frac{B^2}{N+n_{\xi(j),\xi(i)}}, \frac{R^2}{n_{\xi(j),\xi(i)}}\right\} \leq 2B^2(\lambda_{+}-\lambda_{-})^2.
    \end{equation}
    Then, provided $|L_{i}| \geq 2$ for $i \in [L]$, it holds that
    \begin{align}\label{eq:LD struc LB conclusion}
        &\inf_{\hat{\beta}\in \hat{\Theta}_{OSS}}\sup_{P \in \mathcal{P}_{OSS}^{Gauss}}\mathbb{E}\left[\|\hat{\beta}-\beta^*(P)\|^2_{2}\right]\nonumber \\
        &\gtrsim\sup_{\substack{z \in \mathbb{R}^p\\
        \|z\|_{2}\leq B}}\left\{ \frac{R^2}{\max\{1,\lambda_{-}\}}\sum_{i=1}^p\frac{1}{\alpha_{i}(z)}\right\}+\min\{\lambda_{+}^{-2},1\}\max_{i \in [p]} 
     \sum_{j\in [p]\setminus \{i\}} \min\left\{\frac{B^2}{N+n_{\xi(j),\xi(i)}}, \frac{R^2}{n_{\xi(j),\xi(i)}}\right\}.
    \end{align}
\end{theorem}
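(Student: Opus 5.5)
Since the right-hand side of \eqref{eq:LD struc LB conclusion} is a sum of two nonnegative terms, it suffices to lower bound the minimax risk by each term separately, up to constants; the sum is then at most twice the maximum. Both bounds will come from Gaussian sub-families of $\mathcal{P}_{OSS}^{Gauss}$ combined with standard tools. For the first (noise) term we use a Bayesian Cramér--Rao (van Trees) argument or an Assouad hypercube with $\Sigma$ fixed. For the second (covariance-estimation) term we use Assouad's lemma over a family of perturbed covariance matrices with $\beta^*$ fixed. The side conditions $C_X\ge\sqrt{8\lambda_+/3}$, $\kappa_\epsilon\ge105^{1/4}$ (so that $\mathbb{E}[\epsilon^8]^{1/4}=105^{1/4}\sigma^2$ is admissible), $\lambda_-<\lambda_+$ and $|L_i|\ge2$ only serve to guarantee that every constructed distribution lies in $\mathcal{P}_{OSS}^{Gauss}$.

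\textbf{First term.} Fix $z$ with $\|z\|_2\le B$ and take $\Sigma=\lambda I$ with $\lambda=\max\{1,\lambda_-\}$ or a nearby admissible value, $\sigma=R$, and $\beta$ in a small hypercube around a base point proportional to $z$. With known diagonal $\Sigma$, the unlabelled data are ancillary. An observation in pattern $j$ is Gaussian, $Y\mid X_{O_j}\sim N(X_{O_j}^T\beta_{O_j},\,R^2+\lambda\|\beta_{M_j}\|_2^2)$, so its Fisher information for $\beta_i$, $i\in O_j$, is of order $\lambda/(R^2+\lambda\|\beta_{M_j}\|^2)$, i.e.\ $\asymp D_j(\cdot)/R^2$ after rescaling. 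The variance also depends on $\beta$, which adds an extra term of order $\|\beta_{M_j}\|^2/(R^2+\|\beta_{M_j}\|^2)^2$; this is bounded by $1/R^2$ and is harmless. Summing over patterns gives total information of order $\alpha_i/R^2$ per coordinate. Assouad with perturbations $\delta_i\asymp R/\sqrt{\alpha_i(z)}$ then yields $\sum_i R^2/(\lambda\,\alpha_i(z))$. Condition \eqref{eq:LD struc LB SS 1} ensures $\sum_i\delta_i^2\lesssim B^2$, so the hypercube stays in the $B$-ball, and it also ensures that $\alpha_i(\cdot)$ changes by at most a constant factor across the cube. Taking the supremum over $z$ gives the first term.

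\textbf{Second term.} Fix $i$ and choose a unit vector $\beta^*=Be_i$ scaled to norm $B$, with $\sigma=R$. Take $\Sigma=\lambda_0 I+\sum_{j\ne i}\omega_j\gamma_j(e_ie_j^T+e_je_i^T)$ with $\omega\in\{\pm1\}^{p-1}$. The point is that the regression structure depends on $\Sigma_{ji}$. For instance, the pair $(X_j,Y)$ has covariance containing $\Sigma_{ji}B$, which is observed from a sample whose pattern contains $L_{\xi(j)}$ but possibly not $L_{\xi(i)}$. Since the paper's $n_{g,h}$ counts samples containing both modalities, it is cleaner to parametrise differently: fix the target distribution of $(X,Y)$ so that $\beta^*$ itself carries $\omega$. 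A natural choice is $\beta^*_j=-\omega_j\gamma_j B/\lambda_0$, so that $\mathrm{Cov}(X,Y)$ is constant and only $\Sigma$ changes. Then labelled samples reveal $\omega_j$ only through joint observation of coordinates $j$ and $i$ (count $n_{\xi(j),\xi(i)}$), or through $Y$ residual variance, and unlabelled samples reveal it through $\Sigma_{ij}$ (count $N$). The KL divergence between neighbours is then of order $(N+n_{\xi(j),\xi(i)})\gamma_j^2/\lambda_0^2$. Setting $\gamma_j^2\asymp \lambda_0^2\min\{1/(N+n),\,R^2/(B^2n)\}$, where the $R^2$ branch handles the label-noise-limited regime, gives a separation $|\beta^*_j-\beta'^*_j|^2\asymp \min\{B^2/(N+n),R^2/n\}$. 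Assouad then gives the sum over $j\ne i$, and maximising over $i$ finishes the bound. Condition \eqref{eq:LD struc LB SS 2} guarantees that $\sum_j\gamma_j^2$ is small enough that, by Gershgorin or Weyl, the eigenvalues of $\Sigma$ stay in $[\lambda_-,\lambda_+]$; this is where the $\lambda_+^2$ factors enter. Sub-Gaussianity and the small-ball property hold automatically for Gaussians with bounded spectrum.

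\textbf{Main obstacle.} The hard part is the KL computation in the second construction. It must be done pattern by pattern, for marginals of $(X_{O_k},Y)$ with both $\Sigma$ and $\beta^*$ varying, and must show that the divergence scales with $n_{\xi(j),\xi(i)}$ rather than with $h_{\xi(j)}$. I would compute the Gaussian KL via $\tfrac12\|\Sigma_1^{-1/2}(\Sigma_2-\Sigma_1)\Sigma_1^{-1/2}\|_F^2$ applied to the joint covariance of $(X_{O_k},Y)$. I would then verify that entries involving coordinate $j$ without coordinate $i$ cancel by the choice $\Sigma\beta^*=$ const, which is precisely why that parametrisation is chosen.
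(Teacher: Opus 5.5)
\textbf{The second term is fine.} Your argument there is essentially the paper's Proposition~\ref{prop:LD OSS lower bound}: you flip $\Sigma_{ji}$ and $\beta^*_j$ together so that $\Sigma\beta^*$ and $\mathrm{Var}(Y)$ stay fixed, so only samples observing both $L_{\xi(j)}$ and $L_{\xi(i)}$, plus the unlabelled data, detect the flip. Three small repairs are needed.
\begin{itemize}
\item Take $\beta^*_i=B/2$ and rescale $\beta^*_j$ accordingly, so that $\|\beta^*\|_2\le B$.
\item The per-sample labelled KL is of order $\gamma_j^2\max\{1,B^2/R^2\}$ up to $\lambda_0$-factors, not $\gamma_j^2/\lambda_0^2$. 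Your choice of $\gamma_j$ already absorbs this.
\item Gershgorin would need $\sum_j|\gamma_j|$ small, which \eqref{eq:LD struc LB SS 2} does not give. The explicit eigenvalues $\lambda_0\pm\|\gamma\|_2$ (or Weyl) do the job.
\end{itemize}

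\textbf{The gap is in the first term.} You fix $\sigma=R$ and build the cube around a base point $b\propto z$. Flipping coordinate $i$ then also changes the residual variance $R^2+\lambda\|\beta_{M_j}\|_2^2$ of every pattern with $i\in M_j$, by $\lambda(2b_i\delta_i+\delta_i^2)$. Those patterns do not appear in $\alpha_i(z)$, and this leak is multiplied by $\sum_{j:i\in M_j}n_j$, which can be arbitrarily larger than $\alpha_i(z)$. Being ``$O(1/R^2)$ per observation'' therefore does not make it harmless, and total information is not of order $\alpha_i/R^2$.

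A concrete failure: take the simple monotonic pattern, $i$ in the second modality, and $z$ spread over that modality with $\|z_{M_2}\|_2\asymp B$. Then $\alpha_i(z)=n_1$. Treat $\lambda$ as constant and $B\lesssim R$, and set $\delta_i^2\asymp R^2/n_1$ and $b_i^2\asymp B^2/p_0$. The $n_2$ incomplete cases contribute neighbour KL of order $n_2b_i^2\delta_i^2/R^4\asymp n_2B^2/(n_1p_0R^2)$. This is unbounded when $n_2\gg n_1p_0$, which is exactly the regime where the term $R^2p_0/n_1$ matters.

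Centring at $b_i=0$ with $\{0,\delta_i\}$ flips and fixed $\sigma$ still leaves KL of order $n_2/n_1^2$, and \eqref{eq:LD struc LB SS 1} places no upper bound on $n_2$. The same objection applies to a van Trees bound built from the per-coordinate information you describe.

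\textbf{What the paper does instead.} Its proof of Proposition~\ref{prop:LD ISS lower bound} supplies the two missing devices.
\begin{itemize}
\item Since each $M_j$ is a union of modalities, $\alpha_i(z)$ depends on $z$ only through $(\|z_{L_r}\|_2)_r$. You may therefore move all of $z$'s mass onto one half $F^{(2)}$ of each modality, put the base signal $y$ there, and perturb only on the other half $F^{(1)}$. This loses only a factor $2$, because $|F^{(1)}_r|\ge|L_r|/2$ and $\alpha_i(z)$ is constant within a modality. This is what $|L_r|\ge 2$ is used for; it is not merely a membership condition for the class.
\item The noise variance depends on the vertex: $\sigma_\phi^2=R^2/2-\lambda_-\|\beta_\phi\|_2^2+\lambda_-\|y\|_2^2$. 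Then $\mathrm{Var}(Y)$ is constant, so flipping a coordinate in $M_j$ leaves the law of $(X_{O_j},Y)$ exactly unchanged. Only patterns with $i\in O_j$ contribute, each sample adding KL $\lesssim\max\{1,\lambda_-\}\delta_i^2D_j(z)/R^2$ when $\Sigma=\lambda_-I$. This is also where the factor $\max\{1,\lambda_-\}$ comes from; your choice $\lambda=\max\{1,\lambda_-\}$ may not even be admissible if $\lambda_+<1$.
\end{itemize}
Symmetric flips $\pm\delta_i$ on $F^{(1)}$ with fixed $\sigma$ would work equally well. What you cannot avoid is some device that keeps every $\|\beta_{M_j}\|_2$ fixed across the cube.
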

Assuming that $C_{X}\geq \sqrt{8\lambda_{+}/3}$ and  $\kappa_{\epsilon}\geq 105^{\frac{1}{4}}$ ensures that $\mathcal{P}_{OSS}^{Gauss}$ is non-empty. \eqref{eq:LD struc LB SS 1} and \eqref{eq:LD struc LB SS 2} are mild minimum sample size conditions. When $\chi,C,C_{X},\lambda_{-}$, $\lambda_{+}, L$ and $\kappa_{\epsilon}$ are fixed, the lower bound \eqref{eq:LD struc LB conclusion} matches the upper bound in Theorem  \ref{thm:Master OSS with estimating weights} and thus establishes that the OSS estimator defined in $\eqref{eq:OSS estimator definition initial}$ attains the minimax rate in the parameters $R, B, \{|L_{l}|\}_{l=1}^L, \{n_{k}\}_{k=1}^K$ and $N$, provided each modality is not too small. In the $B \lesssim R$ regime, the supervised estimator obtains the minimax rate in $R, B, \{|L_{l}|\}_{l=1}^L$ and $ \{n_{k}\}_{k=1}^K$. 
 
Lower bounds in the unstructured high-dimensional case are given in~\citet{wang2019rate} and~\citet{loh2012corrupted} when the covariance matrix is fixed. These results also give lower bounds in the low-dimensional case. The novelty in Theorem \ref{thm:LD structured lower bound} is its characterisation of the effect of the missingness pattern and the dependence on $R$ and $B$. We recover the lower bounds from the literature for the homogeneous setting as a special case of our result. Namely, when $N=\infty$, $B \lesssim R$  and $\alpha_{i} \asymp \rho n_{\mathcal{L}}$ for some $\rho \in (0,1)$, then the risk scales like $\frac{R^2p}{\rho n_{\mathcal{L}}}$. 

To illustrate the value of using unlabelled data in a simple setting, we now discuss the simple monotonic pattern of Example \ref{examp:simple monotonic pattern} in the OSS case. We depict this pattern in Figure \ref{tikz: simple monotonic} with the covariates split into two modalities. 
\begin{figure}[H]

\begin{center}
\begin{tikzpicture}

\definecolor{myred}{RGB}{255,80,80}
\definecolor{myyellow}{RGB}{255,255,150}
\definecolor{mygreen}{RGB}{120,250,120}

\newcommand{\colOneWidth}{0.5cm}
\newcommand{\colTwoWidth}{6cm}
\newcommand{\colThreeWidth}{1.5cm}
\newcommand{\headerHeight}{0.5cm}
\newcommand{\rowOneHeight}{2cm}
\newcommand{\rowTwoHeight}{3cm}
\newcommand{\rowThreeHeight}{2cm}

\matrix (m) [matrix of nodes,
             nodes in empty cells,
             column 1/.style={nodes={minimum width=\colOneWidth}},
             column 2/.style={nodes={minimum width=\colTwoWidth}},
             column 3/.style={nodes={minimum width=\colThreeWidth}},
             row 1/.style={nodes={minimum height=\headerHeight}},
             row 2/.style={nodes={minimum height=\rowOneHeight}},
             row 3/.style={nodes={minimum height=\rowTwoHeight}},
             column sep=2pt, row sep=2pt] {
    |[fill=myred, draw]| & |[fill=myred, draw]| & |[fill=myred, draw]| \\
    |[fill=myyellow, draw]| & |[fill=myyellow, draw]| & \\
    & |[fill=mygreen, draw]| & |[fill=mygreen, draw]| \\
};

\node[left=4pt of m-1-1] {Group 1 ($n_{1}$ copies)};
\node[left=4pt of m-2-1] {Group 2 ($n_{2}$ copies)};
\node[left=4pt of m-3-1] {Unlabelled data ($N$ copies)};

\node[above=4pt of m-1-1, align=center] {Y};
\node[above=4pt of m-1-2, align=center] {Modality 1 \\ $p-p_{0}$ variables};
\node[above=4pt of m-1-3, align=center] {Modality 2 \\ $p_{0}$ variables};

\end{tikzpicture}
\caption{Simple monotonic pattern}
\label{tikz: simple monotonic}
\end{center}

\end{figure}
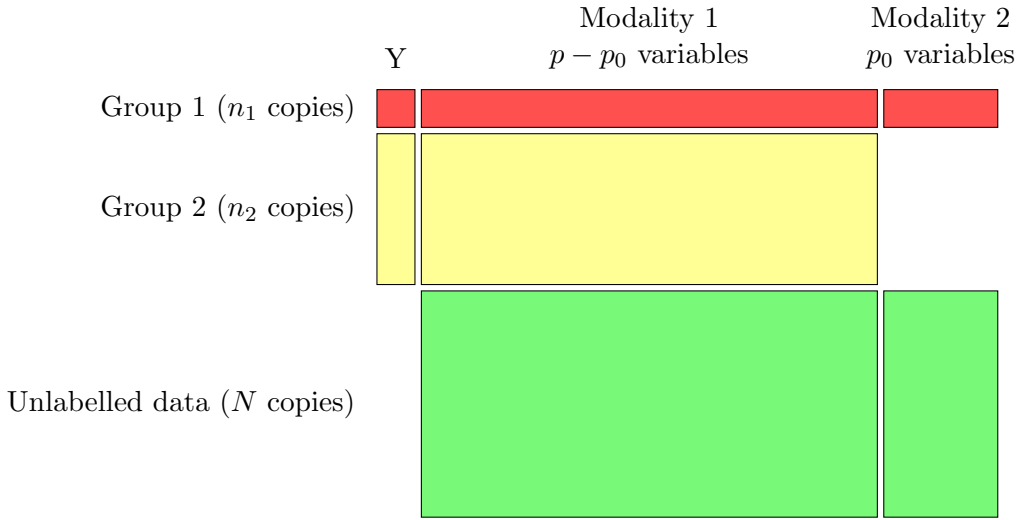

Under the mild safety requirement $N \geq \max\{\frac{B^2}{R^2},1\}n_{1}$, the method \eqref{eq:OSS estimator definition initial} achieves the minimax rate. In particular, in this regime the rate is
\begin{equation}\label{OSS:simple monotonic pattern 2}
    R^2\left(\frac{p_{0}}{n_{1}}+\frac{p-p_{0}}{n_{1}+n_{2}D^*}\right)+\frac{B^2p}{N},
\end{equation}
where $D^* = \frac{R^2}{R^2+B^2}$. This includes regimes in which the final term of \eqref{OSS:simple monotonic pattern 2} dominates the rate. Notably, for a large unlabelled dataset and a large number of observations that are missing the final $p_{0}$ covariates (large $N$ and $n_{2}$), the rate behaves like $\frac{R^2p_{0}}{n_{1}}$, i.e.~we obtain a reduction in effective dimensionality from $p$ to $p_{0}$ relative to a complete case analysis. \eqref{eq:LD struc LB conclusion} clarifies that when $N=0$, the rate is always at least $\frac{\min\{B^2,R^2\}p}{n_{1}}$. This demonstrates that any procedure that fails to use the unlabelled dataset cannot achieve such a reduction in dimensionality, at least when $B \gtrsim R$.

Another interesting implication of \eqref{eq:LD struc LB conclusion} is that in order to do consistent estimation in the supervised $(N=0)$ case, we must observe each pair of variables together infinitely often. This stands in contrast to the ideal semi-supervised case $(N=\infty)$, where we need only observe each variable infinitely often ($\alpha_\mathrm{min} \gg  R^2p$).

\subsection{Unstructured missingness}\label{sec:LD Unstructured missingness}
In contrast to the previous section, we now consider a setting in which we no longer treat the number of missingness patterns as constant. Instead, we work under the balancing assumption of Example~\ref{examp:unstructured pattern}. Recall that we assume that there exists $\rho \in (0,1)$ and $C_{\rho} \geq 1$ such that for all distinct $(g,h) \in [L]^2$, $\rho n_{\mathcal{L}} \leq h_{g} \leq C_{\rho}\rho n_{\mathcal{L}}$ and $\rho^2n_{\mathcal{L}}\leq n_{g,h} \leq C_{\rho}\rho^2n_{\mathcal{L}}$. Since we potentially have a large number of patterns, we set the weights to be equal to $1$ and instead assume that $\|\beta^*\|_{2}\lesssim \sigma$. In the previous section, our estimator took the form
\begin{equation*}
    \hat{\beta}=\hat{B}^{-1}\sum_{i=1}^{n_{\mathcal{L}}}\hat{P}_{\Xi(i)}^T(X_{i})_{O_{\Xi(i)}}Y_{i}, \text{  where  } \hat{B}=\sum_{i=1}^{n_{\mathcal{L}}}\hat{P}_{\Xi(i)}^T(X_{i})_{O_{\Xi(i)}}(X_{i})_{O_{\Xi(i)}}^T\hat{P}_{\Xi(i)},
\end{equation*}
and we have set the weights to be equal to 1. In this setting, we assume knowledge of $\lambda_{+}$ and $\lambda_{-}$ and threshold $\hat{B}$ to create $\hat{B}^{C}$ as follows
\begin{align*}
    \hat{B}^{C}=\begin{cases}
        \rho n_{\mathcal{L}}I_{p} &\text{ if either }\lambda_{\min}(\frac{\hat{B}}{\rho n_{\mathcal{L}}})< \frac{\lambda_{-}^3}{32\lambda_{+}^2} \text{ or } \lambda_{\max}(\frac{\hat{B}}{\rho n_{\mathcal{L}}})>\frac{32\lambda_{+}^3C_{\rho}}{\lambda_{-}^2},\\
        \hat{B} &\text{ otherwise.}
    \end{cases}
\end{align*}
Our estimate is then given by 
\begin{equation}\label{eq:estimator definition thresholded balanced}
    \hat{\beta}=(\hat{B}^C)^{-1}\sum_{i=1}^{n_{\mathcal{L}}}\hat{P}_{\Xi(i)}^T(X_{i})_{O_{\Xi(i)}}Y_{i}.
\end{equation}
Since $\hat{B}^{C}=\hat{B}$ with high probability, this is a very small modification of the previous estimator. The assumption of homogeneity in Example \ref{examp:unstructured pattern} allows us to change the estimator in this way, thereby removing the necessity of Assumption \ref{assump:small-ball}. The following result covers both OSS and supervised cases. 
\begin{theorem}\label{thm:OSS balanced}
Suppose that there exists $\rho \in (0,1)$ and $C_{\rho} \geq 1$ such that for all distinct $(g,h) \in [L]^2$, $\rho n_{\mathcal{L}} \leq h_{g} \leq C_{\rho}\rho n_{\mathcal{L}}$ and $\rho^2n_{\mathcal{L}}\leq n_{g,h} \leq C_{\rho}\rho^2n_{\mathcal{L}}$ and that $\|\beta^*\|_{2}^2\leq C_{*}\sigma^2$ for some $C_{*} >0$. Assume that the distribution of $X$ satisfies Assumptions \ref{assump:subgauss} and \ref{assump:eigen}. Additionally assume that
\begin{align}\label{eq:complicated eval assumption 1}
    &\mathbb{E}\left[\max\left\{\frac{(\rho^2n_{\mathcal{L}}+N)^2}{p^2}\|\hat{\Sigma}-\Sigma\|_\mathrm{op}^4,1\right\}\left(\frac{\max\{\lambda_{\max}(\hat{\Sigma}),1\}}{\min\{\lambda_{\min}(\hat{\Sigma}),1\}}\right)^8\right] \leq H,\\
    &\mathbb{P}\left(\|\hat{\Sigma}-\Sigma\|_\mathrm{op}\geq \min\left\{1,\frac{\lambda_{-}}{2}\right\}\right) \leq \frac{F_{1}}{(\rho^2n_{\mathcal{L}}+N)^{8}},\label{eq:complicated eval assumption 2}
\end{align}
where $\hat{\Sigma}$ is a symmetric, positive-definite estimate of the covariance matrix that is independent of the labelled data used in the estimator, $H=H(C_{X},C_{\rho},\lambda_{-},\lambda_{+})$ and $F_{1}=F_{1}(C_{X},C_{\rho},\lambda_{+},\lambda_{-})$. Then there exists a constant $G=G(C_{X},C_{\rho},C_{*},\lambda_{-},\lambda_{+})$ such that provided $\min\{\rho^2n_{\mathcal{L}}+N,\rho n_{\mathcal{L}}\}\geq G\log(\frac{p}{\rho})\max\{p,\frac{1}{\rho}\}$, the estimator \eqref{eq:estimator definition thresholded balanced} satisfies
\begin{align}\label{eq:OSS balanced conclusion}
    \mathbb{E}\left[\|\hat{\beta}-\beta^*\|_{2}^2\right] &\leq F\left(\frac{\sigma^2p}{\rho n_{\mathcal{L}}}+\frac{\|\beta^*\|_{2}^2p}{\rho^2n_{\mathcal{L}}+N}\right),
\end{align}
where $F=F(C_{X},C_{\rho},C_{*},\lambda_{-},\lambda_{+})$.
\end{theorem}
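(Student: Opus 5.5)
We will work on the event $\mathcal{E}$ where the thresholding is inactive, i.e.\ $\hat B^C=\hat B$, and handle $\mathcal{E}^c$ separately. On $\mathcal{E}^c$ we have $\hat B^C=\rho n_{\mathcal L} I_p$, so the estimator equals $(\rho n_{\mathcal L})^{-1}\sum_i \hat P_{\Xi(i)}^T (X_i)_{O_{\Xi(i)}}Y_i$. Its second moment is polynomially bounded in $n_{\mathcal L}$, $p$, and the eigenvalue ratio of $\hat\Sigma$. This uses Assumption \ref{assump:subgauss}, the fact that $\|\hat P_j\|_{\mathrm{op}}\le \lambda_{\max}(\hat\Sigma)/\lambda_{\min}(\hat\Sigma)$, and \eqref{eq:complicated eval assumption 1}. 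By Cauchy--Schwarz, the contribution of $\mathcal{E}^c$ is then controlled by $\mathbb{P}(\mathcal{E}^c)^{1/2}$ times this polynomial. So it suffices to show $\mathbb{P}(\mathcal{E}^c)\lesssim (\rho^2n_{\mathcal L}+N)^{-8}$ plus an exponentially small term. We get this by intersecting with the event $\{\|\hat\Sigma-\Sigma\|_{\mathrm{op}}<\min\{1,\lambda_-/2\}\}$, which fails with probability at most $F_1(\rho^2n_{\mathcal L}+N)^{-8}$ by \eqref{eq:complicated eval assumption 2}. On that event $\hat P_j$ is uniformly close to $P_j$, and a matrix Bernstein bound shows $\hat B/(\rho n_{\mathcal L})$ has eigenvalues in the stated window with probability $1-p\,e^{-c\rho n_{\mathcal L}}$. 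The lower eigenvalue bound uses $\mathbb{E}[P_j^TX_{O_j}X_{O_j}^TP_j]=\Sigma_{O_j}^T\Sigma_{O_jO_j}^{-1}\Sigma_{O_j}\succeq$ a multiple of the projection onto the $O_j$ coordinates, so that summing over patterns gives $\succsim \rho n_{\mathcal L}\lambda_-^3/\lambda_+^2$ since each $h_g\ge\rho n_{\mathcal L}$. The upper bound uses $h_g\le C_\rho\rho n_{\mathcal L}$. The assumed $\rho n_{\mathcal L}\ge G\log(p/\rho)\max\{p,1/\rho\}$ makes these failure probabilities negligible.

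On $\mathcal{E}$ we write
\[
\hat\beta-\beta^*=\hat B^{-1}\sum_i \hat P_{\Xi(i)}^T(X_i)_{O_{\Xi(i)}}\bigl(Y_i-(X_i)_{O_{\Xi(i)}}^T\hat P_{\Xi(i)}\beta^*\bigr),
\]
so that $\|\hat\beta-\beta^*\|_2\lesssim (\rho n_{\mathcal L})^{-1}\|V\|_2$, where $V$ is the sum. We decompose the residual as $\eta_i+(X_i)_{O}^T(P-\hat P)\beta^*$, with $\eta_i=\epsilon_i+X_i^T\beta^*-(X_i)_O^TP\beta^*$, and split $V=V_1+V_2$. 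Conditioning on $\hat\Sigma$, which is independent of the labelled data, we treat the two terms separately.

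For $V_1=\sum_i\hat P^T X_{O}\eta_i$, note that $\mathbb{E}[X_O\eta]=0$, so $\mathbb{E}\|V_1\|_2^2=\sum_i\mathbb{E}\|\hat P^TX_O\eta\|^2$. Using $\mathbb{E}\eta^2\lesssim\sigma^2+\|\beta^*\|^2\lesssim\sigma^2$ (by $C_*$) and sub-Gaussianity, we get $\mathbb{E}\|V_1\|_2^2\lesssim \sigma^2\sum_i\mathbb{E}\|\hat P_{\Xi(i)}^T(X_i)_{O}\|_2^2$. This is at most a multiple of $\sigma^2$ times $\sum_g |L_g|h_g\lesssim \sigma^2 p\,\rho n_{\mathcal L}$, weighted by eigenvalue ratios of $\hat\Sigma$. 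We must use $\hat P^T X_O=\hat\Sigma_O^T\hat\Sigma_{OO}^{-1}X_O$ and bound the trace by $|O|$ times the ratio, rather than a crude $p$ per sample; but $\sum_i|O_{\Xi(i)}|=\sum_g|L_g|h_g\le C_\rho\rho n_{\mathcal L}p$. Dividing by $(\rho n_{\mathcal L})^2$ gives the term $\sigma^2p/(\rho n_{\mathcal L})$.

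For $V_2=\sum_i\hat P^TX_OX_O^T(P-\hat P)\beta^*$, write it as $\mathbb{E}$-part plus fluctuation. The fluctuation is bounded as for $V_1$ and gives $\|\beta^*\|^2\|\hat\Sigma-\Sigma\|^2 p/(\rho n_{\mathcal L})$, which is lower order. The mean part $\sum_j n_j\hat P_j^T\Sigma_{O_jO_j}(P_j-\hat P_j)\beta^*$ is the \textbf{main obstacle}. A crude bound gives $\rho n_{\mathcal L}\|\hat\Sigma-\Sigma\|\|\beta^*\|$, which yields only $\|\beta^*\|^2 p/N$ and loses the $\rho^2 n_{\mathcal L}$ gain. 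We will instead expand $\hat P_j-P_j$ to first order in $\Delta=\hat\Sigma-\Sigma$, as $\Sigma_{OO}^{-1}(\Delta_O-\Delta_{OO}P_j)$ plus a remainder of order $\|\Delta\|^2$. The remainder contributes $\|\beta^*\|^2\mathbb{E}\|\Delta\|^4$, which \eqref{eq:complicated eval assumption 1} bounds by $p^2/(\rho^2n_{\mathcal L}+N)^2$, and this suffices given the sample-size condition. For the linear part we sum over patterns blockwise. The key point is that in the supervised construction the relevant entries of $\Delta$ for the block $(g,h)$ are estimated from $n_{g,h}\ge\rho^2 n_{\mathcal L}$ pairs. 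Blockwise, the operator norm of the sum is then controlled by $\max_g\|\Delta_{L_g\cdot}\|$ with variance $p/(\rho^2n_{\mathcal L}+N)$, and the needed scaling enters through \eqref{eq:complicated eval assumption 1}. Squaring and dividing by $(\rho n_{\mathcal L})^2$ gives $\|\beta^*\|^2p/(\rho^2n_{\mathcal L}+N)$. If the blockwise argument fails, the fallback is to control the mean part via $\mathbb{E}\|\Delta\|^2\lesssim p/(\rho^2n_{\mathcal L}+N)$, which is implied by \eqref{eq:complicated eval assumption 1}, combined with the bounded eigenvalue ratios. Hölder's inequality combines all pieces.
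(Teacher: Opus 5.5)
Your handling of the thresholding event, of $V_1$, and of the fluctuation part of $V_2$ uses the same ingredients as the paper. The gap is in the mean part of $V_2$, and it starts from a misdiagnosis.

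Hypothesis \eqref{eq:complicated eval assumption 1} already gives $\mathbb{E}\|\hat\Sigma-\Sigma\|_{\mathrm{op}}^2\lesssim p/(\rho^2n_{\mathcal{L}}+N)$ for whatever independent $\hat\Sigma$ is used. So a bound $\rho n_{\mathcal{L}}\|\Delta\|_{\mathrm{op}}\|\beta^*\|_2$ (times eigenvalue ratios of $\hat\Sigma$) gives the target term $\|\beta^*\|_2^2p/(\rho^2n_{\mathcal{L}}+N)$ directly; there is no ``$p/N$'' loss to repair. Your blockwise argument is also not available. The theorem is about an arbitrary $\hat\Sigma$ satisfying the two displayed conditions, so you cannot appeal to how a supervised estimator builds its $(L_g,L_h)$ blocks from $n_{g,h}$ pairs. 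The first-order expansion in $\Delta$ does not by itself address what has to be proved either.

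What has to be proved is the prefactor $\rho n_{\mathcal{L}}$. You call it ``crude'' but never justify it, and both your fallback and your remainder estimate rely on it.
\begin{itemize}
\item The triangle inequality over the $n_{\mathcal{L}}$ summands gives only $n_{\mathcal{L}}\|\Delta\|_{\mathrm{op}}\|\beta^*\|_2$. After dividing by $\rho n_{\mathcal{L}}$ this costs a factor $\rho^{-2}$; for $N=0$ you get $\|\beta^*\|_2^2p/(\rho^4 n_{\mathcal{L}})$, which neither term of \eqref{eq:OSS balanced conclusion} absorbs.
\item Coordinatewise Cauchy--Schwarz only improves $n_{\mathcal{L}}$ to $\sqrt{\rho}\,n_{\mathcal{L}}$. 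This is because the vectors $\Sigma_{O_jO_j}(P_j-\hat P_j)\beta^*$ are not restrictions of one common vector.
\end{itemize}

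The missing idea is an exact rewriting in which the vector being acted on does not depend on the pattern. Let $S_j=\Sigma^{-1}\Sigma_{O_j}^T$ be the $p\times|O_j|$ coordinate-selection matrix, so that $\hat P_j=\hat\Sigma_{O_jO_j}^{-1}S_j^T\hat\Sigma$ and $P_j=\Sigma_{O_jO_j}^{-1}S_j^T\Sigma$. Then
\[
\hat P_j^T\Sigma_{O_jO_j}(P_j-\hat P_j)\beta^*=\hat\Sigma S_j\hat\Sigma_{O_jO_j}^{-1}S_j^T(\Sigma-\hat\Sigma)\beta^*+\hat\Sigma S_j\hat\Sigma_{O_jO_j}^{-1}(\hat\Sigma_{O_jO_j}-\Sigma_{O_jO_j})\hat\Sigma_{O_jO_j}^{-1}S_j^T\hat\Sigma\beta^*.
\]
Summing with weights $n_j$, each term has the form $\hat\Sigma\bigl(\sum_jn_jS_jM_jS_j^T\bigr)v$, with symmetric $M_j$ and a fixed vector $v$. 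Since $\sum_jn_jS_jS_j^T$ is diagonal with entries $h_{\xi(r)}\le C_\rho\rho n_{\mathcal{L}}$, you get
\[
\Bigl\|\sum_jn_jS_jM_jS_j^T\Bigr\|_{\mathrm{op}}\le C_\rho\rho n_{\mathcal{L}}\max_j\|M_j\|_{\mathrm{op}}.
\]
This gives the bound $\lesssim\rho n_{\mathcal{L}}(\lambda_{\max}(\hat\Sigma)/\lambda_{\min}(\hat\Sigma))^2\|\Delta\|_{\mathrm{op}}\|\beta^*\|_2$, with no expansion or remainder. Your fallback (\eqref{eq:complicated eval assumption 1} plus Cauchy--Schwarz) then finishes the argument.

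There are three smaller points, none of which affects the conclusion.
\begin{itemize}
\item On $\mathcal{E}^c$, your Cauchy--Schwarz step needs a fourth moment of $\|\hat\beta-\beta^*\|_2$, hence of $\epsilon$, which the theorem does not assume. The paper avoids this: $\hat B^C\succeq c\,\rho n_{\mathcal{L}}I_p$ always, with $c=c(\lambda_-,\lambda_+)>0$. It therefore decomposes with $\hat B^C$ throughout and uses the bad event only for the noise-free term $(\hat B^C)^{-1}(\hat B-\hat B^C)\beta^*$. Alternatively, condition on the covariates, since $\mathcal{E}$ does not involve the noise.
\item Matrix Bernstein gives failure probability $p\,e^{-c\rho n_{\mathcal{L}}/p}$, not $p\,e^{-c\rho n_{\mathcal{L}}}$. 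This is still negligible under the sample-size condition.
\item $\Sigma_{O_j}^T\Sigma_{O_jO_j}^{-1}\Sigma_{O_j}$ dominates $\lambda_{+}^{-1}\Sigma S_jS_j^T\Sigma$, not a multiple of $S_jS_j^T$. So the lower eigenvalue bound holds only after summing over patterns.
\end{itemize}
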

The conditions in the theorem on the estimate of the covariance $\hat{\Sigma}$ allow us to treat the OSS case and the supervised case in a unified manner. If $N \geq \rho^2n_{\mathcal{L}}$, a mild condition on the size of the unlabelled data, then we can construct an estimate of the covariance, such as that of Proposition~\ref{Prop:OSS covariance blocks} in the Appendix, from the unlabelled data alone. This procedure achieves the rate in \eqref{eq:OSS balanced conclusion}. In the supervised $(N=0)$ case, we apply two-fold cross-fitting exactly as in Section \ref{sec:LD structured missingness}, and estimate the covariance from the labelled data as justified by Proposition \ref{Prop: unstructured cov estimation theorem}. Again, this procedure achieves the rate \eqref{eq:OSS balanced conclusion} with $N=0$. Theorem \ref{thm:LD unstruc lower} will demonstrate that these rates are optimal. We finally remark that, in the related setting where each variable is observed independently with probability $\rho$, covariance estimates in the supervised case satisfying \eqref{eq:complicated eval assumption 1} and \eqref{eq:complicated eval assumption 2} are obtained in \cite{EJSabdalla2024covariance}. 

\subsubsection{Lower bound}\label{sec:Unstructured lower bound}

We now present a lower bound applicable in the unstructured case. Recall the family of distributions $\mathcal{P}_{OSS}^{Gauss}$ and estimators $\hat{\Theta}_{OSS}$ defined in Section \ref{sec:OSS structured lower bound}. 
\begin{theorem}\label{thm:LD unstruc lower}
    Suppose we have data drawn from \eqref{eq:datamech1} with distribution lying in $\mathcal{P}_{OSS}^{Gauss}$ satisfying $C_{X}\geq \sqrt{8\lambda_{+}/3}$, $\kappa_{\epsilon}\geq 105^{\frac{1}{4}}$, $\lambda_{-}<\lambda_{+}$ and $p \geq 2$.
    Assume that there exists $\rho \in (0,1)$ and $C_{\rho}>0$ such that for all distinct $(g,h) \in [L]^2$, $\rho n_{\mathcal{L}} \leq h_{g} \leq C_{\rho}\rho n_{\mathcal{L}}$ and $\rho^2n_{\mathcal{L}}\leq n_{g,h} \leq C_{\rho}\rho^2n_{\mathcal{L}}$. Further suppose that 
    \begin{equation}\label{eq:LD unstruc lower sample size}
        \rho n_{\mathcal{L}} \geq p \max\left\{\frac{1}{2},\frac{R^2}{4B^2\lambda_{-}}\right\} \quad\quad \text{and} \quad\quad \max\left\{\frac{N+\rho^2n_{\mathcal{L}}}{B^2},\frac{\rho^2n_{\mathcal{L}}}{R^2}\right\} \geq p \frac{\min\{1,\lambda_{+}^2\}}{2B^2(\lambda_{+}-\lambda_{-})^2}.
    \end{equation}
    Then it holds that
    \begin{align*}
    \inf_{\hat{\beta}\in \hat{\Theta}_{OSS}}\sup_{P \in \mathcal{P}_{OSS}^{Gauss}}\mathbb{E}\left[\|\hat{\beta}-\beta^*(P)\|^2_{2}\right]
    &\gtrsim \frac{1}{C_{\rho}\lambda_{-}}\frac{R^2p}{\rho n_{\mathcal{L}}}+\frac{\min\{\lambda_{+}^{-2},1\}}{C_{\rho}}\min\left\{\frac{B^2p}{N+\rho^2n_{\mathcal{L}}},\frac{R^2p}{\rho^2n_{\mathcal{L}}}\right\}.
    \end{align*}
\end{theorem}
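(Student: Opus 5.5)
Since a maximum of two lower bounds is within a factor two of their sum, I will prove the two terms separately. Each comes from Fano's method (or Assouad's lemma) over a suitable finite subfamily of $\mathcal{P}_{OSS}^{Gauss}$. Every hypothesis is Gaussian: $X\sim N(0,\Sigma)$, $\epsilon\sim N(0,\sigma^2)$ independent. This makes the conditions $C_X\geq\sqrt{8\lambda_+/3}$ and $\kappa_\epsilon\geq 105^{1/4}$ automatically satisfied. The small-ball condition holds for Gaussians with universal $(C,\chi)$, so I only need to check Assumption~\ref{assump:eigen}, $\|\beta\|_2\leq B$ and $\sigma\leq R$. The cleanest route is to reuse the construction behind Theorem~\ref{thm:LD structured lower bound}, specialised to the balanced counts. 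I will write it out directly so that the constants $C_\rho$ appear explicitly.

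\emph{First term (noise term).} Fix $\Sigma=\lambda_- I_p$, $\sigma=R$, and take a Varshamov--Gilbert packing $\{\beta^{(v)}=\delta v: v\in\{-1,1\}^p\}$ with pairwise Hamming separation $\gtrsim p$. Since $\Sigma$ is diagonal, the missing covariates contribute independent noise of variance $\lambda_-\|\beta_{M}\|_2^2$ to the observed response. I therefore compute the KL divergence between the laws of $(X_O,Y)$ directly. The conditional mean of $Y$ given $X_O$ differs by $X_O^T(\beta_O-\beta'_O)$. The conditional variances $R^2+\lambda_-\|\beta_M\|^2$ are both of order $R^2$ once $\delta^2 p\lambda_-\lesssim R^2$, and they differ by a bounded factor. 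So each sample contributes KL of order $\lambda_-\|\beta_O-\beta'_O\|^2/R^2$, plus a variance-mismatch term that I will make negligible, or remove by a slightly different choice of $\sigma$ per hypothesis. Summing over samples with $h_g\leq C_\rho\rho n_{\mathcal L}$ gives total KL $\lesssim C_\rho\rho n_{\mathcal L}\lambda_-\delta^2 p/R^2$. Fano then lets me take $\delta^2\asymp R^2/(C_\rho\lambda_-\rho n_{\mathcal L})$. This yields the loss $p\delta^2\asymp R^2p/(C_\rho\lambda_-\rho n_{\mathcal L})$. The constraint $\|\beta\|^2=p\delta^2\leq B^2$ is exactly the first sample-size condition in \eqref{eq:LD unstruc lower sample size}, namely $\rho n_{\mathcal L}\geq pR^2/(4B^2\lambda_-)$.

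\emph{Second term (covariance term).} Here $\beta^*$ is fixed with $\|\beta^*\|_2\asymp \min\{B, R\}$-type magnitude and the covariance is perturbed. Take $\beta^*=be_1$, and let $\Sigma^{(v)}=\bar\lambda I+\gamma\sum_{j\geq2}v_j(e_1e_j^T+e_je_1^T)$ with $v\in\{-1,1\}^{p-1}$ and $\bar\lambda$ midway between $\lambda_-$ and $\lambda_+$. The parameter of interest remains $b e_1$, so this perturbation by itself does not move $\beta^*$. I will instead reparametrise so that the quantity estimated depends on $\Sigma$. Concretely, I fix the joint law of $(X_1,Y)$ and of the observed marginals, and let $\beta^{(v)}=\Sigma^{(v)-1}c$ for a fixed cross-moment vector $c=\mathbb{E}[XY]$. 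Then $\beta^{(v)}$ differs across $v$ by order $\gamma b$ in each of $p-1$ coordinates. The data only reveal $v_j$ through samples observing both $X_1$ and $X_j$ (count $n_{g,h}\leq C_\rho\rho^2 n_{\mathcal L}$), through the unlabelled data (count $N$), and through the response.

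I expect the main difficulty to be arranging this so that the KL per labelled pair is $\lesssim \gamma^2\min\{1,b^2/R^2\}^{-1}$-appropriately scaled. That is, the response channel should leak information only at rate $\gamma^2b^2/R^2$, while the $X_1X_j$ channel leaks at rate $\gamma^2$. This should yield total KL $\lesssim \gamma^2 p\,(N+C_\rho\rho^2n_{\mathcal L})$ with $b\asymp B$, or $\gamma^2 p\, C_\rho\rho^2 n_{\mathcal L}b^2/R^2$ when $b\asymp R$. Optimising then gives a loss of $\min\{B^2p/(N+\rho^2n_{\mathcal L}),R^2p/(\rho^2n_{\mathcal L})\}/C_\rho$. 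The factor $\min\{\lambda_+^{-2},1\}$ comes from inverting $\Sigma^{(v)}$, whose eigenvalues lie in $[\lambda_-,\lambda_+]$ provided $\gamma\sqrt{p}\lesssim\lambda_+-\lambda_-$. That requirement is precisely the second condition of \eqref{eq:LD unstruc lower sample size}, and $p\geq2$ ensures the packing is nontrivial. I will carry out these KL bounds using the Gaussian KL formula, $\frac12(\mathrm{tr}(\Sigma_1^{-1}\Sigma_0)-d+\log\det)$, bounded by the squared Frobenius norm of the difference for small perturbations. I will then apply Assouad coordinatewise and combine the two terms.
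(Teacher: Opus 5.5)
Your two-part plan is the paper's proof. The first part is Proposition~\ref{prop:LD ISS lower bound}: Assouad with $\Sigma=\lambda_-I_p$ and a hypercube on $\beta$. Your $\pm\delta$ version with $\sigma=R$ is fine, and the variance-mismatch term in fact vanishes identically, since $\|\beta_{M}\|_2^2=\delta^2|M|$ for every sign vector. The second part is Proposition~\ref{prop:LD OSS lower bound}, whose construction is yours up to relabelling $1\leftrightarrow p$: $\mathbb{E}[XY]$ and $\Var(Y)$ are held fixed, one row of $\Sigma$ carries the signs, and $\beta=\Sigma^{-1}\mathbb{E}[XY]$, so only pair-observations and unlabelled samples are informative. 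However, one step of your second part fails as written: the choice of the size $b$ of $\beta^*$.

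\emph{The leak rates.} A labelled sample observing both $X_1$ and $X_j$ leaks $\gamma^2$ through the covariance entry plus $\gamma^2b^2/R^2$ through the response. That is of order $\gamma^2\max\{1,b^2/R^2\}$, not $\gamma^2\min\{1,b^2/R^2\}^{-1}$. Each unlabelled sample leaks $\gamma^2$ whatever $b$ is. So, up to $\lambda_\pm$-factors and with $n=n_{\xi(1),\xi(j)}$, each coordinate has KL of order $\gamma^2(N+n+nb^2/R^2)$ against a loss of order $\gamma^2b^2$.

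\emph{The fix.} The ratio $b^2/(N+n+nb^2/R^2)$ is increasing in $b$, so you must take $b\asymp B$ in every regime; the paper uses $\beta_p=B/2$. Then $\gamma^2\asymp(N+n+nB^2/R^2)^{-1}$ gives $\gamma^2B^2\asymp\min\{B^2/(N+n),R^2/n\}$ in one stroke. The $R^2/(\rho^2n_{\mathcal L})$ branch comes from the response leak at $b\asymp B$, not from a case split. Your identification of the eigenvalue constraint with the second condition in \eqref{eq:LD unstruc lower sample size} already presupposes this scaling.

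\emph{Why your choice fails.} With $\|\beta^*\|_2\asymp\min\{B,R\}$, or $b\asymp R$ when $B>R$, you only obtain $R^2p/(N+\rho^2n_{\mathcal L})$. For instance, with $N=k\rho^2n_{\mathcal L}$ and $B=kR$ the target is $R^2p/(\rho^2n_{\mathcal L})$, and you lose a factor of order $k$. Nor may you drop the $N$ term from the KL in your ``$b\asymp R$'' bound.

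\emph{The KL computation.} The KL is governed by the whitened quantity $\|(\Omega^O)^{-1/2}(\Omega^O_1-\Omega^O_2)(\Omega^O)^{-1/2}\|_F^2$, not by $\|\Omega^O_1-\Omega^O_2\|_F^2$. To get $b^2/R^2$ only to the first power you need the entries of $(\Omega^O)^{-1}$, as in Lemma~\ref{lemma: KL2 miss}. The perturbation couples row $1$, whose precision entry is at most of order $\bar\lambda^{-1}+b^2/R^2$, with row $j$, whose precision entry is of order $\bar\lambda^{-1}$. A bound through $\|(\Omega^O)^{-1}\|_{\mathrm{op}}^2$ would square that factor and lose the rate.
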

The conditions in \eqref{eq:LD unstruc lower sample size} are mild minimum sample size requirements. Assuming that $C_{X}\geq 105^{\frac{1}{8}}$ and  $\kappa_{\epsilon}\geq 105^{\frac{1}{4}}$ ensures that $\mathcal{P}_{OSS}^{Gauss}$ is non-empty. For the rest of the discussion we treat $C_{X},\lambda_{-},\lambda_{+},C_{\rho}$ and $\kappa_{\epsilon}$ as fixed.

Consider now the regime where $\|\beta^*\|_{2}^2 \lesssim \sigma^2$. Under the unstructured assumption (Example \ref{examp:unstructured pattern}), Theorems~\ref{thm:OSS balanced} and~\ref{thm:LD unstruc lower} establish that the minimax rate in the OSS and supervised settings is
\begin{equation*}
    \frac{R^2p}{\rho n_{\mathcal{L}}}+\frac{B^2p}{\rho^2n_{\mathcal{L}}+N}.
\end{equation*}
Treating $R$ and $B$ as constants, we see that the value of a large amount of unlabelled data is an increase in the effective observation rate from $\rho$ to $\rho^{\frac{1}{2}}$. Under Gaussian assumptions and assuming the covariance matrix is known, \cite{balakrishnan2017statistical} obtains an upper bound, up to log factors, of $\frac{\max\{1,\sigma^2\}p}{n_{\mathcal{L}}}$ when $\rho$ is treated as constant. Our main novelty in this ISS case is in characterising the dependence of the minimax rates on $\rho$ and removing the Gaussian assumptions.

\section{High-dimensional results}\label{sec:High-dimensional results}

We now consider how the picture changes in a high-dimensional sparse setting. The most well-studied way of performing sparse high-dimensional linear regression is the LASSO~\citep{tibshirani1996regression}. For $n$ complete cases $(X_{1},Y_{1}), \ldots, (X_{n},Y_{n})$, this outputs an estimator $\hat{\beta}^{LASSO}$ that solves 
\begin{equation*}
    \hat{\beta}^{LASSO} \in \argmin_{\beta \in \mathbb{R}^p}\left\{\frac{1}{2n}\sum_{i=1}^n(Y_{i}-X_{i}^T\beta)^2+\lambda\|\beta\|_{1}\right\}=\argmin_{\beta \in \mathbb{R}^p}\left\{\frac{1}{2}\beta^T\hat{\Sigma}\beta-\beta^T\hat{\mathbb{E}}\left[XY\right]+\lambda\|\beta\|_{1}\right\},
\end{equation*}
for a regularisation parameter $\lambda >0$, where $\hat{\Sigma}$ is the sample covariance and $\hat{\mathbb{E}}\left[XY\right] = \frac{1}{n}\sum_{i=1}^nY_{i}X_{i}$ estimates $\mathbb{E}\left[XY\right] = \Sigma \beta^*$. A natural way to generalise this with missing data is to replace $\hat{\Sigma}$  and $\hat{\mathbb{E}}\left[XY\right]$ with elementwise estimates of $\Sigma$ and $\mathbb{E}\left[XY\right]$. However, the resulting estimate of  $\Sigma$ is likely not positive semi-definite and so this introduces non-convexity. Much of the early work on unstructured patterns considered this approach with~\citet{loh2011high} analysing the constrained non-convex problem and~\citet{datta2017cocolasso} providing a convex relaxation. However, the upper bounds in these papers did not match the lower bounds in~\citet{loh2012corrupted} in their dependence on the missingness.

In light of the success of $\ell_{1}$-regularisation and our low-dimensional estimator, a natural approach would be to add an $\ell_{1}$-penalty to \eqref{eq:OSS estimator definition initial}. In particular,  for the simple monotonic pattern (Example \ref{examp:simple monotonic pattern}), with $\lambda >0$, in the ISS setting this would take the form 

\begin{equation} \label{eq:Simple monotonic pattern with ell1 regularisation}
    \hat{\beta} \in \argmin_{\beta \in \mathbb{R}^p}\left\{\sum_{i=1}^{n_{1}} (Y_{i}-X_{i}^T\beta)^2+D\sum_{i={n_{1}+1}}^{n_1+n_2} (Y_{i}-(X_{i})_{O_{2}}^T\Sigma_{O_{2}O_{2}}^{-1}\Sigma_{O_{2}}\beta)^2+\lambda \|\beta\|_{1}\right\}.
\end{equation}
There are several problems with this approach. Firstly, tuning multiple parameters is undesirable in high dimensions. Secondly, in the OSS case, being able to invert blocks of the estimate of the covariance would require a very large number of unlabelled samples. However, even in the ISS case, the critical flaw is in choosing the regularisation parameter $\lambda$ in a principled way. Consider a regime where $n_{2}\gg n_{1}$; should we choose $\lambda \asymp \sqrt{\frac{\log(p)}{n_{1}}}$ or $\lambda \asymp \sqrt{\frac{\log(p)}{n_{2}}}$? Empirically, we have found neither choice works well with no middle ground sufficing. The theoretical reason for this is the regularisation in \eqref{eq:Simple monotonic pattern with ell1 regularisation} acts symmetrically on the coordinates of $\beta$, but we have far more information on $\Sigma_{O_{2}O_{2}}^{-1}\Sigma_{O_{2}}\beta^*$ than $\beta^*$ as a whole. Thus, while this imputation-style approach can be successful in unstructured settings~\citep{chandrasekher2020imputation}, to treat more general settings we require new techniques.

An alternative to the LASSO is the Dantzig selector~\citep{candes2007dantzig}, which for $n$ complete cases solves the following linear program 
\begin{equation*}
    \min_{\beta \in \mathbb{R}^p}\left\{\|\beta\|_{1}:\|\hat{\Sigma}\beta-\hat{\mathbb{E}}\left[XY\right]\|_{\infty} \leq \lambda\right\},
\end{equation*}
where $\hat{\Sigma}$ and $\hat{\mathbb{E}}\left[XY\right]$ are defined above and $\lambda>0$ is a regularisation parameter. An insight of~\citet{wang2019rate} is that this is still a convex program when $\hat{\Sigma}$ is replaced with a matrix with negative eigenvalues. They provide stronger estimation guarantees than those based on the LASSO for unstructured missingness. We now extend their estimator to incorporate unlabelled data and provide a refined analysis in the structured and unstructured missingness settings. 
We define  $\gamma = \mathbb{E}\left[XY\right]= \Sigma \beta^* \in \mathbb{R}^p$. We estimate this quantity in the natural way via an unbiased $\hat{\gamma}\in \mathbb{R}^p$, where for $j \in [p]$
\begin{equation}\label{eq:gamma estimate definition}
    \hat{\gamma}_{j} = \frac{1}{h_{\xi(j)}}\sum_{i \in \mathcal{I}_{\mathcal{L}}}(X_{i})_{j}Y_{i}\mathbbm{1}_{\{j \in O_{\Xi(i)}\}}.
\end{equation}
 We further estimate the covariance $\Sigma$ via a natural unbiased $\hat{\Sigma}\in \mathbb{R}^{p\times p},$ where for $(i,j) \in [p]^2$
\begin{equation}\label{eq:covariance estimate definition}
    \hat{\Sigma}_{ij} = \frac{1}{n_{\xi(i),\xi(j)}+N}\left(\sum_{k\in \mathcal{I}_{\mathcal{U}}}(X_{k})_{i}(X_{k})_{j}+\sum_{k \in \mathcal{I}_{\mathcal{L}}}(X_{k})_{i}(X_{k})_{j}\mathbbm{1}_{\{\{i,j\}\subseteq O_{\Xi(k)}\}}\right).
\end{equation}
For the above definition, we make the mild assumption that $n_{g,h}+N > 0$ for all $(g,h) \in [L]^2$. We define our estimator $\hat{\beta}$ to solve the following linear program
\begin{equation}\label{eq:ModifiedDantzig}
    \hat{\beta} \in \argmin_{\beta \in \mathbb{R}^p}\left\{\|\beta\|_{1}:\|\hat{\Sigma}\beta-\hat{\gamma}\|_{\infty} \leq \lambda\right\},
\end{equation}
where $\lambda>0$ is a regularisation parameter.

We impose sub-Gaussian assumptions and a restricted eigenvalue condition in order to analyse this program. 
\begin{assumption}\label{assump:sub-Gaussian distribution}
    $\exists R_{X}>0$ such that $\|X\|_{\psi_{2}} \leq R_{X}$.
    Also, the covariates are centred, i.e., $\mathbb{E}\left[X\right]=0$. Additionally, the noise $\epsilon$ is sub-Gaussian with parameter $\sigma$ in the sense of~\citet[][Definition 10.6.1]{samworth2024statistics}.
\end{assumption}

We use the following restricted eigenvalue assumption, based on~\citet{bickel2009simultaneous}. For a matrix $A \in \mathbb{R}^{p\times p}$ and sparsity $s \in [0,p]$, we define 
\begin{equation*}
    \phi^2(A,s) \equiv \inf_{\substack{S \subset [p]\\|S|\leq s}}\inf_{\substack{\delta \in \mathbb{R}^p: \delta \neq 0\\ \|\delta_{S^c}\|_{1} \leq \|\delta_{S}\|_{1}}}\frac{\delta^TA\delta}{\|\delta_{M(\delta)}\|_{2}^2}, 
\end{equation*}
where 
\begin{equation*}
    M(\delta) = S \cup \{i \in S^c: |\delta_{i}| \geq |\delta_{S^c}|_{(\min\{s,|S^c|\})}\}.
\end{equation*} 
Here, $|\delta_{S^c}|_{(j)}$ is the $j^{th}$ largest absolute value of $\delta_{S^c}$. For $s \in [0,p]$ and $\Phi >0$, we say that a matrix $A$ satisfies $Re(s,\Phi)$ if $\phi^2(A,s) \geq \Phi$.

\begin{assumption}\label{assump:restricted eigenvalue}
    The covariance $\Sigma$ satisfies $Re(s,\Phi_{\Sigma})$ for some constant $\Phi_{\Sigma}>0$.
\end{assumption}
If $\Sigma$ is positive-definite then it satisfies $Re(s,\lambda_{\min}(\Sigma))$. For $s \in [0,p], R>0, B>0, R_{X}>0$ and $\Phi_{\Sigma}>0$ denote by $\mathcal{P}_{HD}\equiv\mathcal{P}_{HD}(R_{X},R,B,s,\Phi_{\Sigma})$ the family of data-generating mechanisms, from Section \ref{sec:Formal Setting}, on the measurable space $(\mathcal{X},\mathcal{A}) =\left(\prod_{k=1}^K(\mathbb{R}^{|O_{k}|+1})^{n_{k}}\otimes (\mathbb{R}^{p})^N,\mathcal{B}\left(\prod_{k=1}^K(\mathbb{R}^{|O_{k}|+1})^{n_{k}}\otimes (\mathbb{R}^{p})^N\right)\right)$ that satisfy Assumptions \ref{assump:sub-Gaussian distribution}  and \ref{assump:restricted eigenvalue} as well as that $\|\beta^*\|_{0}\leq s$, $\|\beta^*\|_{2}\leq B$ and $\sigma \leq R$. Denote by $\hat{\Theta}_{HD}$ the set of functions $\hat{\beta}:\mathcal{X}\rightarrow\mathbb{R}^p$ such that $x \rightarrow \|\hat{\beta}(x)-\beta^*(P)\|_{2}$ is measurable for every $P \in \mathcal{P}_{HD}$.

\subsection{Unstructured missingness}\label{sec:Unstructured Missingness}
 In the unstructured missingness literature, it is typically assumed that each covariate is missing with probability $1-\rho$ independently of everything else. This leads to missingness patterns like that of Example~\ref{examp:unstructured pattern}. As discussed in the introduction, extensive prior work has studied this problem from a minimax perspective. In the setting of Example~\ref{examp:unstructured pattern} we give the following upper bound.
 \begin{theorem}\label{thm:high dimensional upper bound unstructured}
 Under Assumptions \ref{assump:sub-Gaussian distribution} and \ref{assump:restricted eigenvalue}, assume further that $\|\beta^*\|_{0} \leq s $ and that there exists $\rho \in (0,1)$ such that 
 for all $(g,h) \in [L]^2$, $\rho n_{\mathcal{L}} \leq h_{g}$ and $\rho^2n_{\mathcal{L}}\leq n_{g,h}$. For $A>0$, define the regularisation parameter in \eqref{eq:ModifiedDantzig} to be
 \begin{equation*}
     \lambda = A\sqrt{\max\left\{\frac{R_{X}^2(\sigma+R_{X}\|\beta^*\|_{2})^2\log(p)}{\rho n_{\mathcal{L}}},\frac{R_{X}^4\|\beta^*\|_{2}^2\log(p)}{\rho^2n_{\mathcal{L}}+N}\right\}}.
 \end{equation*} 
 Additionally, assume for some universal constant $F$ that
 \begin{align}
    \rho n_{\mathcal{L}} &\geq FA^2 \max \left\{1,\frac{(\sigma+R_{X}\|\beta^*\|_{2})^2}{R_{X}^2\|\beta^*\|_{2}^2}\right\}\log(p),\label{eq:unstructured min sample size 1}\\
     \rho^2n_{\mathcal{L}}+N &\geq F\max\left\{ 
    \frac{R_{X}^4 s^2}{\Phi_{\Sigma}^2},
    \frac{R_{X}^2 s}{\Phi_{\Sigma}},A^2,\frac{A^2R_{X}^2\|\beta^*\|_{2}^2}{(\sigma+R_{X}\|\beta^*\|_{2})^2}
    \right\}\log(p).\label{eq:unstructured min sample size 2}
 \end{align}
Then, with probability at least $1-\frac{6}{p}$, provided $A$ exceeds a sufficiently large universal constant, it holds that 
 \begin{equation*}
     \|\hat{\beta}-\beta^*\|_{2}^2 \lesssim \frac{A^2s}{\Phi_{\Sigma}^2}\left(\frac{R_{X}^2(\sigma+R_{X}\|\beta^*\|_{2})^2\log(p)}{\rho n_{\mathcal{L}}}+\frac{R_{X}^4\|\beta^*\|_{2}^2\log(p)}{\rho^2 n_{\mathcal{L}}+N}\right).
 \end{equation*}
\end{theorem}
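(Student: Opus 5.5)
The argument follows the standard Dantzig selector analysis, with $\hat\Sigma$ allowed to be indefinite. It has three parts: a deterministic reduction, two concentration bounds, and a restricted eigenvalue transfer from $\Sigma$ to $\hat\Sigma$.

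\textbf{Deterministic reduction.} Suppose the event
\[
\mathcal{E}_1=\{\|\hat\Sigma\beta^*-\hat\gamma\|_\infty\le\lambda\}
\]
holds. Then $\beta^*$ is feasible for \eqref{eq:ModifiedDantzig}, so $\|\hat\beta\|_1\le\|\beta^*\|_1$. With $S=\mathrm{supp}(\beta^*)$ and $\delta=\hat\beta-\beta^*$, this gives the cone condition $\|\delta_{S^c}\|_1\le\|\delta_S\|_1$. The triangle inequality also gives $\|\hat\Sigma\delta\|_\infty\le 2\lambda$. Consequently
\[
\delta^T\hat\Sigma\delta\le\|\delta\|_1\|\hat\Sigma\delta\|_\infty\le 4\lambda\|\delta_S\|_1\le4\lambda\sqrt{s}\|\delta_{M(\delta)}\|_2 .
\]
If $\hat\Sigma$ satisfies $Re(s,\Phi_\Sigma/2)$, then $\|\delta_{M(\delta)}\|_2\lesssim\lambda\sqrt s/\Phi_\Sigma$. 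The usual Bickel--Ritov--Tsybakov argument (the tail of $\delta_{S^c}$ beyond its $s$ largest entries is bounded by $\|\delta_S\|_1/\sqrt s$) then yields $\|\delta\|_2\le 2\|\delta_{M(\delta)}\|_2$. Squaring and inserting the choice of $\lambda$ gives the stated bound.

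\textbf{Controlling $\mathcal{E}_1$.} Write
\[
\hat\Sigma\beta^*-\hat\gamma=(\hat\Sigma-\Sigma)\beta^*-(\hat\gamma-\gamma).
\]
The second term is handled entrywise. The quantity $\hat\gamma_j-\gamma_j$ is an average over $h_{\xi(j)}\ge\rho n_{\mathcal L}$ i.i.d. centred products $X_jY$. Since $Y=X^T\beta^*+\epsilon$, each product is sub-exponential with $\psi_1$-norm $\lesssim R_X(\sigma+R_X\|\beta^*\|_2)$. Bernstein's inequality and a union bound over $p$ coordinates then give
\[
\|\hat\gamma-\gamma\|_\infty\lesssim\sqrt{R_X^2(\sigma+R_X\|\beta^*\|_2)^2\log p/(\rho n_{\mathcal L})}.
\]
This uses \eqref{eq:unstructured min sample size 1} to stay in the sub-Gaussian regime of Bernstein, so that $\log p/(\rho n_{\mathcal{L}})\lesssim1$ relative to the scaling.

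For the first term, fix $i$. The quantity $((\hat\Sigma-\Sigma)\beta^*)_i=\sum_j(\hat\Sigma_{ij}-\Sigma_{ij})\beta^*_j$ is not a single average, because different $j$ use different normalisations $n_{\xi(i),\xi(j)}+N$. I would rewrite it as a sum over samples $k$ of independent centred terms
\[
(X_k)_i\sum_j w_{kj}(X_k)_j\beta^*_j,
\]
with weights $w_{kj}=\mathbbm 1\{\cdot\}/(n_{\xi(i),\xi(j)}+N)\le 1/(\rho^2n_{\mathcal L}+N)$. Each summand has $\psi_1$-norm $\lesssim R_X^2\|\beta^*\|_2/(\rho^2n_{\mathcal L}+N)$, after subtracting its mean; the sub-Gaussian norm of $\sum_j w_{kj}(X_k)_j\beta^*_j$ is bounded by $R_X$ times the $\ell_2$ norm of the weighted vector. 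Bounding the total variance proxy needs care. The sum of squared weights over $k$ is $\sum_k\|w_{k\cdot}\circ\beta^*\|_2^2$, and bounding it by $\|\beta^*\|_2^2/(\rho^2n_{\mathcal L}+N)$ uses that each pair $(i,j)$ is counted by exactly $n_{\xi(i),\xi(j)}+N$ samples. Bernstein and a union bound then give the second term in $\lambda$, with \eqref{eq:unstructured min sample size 2} ensuring the sub-Gaussian regime. The lower bounds on $A$ absorb the constants, and the failure probability is $O(1/p)$.

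\textbf{Restricted eigenvalue transfer.} This is the step I expect to be the main obstacle. Let $\mathcal{E}_2=\{\|\hat\Sigma-\Sigma\|_\infty\le c\sqrt{R_X^4\log p/(\rho^2n_{\mathcal L}+N)}\}$. It follows from elementwise Bernstein over $p^2$ entries, each averaged over at least $\rho^2 n_{\mathcal L}+N$ samples. On the cone, $\|\delta\|_1\le2\|\delta_S\|_1\le 2\sqrt s\|\delta_{M(\delta)}\|_2$, so
\[
|\delta^T(\hat\Sigma-\Sigma)\delta|\le\|\hat\Sigma-\Sigma\|_\infty\|\delta\|_1^2\le 4s\|\hat\Sigma-\Sigma\|_\infty\|\delta_{M(\delta)}\|_2^2 .
\]
This is at most $\tfrac12\Phi_\Sigma\|\delta_{M(\delta)}\|_2^2$ precisely when $\rho^2n_{\mathcal L}+N\gtrsim R_X^4s^2\log p/\Phi_\Sigma^2$, which is the first term of \eqref{eq:unstructured min sample size 2}. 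Hence $\hat\Sigma$ satisfies $Re(s,\Phi_\Sigma/2)$ on $\mathcal E_2$. Intersecting $\mathcal E_1$ and $\mathcal E_2$ and collecting the failure probabilities gives at most $6/p$, which completes the argument.
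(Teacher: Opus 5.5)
Your proposal is correct and follows the paper's proof essentially step for step. The paper uses the same four ingredients: the deterministic Dantzig bound of Proposition~\ref{thm:HD master theorem}; Bernstein with a union bound for $\hat\gamma$ (Lemma~\ref{lemma: gamma control}); the per-sample rewriting of $((\hat\Sigma-\Sigma)\beta^*)_r$ with weights $\beta^*_l/(N+n_{\xi(r),\xi(l)})$, together with the same variance-proxy count (Lemma~\ref{lemma: hat sigma control}); and the transfer $\phi^2(\hat\Sigma,s)\ge\phi^2(\Sigma,s)-4s\|\hat\Sigma-\Sigma\|_\infty$ (Lemma~\ref{lemma:restricted eigenvalue}).

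The one minor difference is where you apply Bernstein. You apply it at each term's natural rate and then note that this rate is at most $\lambda/2$, whereas the paper applies it directly at $t=\lambda/2$. That is why the paper needs the second term of \eqref{eq:unstructured min sample size 1} and the last term of \eqref{eq:unstructured min sample size 2}: they keep $\lambda/2$ inside the sub-Gaussian regime. Your route does not need those two terms.
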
 

Conditions \eqref{eq:unstructured min sample size 1} and \eqref{eq:unstructured min sample size 2} are mild effective sample size requirements. Treating $\Phi_{\Sigma}$ and $R_{X}$ as constants and assuming $\|\beta^*\|_{2}\lesssim \sigma$, we obtain a rate
\begin{equation}\label{eq:HD unstructured rate}
    s\left(\frac{\sigma^2\log(p)}{\rho n_{\mathcal{L}}}+\frac{\|\beta^*\|_{2}^2\log(p)}{\rho^2 n_{\mathcal{L}}+N}\right).
\end{equation}
Note that~\citet{wang2019rate} obtains the same upper bound in the $N=0$ and $N=\infty$ (known covariance) settings, via essentially the same estimator. We now move to studying this setting from a lower bounds perspective.  
 Previous lower bounds on the minimax risk have focussed on the $N=0$ case and include~\citet{loh2012corrupted} and~\citet{wang2019rate}, the latter giving a lower bound of 
\begin{equation*}
    \inf_{\hat{\beta} \in \hat{\Theta}_{HD}} 
    \sup_{P \in \mathcal{P}_{HD}}\mathbb{E}\left[\|\hat{\beta}-\beta^*\|_{2}^2\right] \gtrsim \frac{R^2 s\log(p/s)}{\rho n_{\mathcal{L}}}.
\end{equation*}
Note the difference in dependence on the observation rate $\rho$ compared to \eqref{eq:HD unstructured rate}. In the $N=0$ case,~\citet{wang2019rate} also gives a lower bound of
\begin{equation*}
    \inf_{\hat{\beta} \in \hat{\Theta}_{HD}} 
    \sup_{P \in \mathcal{P}_{HD}}\mathbb{E}\left[\|\hat{\beta}-\beta^*\|_{2}^2\right] \gtrsim \frac{B^2}{\rho^2n_{\mathcal{L}}},
\end{equation*}
provided $R\gtrsim B$. This exhibits the upper bound scaling in $\rho$ and $n_{\mathcal{L}}$, but does not depend on the dimensionality. We now present a result showing that the upper bound of \eqref{eq:HD unstructured rate} is sharp in the parameters $\rho,N, n_{\mathcal{L}}, p, s$. It is also sharp in $B$ and $R$, provided $B\lesssim R$. We condition on the missingness pattern, in contrast to~\citet{loh2011high} and~\citet{wang2019rate} which work with random missingness. However, provided $\rho n_\mathcal{L}/\log(p) \rightarrow \infty$ and $(\rho^2n_\mathcal{L}+N)/\log(p) \rightarrow \infty$, the number of labelled samples with each variable present and the number of samples with each pair of variables present are stable, which would allow us to adapt our proof to this setting.
\begin{theorem}\label{thm:balancing lower bound}
    Consider the setting \eqref{eq:datamech1} with distribution lying in $\mathcal{P}_{HD}$ and $\Phi_{\Sigma} < \frac{3R_{X}^2}{16}$. Suppose for some $\rho \in (0,1)$ and $C_{\rho}>1$ there exists some $j$ such that for all $i \in [p]\setminus \{j\}$,
    \begin{align*}
         \quad \sum_{k=1}^Kn_{k}\mathbbm{1}_{\{i,j\in O_{k}\}} &\leq C_{\rho}\rho^2 n_{\mathcal{L}}.
    \end{align*}
    If $p-2 \geq 24(s-1)^2$, $s \geq 2$
    and
    \begin{equation}\label{eq: balancing minimum sample size}
        C_{\rho}\rho^2n_{\mathcal{L}}+N \geq \frac{128s\log(p-2)}{9}\max\left\{1,\frac{1}{R_{X}^4}\right\}\min\left\{\frac{R^2}{B^2},1\right\},
    \end{equation}
    then 
    \begin{equation*}
        \inf_{\hat{\beta} \in \hat{\Theta}_{HD}}\sup_{P \in \mathcal{P}_{HD}}\mathbb{E}\left[\|\hat{\beta}-\beta^*\|_{2}^2\right] \geq \frac{1}{2^{17}C_{\rho}}\min\left\{1,\frac{2^6}{3^2R_{X}^4}\right\}\frac{\min\left\{R^2,B^2\right\}s\log(\frac{p-1}{2(s-1)})}{\rho^2n_{\mathcal{L}}+N}.
    \end{equation*}
\end{theorem}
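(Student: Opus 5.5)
We plan to prove Theorem~\ref{thm:balancing lower bound} by Fano's method over a packing of sparse parameters, where the separation is carried by the \emph{covariance} of the covariates rather than by the noise. The rate $\frac{\min\{R^2,B^2\}s\log(p/s)}{\rho^2 n_{\mathcal{L}}+N}$ involves $N$ and the cross-sample sizes $\sum_k n_k\mathbbm{1}_{\{i,j\in O_k\}}$. This signals that the hard instances should differ only in the correlations between the distinguished coordinate $j$ and the other coordinates. We make $\beta^*$ depend on these correlations so that $\beta^*$ moves while the joint distribution of $(X,Y)$ moves very little.

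\textbf{Construction.} Take $X\sim N(0,\Sigma_v)$, $\epsilon\sim N(0,\sigma^2)$ with $\sigma=R$. Write coordinates as $j$, a reserved coordinate $j'$, and the remaining $p-2$ coordinates. Use Gilbert--Varshamov for sparse hypercube vectors to obtain $v^{(1)},\ldots,v^{(M)}\in\{0,1\}^{p-2}$, each with exactly $s-1$ ones. These have pairwise Hamming distance at least $(s-1)/2$, and $\log M\gtrsim s\log\frac{p-1}{2(s-1)}$; the condition $p-2\ge 24(s-1)^2$ is what makes this packing available with the stated constants.

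Set $\Sigma_v=I+\tau(e_je_{v}^T+e_ve_j^T)$, with $e_v$ the embedded $v$. Choose $\beta^*_v$ so that $\gamma=\Sigma_v\beta^*_v$ and the conditional law of $Y$ given each observed block are essentially fixed. The natural choice is $\beta^*_v=b(e_{j}-\tau e_v)$ up to normalisation, or the solution of $\Sigma_v\beta=be_j$, which is exactly $(s)$-sparse. With this choice $Y$ depends on $X_j$ and the $v$-coordinates only through the combination $X^T\Sigma_v^{-1}e_j$. Keep $\|\tau e_v\|_2\asymp \tau\sqrt{s}$ small so that $\lambda_{\min}$ stays bounded below, giving $Re(s,\Phi_\Sigma)$ with $\Phi_\Sigma<3R_X^2/16$. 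Also check $R_X$ sub-Gaussianity and $\|\beta^*\|_2\le B$ by taking $b\asymp\min\{B,R\}$; this is where the $\min\{R^2,B^2\}$ and $\min\{1,R_X^{-4}\}$ factors enter.

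\textbf{Separation and KL.} The separation is $\|\beta_v-\beta_{v'}\|_2^2\asymp b^2\tau^2 d_H(v,v')\gtrsim b^2\tau^2 s$. For the divergence, compute the Gaussian KL between the laws of each observed $(X_O,Y)$. Observations that do not contain both $j$ and the coordinates $i\in\mathrm{supp}(v)$ see only a distribution that is independent of $v$ to leading order, so they contribute nothing. The marginal of $(X_O,Y)$ with $j\notin O$ has $Y$-variance depending on $v$ only at order $\tau^2 b^2\cdot$ (a quantity that cancels by the construction). The key computation shows this cancellation, so only samples observing pairs $\{i,j\}$, plus all $N$ unlabelled samples, contribute. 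Each contributes KL $\lesssim \tau^2\|v-v'\|_0\cdot\max\{1,b^2/R^2\}$-type terms. Summing with the hypothesis $\sum_k n_k\mathbbm{1}_{\{i,j\in O_k\}}\le C_\rho\rho^2n_{\mathcal{L}}$ gives $\mathrm{KL}\lesssim (C_\rho\rho^2n_{\mathcal{L}}+N)\tau^2 s\,\max\{1,b^2/R^2\}$. Choose $\tau^2\asymp \frac{\min\{1,R^2/B^2\}\log M}{s(C_\rho\rho^2n_{\mathcal{L}}+N)}$ so that KL $\le \log M/4$; condition \eqref{eq: balancing minimum sample size} ensures $\tau\sqrt{s}$ is small enough for the eigenvalue constraints. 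Fano then yields risk $\gtrsim b^2\tau^2 s\asymp \frac{\min\{R^2,B^2\}s\log(\frac{p-1}{2(s-1)})}{C_\rho(\rho^2n_{\mathcal{L}}+N)}$.

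\textbf{Main obstacle.} The delicate step is the KL bound for labelled samples that observe $j$ but miss some support coordinates, or vice versa. Their marginal law of $(X_O,Y)$ must be shown to depend on $v$ only at second order in $\tau$ times a bounded factor, not at first order times $n_{\mathcal{L}}$. If first-order leakage appears there, I would adjust the construction by adding a compensating $\tau$-multiple on the reserved coordinate $j'$, keeping the conditional variance of $Y$ given $X_{O\setminus\{j\}}$ exactly $v$-invariant. The $-2$ in $p-2$ suggests this is the authors' device.
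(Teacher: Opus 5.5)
Your hard instances are the paper's. Replace the baseline $I$ by $\tau_0 I$ with $\tau_0=(\Phi_\Sigma+3R_X^2/8)/2$. Then your $(\Sigma_v,\beta_v)$ is a reparametrisation of the paper's $(\Sigma_x,\beta_x)$ with $j=p$ and $x=\delta w$. There, $\Sigma_x\beta_x=(\frac{B\tau}{2}-\frac{2\|x\|_2^2}{B\tau})e_p$ and $\Delta_x$ depend on $x$ only through $\|x\|_2$, which is exactly your mechanism.

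Where you genuinely differ is the Fano step. The paper compares each $P_r$ with a centre hypothesis $P_0$ given by $x_0=(0,\delta\sqrt{s-1})$, placed on the reserved coordinate $p-1$; that, not a compensation device, is what the ``$-2$'' is for. It then controls $\frac{1}{M}\sum_r\mathrm{KL}(P_r,P_0)$ through the averaging property \eqref{eq:HD packing new conclusion} of Lemma~\ref{lemma:high-dimensional packing}. The condition $p-2\ge 24(s-1)^2$ is there mainly to guarantee \eqref{eq:min dimension sparsity}.

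Your pairwise bound also works. The law of $(X_{O_k},Y)$ is exactly free of $v$ when $j\notin O_k$, and depends only on $v_{O_k}$ when $j\in O_k$. Hence $\mathrm{KL}_k(v,v')\lesssim \tau^2\max\{1,b^2/R^2\}\,|O_k\cap(\mathrm{supp}\,v\,\triangle\,\mathrm{supp}\,v')|$. The sum over samples then reorganises as $\sum_{i\in \mathrm{supp}\,v\triangle\mathrm{supp}\,v'}\sum_k n_k\mathbbm{1}_{\{i,j\in O_k\}}\le 2(s-1)C_\rho\rho^2 n_{\mathcal{L}}$. Under the uniform-in-$i$ hypothesis as stated, this makes the centre and the averaging property unnecessary; plain sparse Gilbert--Varshamov suffices. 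The paper's averaged bookkeeping, by contrast, uses the hypothesis only through $\frac{1}{p-2}\sum_{i\le p-2}\sum_k n_k\mathbbm{1}_{\{i,p\in O_k\}}$ and the single pair $\{p-1,p\}$. So it stays valid if $\sum_k n_k\mathbbm{1}_{\{i,p\in O_k\}}$ is bounded only on average over $i$.

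Consequently your ``main obstacle'' is not one, and you should drop the fallback. The danger is not a first-order term, since a Gaussian KL is always quadratic in the covariance difference. It would be a nonzero contribution from every sample that observes $j$. That is excluded exactly, because $\mathbb{E}[X_OY]=b(e_j)_O$ and $\mathrm{Var}(Y)$ are the same for all $v$ (all have $s-1$ ones). What you must state explicitly is that the per-sample KL scales with $|O\cap(\mathrm{supp}\,v\,\triangle\,\mathrm{supp}\,v')|$. Bounding it by $\tau^2\|v-v'\|_0$ for every sample touching the support loses a factor $s$. Also compute the KL exactly by block inversion, as in Lemmas~\ref{lemma: KL2 miss} and~\ref{lemma:KL3}, rather than by a quadratic expansion. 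Condition \eqref{eq: balancing minimum sample size} does not make $\tau^2 sB^2/R^2$ small when $R\ll B$, and in that regime the expansion overestimates.

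Three further fixes are needed.
\begin{enumerate}
\item Take $b\asymp B$; the paper puts $B/2$ on coordinate $p$. With $b\asymp\min\{B,R\}$ and your $\tau^2$, the separation $b^2\tau^2 s$ is only $\min\{B^2,R^2\}\min\{1,R^2/B^2\}\log M/(C_\rho\rho^2n_{\mathcal{L}}+N)$. Removing the factor $\min\{1,R^2/B^2\}$ from $\tau^2$ would require $C_\rho\rho^2n_{\mathcal{L}}+N\gtrsim s\log p$, which is stronger than \eqref{eq: balancing minimum sample size}.
\item The rescaling by $\tau_0$ is forced. For Gaussian $X$, $\|X\|_{\psi_2}=\sqrt{8\lambda_{\max}(\Sigma)/3}$, and $Re(s,\Phi_\Sigma)$ needs $\lambda_{\min}\ge\Phi_\Sigma$. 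This is where $\Phi_\Sigma<3R_X^2/16$ enters. The resulting factor $\min\{1,\tau_0^{-2}\}\ge\min\{1,2^6/(3^2R_X^4)\}$ is the $R_X$-dependence in the bound.
\item Small $s$ is not covered. Under the hypotheses, $\lfloor(1+\frac{p-2}{2(s-1)})^{(s-1)/8}\rfloor$ can equal $1$ (e.g.\ $s=2$, $p=26$), so Fano gives nothing. The paper treats $2\le s\le 32$ separately with $w_i=e_i$ and $M=p-1$. This yields order $\min\{R^2,B^2\}\log(p-2)/(C_\rho\rho^2n_{\mathcal{L}}+N)$, and $s\le 32$ is absorbed into the constant.
\end{enumerate}
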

For $N=0$, this theorem resolves a conjecture of~\citet{wang2019rate}, at least in the regime that $B\lesssim R$. \eqref{eq: balancing minimum sample size} is a very mild minimum sample size condition. The condition $p-1 \geq24(s-1)^2$ is a very mild assumption on the sparsity. In regimes where $p-1 \leq 24(s-1)^2$,  $\log((p-1)/(2(s-1))) \lesssim \log(s)$  and so it is a small logarithmic correction to the $s$ term in the rate; Theorem \ref{thm:LD unstruc lower} yields a lower bound of order $\frac{s\min\{R^2,B^2\}}{\rho^2n_{\mathcal{L}}+N}$ in this regime, differing in the log factor alone. The assumption $s \geq 2$ is mild but also essential to the construction in the proof. The proof of Theorem \ref{thm:balancing lower bound} leverages an extension of the sparse Gilbert--Varshamov lemma (Lemma \ref{lemma:high-dimensional packing} in Appendix B) that may be of interest for other high-dimensional missing data problems. Combining this lower bound with the lower bound in~\citet{wang2019rate} and the upper bound of Theorem \ref{thm:high dimensional upper bound unstructured} demonstrates that when $B \lesssim R$ and $1<s^2 \lesssim p$, the minimax rate is 
\begin{equation*}
     \frac{R^2s\log(p)}{\rho n_{\mathcal{L}}}+\frac{B^2s\log(p)}{\rho^2 n_{\mathcal{L}}+N}.
\end{equation*}
Although formally our model of missingness is different to that of~\citet{wang2019rate}, it can be checked that the proof of their Theorem~2 can be adapted to our setting.
This rate is analogous to the corresponding low-dimensional rate of Section \ref{sec:LD Unstructured missingness} which we display below
\begin{equation*}
    \frac{R^2p}{\rho n_{\mathcal{L}}}+\frac{B^2p}{\rho^2n_{\mathcal{L}}+N}.
\end{equation*}
In both cases, the value of a large unlabelled dataset is an increase in the effective observation rate from $\rho$ to $\rho^{\frac{1}{2}}$. All of our lower bounds are stated in terms of expected loss, in contrast to our high-probability upper bounds in this section. However, these could be converted to lower bounds that hold with constant probability using tools as in~\citet{ma2024high}.

\subsection{Structured missingness}
We give an additional analysis in the blockwise-missing setup with a constant number of missingness patterns.  Recall that treating the number of modalities $L$ as fixed is equivalent to treating the number of patterns $K$ as fixed.
\subsubsection{Upper bound}
We begin by giving an upper bound for \eqref{eq:ModifiedDantzig} in the blockwise setting. 
\begin{theorem}\label{thm:high dimensional rate structured}
    Consider the estimator $\hat{\beta}$ in \eqref{eq:ModifiedDantzig} with regularisation parameter
    \begin{equation*}
        \lambda = A\max\left\{\max_{l \in [L]}\left\{\sqrt{\frac{R_{X}^2(\sigma+R_{X}\|\beta^*\|_{2})^2\log\left(2L|L_{l}|\right)}{h_{l}}}\right\},\max_{h\in[L]}\left\{L\sqrt{\frac{R_{X}^4\|\beta^*\|_{2}^2\log\left(2L|L_{h}|\right)}{N+\min_{g}\{n_{g,h}\}}}\right\}\right\},
    \end{equation*}
    for some $A>0$. Assume that $\|\beta^*\|_{0} \leq s $ and that Assumptions \ref{assump:sub-Gaussian distribution} and \ref{assump:restricted eigenvalue} are satisfied. Additionally, suppose that for some universal constant $F$ and all $g,h \in [L]^2$
    \begin{align}\label{eq:blockwise-missing overall sample sizes 1}
    n_{g,h} + N &\geq  F\max\left\{ 
    \frac{ R_{X}^4 s^2}{\Phi_{\Sigma}^2},
    \frac{R_{X}^2 s}{\Phi_{\Sigma}},A^2,\frac{A^2L^2R_{X}^2\|\beta^*\|_{2}^2}{(\sigma+R_{X}\|\beta^*\|_{2})^2} 
    \right\} \max\{\log(2L|L_{g}|),\log(2L|L_{h}|)\},\quad 
    \\
    h_{g}&\geq F\max\left\{A^2,\frac{A^2(\sigma+R_{X}\|\beta^*\|_{2})^2}{R_{X}^2L^2\|\beta^*\|_{2}^2}\right\}\log(2L|L_{g}|).   \label{eq:blockwise-missing overall sample sizes 2}
    \end{align}
    With probability at least
    \begin{equation}\label{eq:blockwise-missing high probability guarantee}
        1 - \frac{1}{L\min_{l \in [L]}\left\{|L_{l}|\right\}},
    \end{equation}
    provided $A$ exceeds a sufficiently large universal constant, it holds that
    \begin{equation}\label{eq:high dimensional rate}
        \|\hat{\beta}-\beta^*\|_{2}^2 \lesssim \frac{sA^2}{\Phi_{\Sigma}^2}\max\left\{\max_{l \in [L]}\left\{\frac{R_{X}^2(\sigma+R_{X}\|\beta^*\|_{2})^2\log\left(2L|L_{l}|\right)}{h_{l}}\right\},\max_{h }\left\{\frac{L^2R_{X}^4\|\beta^*\|_{2}^2\log\left(2L|L_{h}|\right)}{N+\min_{g}\{n_{g,h}\}}\right\}\right\}.
    \end{equation}
\end{theorem}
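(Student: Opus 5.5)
The plan follows the standard Dantzig selector analysis of \citet{bickel2009simultaneous} and \citet{wang2019rate}, in three steps. First, show that $\beta^*$ is feasible for \eqref{eq:ModifiedDantzig} with high probability. Second, show that the unbiased but possibly indefinite matrix $\hat{\Sigma}$ from \eqref{eq:covariance estimate definition} inherits a restricted eigenvalue condition from Assumption~\ref{assump:restricted eigenvalue}. Third, combine the two in the usual cone argument. Write $\delta=\hat{\beta}-\beta^*$ and $S=\mathrm{supp}(\beta^*)$. Once $\beta^*$ is feasible, we get $\|\hat{\beta}\|_1\le\|\beta^*\|_1$, so $\|\delta_{S^c}\|_1\le\|\delta_S\|_1$. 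It also follows that $\|\hat{\Sigma}\delta\|_\infty\le 2\lambda$, and hence $\delta^T\hat{\Sigma}\delta\le 2\lambda\|\delta\|_1\le 4\lambda\sqrt{s}\|\delta_{M(\delta)}\|_2$.

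\textbf{Feasibility.} Decompose
\[
\hat{\Sigma}\beta^*-\hat{\gamma}=(\hat{\Sigma}-\Sigma)\beta^*-(\hat{\gamma}-\gamma).
\]
Each coordinate of $\hat{\gamma}-\gamma$ for $j\in L_l$ is an average of $h_l$ independent centred products $X_jY$. Since $Y=X^T\beta^*+\epsilon$, each product is sub-exponential with $\psi_1$ norm at most a constant times $R_X(\sigma+R_X\|\beta^*\|_2)$. Bernstein's inequality and a union bound over the $|L_l|$ coordinates of each modality, taken at level $\log(2L|L_l|)$, then give the first term of $\lambda$.

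For the second term, fix a row $i$ in modality $h$. Then
\[
((\hat{\Sigma}-\Sigma)\beta^*)_i=\sum_{g\in[L]}\frac{1}{n_{g,h}+N}\sum_{k}\Big(X_{k,i}(X_k)_{L_g}^T\beta^*_{L_g}-\mathbb{E}[\cdot]\Big),
\]
where the inner sum runs over the samples with $L_g\cup L_h$ observed. Each summand is sub-exponential with norm at most $R_X^2\|\beta^*_{L_g}\|_2\le R_X^2\|\beta^*\|_2$. Applying Bernstein to each $g$ and summing over the $L$ blocks yields the factor $L$ together with $N+\min_g n_{g,h}$.

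Conditions \eqref{eq:blockwise-missing overall sample sizes 1}--\eqref{eq:blockwise-missing overall sample sizes 2} keep us in the sub-Gaussian regime of Bernstein, so the $\sqrt{\cdot}$ rate applies rather than the linear one. The failure probabilities $\sum_l |L_l|(2L|L_l|)^{-c}$ sum to at most a constant fraction of \eqref{eq:blockwise-missing high probability guarantee}, provided $A$ is large.

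\textbf{Restricted eigenvalue for $\hat{\Sigma}$.} I would control
\[
\sup_{\delta\in\text{cone}}\frac{|\delta^T(\hat{\Sigma}-\Sigma)\delta|}{\|\delta_{M(\delta)}\|_2^2}.
\]
The cone condition gives $\|\delta\|_1\le 2\sqrt{s}\|\delta_{M(\delta)}\|_2$, and the standard cone fact gives $\|\delta\|_2\lesssim\|\delta_{M(\delta)}\|_2$. It then suffices to bound $\|\hat{\Sigma}-\Sigma\|_\infty$, since
\[
|\delta^T(\hat{\Sigma}-\Sigma)\delta|\le\|\hat{\Sigma}-\Sigma\|_\infty\|\delta\|_1^2\le 4s\|\hat{\Sigma}-\Sigma\|_\infty\|\delta_{M(\delta)}\|_2^2.
\]
Bernstein applied entrywise gives
\[
\|\hat{\Sigma}_{L_gL_h}-\Sigma_{L_gL_h}\|_\infty\lesssim R_X^2\sqrt{\frac{\log(2L\max\{|L_g|,|L_h|\})}{n_{g,h}+N}}.
\]
The conditions $n_{g,h}+N\gtrsim R_X^4s^2\Phi_\Sigma^{-2}\log(\cdot)$ and $n_{g,h}+N\gtrsim R_X^2s\Phi_\Sigma^{-1}\log(\cdot)$ make $4s\|\hat{\Sigma}-\Sigma\|_\infty\le\Phi_\Sigma/2$. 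This is exactly why the $s^2$ term appears in \eqref{eq:blockwise-missing overall sample sizes 1}. So $\hat{\Sigma}$ satisfies $Re(s,\Phi_\Sigma/2)$ on the event.

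\textbf{Conclusion and main obstacle.} Combining the two steps gives
\[
\tfrac{\Phi_\Sigma}{2}\|\delta_{M(\delta)}\|_2^2\le 4\lambda\sqrt{s}\|\delta_{M(\delta)}\|_2,
\]
so $\|\delta\|_2^2\lesssim s\lambda^2/\Phi_\Sigma^2$, which is \eqref{eq:high dimensional rate}. I expect the main obstacle to be the bookkeeping in the feasibility step, in two respects. One is getting the correct per-modality normalisations, so that the union bound uses $\log(2L|L_l|)$ rather than $\log p$. The other is making the total failure probability at most $1/(L\min_l|L_l|)$, which requires the tail of every modality to be absorbed at the level of the smallest one. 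Choosing the Bernstein deviation as $A\sqrt{\log(2L|L_l|)/h_l}$ with $A$ large should give per-modality failure probability $(2L|L_l|)^{-3}$, and this sums appropriately.
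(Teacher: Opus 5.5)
Your proposal is correct and matches the paper's proof essentially step for step. The paper also proves feasibility of $\beta^*$ by bounding $\|\hat\gamma-\gamma\|_\infty$ and $\|(\hat\Sigma-\Sigma)\beta^*\|_\infty$ separately, with Bernstein and union bounds at level $\log(2L|L_l|)$ and each of the $L$ column-modality blocks controlled at $\lambda/(2L)$; it then transfers the restricted eigenvalue condition via $\phi^2(\hat\Sigma,s)\ge\phi^2(\Sigma,s)-4s\|\hat\Sigma-\Sigma\|_\infty$ and finishes with the same cone argument, giving $\|\hat\beta-\beta^*\|_2\le 16\lambda\sqrt{s}/\Phi_\Sigma$, with only the tail-exponent constants differing (e.g.\ $(2L|L_l|)^{-2}$ versus your $(2L|L_l|)^{-3}$), both of which sum to the stated failure probability $1/(L\min_l|L_l|)$.
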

This theorem guarantees that, for a sufficiently large choice of regularisation parameter $\lambda$, the program \eqref{eq:ModifiedDantzig} yields an estimate $\hat{\beta}$ that achieves rate \eqref{eq:high dimensional rate}. In particular, treating $A,\phi_{\Sigma},R_{X}$ and $L$ as constants, we obtain a rate
\begin{equation*}
    R^2s\max_{g\in[L]}\left\{\frac{\log(|L_{g}|)}{h_{g}}\right\}+B^2s\max_{g,h \in [L]}\left\{\frac{\log(|L_{g}|)}{n_{g,h}+N}\right\}.
\end{equation*}
 The high-probability guarantee \eqref{eq:blockwise-missing high probability guarantee} is close to 1 provided the modalities are not too small. A closer inspection of the proof yields that this probability can be inflated at the cost of small logarithmic factors in \eqref{eq:high dimensional rate}, if some of the modalities are very small. Conditions \eqref{eq:blockwise-missing overall sample sizes 1} and \eqref{eq:blockwise-missing overall sample sizes 2} are mild minimum sample size requirements.

\subsubsection{Lower bound}
Recall the class of distributions $\mathcal{P}_{HD}=\mathcal{P}_{HD}(R_{X},R,B,s,\Phi_{\Sigma})$ defined just before the start of Section~\ref{sec:Unstructured Missingness}. We now present a minimax lower bound for this class in the blockwise-missing setting. 

\begin{theorem}\label{thm:High-dimensional lower bound}
    Consider the setting \eqref{eq:datamech1} with distribution lying in $\mathcal{P}_{HD}$ and $\Phi_{\Sigma} \leq \frac{3R_{X}^2}{16}$. There exists a universal constant $c_{1}$ such that if $2 \leq  s \leq c_{1}\min_{l \in [L]}|L_{l}|$ and 
    \begin{equation}\label{eq:blockwise sample size requirements}
        \begin{aligned}
        &\max_{g,h}\left\{\frac{16s\log\left(1+\frac{|L_{h}|}{2(s-1)}\right)\min\left\{1,\frac{R^2}{B^2}\right\}\min\left\{1,\frac{1}{R_{X}^4}\right\}}{9(n_{g,h}+N)}\right\}\leq 1, \quad\quad \max_{i \in [L]}\left\{\frac{R^2s\log\left(1+\frac{|L_{i}|}{2s}\right)}{16B^2\Phi_{\Sigma}h_{i}} \right\} \leq 1, 
        \end{aligned}
    \end{equation}
    then, 
    \begin{align*}
        &\inf_{\hat{\beta}\in\hat{\Theta}_{HD}}\sup_{P \in \mathcal{P}_{HD}}\mathbb{E}\left[\|\hat{\beta}-\beta^*\|_{2}^2\right]\nonumber\\ &\geq \max\left\{\max_{h \in [L]}\left\{\frac{\min\left\{1,\frac{2^6}{3^2R_{X}^4}\right\}\min\{B^2,R^2\}s\log\left(1+\frac{|L_{h}|}{2(s-1)}\right)}{2^{17}\min_{g \in [L]}\{n_{g,h}+N\}}\right\}, \max_{l \in [L]}\left\{\frac{R^2s}{2^{10}h_{l}\Phi_{\Sigma}}\log\left(1+\frac{|L_{l}|}{2s}\right)\right\}\right\}.
\end{align*}
\end{theorem}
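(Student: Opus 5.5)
The lower bound is a maximum of two terms, so I would prove each separately, restricting to Gaussian sub-families of $\mathcal{P}_{HD}$ and applying Fano's inequality over a sparse packing. The \emph{second term} (the "$R^2/h_l$" or noise term) comes from the structure where $\Sigma$ is known and only one modality matters. Fix $l\in[L]$, take $\Sigma=\Phi_\Sigma I_p$, and take $\beta^*$ supported on $L_l$. Since $\Sigma$ is diagonal, any sample missing modality $l$ has covariates independent of $Y$ but still informative about $\sigma$ only. I would therefore use the following model: $Y=X^T\beta^*+\epsilon$ with $\epsilon\sim N(0,\sigma^2)$, $\sigma=R$, where variables outside $L_l$ carry no information about $\beta^*_{L_l}$. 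For a sample in pattern $k$ with $L_l\not\subseteq O_k$, the law of $(X_{O_k},Y)$ is $N(0,\cdot)$ with $\mathrm{Var}(Y)=R^2+\Phi_\Sigma\|\beta^*\|_2^2$. To kill the information from these samples, I would fix $\|\beta^*\|_2$ constant across the packing so their laws are identical.

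The packing is a Gilbert--Varshamov sparse packing of $\{0,\pm1\}^{L_l}$ vectors with $s$ nonzeros, scaled by $\delta$, with all vectors of equal norm $\delta\sqrt s$. It has $\log$-cardinality $\gtrsim s\log(1+|L_l|/(2s))$ and pairwise $\ell_2$ separation $\gtrsim \delta^2 s$. The KL divergence between fully informative samples is $h_l\Phi_\Sigma\|\beta-\beta'\|^2/(2R^2)\lesssim h_l\Phi_\Sigma\delta^2 s/R^2$. Choosing $\delta^2\asymp R^2\log(1+|L_l|/2s)/(\Phi_\Sigma h_l)$ and applying Fano gives the second term. The second condition in \eqref{eq:blockwise sample size requirements} ensures $\delta^2 s\le B^2$. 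RE holds because $\Sigma$ is diagonal, sub-Gaussianity requires $\Phi_\Sigma\lesssim R_X^2$, and the noise is Gaussian.

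The \emph{first term} (covariance estimation) mirrors the construction behind Theorem~\ref{thm:balancing lower bound}. Fix $h$ and $g^*$ attaining $\min_g\{n_{g,h}+N\}$, and pick a coordinate $j\in L_{g^*}$. I would take $\beta^*=b e_j$ with $b\asymp\min\{B,R\}$, and perturb $\Sigma$ as $\Sigma_\theta=\Phi I+\tau(e_j\theta^T+\theta e_j^T)$, where $\theta$ is $(s-1)$-sparse on $L_h\setminus\{j\}$, drawn from the packing lemma (Lemma~\ref{lemma:high-dimensional packing}). The key trick is to vary the \emph{regression parameter} jointly so that the conditional law of $Y$ given the observed covariates is unchanged whenever $X_j$ and $X_{L_h}$ are not jointly observed. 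For example, one would choose $\beta_\theta=\Sigma_\theta^{-1}\Sigma_0\beta_0$ so that $\gamma=\Sigma\beta$ is constant and $\beta_\theta-\beta_{\theta'}\approx b\tau(\theta-\theta')/\Phi$. The noise variance $\sigma_\theta^2$ is then adjusted to keep $\mathrm{Var}(Y)$ fixed.

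With this choice, only samples observing both modalities $g^*$ and $h$, plus the unlabelled data, distinguish hypotheses, contributing KL $\lesssim (n_{g^*,h}+N)\tau^2 s$. Setting $\tau^2\asymp\log(1+|L_h|/2(s-1))/(n_{g^*,h}+N)$ gives a loss separation of $b^2\tau^2 s$, which is the claimed term. The first condition in \eqref{eq:blockwise sample size requirements} keeps $\tau$ small enough that $\Sigma_\theta$ is positive definite, satisfies $Re$, stays sub-Gaussian, and has $\sigma_\theta\le R$. The conditions $s\le c_1|L_l|$ and $s\ge 2$ make the packing nontrivial.

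The main obstacle is this second construction. I need to check that, for every pattern not containing both $g^*$ and $h$, the marginal of $(X_{O_k},Y)$ is exactly invariant in $\theta$. This requires matching $\mathrm{Cov}(X_{O_k},Y)=(\Sigma_\theta\beta_\theta)_{O_k}$ (constant by design), matching $\Sigma_{\theta,O_kO_k}$ (the perturbation lives only on the $(j,L_h)$ block), and matching $\mathrm{Var}(Y)$ (handled by $\sigma_\theta$). I would verify this first, then bound the KL divergence on the informative samples via the Gaussian KL formula and a Taylor bound in $\tau$.
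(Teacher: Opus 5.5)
Your proposal is correct and matches the paper's proof: the second term is the Fano argument of Proposition~\ref{prop:lower bound h_l}, with $\Sigma=\Phi_\Sigma I$ and a constant-norm $s$-sparse packing on $L_l$. The first term is the joint $(\Sigma,\beta)$ packing of Proposition~\ref{prop:lower bound n_g1g2}, which keeps $\Sigma\beta$ and $\mathrm{Var}(Y)$ invariant so that only the $n_{g,h}$ doubly-observed samples and the $N$ unlabelled samples are informative; the paper's explicit $\beta_x=(-x/\tau,B/2)$ is your $\Sigma_\theta^{-1}\Sigma_0\beta_0$ up to reparametrisation, except that it puts the $\min\{B^2,R^2\}$ factor into the perturbation size rather than into $b$. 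The one point you should make explicit is that the diagonal of $\Sigma_\theta$ must lie strictly above $\Phi_\Sigma$ (the paper takes it to be $(\Phi_\Sigma+3R_X^2/8)/2$, which is where $\Phi_\Sigma\le 3R_X^2/16$ is used). With base $\Phi_\Sigma I$, the $s$-sparse direction $e_j-\theta/\|\theta\|_2$ gives Rayleigh quotient $\Phi_\Sigma-\tau\|\theta\|_2$, so $Re(s,\Phi_\Sigma)$ fails for every perturbation size $\tau>0$, however small.
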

The restriction to settings satisfying $ s \leq c_{1}\min_{l \in [L]}|L_{l}|$ is a mild assumption on the modality sizes. The assumption $2 \leq s$ is mild but, as before, essential to the construction. The conditions in \eqref{eq:blockwise sample size requirements} are weak minimum sample size requirements. 
\subsection{Discussion}
The minimax upper and lower bounds agree in $|L_{g}|$, $h_{g}$, $N$, $n_{g,h}$ and $s$. These rates are also sharp in $B$ and $R$, provided $B\lesssim R$. For the rest of the discussion in this section, we treat the parameters $L,\Phi_{\Sigma}$ and $R_{X}$ as constants. We illustrate the rate of convergence given in \eqref{eq:high dimensional rate} in several settings, comparing with existing estimators in the literature. 
\subsubsection{Simple monotonic pattern}
We first specialise our rate to the simple monotonic pattern (Example \ref{examp:simple monotonic pattern}). Assuming a bounded signal-to-noise ratio, $B \lesssim R$, we obtain the rate 
\begin{equation*}
    R^2s\left(\frac{\log(p_{0})}{n_{1}}+\frac{\log(p-p_{0})}{n_{1}+n_{2}}\right)+\frac{B^2s\log(p)}{n_{1}+N}.
\end{equation*}
We display below the analogous low-dimensional rate
\begin{equation*}
    R^2\left(\frac{p_{0}}{n_{1}}+\frac{p-p_{0}}{n_{1}+n_{2}}\right)+\frac{B^2p}{n_{1}+N}.
\end{equation*}
In both cases, when $n_{2}$ is sufficiently large the final term becomes negligible. As in the low-dimensional setting, we see the value of a large number of unlabelled samples is in a reduction of the effective dimensionality from $p$ to $p_{0}$. Again, this reduction would not be possible without the unlabelled data. Closely related work that considers this semi-supervised setting is that of \cite{song2024semi}. For the simple monotonic pattern, their upper bound coincides with the complete cases upper bound. For a general missingness pattern, the rate of convergence given by their estimator is always at least as large as that given by \eqref{eq:ModifiedDantzig}. 

\subsubsection{Supervised settings}
We now consider the supervised case when $N=0$ and compare with DISCOM~\citep{yu2020optimal}, the most relevant estimator in this context. Our optimal upper bounds provide stronger guarantees in several ways. When $\sigma \lesssim \|\beta^*\|_{2}$, the first term in \eqref{eq:high dimensional rate} is at most the second up to constants. Our rate of convergence simplifies to 
\begin{equation*}
    \max_{h }\left\{\frac{\|\beta^*\|_{2}^2s\log\left(|L_{h}|\right)}{\min_{g}\{n_{g,h}\}}\right\}\leq\frac{\|\beta^*\|_{2}^2s\log\left(p\right)}{\min_{g,h}\{n_{g,h}\}}.
\end{equation*}
DISCOM is proven to obtain a rate
\begin{equation*}
    \frac{\|\beta^*\|_{1}^2s\log(p)}{\min_{g,h}\{n_{g,h}\}}.
\end{equation*}
We have reduced the $\|\beta^*\|_{1}^2$ to $\|\beta^*\|_{2}^2$, offering an improvement of up to a factor of $s$. More generally, in any dataset with at most two missingness patterns we see the same improvement in rate. Furthermore, the guarantee  for DISCOM requires $\frac{s^2\log(p)}{\min_{g,h}n_{g,h}} = o(1)$, a stronger requirement relative to the milder minimum sample size requirements of \eqref{eq:blockwise-missing overall sample sizes 1} and \eqref{eq:blockwise-missing overall sample sizes 2}. 

The difference between the optimal rate \eqref{eq:high dimensional rate} and the rate provided by DISCOM becomes starker when there are imbalances in the sizes of the modalities and three or more modalities. Consider a setting where there are three observation patterns $O_{1} = [p]$, $O_{2} = [p-p_{0}]$, $O_{3} = [p-p_{0}-p_{1}]\cup\{[p]\setminus[p-p_{0}]\}$. We think of $n_{1}$ as small relative to $n_{2}$ and $n_{3}$, and $\log(p)$ as large relative to $\log(p_{1})$ and $\log(p_{0})$. We display this missingness pattern in Figure \ref{tikz:more complicated pattern}, where the covariates are split into their various modalities.
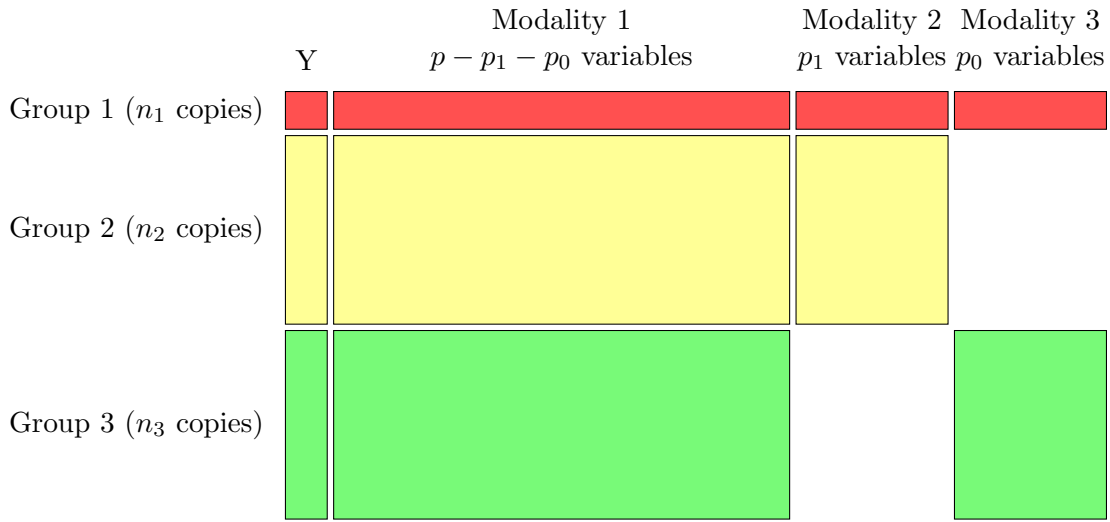
\begin{figure}[H]
\begin{center}
\begin{tikzpicture}

\definecolor{myred}{RGB}{255,80,80}
\definecolor{myyellow}{RGB}{255,255,150}
\definecolor{mygreen}{RGB}{120,250,120}

\matrix (m) [matrix of nodes,
  nodes={draw}, 
  column 1/.style={nodes={minimum width=0.55cm, minimum height=2cm}},   
  column 2/.style={nodes={minimum width=6cm, minimum height=2cm}},   
  column 3/.style={nodes={minimum width=2cm, minimum height=2cm}},   
  column 4/.style={nodes={minimum width=2cm, minimum height=2cm}}, 
  row 1/.style={nodes={minimum height=0.5cm}},   
  row 2/.style={nodes={minimum height=2.5cm}}, 
  row 3/.style={nodes={minimum height=2.5cm}},   
  column sep=2pt, row sep=2pt
] {
  |[fill=myred]| & |[fill=myred]| & |[fill=myred]| & |[fill=myred]| \\
  |[fill=myyellow]| & |[fill=myyellow]| & |[fill=myyellow]| & \\
  |[fill=mygreen]| & |[fill=mygreen]| & & |[fill=mygreen]| \\
};

\node[left=4pt of m-1-1] {Group 1 ($n_{1}$ copies)};
\node[left=4pt of m-2-1] {Group 2 ($n_{2}$ copies)};
\node[left=4pt of m-3-1] {Group 3 ($n_{3}$ copies)};

\node[above=4pt of m-1-1, align=center] {Y};
\node[above=4pt of m-1-2, align=center] {Modality 1 \\ $p-p_{1}-p_{0}$ variables};
\node[above=4pt of m-1-3, align=center] {Modality 2 \\ $p_{1}$ variables};
\node[above=4pt of m-1-4, align=center] {Modality 3 \\ $p_{0}$ variables};

\end{tikzpicture}
\caption{A non-monotonic pattern}
\label{tikz:more complicated pattern}
\end{center}
\end{figure}

The DISCOM method has an upper bound of
\begin{equation*}
    \frac{\|\beta^*\|_{1}^2s\log(p)}{n_{1}}.
\end{equation*}
By contrast, \eqref{eq:ModifiedDantzig} achieves a faster rate of 
\begin{equation*}
    \frac{\|\beta^*\|_{2}^2s\log(p-p_{1}-p_{0})}{n_{1}+\min\{n_{2},n_{3}\}}+\frac{\|\beta^*\|_{2}^2s\log(p_{1}+p_{0})}{n_{1}}.
\end{equation*}

\section{Simulations}\label{sec:Simulations}
We first investigate the empirical performance of our low-dimensional estimators from Section \ref{sec:Low-dimensional results} and our high-dimensional estimator from Section \ref{sec:High-dimensional results} through a range of simulation settings. Code for these simulations, and all other empirical work in this paper, is available via \url{https://warwick.ac.uk/fac/sci/statistics/staff/research_students/risebrow/}. The corresponding $\texttt{R}$ package is available on CRAN as LRMiss~\citep{LRMiss}.
\subsection{Simple monotonic pattern}
In order to investigate the properties of our estimators, we first consider the simple monotonic missingness pattern (Example \ref{examp:simple monotonic pattern}). Recall that, in the labelled dataset, we have $n_{1}$ complete cases and $n_{2}$ observations missing their final $p_{0}$ covariates. 

\subsubsection{Comparison with unweighted imputation}\label{sec:Comparison with single imputation}
In this section, we implement our procedure \eqref{eq:OSS estimator definition initial} in the ISS case with oracle weights. We compare this to a single imputation procedure that amounts to choosing the weights $\{\hat{D}_{k}\}_{k=1}^K$ in \eqref{eq:OSS estimator definition initial} to be 1. We choose $p = 10$, $n_{1} = 100$, $p_{0}=1$, $\beta^* = (1,1,\ldots,1,5)$, $\Sigma_{i,j} = 0.6^{|i-j|}$. We assume $X \sim N(0,\Sigma)$, $\epsilon \sim N(0,\sigma^2)$ and $\epsilon \indep X$ with the parameter $\sigma$ varying over $\{1,2,3\}$ across the plots. We plot the mean squared error of our procedure $\|\hat{\beta}-\beta^*\|_{2}^2$ relative to $n_{2}$. Each graph is formed by choosing $20$ points on the $x$-axis, repeating the simulation $1{,}000$ times at each value of $n_{2}$ and averaging the results.
\begin{figure}[H]
    \centering
    \includegraphics[
        width=1\linewidth,
        trim={0 0 0 1cm},
        clip
    ]{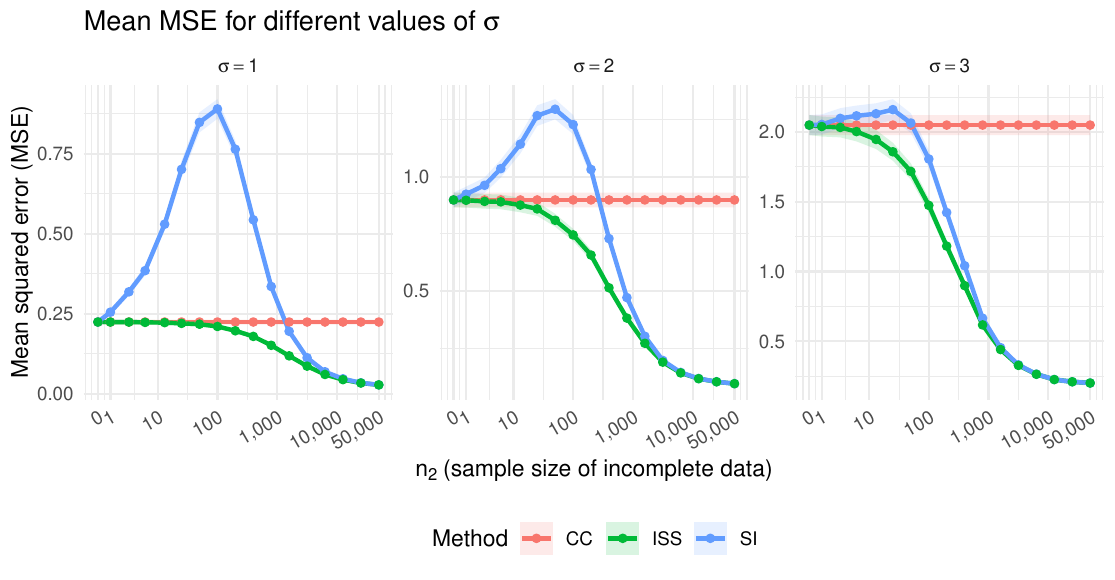}
    \caption{CC refers to a complete case analysis of the 100 complete cases via least squares. SI refers to the estimator \eqref{eq:OSS estimator definition initial} with choices of weights $\hat{D}_{1}=\hat{D}_{2}=1$. ISS refers to our estimator \eqref{eq:OSS estimator definition initial} with oracle weights $\hat{D}_{1}=1, \hat{D}_{2} = \frac{\sigma^2}{\sigma^2+(\beta^*_{M})^TS_M\beta^*_{M}}$, where $M=\{10\}$. Error bars are given by the ribbons.}
    \label{fig:Weighting Importance Example}
\end{figure}

These results highlight the need for the reweighting step in \eqref{eq:OSS estimator definition initial}. The leftmost plot has a large missing signal-to-noise ratio $\frac{(\beta^*_{M})^TS_M\beta^*_{M}}{\sigma^2}$. In this case, when $n_2$ is small to moderate, fitting the estimator without weights leads to a degradation in performance relative to the complete case estimator. For large $n_{2}$, however, the performance improves and matches that of our approach. In this setting, Theorem \ref{thm:Master OSS with estimating weights} predicts an improvement in performance for large $n_{2}$ of roughly a factor of 10 for our estimator relative to a complete case analysis by least squares. This is broadly what is observed in the simulations.

\subsubsection{Approximating the oracle weights}
This section considers the same data-generating mechanism as Section \ref{sec:Comparison with single imputation}, again in the ISS case with oracle weights. The previous section established the necessity of the weighting step. A natural question is how close the weights need to be to the oracle values to achieve good performance. For this missingness pattern, only the ratio $\hat{D}_{2}/\hat{D}_{1}$ affects the estimator. We thus consider deviations of this from the oracle value. Figure \ref{fig:2 and 4 weight plots} demonstrates that the specific choice of weight does not substantially matter beyond being of the correct order of magnitude. When the weight is set to be substantially larger than the oracle value, the performance can be worse than a complete case estimator. However, when the weight is much smaller than the oracle value, we still get an improvement on the complete case estimator. Note that in Figure~\ref{fig:2 and 4 weight plots}, for $\sigma=3$, the ratio $\hat{D}_{2}/\hat{D}_{1}$ is set to be larger than $1$ to produce the pink curve. We would never advocate for this in practice.

\begin{figure}[H]
    \centering
    \includegraphics[
        width=1\linewidth,
        trim={0 0 0 1cm},
        clip
    ]{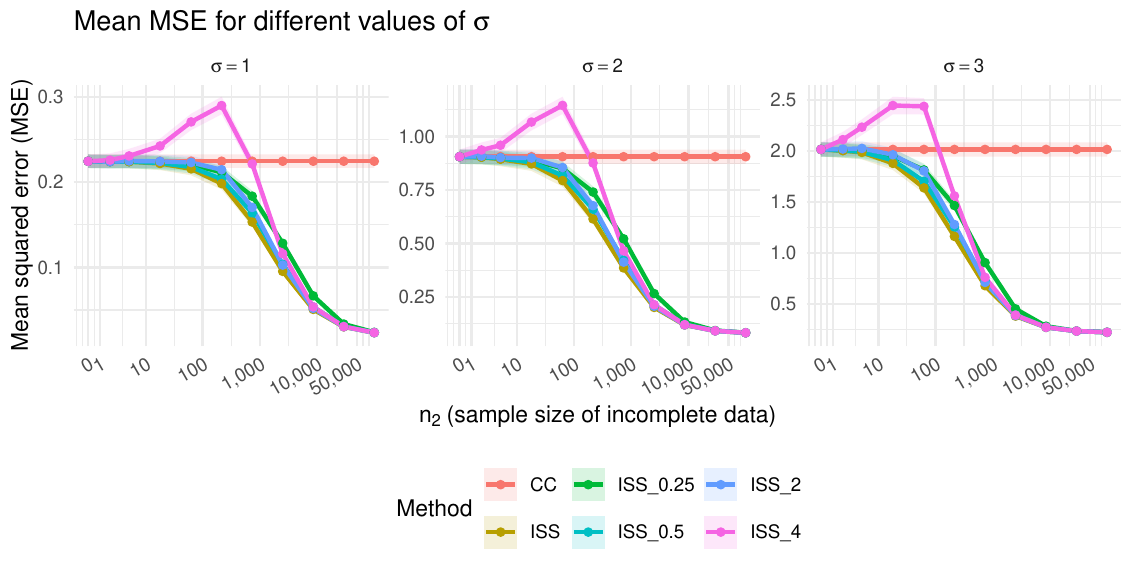}
    \caption{CC denotes the OLS estimate on complete cases. ISS is our proposed estimator \eqref{eq:OSS estimator definition initial} with oracle weights $\hat{D}_{1}=1, \hat{D}_{2}=\frac{\sigma^2}{\sigma^2+(\beta^*_{M})^TS_M\beta^*_{M}}$. For $c>0$, ISS$_c$ denotes the estimator with weights $\hat{D}_{1}=1, \hat{D}_{2}=\frac{c\sigma^2}{\sigma^2+(\beta^*_{M})^TS_M\beta^*_{M}}$. Error bars from 1{,}000 repetitions are shown by ribbons.}
    \label{fig:2 and 4 weight plots}
\end{figure}

\subsubsection{The OSS case}
We now focus on the OSS case with the same data-generating choices as for the previous two sections, changing only that $\sigma$ takes values in $\{2,4,6\}$. We explore how estimating the covariance from an unlabelled dataset of size $N$ affects the performance of the OSS estimator \eqref{eq:OSS estimator definition initial}. We initialise the weights using oracle values again. Clearly, it is not necessary to be close to the ISS setting to improve the performance relative to least squares. As predicted by the theory, as $\frac{\|\beta^*\|_{2}^2}{\sigma^2}$ decreases, the imputation error contributes less to the total mean squared error; for larger $\sigma$, this allows us to improve upon the performance of the complete case estimator even with a small number of unlabelled samples. For a sufficiently large unlabelled dataset, the OSS plots are indistinguishable from our ISS estimator.

\begin{figure}[H]
    \centering
    \includegraphics[
        width=1\linewidth,
        trim={0 0 0 1cm},
        clip
    ]{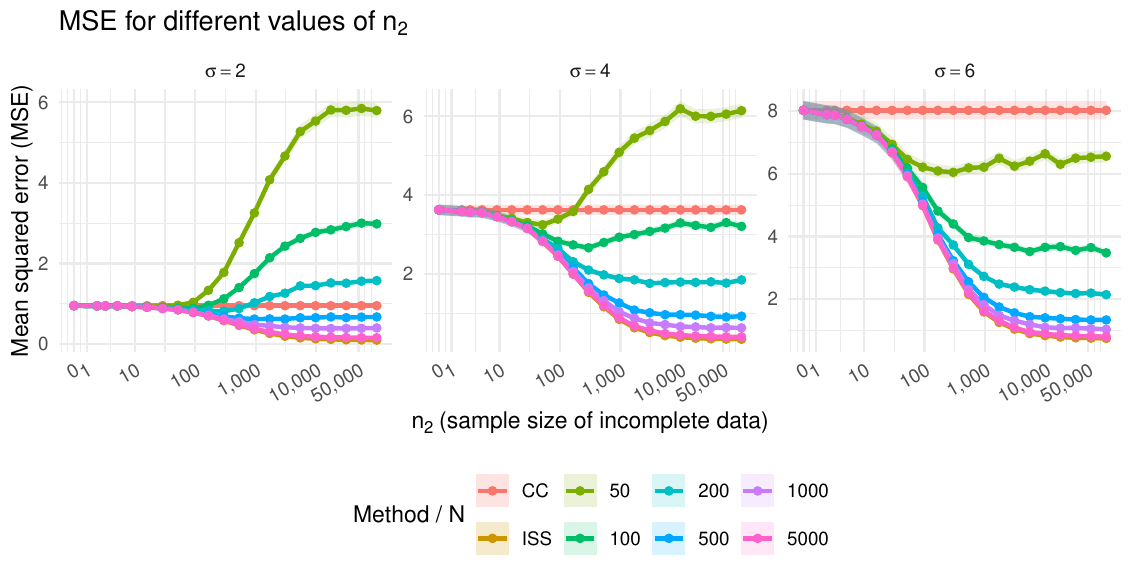}
    \caption{We compute our estimator \eqref{eq:OSS estimator definition initial} with unlabelled sample size $N$ varying from $50$ to $5{,}000$. ISS is the ideal semi-supervised estimator \eqref{eq:OSS estimator definition initial}. CC is the complete case estimator. Labelled sample sizes are $n_{1}=100$ and $n_{2}$ varying from $0$ to $100{,}000$. Error bars from 1{,}000 repetitions are shown by ribbons.}
    \label{fig:Simple monotonic OSS case}
\end{figure}

\subsection{Comparison to existing methods}\label{sec:Comparison to existing methods}
We now compare various supervised and semi-supervised approaches to this problem in the following simulation.  
We generate covariates $(X_1, \ldots, X_{10})$ from the distribution $N(0,\Sigma)$, where, for $(i,j)\in [p]^2$, $\Sigma_{i,j}=0.6^{|i-j|}$. The response is generated according to \eqref{eq:datamech1} with $\epsilon \sim N(0,\sigma^2)$, $\epsilon \indep X$ and we let $\sigma$ vary below. We set $\beta^* = (1,\ldots,1)\in \mathbb{R}^{10}$. We work in the simple monotonic pattern (Example \ref{examp:simple monotonic pattern}) with an unlabelled dataset of size $N=10{,}000$, $p_{0}=1$ and $n_1=500$ complete cases. We repeat each simulation $500$ times and compare the ratio between the complete case OLS estimator's MSE and every other estimator's MSE. A large value indicates good performance.  MICE$_{\mathcal{L}\cup\mathcal{U}}$ refers to multiple imputation via chained equations using the package associated with~\citet{van2011mice} with default options and where the missing responses in the unlabelled data are imputed; MICE$_{\mathcal{L}}$ refers to the same package but where the unlabelled data is ignored; SS refers to the method introduced in~\citet{azriel2022semi} incorporating only unlabelled data and ignoring the labelled data with missingness. CC-only  refers to the least squares estimator on the complete cases; RRZ refers to an implementation of the classical method introduced in~\citet{robins1994estimation} -- additional information is given in Appendix \ref{Appendix:Simulations}; OSS refers to the ordinary semi-supervised estimator \eqref{eq:OSS estimator definition initial} with weights estimated from the complete case estimator and covariance estimated with the unlabelled data.

\begin{table}[H]
\centering
\caption{Complete cases' MSE relative to each estimator's MSE with $\sigma=1$}
\label{tab:re_table}
\begin{tabular}{lccccc}
\toprule
\textbf{Methods} 
& \textbf{$n_{2}= 500$} 
& \textbf{$n_{2}= 1{,}000$} 
& \textbf{$n_{2}= 2{,}000$} 
& \textbf{$n_{2}= 5{,}000$} 
& \textbf{$n_{2}= 50{,}000$}\\
\midrule

MICE$_{\mathcal{L}\cup\mathcal{U}}$ & 1.526 (0.043)& 1.054 (0.033) & 0.358 (0.013) & 0.119 (0.003)& 0.056 (0.001) \\
MICE$_{\mathcal{L}}$ & 1.616 (0.038) & 1.985 (0.057)& 2.116 (0.059)  & 2.499 (0.080)& 2.815 (0.117)\\
SS & 0.968 (0.007) & 0.968 (0.007) & 0.968 (0.007)& 0.968 (0.007) & 0.968 (0.007) \\
RRZ & 1.684 (0.038)& 2.080 (0.058) & 2.359 (0.070) & 2.708 (0.073) & 3.188 (0.100)\\
CC  & 1&1 &1 &1 & 1\\
OSS &\textbf{1.839} (0.048)&\textbf{2.457} (0.071)&\textbf{3.461} (0.102) &\textbf{5.735} (0.229) &\textbf{12.164} (0.616)\\
\bottomrule
\end{tabular}
\end{table}

\begin{table}[H]
\centering
\caption{Complete cases' MSE relative to each estimator's MSE with $\sigma=3$}
\label{tab:re_table_2}
\begin{tabular}{lccccc}
\toprule
\textbf{Methods} 
& \textbf{$n_{2}= 500$} 
& \textbf{$n_{2}= 1{,}000$} 
& \textbf{$n_{2}= 2{,}000$} 
& \textbf{$n_{2}= 5{,}000$} 
& \textbf{$n_{2}= 50{,}000$}\\
\midrule

MICE$_{\mathcal{L}\cup\mathcal{U}}$ & 1.935 (0.054)& 2.527 (0.081) & 2.609 (0.131) & 1.089 (0.042)& 0.506 (0.013) \\
MICE$_{\mathcal{L}}$ & 2.120 (0.058) & 3.040 (0.105)& 3.833 (0.121)  & 6.052 (0.236)& 9.856 (0.485)\\
SS & 0.968 (0.007) & 0.968 (0.007) & 0.968 (0.007)& 0.968 (0.007) & 0.968 (0.007) \\
RRZ & 2.142 (0.057)& 3.117 (0.112) & 4.207 (0.134) & 6.558 (0.239) & 11.872 (0.610)\\
CC  & 1&1 &1 &1 & 1\\
OSS &\textbf{2.187} (0.061)&\textbf{3.153} (0.104)&\textbf{4.773} (0.166) &\textbf{8.765} (0.458) &\textbf{28.571} (2.005)\\
\bottomrule
\end{tabular}
\end{table}

The numbers in brackets are standard errors from repeating the simulation $500$ times. We see that our estimator performs the best in all of the scenarios. The other estimators are benchmarks that demonstrate the benefits of our approach, which uses all incomplete data. SS is primarily introduced to deal with misspecification and exhibits similar performance to CC. This is to be expected as the model is well-specified. Comparing with RRZ, which is semiparametrically efficient in the supervised setting, and MICE$_{\mathcal{L}}$, we see that the unlabelled data allows us to make much better use of the incomplete labelled data. Despite being the only other estimator besides OSS that makes use of the incomplete data and unlabelled data, MICE$_{\mathcal{L}\cup\mathcal{U}}$ yields very poor results. This is due to the estimator being severely biased, as discussed in \citet{li2024adaptive}.

\subsection{Comparison to other supervised imputation procedures}
We now consider the performance of our estimator when no unlabelled data is present. In this setting, we assume $p=10$, $\beta^*=(1,\ldots,1)$ and for $(i,j) \in [p]\times[p]$, $\Sigma_{i,j} = 0.6^{|i-j|}$. We generate data according to $X \sim N(0,\Sigma)$, $\epsilon \sim N(0,1)$ and $\epsilon \indep X$. As described in the figures below, we consider both a structured setting and an unstructured setting. Here, OSS refers to \eqref{eq:OSS estimator definition initial} where we estimate the covariance from the labelled data in a pairwise manner. More refined estimates of the covariance may give better empirical performance, depending on the distribution of the covariates. The weights are estimated from the initial consistent unweighted estimator. OSS\textunderscore CF is the same as OSS but with cross-fitting as outlined in the discussion after Theorem \ref{thm:weights initialisation guarantee}; CC refers to a least squares complete case analysis. Mean refers to mean imputation via the MICE package; Random refers to replacing missing entries by randomly sampling observed values in that column via the MICE package; kNN refers to a k-nearest neighbours rule for imputing missing values via the VIM package~\citep{kowarik2016imputation} with $k=5$; MI refers to multiple imputation using a random forest based procedure \cite{StekhovenBuhlmann2012}, with stochasticity introduced by aggregation and predictive mean matching \citep{Mayer2021missRangerArticle}. 
We see that in both simulations, our estimator outperforms the other imputation estimators. This is striking in the structured setting where all the other methods perform similarly to, or worse than, the standard complete case estimator. We see that the performance of our estimator is similar with and without cross-fitting.

\begin{figure}[H]
    \centering
    \begin{subfigure}{0.48\linewidth}
        \centering
        \includegraphics[
            width=\linewidth,
            trim=0 0 4cm 2cm,
            clip
        ]{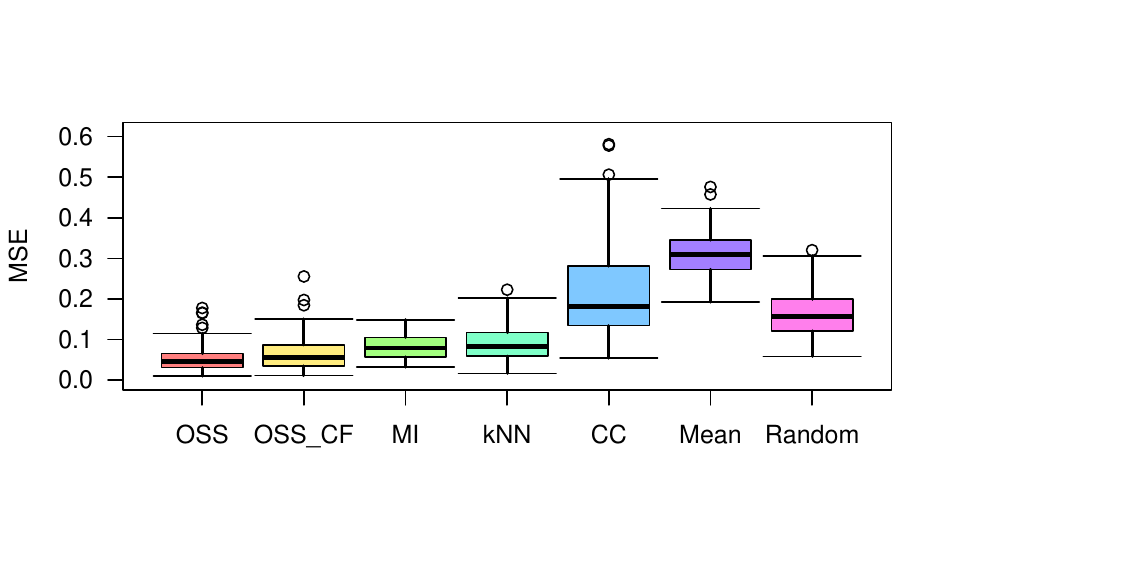}
        \caption{Unstructured missingness}
        \label{fig:unstructured-imputation-comparison}
    \end{subfigure}
    \hfill
    \begin{subfigure}{0.48\linewidth}
        \centering
        \includegraphics[
            width=\linewidth,
            trim=0 0 4cm 1.5cm,
            clip
        ]{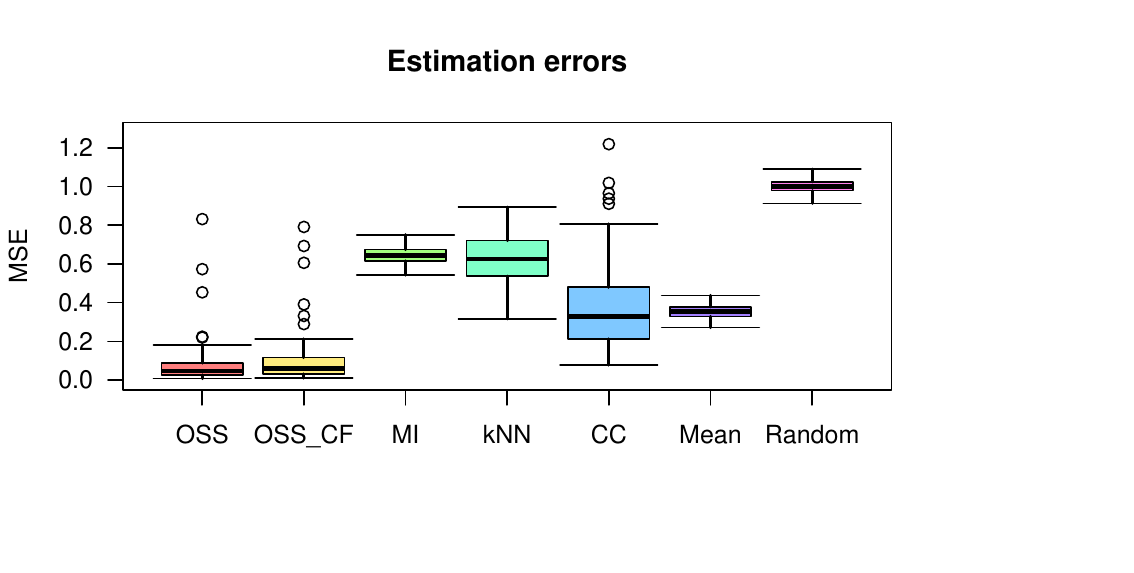}
        \caption{Structured missingness}
        \label{fig:structured-imputation-comparison}
    \end{subfigure}

    \caption{Comparison of imputation methods under structured and unstructured missingness patterns. In (a), 1{,}000 observations have each covariate missing independently with probability $0.2$. In (b), there are 50 complete cases, 500 observations missing the final covariate, and 4{,}500 missing the penultimate covariate. Results are based on $1{,}000$ repetitions.}
    \label{fig:imputation-pattern-comparison}
\end{figure}

\subsection{High-dimensional simulations}
We consider the performance of \eqref{eq:ModifiedDantzig} in both a structured setting and an unstructured setting. 
\subsubsection{Three cycle}
We consider a setting with three modalities. We use $100$ covariates and define three missingness patterns with $50$ samples missing the first $45$ covariates, $50$ samples missing the next $45$ covariates and $1{,}000$ samples missing their final $10$ variables. This is a challenging setting, since we have no complete cases and there are large imbalances in the number of times each modality is observed. $\beta^*$ is set to have $15$ entries equal to one and all other entries equal to zero, with the active set taken to be the first five variables in each modality. Setting $A=[90]$ and $B=A^c$, for each simulation, we generate a random matrix that has $\Sigma_{AA}=I_{|A|}$, $\Sigma_{BB}=I_{|B|}$ and $\Sigma_{AB}=0.3*uv^T$, where $u$ and $v$ are uniformly distributed on the unit sphere of appropriate dimension. The data is generated from \eqref{eq:datamech1} with $X \sim N(0,\Sigma)$, $\epsilon \sim N(0,\sigma^2)$ and $\epsilon \indep X$. We use our estimator \eqref{eq:ModifiedDantzig} with varying quantities of unlabelled data and apply five-fold cross-validation to choose the tuning parameter. All unlabelled data is assumed to be complete. We repeat the simulation 100 times and record a plot displaying the mean squared estimation error of each procedure. We vary $\sigma$ between the two plots. In both cases, particularly with smaller $\sigma$, the unlabelled data significantly improves the performance of the procedure.

\begin{figure}[H]
    \centering
    \begin{subfigure}{0.48\linewidth}
        \centering
        \includegraphics[width=\linewidth]{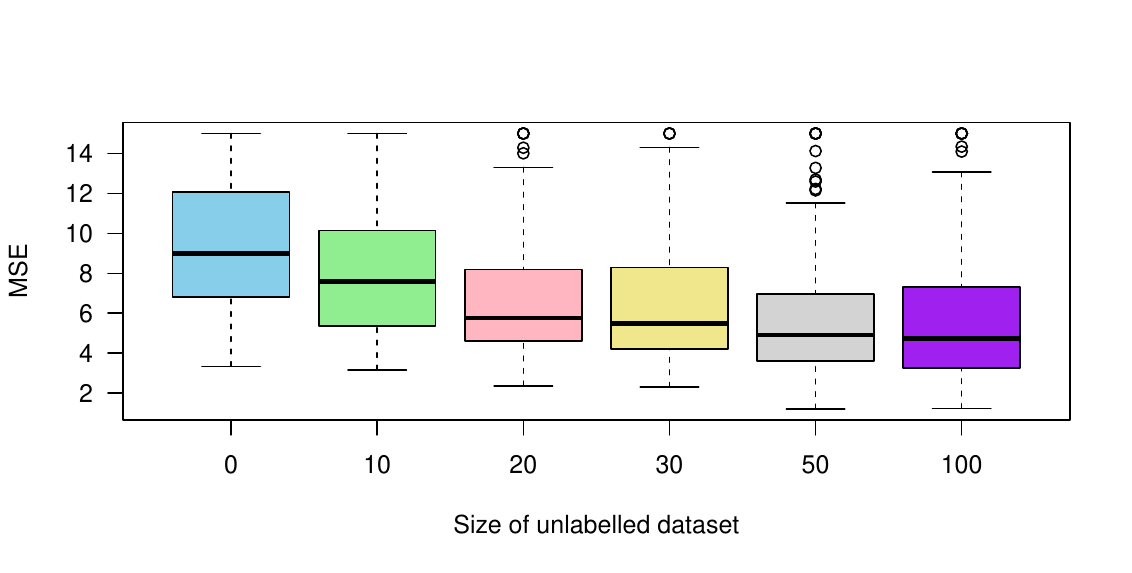}
        \caption{$\sigma = 3$}
        \label{fig:structured-sigma-3}
    \end{subfigure}
    \hfill
    \begin{subfigure}{0.48\linewidth}
        \centering
        \includegraphics[width=\linewidth]{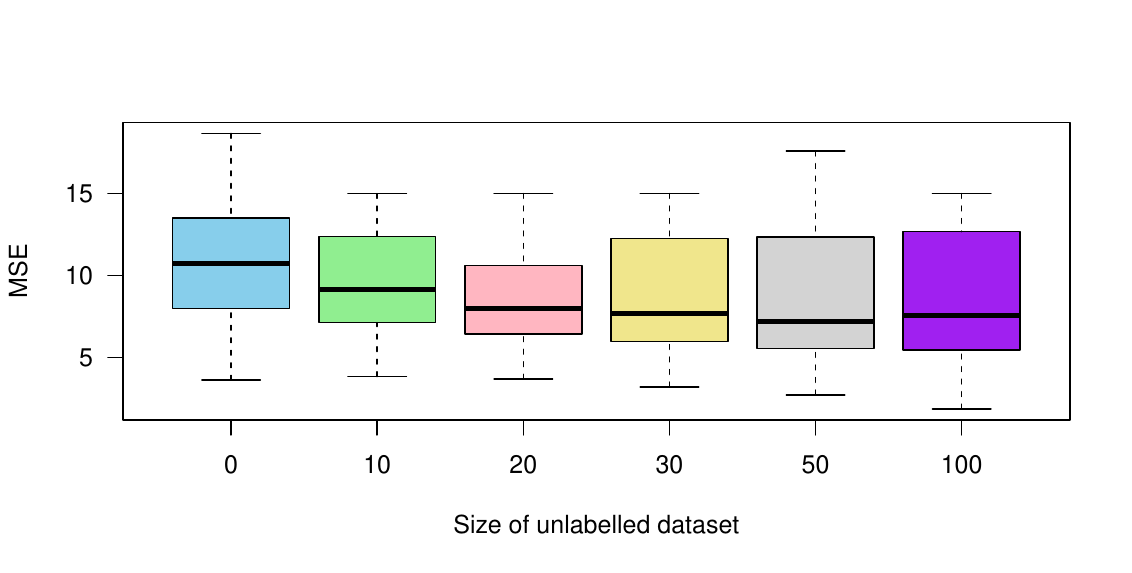}
        \caption{$\sigma = 5$}
        \label{fig:structured-sigma-5}
    \end{subfigure}

    \caption{MSE boxplots when \eqref{eq:ModifiedDantzig} is applied with the labelled dataset alongside unlabelled datasets of varying size.}
    \label{fig:structured-sigma-comparison}
\end{figure}

\subsubsection{Unstructured missingness}

In this simulation, we instead have a labelled dataset of size $1{,}000$ in $50$ dimensions with each covariate being observed independently with probability $0.2$. Setting $A=[45]$ and $B=A^c$, for each simulation, we generate a random matrix that has $\Sigma_{AA}=I_{|A|}$, $\Sigma_{BB}=I_{|B|}$ and $\Sigma_{AB}=0.3*uv^T$, where $u$ and $v$ are uniformly distributed on the unit sphere of appropriate dimension. We set $\beta^*$ to have its first five entries and last five entries to be equal to one with all other entries being zero. The data is generated according to \eqref{eq:datamech1} where $X \sim N(0,\Sigma)$, $\epsilon \sim N(0,\sigma^2)$ and $\epsilon \indep X$. We vary the number of unlabelled samples used by the procedure, assuming all covariates are observed in the unlabelled dataset. We apply five-fold cross-validation to select the regularisation parameter. The simulations are repeated $100$ times and MSE boxplots are displayed in Figure \ref{fig:unstructured-sigma-comparison}. Again, even a small number of unlabelled samples can substantially improve the performance of the procedure. As before, this improvement is greater with smaller $\sigma$. 

\begin{figure}[H]
    \centering
    \begin{subfigure}{0.48\linewidth}
        \centering
        \includegraphics[width=\linewidth]{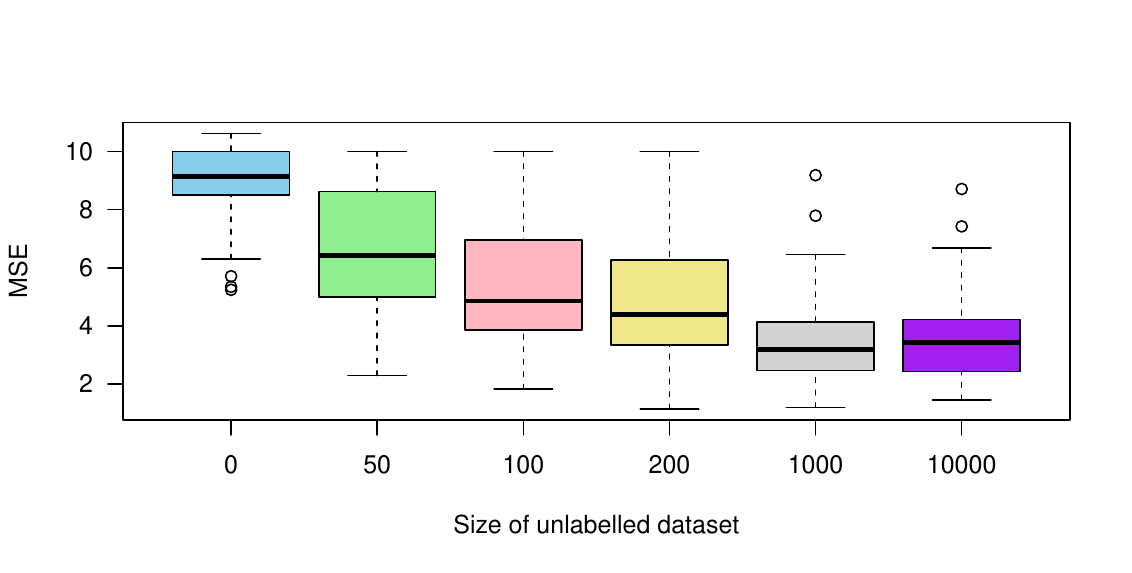}
        \caption{$\sigma = 3$}
        \label{fig:unstructured-3-sigma}
    \end{subfigure}
    \begin{subfigure}{0.48\linewidth}
        \centering
        \includegraphics[width=\linewidth]{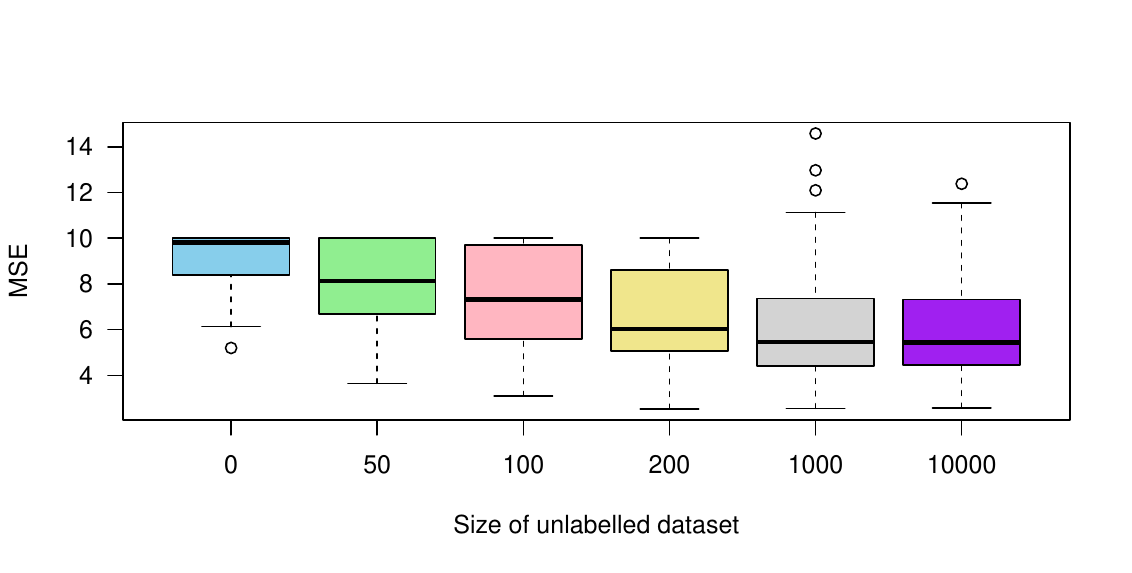}
        \caption{$\sigma = 5$}
        \label{fig:unstructured-5-sigma}
    \end{subfigure}
    \hfill
    \caption{MSE boxplots when \eqref{eq:ModifiedDantzig} is applied with the labelled dataset alongside unlabelled datasets of varying size.}
    \label{fig:unstructured-sigma-comparison}
\end{figure}

\section{Semi-synthetic real data application}\label{sec:Real World Dataset Application}
In this section, our primary goal is to illustrate real-data settings in which our estimator can offer substantial gains over the least squares estimator. We consider the California housing dataset~\citep{pace1997_sparse}. This dataset contains covariates for regions of California, which we group into the following modalities: demographic variables (median income and mean household size), geographic variables (latitude, longitude and ocean proximity) and property type variables (median house age, mean number of rooms, mean number of bedrooms). Our task is to regress median house value against these covariates. After the pre-processing steps outlined in Appendix \ref{Appendix:Real World Dataset Application}, we are left with 19,414 complete cases. We regress median house value against all the other variables via least squares and define this as the ground truth. Under MCAR mechanisms, we then introduce synthetic missingness to create two blockwise-missing settings. In both scenarios, we compare the OSS estimator to the least squares estimate on the complete cases. We plot the mean squared estimation errors for the coefficients, aggregated according to the modalities. Ribbons depicting approximate $95\%$ pointwise confidence intervals are displayed in all plots.

\subsection{Simple monotonic pattern}
In this setting, we consider the simple monotonic pattern (Example \ref{examp:simple monotonic pattern}) where in the labelled dataset we have either 400 or 1{,}000 complete cases and 5{,}000 cases missing the demographic variables. We vary the number of unlabelled samples that the OSS estimator has access to. Akin to the simulations in Section \ref{sec:Simulations}, the total mean squared error decreases as the unlabelled data size increases. With a large unlabelled dataset, the OSS estimator reduces the error in estimating the coefficients of the non-demographic variables. The difference is very small for the demographic variables, vanishing for a sufficiently large unlabelled dataset. The performance gap between the OSS and complete cases estimators is largest when the number of complete cases is small.

\begin{figure}[H]
    \centering

    \begin{subfigure}{0.48\linewidth}
        \centering
        \includegraphics[width=\linewidth]{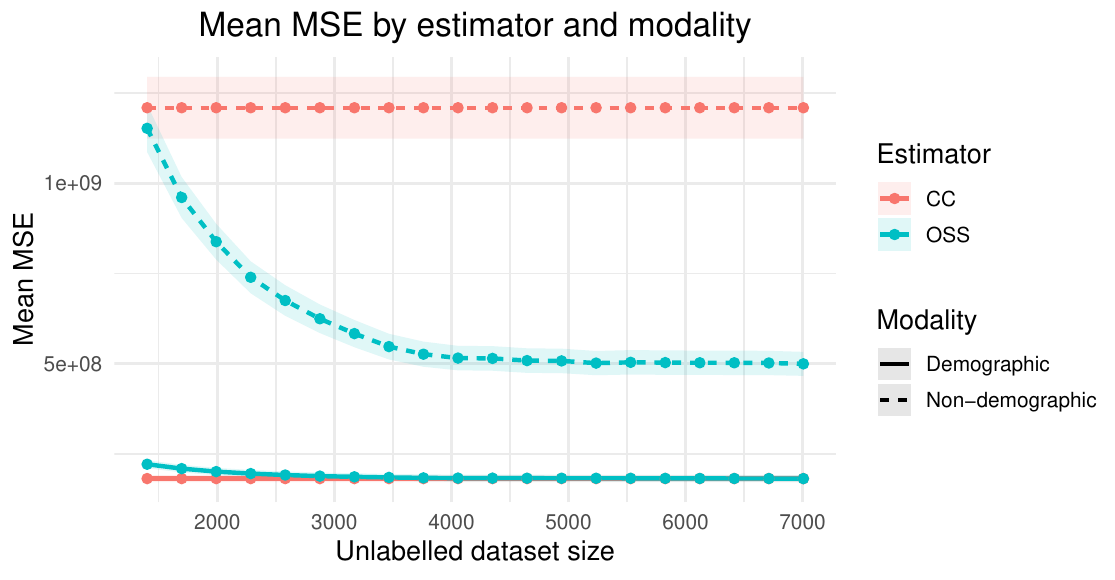}
        \label{fig:400 by modality}
    \end{subfigure}
    \hfill
    \begin{subfigure}{0.48\linewidth}
        \centering
        \includegraphics[width=\linewidth]{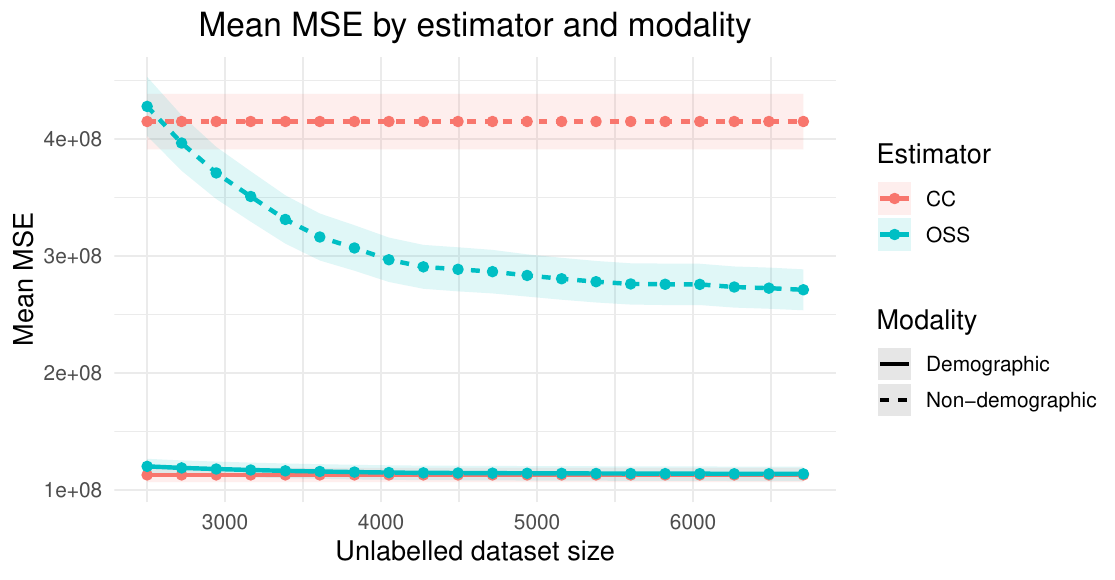}
        \label{fig:1000 by modality}
    \end{subfigure}

    \begin{subfigure}{0.48\linewidth}
        \centering
        \includegraphics[width=\linewidth]{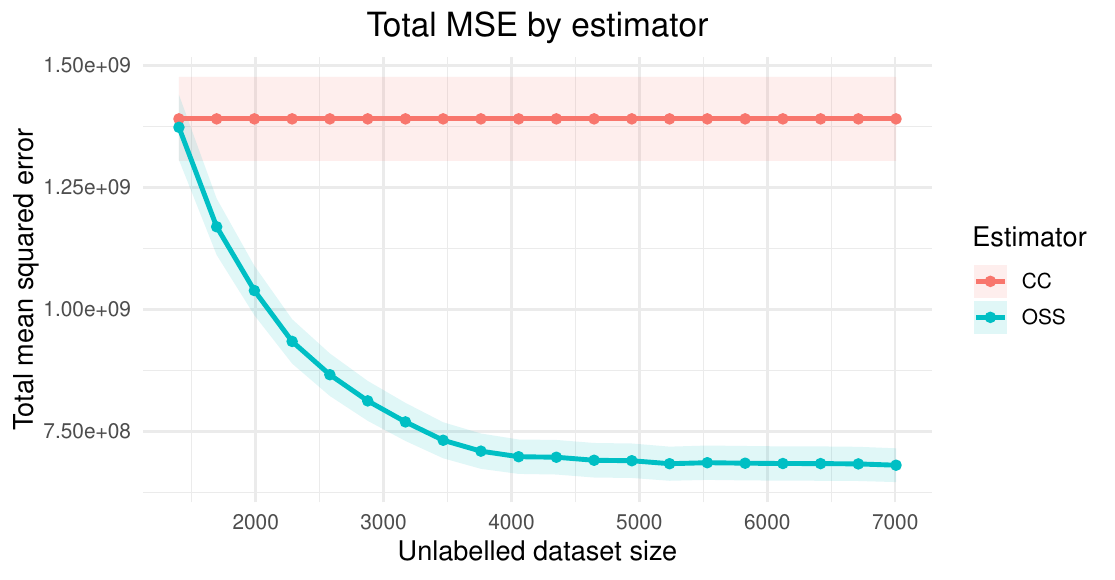}
        \label{fig:400 total MSE}
    \end{subfigure}
    \hfill
    \begin{subfigure}{0.48\linewidth}
        \centering
        \includegraphics[width=\linewidth]{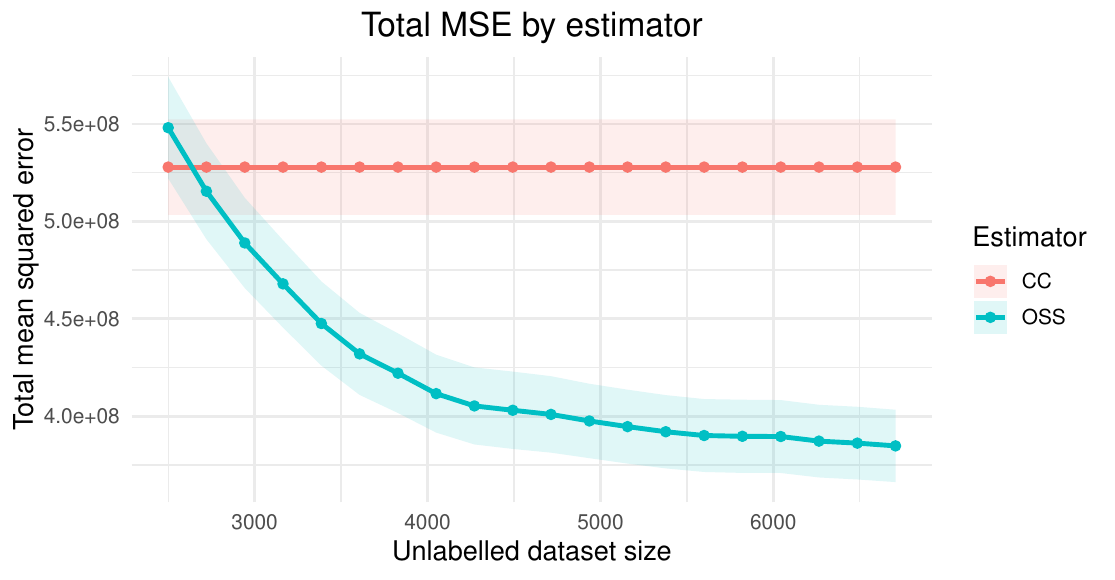}
        \label{fig:1000 total MSE}
    \end{subfigure}

    \caption{Average errors based on 1{,}000 repetitions with standard errors given as shaded ribbons. The top row of plots displays MSE by estimator, aggregated by the modality. The bottom row of plots displays the total MSE. The left-hand plots are with $400$ complete cases and the right-hand plots are with $1{,}000$ complete cases. The least squares complete case estimator is depicted in red and the OSS estimator \eqref{eq:OSS estimator definition initial} is displayed in blue.}
\end{figure}

\subsection{A more complicated pattern}
In this simulation, we consider a more complicated missingness pattern. We first assume that in the labelled dataset we have 1{,}000 complete cases, $m/2$ cases missing only demographic variables and $m/2$ cases missing only property type variables. This is exactly the setting of Figure \ref{tikz:more complicated pattern}. We consider both 10{,}000 and 5{,}000 unlabelled cases. Varying $m$, we compare the OSS estimator and the standard OLS complete case estimator. 

\begin{figure}[H]
    \centering
    \begin{subfigure}{0.48\linewidth}
        \centering
        \includegraphics[width=\linewidth]{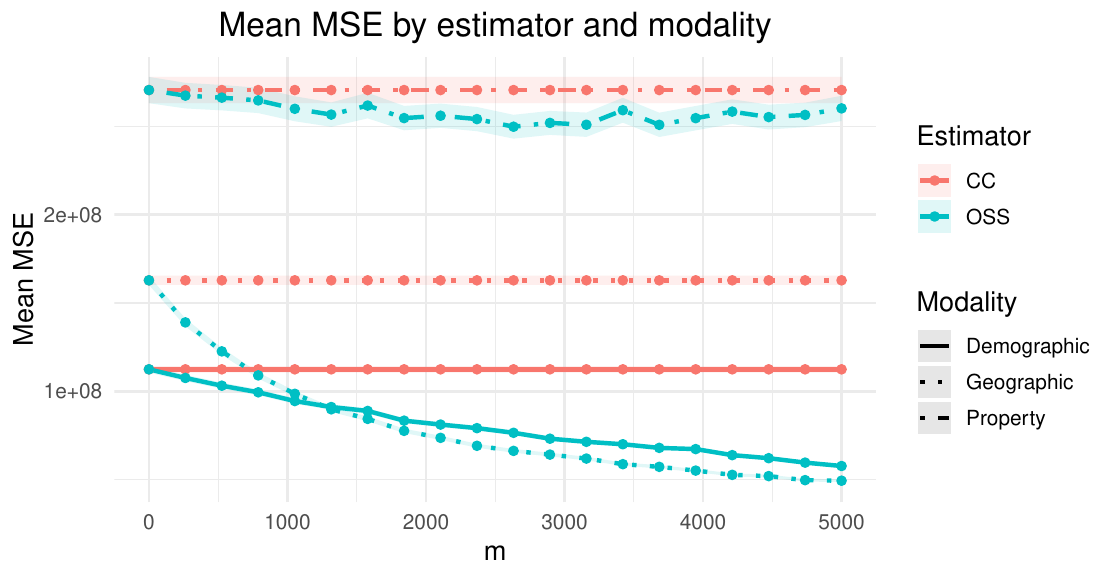}
    \end{subfigure}
    \hfill
    \begin{subfigure}{0.48\linewidth}
        \centering
        \includegraphics[width=\linewidth]{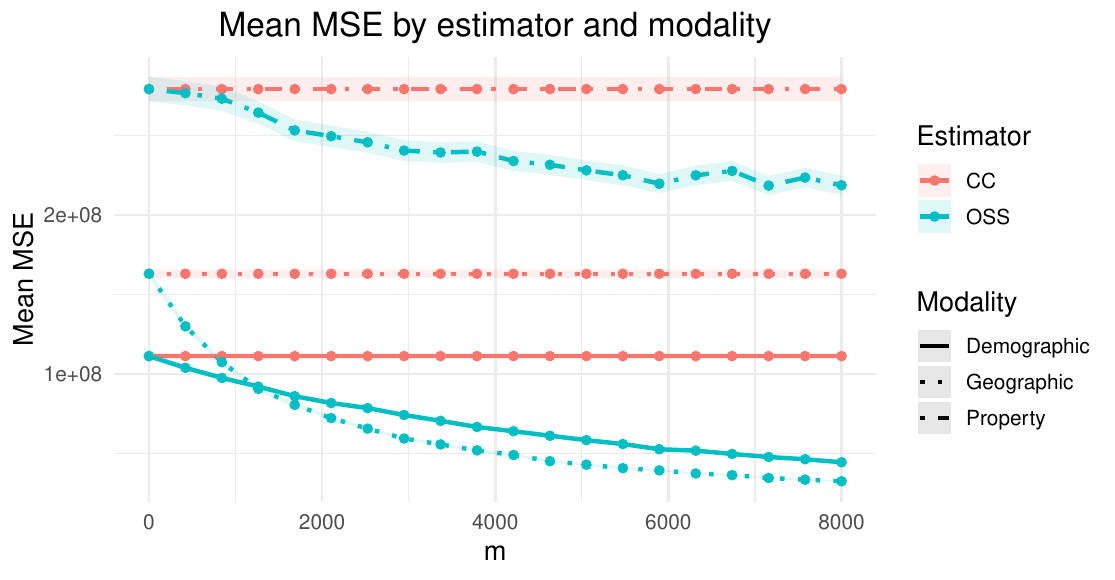}
    \end{subfigure}
    \begin{subfigure}{0.48\linewidth}
        \centering
        \includegraphics[width=\linewidth]{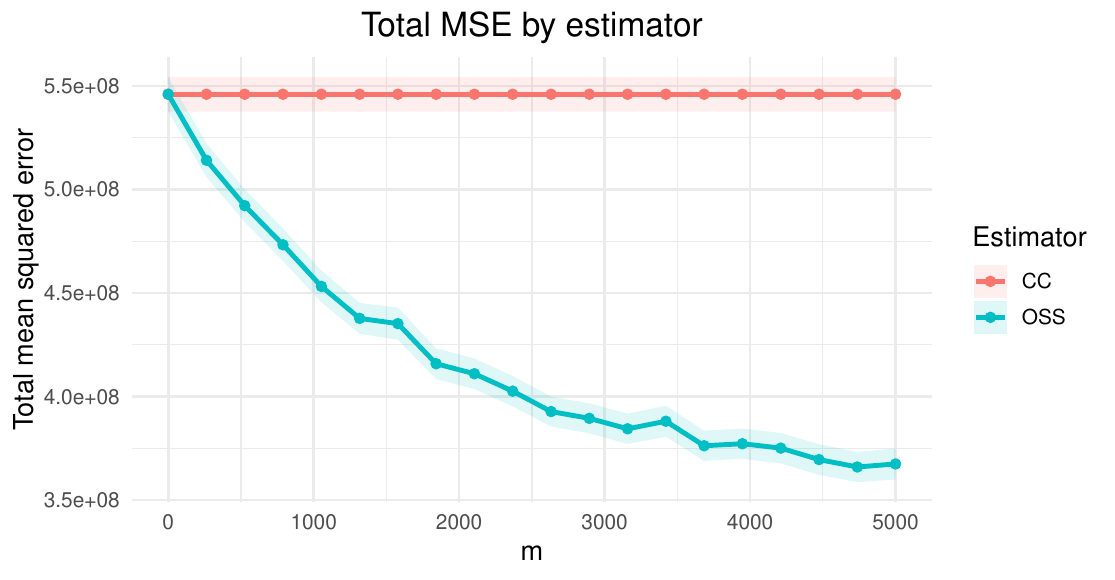}
    \end{subfigure}
    \hfill
    \begin{subfigure}{0.48\linewidth}
        \centering
        \includegraphics[width=\linewidth]{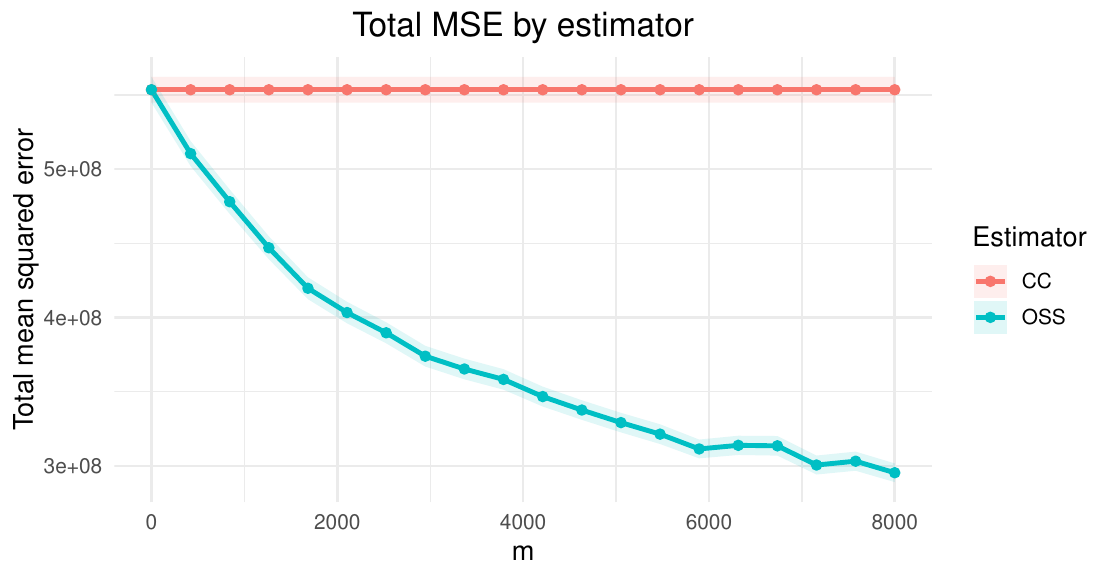}
    \end{subfigure}

    \caption{Average errors based on 10{,}000 repetitions with standard errors given as shaded ribbons. The top row of plots displays MSE by estimator, aggregated by the modality. The bottom row of plots displays the total MSE. The left-hand plots are with $5{,}000$ unlabelled cases and the right-hand plots are with $10{,}000$ unlabelled cases. The least squares complete case estimator is depicted in red and the OSS estimator \eqref{eq:OSS estimator definition initial} is displayed in blue.}
\end{figure}
Again, we see that the OSS estimator improves estimation performance relative to the complete case estimator for the coefficients in all three modalities. The most substantial gains are made in the geographic variables which are observed in all three missingness patterns. The improvements made by increasing the size of the unlabelled dataset are substantial. 

\section*{Acknowledgements}

The second author was supported by European Research Council Starting Grant 101163546.

\bibliography{references}  

\appendix
\section{Low-dimensional proofs}\label{sec:Low Dimensional Proofs}
Appendix \ref{sec:Low Dimensional Proofs} gives all the proofs in the low-dimensional setting and is split into four subsections. Section \ref{A:Preliminary Lemmas} derives some preliminary lemmas. Section \ref{A:Proofs of blockwise-missing OSS upper bounds} gives the proof of the upper bound for the estimator in the blockwise setting (Theorem \ref{thm:Master OSS with estimating weights}). It also includes results on estimating the covariance matrix at a sufficiently fast rate (Propositions \ref{Prop:Supervised covariance blocks}, \ref{Prop: unstructured cov estimation theorem} and \ref{Prop:OSS covariance blocks}). Section \ref{A:OSS Proofs under a balancing assumption} gives the proof of the upper bound for the estimator under a balancing assumption (Theorem \ref{thm:OSS balanced}). Section \ref{A:Lower Bounds} gives the proofs of the low-dimensional lower bounds (Theorems \ref{thm:LD structured lower bound} and \ref{thm:LD unstruc lower}).

\subsection{Preliminary Lemmas}\label{A:Preliminary Lemmas}
This section proves three preliminary lemmas (Lemmas \ref{lemma:hyper norm control}, \ref{lemma:eigenval control} and \ref{lemma:subgaussian tail control}).
For $k>2$, a distribution $P_{X}$ on $X \in \mathbb{R}^p$ satisfies $(k,2,C_{k})$--hypercontractivity if, for all $\theta \in \mathbb{R}^p$, $\mathbb{E}\left[|X^T\theta|^k\right]^{\frac{1}{k}} \leq C_{k}\mathbb{E}\left[(X^T\theta)^2\right]^{\frac{1}{2}}$. Observe that, for any $k >2$, Assumption~\ref{assump:subgauss} of sub-Gaussianity implies $(k,2,C_k)$ for some $C_k>0$.
\begin{lemma}\label{lemma:hyper norm control}
    Assume $P_{X}$ satisfies $(k,2,C_{k})$--hypercontractivity and that $\|\mathbb{E}\left[XX^T\right]\|_\mathrm{op} \leq \lambda_{+}$. Then, for any $1 \leq r \leq k$, it holds that 
   \begin{equation*}
    \mathbb{E}\left[\|X\|^{r}_{2}\right] \leq C_{k}^rp^{\frac{r}{2}}\lambda_{+}^{\frac{r}{2}}.
    \end{equation*}

\end{lemma}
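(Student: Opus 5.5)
The plan is to reduce the norm of $X$ to its coordinates and apply hypercontractivity to each coordinate direction $\theta = e_i$. Write $\|X\|_2^2 = \sum_{i=1}^p X_i^2$.

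First I would handle the case $r \geq 2$. The function $x\mapsto x^{r/2}$ is convex, so Jensen's inequality applied to the uniform average over coordinates gives
\begin{equation*}
\|X\|_2^r = p^{\frac{r}{2}}\Big(\frac{1}{p}\sum_{i=1}^p X_i^2\Big)^{\frac{r}{2}} \leq p^{\frac{r}{2}-1}\sum_{i=1}^p |X_i|^r .
\end{equation*}
Taking expectations, I would use $(k,2,C_k)$--hypercontractivity with $\theta=e_i$ together with the monotonicity of $L^q$ norms. Since $r\le k$,
\begin{equation*}
\mathbb{E}|X_i|^r \leq \mathbb{E}\left[|X_i|^k\right]^{\frac{r}{k}} \leq C_k^r\,\mathbb{E}\left[X_i^2\right]^{\frac{r}{2}} = C_k^r\,(e_i^T\Sigma e_i)^{\frac{r}{2}} \leq C_k^r\lambda_+^{\frac{r}{2}},
\end{equation*}
where the last step uses $e_i^T\Sigma e_i \leq \|\Sigma\|_{\mathrm{op}}\leq\lambda_+$. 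Summing over the $p$ coordinates gives $\mathbb{E}\|X\|_2^r \leq p^{r/2}C_k^r\lambda_+^{r/2}$, which is the claim.

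For $1\leq r<2$ I would argue directly, using Jensen (concavity of $x \mapsto x^{r/2}$):
\begin{equation*}
\mathbb{E}\|X\|_2^r \leq (\mathbb{E}\|X\|_2^2)^{\frac{r}{2}} = \Tr(\Sigma)^{\frac{r}{2}} \leq (p\lambda_+)^{\frac{r}{2}}.
\end{equation*}
To match the stated form I then need $C_k\geq 1$. This holds automatically whenever $\Sigma\neq 0$: pick $\theta$ with $\mathbb{E}[(X^T\theta)^2]>0$, and Lyapunov's inequality gives $\mathbb{E}[|X^T\theta|^k]^{1/k}\geq \mathbb{E}[(X^T\theta)^2]^{1/2}$, so $C_k \geq 1$. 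If $\Sigma=0$ then $X=0$ almost surely and the bound is trivial.

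There is no real obstacle here. The only points needing care are the split at $r=2$ (where convexity of $x\mapsto x^{r/2}$ switches to concavity) and the justification that $C_k\geq1$.
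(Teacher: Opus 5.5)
Your proposal is correct and follows essentially the same route as the paper. For $r\ge 2$ it uses the power-mean bound $\|X\|_2^r\le p^{r/2-1}\sum_{j}|X_j|^r$ followed by Jensen and hypercontractivity along each $e_j$, and for $1\le r<2$ it uses Jensen with $\Tr(\Sigma)\le p\lambda_+$. Your remark that $C_k\ge 1$ (by Lyapunov's inequality whenever $\Sigma\neq 0$) fills a small step the paper leaves implicit when it drops the factor $C_k^r$ in the case $r<2$.
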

\begin{proof}
    For $r \geq 2$, a standard property of the $\|\cdot\|_{r}$ norm is that $\|x\|_2 \leq p^{1/2-1/r} \|x\|_r$. For $2\leq r \leq k,$ it holds that
\begin{align*}
    \mathbb{E}\left[\|X\|_2^r\right] &\leq p^{r/2-1} \mathbb{E}\left[\|X\|_r^r\right] = p^{r/2} \frac{1}{p} \sum_{j=1}^p \mathbb{E}\left[ |\langle X, e_j\rangle|^r\right] \\&\leq p^{r/2} \frac{1}{p} \sum_{j=1}^p \mathbb{E}\left[ |\langle X, e_j\rangle|^k\right]^{\frac{r}{k}}\leq p^{r/2} \frac{1}{p} \sum_{j=1}^p C_k^r \{\mathbb{E}\left[X_j^2\right]\}^{r/2} \leq C_k^r p^{r/2} \lambda_{+}^{r/2},
\end{align*}
where the first inequality follows because $r \geq 2$, the second inequality follows by Jensen's inequality, the third inequality follows from the hypercontractivity condition and the final inequality holds since $\mathbb{E}\left[X_{j}^2\right]=e_{j}^T\mathbb{E}\left[XX^T\right]e_{j} \leq \lambda_{+}$.

For the case $r \in [1,2]$, it holds that
\begin{equation*}
    \mathbb{E}\left[\|X\|_2^r\right] \leq \mathbb{E}^{r/2}\left[\|X\|_2^2\right] = \Tr(\Sigma)^{\frac{r}{2}} \leq  p^{r/2} \lambda_{+}^{r/2},
\end{equation*}
where the first inequality holds by Jensen's inequality.
\end{proof}
Define for each $i \in [n_{\mathcal{L}}]$,
\begin{align*}
\hat{\lambda}_{\min} &\equiv \min_{j=1,\ldots,K} \lambda_{\min} \left( \frac{\Sigma_{O_{j}O_{j}}^{-\frac{1}{2}}X_{O_{j}}^T X_{O_{j}}\Sigma_{O_{j}O_{j}}^{-\frac{1}{2}}}{n_{j}} \right) \\&\equiv \min_{j=1,\ldots,K}\lambda_{\min} \left( \hat{\Sigma}_{j} \right),
\end{align*} where $X_{O_{j}}$ is the observed design matrix in the $j^{th}$ missingness pattern. The following lemma controls inverse powers of this quantity in expectation. 
\begin{lemma}\label{lemma:eigenval control}
Assume that $P_{X}$ satisfies Assumption \ref{assump:small-ball}. For $a>0$, provided $1\leq r \leq a$ and $n_{\min} \geq \max \left\{\frac{12r}{\chi},\frac{6p}{\chi}\right\}$, it holds that
    \begin{equation*}
        \mathbb{E}\left[\hat{\lambda}_{\min}^{-r}\right]\leq C'(\chi,C,a),
    \end{equation*}
    for some constant $C'=C'(\chi,C,a)$.
\end{lemma}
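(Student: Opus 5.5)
We reduce to a single pattern and then follow the small-ball argument of \citet{mourtada2022exact}. Since $\hat{\lambda}_{\min}^{-r}=\max_j\lambda_{\min}(\hat{\Sigma}_j)^{-r}\le\sum_{j=1}^K\lambda_{\min}(\hat{\Sigma}_j)^{-r}$, it suffices to bound $\mathbb{E}[\lambda_{\min}(\hat{\Sigma}_j)^{-r}]$ for each fixed $j$. The constant will then pick up a factor $K$; if $K$ is meant to be absent from $C'$, we would instead absorb it via the per-pattern tail bound below and a union over at most $K$ patterns. Fix $j$, write $O=O_j$, $n=n_j$, $d=|O|\le p$, and set $Z_i=\Sigma_{OO}^{-1/2}(X_i)_O$. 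Then $\mathbb{E}[Z_iZ_i^T]=I_d$, and by the consequence of Assumption \ref{assump:small-ball} noted after its statement, for every unit $\theta\in\mathbb{R}^d$ we have $\mathbb{P}(|Z^T\theta|\le t)\le (Ct)^{\chi}$.

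The core step is a lower-tail bound: for $t\in(0,1]$,
\[
\mathbb{P}\bigl(\lambda_{\min}(\hat{\Sigma}_j)\le t\bigr)\le (C_1 t)^{\chi n/6}
\]
for a constant $C_1=C_1(\chi,C)$, valid once $n\ge 6d/\chi$. This is exactly Mourtada's Theorem on the smallest eigenvalue of the sample covariance under the small-ball condition, and I would reprove or adapt it.

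For a fixed unit $\theta$, write $\theta^T\hat{\Sigma}_j\theta=\frac1n\sum_i(Z_i^T\theta)^2$. A Chernoff/Laplace-transform bound then gives
\[
\mathbb{P}\Bigl(\tfrac1n\sum_i (Z_i^T\theta)^2\le t\Bigr)\le e^{\lambda n t}\,\bigl(\mathbb{E}e^{-\lambda (Z^T\theta)^2}\bigr)^n .
\]
The small-ball condition yields $\mathbb{E}e^{-\lambda (Z^T\theta)^2}\lesssim (C^2/\lambda)^{\chi/2}$, and optimising over $\lambda\asymp 1/t$ gives $(C't)^{\chi n/2}$ pointwise in $\theta$.

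The main obstacle is making this uniform over the sphere without an upper moment bound. Mourtada's trick is to use a PAC-Bayes/smoothing argument: average over $\theta$ drawn from a Gaussian centred at the direction of interest, with a variance comparable to $t$. This controls the fluctuation term $\|Z_i\|^2$ only through $\mathrm{tr}$-type quantities, which are handled by Lemma \ref{lemma:hyper norm control}-style moments or, in Mourtada's version, purely through the isotropy $\mathbb{E}\|Z\|^2=d$. The cost is a KL term of order $d\log(1/t)$, and it is this term that forces $n\ge 6d/\chi$ and the exponent $\chi n/6$. I expect to cite this result essentially as a black box, checking only that the constants match the stated sample-size conditions.

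Given the tail bound, we finish by integration:
\[
\mathbb{E}[\lambda_{\min}^{-r}]=\int_0^\infty \mathbb{P}(\lambda_{\min}^{-r}>u)\,du\le 1+\int_1^\infty \mathbb{P}(\lambda_{\min}<u^{-1/r})\,du\le 1+C_1^{\chi n/6}\int_1^\infty u^{-\chi n/(6r)}\,du .
\]
The last integral is finite and bounded by $2$ as soon as $\chi n/(6r)\ge 2$, which is precisely the condition $n\ge 12r/\chi$. If $C_1>1$, we instead split at $u_0=C_1^{r}$ (or a fixed multiple of it) to obtain a bound depending only on $(\chi,C,r)$. Since $r\le a$, taking the supremum over $r\in[1,a]$ gives $C'(\chi,C,a)$, uniformly in $n$ and $p$.
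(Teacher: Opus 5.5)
Your argument is sound, but it is organised differently from the paper's. The paper does not integrate a tail bound. It cites the moment form of Mourtada's result (his Corollary 4), which bounds $\mathbb{E}[\lambda_{\min}(\hat{\Sigma}_j)^{-rq}]^{1/q}$ by a constant at the largest admissible moment $q=\chi n_{\min}/(12r)$. It then handles the minimum over patterns with the $L^q$ maximal inequality $\mathbb{E}[\max_j Y_j]\le K^{1/q}\max_j\mathbb{E}[Y_j^q]^{1/q}$, where $Y_j=\lambda_{\min}(\hat{\Sigma}_j)^{-r}$. The price of the maximum is therefore only $K^{12r/(\chi n_{\min})}\le 2^{12rp/(\chi n_{\min})}\le 2^{12a/\chi}$, using $K\le 2^p$ and $p\le n_{\min}$.

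You instead use the tail form (Mourtada's Theorem 4) and integrate by hand, which amounts to reproving that corollary. This is fine. Your split of the integral at $u_0\asymp C_1^{r}$ is genuinely needed, since Mourtada's constant $C_1$ is always larger than $1$ and the unsplit factor $C_1^{\chi n/6}$ grows with $n$.

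The one point you must tighten is the maximum over patterns. Your main line, $\max_j\le\sum_j$, gives a constant proportional to $K$, which is not the claimed $C'(\chi,C,a)$; $K$ can be as large as $2^p$. Your fallback, a union bound in the tail, does work, but only because of a fact you do not state: $K\le 2^p$ and $p\le \chi n_{\min}/6$ by the hypothesis $n_{\min}\ge 6p/\chi$. Explicitly, for $t\le 1/(2C_1)$,
\[
\mathbb{P}(\hat{\lambda}_{\min}\le t)\le\sum_{j=1}^K (C_1t)^{\chi n_j/6}\le 2^p(C_1t)^{\chi n_{\min}/6}\le (2C_1t)^{\chi n_{\min}/6}.
\]
Splitting at $u_0=(2C_1)^r$ and using $\chi n_{\min}/(6r)\ge 2$ then gives $\mathbb{E}[\hat{\lambda}_{\min}^{-r}]\le 2(2C_1)^r\le 2(2C_1)^a$.

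This is the same mechanism as in the paper: concentration that is exponential in $n_{\min}$ beats the at most $2^p$ patterns. You express it through tails rather than $L^q$ norms, which has the small bonus of giving a high-probability bound on $\hat{\lambda}_{\min}$ directly. Your sketch of the PAC-Bayes argument is unnecessary, since both you and the paper treat Mourtada's result as a black box.
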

\begin{proof}
    For $q \in [1,\infty)$ and for a real-valued random variable $Y$, define $\|Y\|_{\varphi_q} = \mathbb{E}\left[|Y|^q\right]^{\frac{1}{q}}$. By Corollary 4 of~\citet{mourtada2022exact}, we have that for $q =\frac{\chi n_{\min}}{12r}$,
\begin{equation*}
    \left\|\frac{1}{\lambda_{\min}(\hat{\Sigma}_{j})^r}\right\|_{\varphi_q} \leq C'(\chi,C,a),
\end{equation*}
for some constant $C'=C'(C,\chi)$. We allow this constant to grow throughout the proof. Hence, by ~\citet[section~5.4]{pollard2024pttm}, for $r \leq a$, it holds that
\begin{align*}
    \mathbb{E}\left[\max_{j \in [K]}\left\{\frac{1}{\lambda_{\min}(\hat{\Sigma}_{j})^r} \right\}\right]&\le C'(\chi,C,a) (K)^{\frac{12r}{\chi n_{\min}}}\\
    &\leq C'(\chi,C,a) (2^{\frac{p}{n_{\min}}})^{\frac{12r}{\chi}}\\
    &\leq C'(\chi,C,a),
\end{align*}
where the last two lines follow from the fact that $K \leq 2^p$ and $p \leq n_{\min}$. 
\end{proof}

\begin{lemma}\label{lemma:subgaussian tail control}
    Let $\hat{\Sigma}=\frac{1}{n}\sum_{i=1}^nX_{i}X_{i}^T$ be the sample second moment matrix of $n$ i.i.d copies of $X$. Further assume that $\|X\|_{\psi_{2}}\leq C_{X}$, $n \geq p$ and $\lambda_{\min}(\Sigma)>0$. For $a\geq 2$, there exists a constant $C=C(C_{X},a,\lambda_{\min}(\Sigma),\lambda_{\max}(\Sigma))$ such that
    \begin{align*}
        \mathbb{E}\left[\|\hat{\Sigma}-\Sigma\|_{\mathrm{op}}^{a}\right] \leq C\left(\frac{p}{n}\right)^{\frac{a}{2}},
    \end{align*}
    and that
    \begin{align*}
        \mathbb{E}\left[\lambda_{\max}\left(\hat{\Sigma}\right)^{a}\right] \leq C.
    \end{align*}
    Let $A=c\frac{C_{X}^2}{\lambda_{\min}(\Sigma)}\|\Sigma\|_{\mathrm{op}}$ for some sufficiently large universal constant $c$ and let $b = \sqrt{\frac{p}{n}}+\frac{p}{n}$. For $x \geq bA$, it holds that
    \begin{equation*}
        \mathbb{P}\left(\|\hat{\Sigma}-\Sigma\|_{\mathrm{op}}\geq x\right) \leq 2e^{-\frac{n}{4}\min\left\{\frac{x}{A}-b,\left(\frac{x}{A}-b\right)^2\right\}}.
    \end{equation*}
\end{lemma}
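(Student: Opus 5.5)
The plan is to reduce everything to the standard sub-Gaussian covariance concentration bound and then integrate the tail. Write $Z_i = \Sigma^{-1/2}X_i$, so that $\mathbb{E}[Z_iZ_i^T]=I_p$ and $\|Z_i\|_{\psi_2}\leq C_X\lambda_{\min}(\Sigma)^{-1/2}$. The starting point is the classical result (e.g.\ \citet{vershynin2025HDP}, Theorem 4.6.1 / Exercise 4.7.3): for isotropic sub-Gaussian vectors with $\|Z\|_{\psi_2}\leq K_0$, with probability at least $1-2e^{-u^2}$,
\begin{equation*}
\Bigl\|\tfrac1n\textstyle\sum_i Z_iZ_i^T - I_p\Bigr\|_{\mathrm{op}}\leq cK_0^2\Bigl(\sqrt{\tfrac{p+u^2}{n}}+\tfrac{p+u^2}{n}\Bigr).
\end{equation*}
Since $\hat\Sigma-\Sigma=\Sigma^{1/2}(\frac1n\sum Z_iZ_i^T-I)\Sigma^{1/2}$, the operator norm is multiplied by at most $\|\Sigma\|_{\mathrm{op}}$. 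This produces the constant $A=cC_X^2\|\Sigma\|_{\mathrm{op}}/\lambda_{\min}(\Sigma)$.

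To get the tail bound in the stated form, I would use $\sqrt{p+u^2}\leq\sqrt p+u$ to write the deviation as at most $A(b+\delta)$, with $\delta=\max\{t,t^2\}$ for a suitable $t\asymp u/\sqrt n$. The extra cross terms are absorbed by enlarging $c$, using $\sqrt{p/n}\cdot u/\sqrt n\leq \frac{p}{n}+\frac{u^2}{n}$. Given $x\geq bA$, I set $\delta=x/A-b$ and solve for $u$. This gives $u^2\geq \frac{n}{4}\min\{\delta,\delta^2\}$, and hence the stated bound $2\exp(-\frac n4\min\{\cdot\})$. The factor $\frac14$ is obtained by bookkeeping in the inversion of $\max\{t,t^2\}$.

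For the moment bound I would integrate the tail:
\begin{equation*}
\mathbb{E}\|\hat\Sigma-\Sigma\|_{\mathrm{op}}^a\leq (2bA)^a+\int_{2bA}^\infty a x^{a-1}\mathbb{P}(\|\hat\Sigma-\Sigma\|_{\mathrm{op}}\geq x)\,dx.
\end{equation*}
On this range $x/A-b\geq x/(2A)$, so the integrand is bounded by $ax^{a-1}\cdot 2\exp(-\frac n4\min\{\frac{x}{2A},\frac{x^2}{4A^2}\})$. Splitting the integral at $x=2A$ gives a Gaussian part, which contributes $\lesssim (A^2/n)^{a/2}$, and an exponential part, which contributes $\lesssim (A/n)^a$. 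Since $n\geq p$, we have $b\leq2\sqrt{p/n}$ and $1/n\leq p/n\leq\sqrt{p/n}$, so all terms are $\leq C(p/n)^{a/2}$.

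Finally, $\lambda_{\max}(\hat\Sigma)\leq\lambda_{\max}(\Sigma)+\|\hat\Sigma-\Sigma\|_{\mathrm{op}}$. Then $(u+v)^a\leq2^{a-1}(u^a+v^a)$ together with the moment bound and $p/n\leq1$ gives $\mathbb{E}[\lambda_{\max}(\hat\Sigma)^a]\leq C$.

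The main obstacle is tracking constants so that the tail bound holds exactly in the displayed form. The remaining steps are routine.
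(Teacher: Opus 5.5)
Your proposal is correct and follows essentially the same route as the paper. Both arguments invoke Vershynin's sub-Gaussian covariance concentration with $A = cC_X^2\|\Sigma\|_{\mathrm{op}}/\lambda_{\min}(\Sigma)$ and invert the tail via the case split between $\sqrt{u/n}$ and $u/n$ to get the $\frac{n}{4}\min\{\cdot,\cdot\}$ form. They then integrate the tail for the moments and finish with the triangle inequality for $\lambda_{\max}(\hat{\Sigma})$. The differences are cosmetic: the paper checks the $\psi_2$--$L^2$ comparability of $\langle X,v\rangle$ directly rather than whitening, and integrates $(\|\hat{\Sigma}-\Sigma\|_{\mathrm{op}}-Ab)_+$ rather than integrating from $2bA$.
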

\begin{proof}
We have that for all $v \in \mathcal{S}^{p-1}$
\begin{align*}
    \left\|\langle X, v \rangle\right\|_{\psi_{2}} &\leq C_{X}\\
    &\leq \frac{C_{X}\sqrt{v^T\Sigma v}}{\sqrt{\lambda_{\min}(\Sigma)}}.
\end{align*}
Thus, for some universal constant $c$, \citet[Exercise 4.49]{vershynin2025HDP} yields the following
\begin{equation*}
    \mathbb{P}\left(\|\hat{\Sigma}-\Sigma\|_{\mathrm{op}}\geq c\frac{C_{X}^2}{\lambda_{\min}(\Sigma)}\|\Sigma\|_{\mathrm{op}}\left(\sqrt{\frac{p+u}{n}}+\frac{p+u}{n}\right)\right) \leq 2e^{-u}.
\end{equation*}
Since $\sqrt{\frac{p+u}{n}}\leq \sqrt{\frac{p}{n}}+\sqrt{\frac{u}{n}}$, adjusting constants yields that
\begin{equation*}
    \mathbb{P}\left(\|\hat{\Sigma}-\Sigma\|_{\mathrm{op}}-c\frac{C_{X}^2}{\lambda_{\min}(\Sigma)}\|\Sigma\|_{\mathrm{op}}\left(\sqrt{\frac{p}{n}}+\frac{p}{n}\right)\geq c\frac{C_{X}^2}{\lambda_{\min}(\Sigma)}\|\Sigma\|_{\mathrm{op}}\left(\sqrt{\frac{u}{n}}+\frac{u}{n}\right)\right) \leq 2e^{-u}.
\end{equation*}
Define $A\equiv c\frac{C_{X}^2}{\lambda_{\min}(\Sigma)}\|\Sigma\|_{\mathrm{op}}$, $b = \sqrt{\frac{p}{n}}+\frac{p}{n}$ and $t=A\left(\sqrt{\frac{u}{n}}+\frac{u}{n}\right)$. It holds that either $\frac{Au}{n}\geq \frac{t}{2}$ or $A\sqrt{\frac{u}{n}}\geq \frac{t}{2}$. It follows that $u \geq \min\left\{\frac{tn}{2A},\frac{t^2n}{4A^2}\right\}$. The above yields that
\begin{equation}\label{eq:sub gauss prob bound}
    \mathbb{P}\left(\|\hat{\Sigma}-\Sigma\|_{\mathrm{op}}-Ab \geq t\right) \leq 2e^{-\frac{n}{4}\min\left\{\frac{t}{A},\frac{t^2}{A^2}\right\}}.
\end{equation}
The final part of the Lemma follows immediately.
We have that
\begin{equation}\label{eq: subgauss decomposition}
    \|\hat{\Sigma}-\Sigma\|_{\mathrm{op}}\leq Ab + \left(\|\hat{\Sigma}-\Sigma\|_{\mathrm{op}}-Ab\right)_{+} \equiv Ab+ Z.
\end{equation}
We also have that
\begin{align}
    \mathbb{E}\left[Z^{a}\right] &= \int_{0}^{\infty}\mathbb{P}\left(Z \geq t^{\frac{1}{a}}\right)dt \nonumber\\
    & = a \int_{0}^{\infty}\mathbb{P}\left(Z \geq u\right)u^{a-1}du \nonumber\\
    &\leq 2a\int_{0}^{\infty}e^{-\frac{n}{4}\min\{\frac{u}{A},\frac{u^2}{A^2}\}}u^{a-1}du \nonumber\\
    &=2a\int_{A}^{\infty}e^{-\frac{nu}{4A}}u^{a-1}du+2a\int_{0}^{A}e^{-\frac{nu^2}{4A^2}}u^{a-1}du\nonumber\\
    &\leq 2a\left(\frac{4A}{n}\right)^a\int_{0}^{\infty}e^{-z}z^{a-1}dz +2a\left(\frac{2A}{\sqrt{n}}\right)^{a}\int_{0}^{\infty}e^{-z}z^{\frac{a}{2}-1}dz\nonumber\\
    &\leq \frac{C(C_{X},a,\lambda_{\min}(\Sigma),\lambda_{\max}(\Sigma))}{n^{\frac{a}{2}}},\nonumber
\end{align}
where the third line follows from \eqref{eq:sub gauss prob bound}. Applying the previous display and the fact that for $a,b,r>0$, $(a+b)^{r}\leq 2^{r-1}(a^r+b^r)$ to \eqref{eq: subgauss decomposition} yields the first statement. In particular, we have that
\begin{align*}
    \mathbb{E}\left[\|\hat{\Sigma}-\Sigma\|_{\mathrm{op}}^{a}\right] &\leq 2^{a-1}\left((Ab)^{a}+\mathbb{E}\left[Z^a\right]\right)\\
    &\leq C(C_{X},a,\lambda_{\min}(\Sigma),\lambda_{\max}(\Sigma))\left(\frac{p}{n}\right)^{\frac{a}{2}}.
\end{align*}
 The second statement follows straightforwardly from the first. By the triangle inequality, the previous display and since $(a+b)^{r}\leq 2^{r-1}(a^r+b^r)$, it holds that
\begin{equation*}
    \mathbb{E}\left[\lambda_{\max}(\hat{\Sigma})^{a}\right]=\mathbb{E}\left[\|\hat{\Sigma}\|_{\mathrm{op}}^{a}\right] \leq 2^{a-1}\left(\mathbb{E}\left[\|\hat{\Sigma}-\Sigma\|_{\mathrm{op}}^{a}\right]+\|\Sigma\|_{\mathrm{op}}^{a}\right)\leq C.
\end{equation*}
\end{proof}

\subsection{Proofs of blockwise-missing OSS upper bounds}\label{A:Proofs of blockwise-missing OSS upper bounds}
The main goal of this section is to prove an upper bound for the estimator in the blockwise-missing setting (Theorem \ref{thm:Master OSS with estimating weights}). The key result to show this is Lemma \ref{lemma:squared matrix bound}.  Towards the end of this section we prove Theorem \ref{thm:weights initialisation guarantee}, regarding how to approximate the optimal weights. See the results after Lemma \ref{lemma:supervised Covariance Matrix Estimation} for more details on how to estimate the covariance matrix at the desired rate.
\begin{lemma}\label{lemma:squared matrix bound}
    Let $K \in \mathbb{N}$, $0<t_{1}\leq t_{2} \leq \cdots \leq t_{K}$, $r_{1},\ldots,r_{K} \in \mathbb{N}$ and for $i \in [K]$, let $C_{i} \in \mathbb{R}^{r_{i}\times r_{i}}$ be a symmetric positive-definite matrix satisfying $\mu_{C}I_{r_{i}} \preceq C_{i} \preceq \Omega_{C}I_{r_{i}}$ for $\mu_{C},\Omega_{C}>0$. Assume there exists a symmetric positive-definite matrix $\Sigma \in \mathbb{R}^{p\times p}$ such that $R_{i} = \Sigma_{O_{i}}^T\in\mathbb{R}^{p\times r_{i}}$ for some set $O_{i}\subseteq [p]$. Furthermore, we assume that $\cup_{i=1}^K O_{i} = [p]$. Then, letting $S = t_{1}R_{1}C_{1}R_{1}^T+\cdots+t_{K}R_{K}C_{K}R_{K}^T$, for $l \in [K]$ we have
    \begin{equation*}
    \|R_{l}^TS^{-2}R_{l}\|_\mathrm{op}=\|S^{-1}R_{l}\|_\mathrm{op}^2 
    \leq F(K)\frac{\lambda^6_{\max}(\Sigma)}{\lambda^8_{\min}(\Sigma)}\frac{\max\{1,\Omega_{C}^{2K}\}}{t_{l}^2\min\{\mu_{C}^{2K},1\}},
    \end{equation*}
    for some constant $F=F(K)$.
\end{lemma}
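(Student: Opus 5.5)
The plan is to strip off $\Sigma$ first and reduce to a purely combinatorial statement about sums of coordinate-block matrices, which I would then prove by induction on $K$ using Schur complements.

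\textbf{Removing $\Sigma$.} For each $i$, let $E_i\in\mathbb{R}^{p\times r_i}$ be the coordinate selection matrix of $O_i$. Then $R_i=\Sigma_{O_i}^T=\Sigma E_i$, since $\Sigma$ is symmetric. Hence $S=\Sigma M\Sigma$ with
\begin{equation*}
M=\sum_{i=1}^K t_iE_iC_iE_i^T ,
\end{equation*}
and $M$ is positive-definite because $\cup_i O_i=[p]$. It follows that $S^{-1}R_l=\Sigma^{-1}M^{-1}E_l$, so
\begin{equation*}
\|S^{-1}R_l\|_{\mathrm{op}}^2\le\lambda_{\min}(\Sigma)^{-2}\,\|M^{-1}E_l\|_{\mathrm{op}}^2 .
\end{equation*}
This is already stronger than the stated $\lambda$-dependence, since $\lambda_{\max}^6/\lambda_{\min}^8\ge\lambda_{\min}^{-2}$. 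It therefore suffices to prove
\begin{equation*}
\|M^{-1}E_l\|_{\mathrm{op}}\le F(K)\,\frac{\max\{1,\Omega_C^{K}\}}{t_l\min\{1,\mu_C^{K}\}} ,
\end{equation*}
for any family of subsets $O_1,\ldots,O_K$ of an index set whose union is the whole set. I would prove this by induction on $K$.

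\textbf{Two easy facts.} The first is a variational bound. For $u\in\mathbb{R}^{r_l}$,
\begin{equation*}
u^TE_l^TM^{-1}E_lu=\sup_x\{2u^Tx_{O_l}-x^TMx\}\le\sup_x\{2u^Tx_{O_l}-t_l\mu_C\|x_{O_l}\|_2^2\}=\frac{\|u\|_2^2}{t_l\mu_C}.
\end{equation*}
So $E_l^TM^{-1}E_l\preceq (t_l\mu_C)^{-1}I$. The same argument applies to any principal submatrix. The second fact concerns the obstacle: naively $\|M^{-1}E_l\|^2\le\|M^{-1}\|\,\|E_l^TM^{-1}E_l\|$, which only gives $1/(t_1t_l)$. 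The whole difficulty is to avoid this loss.

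\textbf{Induction via Schur complements (main step).} Let $T=\cup_{i\ge l}O_i$ and $U=T^c$. Every coordinate of $T$ lies in some $O_i$ with $t_i\ge t_l$, so $M_{TT}\succeq\mu_Ct_lI$. Block inversion gives
\begin{equation*}
(M^{-1})_{TT}=\bigl(M_{TT}-M_{TU}M_{UU}^{-1}M_{UT}\bigr)^{-1},\qquad (M^{-1})_{UT}=-M_{UU}^{-1}M_{UT}(M^{-1})_{TT}.
\end{equation*}
Since $O_l\subseteq T$, the first fact gives $\|(M^{-1})_{TT}\|\le(\mu_Ct_l)^{-1}$. It therefore remains to bound $\|M_{UU}^{-1}M_{UT}\|$ by a quantity independent of the $t_i$.

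Coordinates in $U$ are covered only by patterns $i<l$, so
\begin{equation*}
M_{UU}=\sum_{i<l}t_i(E_i)_U C_i (E_i)_U^T,\qquad M_{UT}=\sum_{i<l}t_i(E_i)_UC_i(E_i)_T^T .
\end{equation*}
Thus $\|M_{UU}^{-1}M_{UT}\|\le\Omega_C\sum_{i<l}t_i\|M_{UU}^{-1}(E_i)_U\|$. To handle each summand, I would write $(E_i)_UC_i(E_i)_U^T=E'_iC'_iE_i'^T$, where $E'_i$ selects $O_i\cap U$ inside $U$ and $C'_i$ is a principal submatrix of $C_i$, so $\mu_CI\preceq C'_i\preceq\Omega_CI$. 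Then $M_{UU}$ is a matrix of the same form with at most $l-1<K$ patterns covering $U$. The inductive hypothesis gives $\|M_{UU}^{-1}E'_i\|\lesssim\Omega_C^{K-1}/(t_i\mu_C^{K-1})$, and the factor $t_i$ cancels.

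Combining the two blocks of $M^{-1}E_l$ then gives the claimed bound, with one extra factor of $\Omega_C/\mu_C$ per induction level, which accounts for the $K$-th powers. The base case is the first fact (take $U=\emptyset$ when $l=1$, or when $K=1$).

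The main obstacle I expect is the bookkeeping in this inductive step. Restricting to $U$ changes the patterns, some of which may become empty, and the ordering of the $t_i$ must be preserved for the sub-system. The induction must also be set up for general selection matrices, with no $\Sigma$, so that it applies to $M_{UU}$.
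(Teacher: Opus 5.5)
Your proof is correct, and it takes a genuinely different route from the paper's.

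\textbf{The paper's route: split the patterns by weight.} The paper never factors out $\Sigma$. It splits the patterns at the threshold $2t_1$. The light patterns ($t_l<2t_1$) are handled directly via $S\succeq \tfrac{t_l}{2}\mu_C\lambda_{\min}^2(\Sigma)I$. For the heavy patterns it writes $S=A+\Sigma_N^T T\Sigma_N$, where $A$ puts every pattern at a weight comparable to $t_1$ and $T=\sum_{l>M}(t_l-t_1)E_l'C_l(E_l')^T$ carries the excess. Woodbury then gives $S^{-1}R_l=A^{-1}\Sigma_N^T(T^{-1}+\Sigma_NA^{-1}\Sigma_N^T)^{-1}T^{-1}E_l'$. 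The factor $t_1$ cancels through what is essentially the condition number of $A$. The induction is then applied to $T$, i.e.\ to the heavy patterns with $\Sigma=I$, using $t_l-t_1\ge t_l/2$.

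\textbf{Your route: split the coordinates.} The identity $S^{-1}R_l=\Sigma^{-1}M^{-1}E_l$ removes $\Sigma$ exactly. Block inversion over $T\cup U$ then reduces everything to the scale-free coupling $\|M_{UU}^{-1}M_{UT}\|$. Your induction is applied to the light patterns restricted to $U$, where the $t_i$ cancel pattern by pattern.

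\textbf{What each buys.} Both arguments lose one factor $\Omega_C/\mu_C$ per level, so they produce the same $K$-th powers. Yours avoids the doubling threshold. It also yields $\lambda_{\min}^{-2}(\Sigma)$ in place of $\lambda_{\max}^6(\Sigma)/\lambda_{\min}^8(\Sigma)$, which is strictly stronger. Since the lemma is later applied with $\hat\Sigma$ in place of $\Sigma$, this would slightly relax the eigenvalue-moment conditions there. The paper's sub-system, in exchange, consists of whole patterns with the original $C_l$. It needs none of your truncation bookkeeping: patterns $O_i\cap U$ possibly becoming empty, and $C_i$ replaced by principal submatrices.

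\textbf{One step needs a better justification.} The bound $\|(M^{-1})_{TT}\|_{\mathrm{op}}\le(\mu_Ct_l)^{-1}$ does not follow from $M_{TT}\succeq\mu_Ct_lI$, because the Schur complement subtracts $M_{TU}M_{UU}^{-1}M_{UT}\succeq 0$. Nor does it follow from your first fact for $O_l$ alone. Instead, rerun the variational argument with $E_T$ in place of $E_l$, using
\[
x^TMx\ge\sum_{i\ge l}t_ix_{O_i}^TC_ix_{O_i}\ge\mu_Ct_l\|x_T\|_2^2 .
\]
This holds precisely because $T$ is covered by the patterns $i\ge l$. Then conclude via
\[
\|M^{-1}E_l\|_{\mathrm{op}}\le\|M^{-1}E_T\|_{\mathrm{op}}\le\bigl(1+\|M_{UU}^{-1}M_{UT}\|_{\mathrm{op}}\bigr)\|(M^{-1})_{TT}\|_{\mathrm{op}} .
\]
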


\begin{proof}
    We induct on $K$. For $K=1$, we necessarily have $R_{1}=\Sigma$ and the statement follows, since
    \begin{align*}
        \|(t_{1}\Sigma C_{1}\Sigma)^{-1}\Sigma\|_\mathrm{op}^2 & = \frac{1}{t_{1}^2}\|\Sigma^{-1}C_{1}^{-1}\|_\mathrm{op}^2\\
        & \leq \frac{1}{t_{1}^2}\|\Sigma^{-1}\|_\mathrm{op}^2\|C_{1}^{-1}\|_\mathrm{op}^2\\
        & \leq \frac{1}{\mu_{C}^2t_{1}^2\lambda^2_{\min}(\Sigma)}.
    \end{align*}
    We now assume that the statement holds up to $K-1$ for all appropriate $\Sigma$ and consider the statement for $K$. For $i \in [K]$, define $E_{i} \in \mathbb{R}^{p \times r_{i}}$ as $E_{i}=\Sigma^{-1}R_{i}$, i.e., $E_{i}$ `selects' certain columns of $\Sigma$. This ensures that $E_iE_i^T$ is a diagonal matrix with ones on the diagonal entries corresponding to $O_i$ and zeroes elsewhere. Let $t_{M}$ be the largest element of $\{t_{i}\}_{i=1}^K$ such that $2t_{1} > t_{M}$. Necessarily, it holds that $M \geq 1$. For $l \in \{1,\ldots,M\}$, note that \begin{align*}
        S \succeq t_{1}\Sigma\left(E_{1}C_{1}E_{1}^T+\cdots+ E_{K}C_{K}E_{K}^T\right)\Sigma \succeq t_{1}\mu_{C}\Sigma\left(E_{1}E_{1}^T+\cdots+ E_{K}E_{K}^T\right)\Sigma \succeq \frac{t_{l}}{2}\mu_{C}\lambda_{\min}^2(\Sigma)I_{p},
    \end{align*} 
    where the final bound follows since $\cup_{i=1}^K O_i = [p]$. Therefore, the appropriate bound on $\|S^{-1}R_{l}\|_\mathrm{op}^2$ holds for these $l$. If $M = K$ then the result holds for all $l$. Therefore, it suffices to consider cases for which $M < K$. 
    We rewrite $S$ as 
    \begin{align*}
        S &= \sum_{l=1}^M t_{l}R_{l}C_{l}R_{l}^T+\sum_{l=M+1}^Kt_{1}R_{l}C_{l}R_{l}^T+\sum_{l=M+1}^KR_{l}C_{l}R_{l}^T(t_{l}-t_{1})\\
        & \equiv A + \sum_{l=M+1}^KR_{l}C_{l}R_{l}^T(t_{l}-t_{1}).
    \end{align*}
    For a set $Q\subseteq [p]$ and $k \in [|Q|]$ define $(Q)_{(k)}$ as the $k^{th}$ largest element of $Q$. Let $\cup_{l=M+1}^KO_{l} = N$ and $p' = |N|$ and for $l \in [K]\setminus [M]$ define $E_{l}'\in \mathbb{R}^{p'\times r_{l}}$ via $(E_{l}')_{jk}=\mathbbm{1}_{\{(N)_{(j)}=(O_{l})_{(k)}\}}$ for $j \in [p'], k \in [r_{l}]$. It holds that $\Sigma_{N}^TE_{l}'=R_{l}$ for $l \in \{M+1,\ldots,K\}$, i.e., $E_{l}'$ again `selects' the relevant columns of $\Sigma$. Additionally, it holds that $(K-M)I_{p'}\succeq \sum_{l=M+1}^KE_{l}'(E_{l}')^T \succeq I_{p'}$. In this notation, we rewrite $S$ as 
    \begin{align*}
        S & = A + \Sigma_{N}^T\left(\sum_{l=M+1}^K(t_{l}-t_{1})E_{l}'C_{l}(E_{l}')^T\right)\Sigma_{N}\\
        & \equiv A+\Sigma_{N}^TT\Sigma_{N}.
    \end{align*}
    By the Woodbury matrix identity, it holds that 
    \begin{equation*}
        S^{-1}=A^{-1}-A^{-1}\Sigma_{N}^T\left(T^{-1}+\Sigma_{N}A^{-1}\Sigma_{N}^T\right)^{-1}\Sigma_{N}A^{-1}.
    \end{equation*}
    Multiplying by $R_{l}$ for $l \in [K]\setminus [M]$ yields
    \begin{align*}
        S^{-1}R_{l} &= A^{-1}R_{l}-A^{-1}\Sigma_{N}^T\left(T^{-1}+\Sigma_{N}A^{-1}\Sigma_{N}^T\right)^{-1}\Sigma_{N}A^{-1}\Sigma_{N}^TE_{l}'\\
        &=A^{-1}\Sigma_{N}^T\left(T^{-1}+\Sigma_{N}A^{-1}\Sigma_{N}^T\right)^{-1}T^{-1}E_{l}'.
    \end{align*}
    By the submultiplicativity of the operator norm and since $\left(T^{-1}+\Sigma_{N}A^{-1}\Sigma_{N}^T\right)^{-1} \preceq \left(\Sigma_{N}A^{-1}\Sigma_{N}^T\right)^{-1}$, we have that
    \begin{align}\label{eq:squared operator penultimate bound}
        \|S^{-1}R_{l}\|_\mathrm{op} \leq \|A^{-1}\|_\mathrm{op}\|\Sigma_{N}^T\|_\mathrm{op}\|\left(\Sigma_{N}A^{-1}\Sigma_{N}^T\right)^{-1}\|_\mathrm{op}\|T^{-1}E_{l}'\|_\mathrm{op}.
    \end{align}
    We bound each of these terms in turn. Clearly, it holds that 
    \begin{equation}\label{eq:squared A bound}
        t_{1}\lambda_{\min}^2(\Sigma)\mu_{C}I_{p}\preceq A\preceq 2Kt_{1}\lambda_{\max}^2(\Sigma)\Omega_{C}I_{p},
    \end{equation}
    and that 
    \begin{equation}\label{eq: R^2 bound}
        \lambda_{\min}(\Sigma_{N}\Sigma_{N}^T) = \inf_{\substack{x \in \mathcal{S}^{p-1}\\ x_{N^c}=0}} x^T \Sigma \Sigma^T x \geq \lambda_{\min}^2(\Sigma).
    \end{equation}
    By the inductive hypothesis (noting that each $E_{l}'$ is formed from columns of $I_{p'}$ which has unit eigenvalues), we have that
    \begin{equation}\label{eq:squared inductive hypothesis}
        \|T^{-1}E_{l}'\|_\mathrm{op} \leq F(K)\frac{\max\{1,\Omega_{C}^{(K-1)}\}}{(t_{l}-t_{1})\min\{1,\mu_{C}^{(K-1)}\}}.
    \end{equation}
    Combining \eqref{eq:squared inductive hypothesis}, \eqref{eq: R^2 bound} and \eqref{eq:squared A bound} into \eqref{eq:squared operator penultimate bound} yields
    \begin{align*}
        \|S^{-1}R_{l}\|_\mathrm{op} &\leq \frac{1}{t_{1}\lambda_{\min}^2(\Sigma)\mu_{C}}\times\lambda_{\max}(\Sigma)\times\frac{2t_{1}K\lambda^2_{\max}(\Sigma)\Omega_{C}}{\lambda^2_{\min}(\Sigma)}\times F(K)\frac{\max\{1,\Omega_{C}^{(K-1)}\}}{(t_{l}-t_{1})\min\{1,\mu_{C}^{(K-1)}\}}\\
        &\leq F(K)\frac{\lambda^3_{\max}(\Sigma)}{\lambda_{\min}^4(\Sigma)}\frac{\max\{1,\Omega_{C}^{K}\}}{t_{l}\min\{1,\mu_{C}^{K}\}}.
    \end{align*}
    The result follows via induction. 
\end{proof}

Define
\begin{align*}
\hat{\lambda}_{\min} \equiv \min_{j\in[K]} \lambda_{\min} \left( \frac{X_{O_{j}}^T X_{O_{j}}}{n_{j}}\right), \quad\quad\quad \hat{\lambda}_{\max} \equiv \max_{j\in[K]} \lambda_{\max} \left( \frac{X_{O_{j}}^T X_{O_{j}}}{n_{j}}\right),
\end{align*} 
where $X_{O_{j}}$ is the observed design matrix in the $j^{th}$ missingness pattern.

In order to prove Theorem \ref{thm:Master OSS with estimating weights}, we prove a result with slightly weaker conditions on $\hat{\Sigma},\hat{\lambda}_{\min}, \hat{\lambda}_{\max}$. These are implied by the conditions of Theorem \ref{thm:Master OSS with estimating weights}.

\begin{theorem}\label{thm:Complicated Master OSS with estimating weights}
Assume the noise distribution satisfies $\mathbb{E}\left[\epsilon^8\right]^{\frac{1}{4}}\leq\kappa_{\epsilon}\sigma^2$, for some $\kappa_{\epsilon}\geq 1$. Further assume the distribution of covariates satisfies Assumptions \ref{assump:small-ball}, \ref{assump:subgauss} and \ref{assump:eigen} and that for some constant $F=F(\lambda_{+},\lambda_{-},C_{X},\chi,C,K,\kappa_{\epsilon})$ 
\begin{equation}\label{eq:Complicated OSS master covariance evals bound}
    \mathbb{E}\left[\frac{\max\{1,\lambda_{\max}(\hat{\Sigma})\}^{24}}{\min\{1,\lambda_{\min}(\hat{\Sigma})\}^{32}}\left(\max\left\{1,\frac{\hat{\lambda}_{\max}}{\lambda_{\min}^2(\hat{\Sigma})}\right\}\max\left\{1,\frac{\lambda_{\max}^2(\hat{\Sigma})}{\hat{\lambda}_{\min}}\right\}\right)^{8K}\right]\leq F.
\end{equation}
Also assume that the estimate of the covariance $\hat{\Sigma}$ is symmetric, positive-definite and independent of the labelled data used in \eqref{eq:OSS estimator definition initial}. Additionally, assume that for some $\zeta \!=\! \zeta(\{n_{k}\}_{k=1}^K,\{|L_{k}|\}_{k=1}^L,p,N)\!\leq\! 1$ 
\begin{equation}\label{eq:Complicated OSS master covariance deviation bound}
    \mathbb{E}\left[\frac{\lambda_{\max}^{8}(\hat{\Sigma})}{\min\{\lambda_{\min}^{12}(\hat{\Sigma}),1\}}\left(\max\left\{1,\frac{\hat{\lambda}_{\max}}{\lambda_{\min}^2(\hat{\Sigma})}\right\}\max\left\{1,\frac{\lambda_{\max}^2(\hat{\Sigma})}{\hat{\lambda}_{\min}}\right\}\right)^{2K}\left(\frac{\|\hat{\Sigma}-\Sigma\|_\mathrm{op}^{2}}{\zeta}+\frac{\|\hat{\Sigma}-\Sigma\|_\mathrm{op}^{8}}{\zeta^4}\right)\right] \leq F
\end{equation}
 and  
\begin{equation*}\label{eq:Complicated OSS master weights deviation bound}
    \max_{k \in [K]}\left\{\mathbb{E}\left[\left(\frac{D_{k}^*}{\hat{D}_{k}}\right)^{16}\right],\mathbb{E}\left[\left(\frac{\hat{D}_{k}}{D_{k}^*}\right)^{16}\right]\right\}\leq F.
\end{equation*}
For some $G=G(\lambda_{+},\lambda_{-},C_{X},\chi,C,K,\kappa_{\epsilon})$, it holds that
    \begin{equation*}
        \mathbb{E}\left[\|\hat{\beta}-\beta^*\|_{2}^2\right] \leq G\left(\sigma^2\sum_{i=1}^p\frac{1}{\alpha_{i}}+\|\beta^*\|_{2}^2\zeta\right).
    \end{equation*}
\end{theorem}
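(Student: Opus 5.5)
We will start from the closed form of the estimator. Writing $\hat W_k=\hat D_k\sum_{i\in\mathcal I_k}(X_i)_{O_k}(X_i)_{O_k}^T=\hat D_k n_k\hat\Sigma^{(k)}$, we have $\hat\beta=\hat S^{-1}\sum_k\hat P_k^T\hat D_k\sum_{i\in\mathcal I_k}(X_i)_{O_k}Y_i$ with $\hat S=\sum_k \hat P_k^T\hat W_k\hat P_k$. We then substitute $Y_i=(X_i)_{O_k}^TP_k\beta^*+\eta_i$, where $\eta_i=X_i^T\beta^*-(X_i)_{O_k}^TP_k\beta^*+\epsilon_i$ satisfies $\mathbb E[\eta_iX_{O_k}]=0$ and $\mathbb E\eta_i^2=d_k$. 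This gives the decomposition
\begin{equation*}
\hat\beta-\beta^*=\hat S^{-1}\sum_k\hat P_k^T\hat D_k\sum_{i\in\mathcal I_k}(X_i)_{O_k}\eta_i+\hat S^{-1}\sum_k\hat P_k^T\hat W_k(P_k-\hat P_k)\beta^* .
\end{equation*}
The first term is a variance term. The second is a bias term arising from covariance estimation; it vanishes when $\hat\Sigma=\Sigma$, since then $\hat S^{-1}\sum\hat P_k^T\hat W_k\hat P_k\beta^*=\beta^*$. We bound the second moment of each separately, conditioning on $\hat\Sigma$ and the weights, which are independent of the labelled data used in the fit. The weights are estimated on the same fold, so we use Cauchy--Schwarz/Hölder with the $16$th-moment control on $\hat D_k/D_k^*$ to replace $\hat D_k$ by $D_k^*$ at the cost of constants.

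\textbf{Bias term.} Note that $\hat P_k-P_k$ can be expressed through $\hat\Sigma_{O_kO_k}^{-1}$ and $\hat\Sigma-\Sigma$, so $\|\hat P_k-P_k\|_{\mathrm{op}}\lesssim\|\hat\Sigma-\Sigma\|_{\mathrm{op}}\lambda_{\max}(\hat\Sigma)/\min\{\lambda^2_{\min}(\hat\Sigma),1\}$. The key point is that $\|\hat S^{-1}\hat P_k^T\hat W_k\|_{\mathrm{op}}$ is $O(1)$ uniformly in the sample sizes: the factor $\hat D_kn_k$ in $\hat W_k$ cancels the $1/t_k$ in Lemma~\ref{lemma:squared matrix bound}. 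We will apply that lemma with $\hat\Sigma$ in place of $\Sigma$, $R_k=\hat\Sigma_{O_k}^T$, $t_k=\hat D_kn_k$, and $C_k=\hat\Sigma_{O_kO_k}^{-1}\hat\Sigma^{(k)}\hat\Sigma_{O_kO_k}^{-1}$. Its eigenvalues lie between $\hat\lambda_{\min}/\lambda^2_{\max}(\hat\Sigma)$ and $\hat\lambda_{\max}/\lambda^2_{\min}(\hat\Sigma)$, which is exactly where the products $\max\{1,\hat\lambda_{\max}/\lambda_{\min}^2(\hat\Sigma)\}\max\{1,\lambda^2_{\max}(\hat\Sigma)/\hat\lambda_{\min}\}$ to the power $2K$ in the hypotheses come from. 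Squaring, taking expectations, and using \eqref{eq:Complicated OSS master covariance deviation bound} then yields a contribution of order $K\|\beta^*\|_2^2\zeta$.

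\textbf{Variance term.} Conditionally on $\hat\Sigma$ and the design, the $\eta_i$ are not mean zero given $X_{O_k}$ (only uncorrelated with it), so we cannot simply condition on the design. Instead we split $\eta_i=\epsilon_i+\xi_i$. The $\epsilon$ part, conditional on all covariates, has covariance $\sigma^2\hat S^{-1}\sum_k\hat D_k^2\hat P_k^T\hat\Sigma^{(k)}n_k\hat P_k\hat S^{-1}$. Since $\hat D_k\le$ const and $\hat D_k\lesssim D_k^*$ (in moments), this is $\preceq\sigma^2\max_k(\hat D_k/D_k^*)\hat S^{-1}$ up to constants, with $D_k^*\le1$. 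For the $\xi$ part, $\mathbb E\xi_i^2=d_k-\sigma^2$, and $D_k^*d_k=\sigma^2$, so the effective noise variance per pattern, weighted by $D_k^{*2}$, is again at most $\sigma^2D_k^*$. We handle this term by computing $\mathbb E\|\cdot\|^2$ as a sum of independent mean-zero vectors: the cross terms vanish since $\mathbb E[\xi_iX_{O_k}]=0$ and $\hat P_k,\hat S$ are fixed given $\hat\Sigma$, except that $\hat S$ depends on the design. To deal with that dependence, we compare $\hat S$ to its population counterpart $S=\sum_kD_k^*n_kP_k^T\Sigma_{O_kO_k}P_k$ via the eigenvalue bounds $\hat\lambda_{\min},\hat\lambda_{\max}$, and use Hölder with the eighth-moment assumption on $\epsilon$ and sub-Gaussianity to control fourth moments of $\xi_iX_{O_k}$. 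Either way, the term reduces to $\sigma^2\,\mathbb E\,\mathrm{Tr}(S^{-1})$ times constants.

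\textbf{Main obstacle.} The hardest step is showing $\mathrm{Tr}(S^{-1})\lesssim\sum_{i=1}^p1/\alpha_i$ with constants depending only on $K$ and the eigenvalue bounds. We would write $S=\Sigma(\sum_kD_k^*n_kE_k\Sigma_{O_kO_k}^{-1}E_k^T)\Sigma$ and bound the inner matrix below using $\Sigma_{O_kO_k}^{-1}\succeq\lambda_+^{-1}I$. This gives $S\succeq\lambda_+^{-1}\Sigma\,\mathrm{diag}(\alpha)\,\Sigma$, and hence $\mathrm{Tr}(S^{-1})\le\lambda_+\mathrm{Tr}(\Sigma^{-1}\mathrm{diag}(\alpha)^{-1}\Sigma^{-1})\le\lambda_+\lambda_-^{-2}\sum_i1/\alpha_i$. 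That part is easy. The real difficulty is transferring this from $S$ to the random $\hat S$ with moment control, which is why the elaborate mixed moment conditions appear. We expect to need a uniform-in-sample-size bound of the form $\hat S^{-1}\preceq\Gamma\, S^{-1}$, where $\Gamma$ is a polynomial in $\lambda_{\max}(\hat\Sigma)$, $1/\lambda_{\min}(\hat\Sigma)$, $\hat\lambda_{\max}$, $1/\hat\lambda_{\min}$ and $\max_k\hat D_k/D_k^*$. This bound has to be proven with Lemma~\ref{lemma:squared matrix bound}-type arguments so as to absorb $\hat P_k\neq P_k$, after which Hölder with \eqref{eq:Complicated OSS master covariance evals bound} finishes.
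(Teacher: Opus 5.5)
Your treatment of the bias term (Lemma~\ref{lemma:squared matrix bound} with $t_k=\hat D_kn_k$, so that $t_k$ cancels) is essentially the paper's. The variance term, however, has a genuine gap.

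Reducing it to $\sigma^2\,\mathbb E\,\mathrm{Tr}(\hat S^{-1})$ requires a covariance computed conditionally on the design and the weights. That is legitimate only for the $\epsilon$-part, and only if $\hat D_k$ is independent of the noise. Neither condition holds here.
\begin{itemize}
\item The theorem assumes nothing about $\hat D_k$ beyond moment bounds on $\hat D_k/D_k^*$. In the intended applications $\hat D_k$ is built from residuals of the very labelled sample that enters $\hat\beta$.
\item $\xi_i$ is merely uncorrelated with $(X_i)_{O_k}$; in general $\mathbb E[\xi_i\mid (X_i)_{O_k}]\neq 0$. Meanwhile $\hat S$ is a function of that same design.
\end{itemize}
So you must bound the random operator multiplying $\sum_{i\in\mathcal I_k}(X_i)_{O_k}\eta_i$ by random scalars and then separate by Cauchy--Schwarz/H\"older. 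For this, the per-pattern bound $\|\hat S^{-1}\hat P_k^T\|_{\mathrm{op}}\lesssim 1/t_k$ is too crude. In the simple monotonic pattern the complete cases alone would then give a bound of order $\sigma^2p/n_1$, instead of $\sigma^2\{p_0/n_1+(p-p_0)/(n_1+n_2D_2^*)\}$.

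The remedy you propose, $\hat S^{-1}\preceq\Gamma S^{-1}$ with $\Gamma$ free of the sample sizes, is false. Take $p=2$, $p_0=1$, $\Sigma=I_2$, $\hat\Sigma$ equal to $I_2$ except for off-diagonal entries $\varepsilon$, $\hat\Sigma^{(k)}=\Sigma_{O_kO_k}$, and oracle weights.
\begin{itemize}
\item Then $S=\mathrm{diag}(n_1+m,n_1)$ with $m=n_2D_2^*$, but $\hat S=n_1I_2+muu^T$ with $u=(1,\varepsilon)^T$.
\item As $m\to\infty$, $e_1^T\hat S^{-1}e_1\to\varepsilon^2/\{n_1(1+\varepsilon^2)\}$, while $e_1^TS^{-1}e_1=1/(n_1+m)\to 0$.
\item Every quantity you allow in $\Gamma$ stays bounded throughout.
\end{itemize}
Even a valid Loewner bound would not control $V^T\hat S^{-2}V$, since squaring is not operator monotone.

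Conversely, where conditioning is legitimate, no comparison with $S$ is needed. Indeed $\hat S=\hat\Sigma(\sum_kt_kE_kC_kE_k^T)\hat\Sigma\succeq\mu_C\hat\Sigma\,\mathrm{diag}(\hat\alpha)\hat\Sigma$ with $\hat\alpha_i=\sum_kt_k\mathbbm{1}_{\{i\in O_k\}}$. This gives $\mathrm{Tr}(\hat S^{-1})\le\mu_C^{-1}\lambda_{\min}^{-2}(\hat\Sigma)\sum_i\hat\alpha_i^{-1}$ at once. So the step you flagged as the main obstacle is not where the difficulty lies.

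The missing idea is a modality-wise operator bound, combined with a separation argument instead of conditioning.
\begin{itemize}
\item The paper centres only at $\mathbb E[\hat V_i\gamma_i\mid\hat\Sigma]$. It writes everything else as $\delta_i=\hat\Sigma^T_{O_{\Xi(i)}}\tau_i=\sum_l\hat\Sigma^T_{L_l}\tau_i^{(l)}$, where $\mathbb E[\tau_i\mid\hat\Sigma]=0$ needs only uncorrelatedness.
\item $\hat\Sigma^T_{L_l}$ is a column block of $\hat\Sigma^T_{O_j}$ for every pattern $j$ with $L_l\subseteq O_j$. Lemma~\ref{lemma:squared matrix bound} therefore gives $\|\hat S^{-1}\hat\Sigma^T_{L_l}\|_{\mathrm{op}}\lesssim\Gamma/\hat\alpha^{(L_l)}$, with $\hat\alpha^{(L_l)}=\max_{j:L_l\subseteq O_j}\hat D_jn_j$. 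This is the largest effective sample size among patterns observing $L_l$, not that of the pattern the observation came from.
\item Hence $\|\hat S^{-1}\sum_i\hat D_{\Xi(i)}\delta_i\|_2^2\lesssim\sum_{k,l}\hat D_k^2\Gamma^2(\hat\alpha^{(L_l)})^{-2}\|\sum_{i\in\mathcal I_k}\tau_i^{(l)}\|_2^2$.
\item Cauchy--Schwarz separates the random prefactor from the noise sum. The prefactor is controlled by the moment hypotheses on $\hat D_k/D_k^*$ and \eqref{eq:Complicated OSS master covariance evals bound}. The noise sum satisfies $\mathbb E\|\sum_{i\in\mathcal I_k}\tau_i^{(l)}\|_2^4\lesssim n_k^2(\sigma^2|L_l|/D_k^*)^2$.
\item Summing yields $\sigma^2\sum_l|L_l|\sum_{k:L_l\subseteq O_k}D_k^*n_k/(\alpha^{(L_l)})^2\lesssim\sigma^2\sum_i1/\alpha_i$.
\end{itemize}
Without this refinement, your variance bound either rests on an invalid conditioning or loses the rate.
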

\begin{proof}
We define some additional notation.
For some $P=P_{j}$, $O_{j}=O\subseteq [p]$ and samples $(X_{O},Y)$, which are independent of $\hat{\Sigma}$, define $\gamma = X_{O}^T(P-\hat{P})\beta^*$ and $\hat{V} = \hat{P}^TX_{O}$. It holds  that
\begin{align*}
    Y &= X^T\beta^*+\epsilon = X_{O}^TP\beta^*+X^T\beta^*-X_{O}^TP\beta^*+\epsilon\\
    &= X_{O}^T\hat{P}\beta^*+X_{O}^T(P-\hat{P})\beta^*+X^T\beta^*-X_{O}^TP\beta^*+\epsilon\\
    &\equiv \hat{V}^T\beta^* + \gamma + \eta.
\end{align*}
 For $i \in [n_{\mathcal{L}}]$, the above yields 
\begin{align*}
    \hat{V}_{i}Y_{i} &= \hat{V}_{i}\hat{V}_{i}^T\beta^*+\hat{V}_{i}\gamma_{i}+\hat{V}_{i}\eta_{i}\\
    & = \hat{V}_{i}\hat{V}_{i}^T\beta^*+\mathbb{E}\left[\hat{V}_{i}\gamma_{i}|\hat{\Sigma}\right]+\hat{V}_{i}\gamma_{i}-\mathbb{E}\left[\hat{V}_{i}\gamma_{i}|\hat{\Sigma}\right]+\hat{V}_{i}\eta_{i}\\
    &\equiv \hat{V}_{i}\hat{V}_{i}^T\beta^*+\mathbb{E}\left[\hat{V}_{i}\gamma_{i}|\hat{\Sigma}\right] + \delta_{i}.
\end{align*}
Note that by the triangle inequality and the submultiplicativity of the operator norm, we have that
\begin{align}\label{eq:P hat P upper bound}
    \|\hat{P}-P\|_\mathrm{op} & = \|\left(\hat{\Sigma}_{OO}\right)^{-1}\hat{\Sigma}_{O}-\Sigma_{OO}^{-1}\Sigma_{O}\|_\mathrm{op}\nonumber\\
    & = \|\left(\hat{\Sigma}_{OO}\right)^{-1}\hat{\Sigma}_{O}-\Sigma_{OO}^{-1}\hat{\Sigma}_{O}+\Sigma_{OO}^{-1}\hat{\Sigma}_{O}-\Sigma_{OO}^{-1}\Sigma_{O}\|_\mathrm{op}\nonumber\\
    &\leq \|\left(\hat{\Sigma}_{OO}\right)^{-1}\hat{\Sigma}_{O}-\Sigma_{OO}^{-1}\hat{\Sigma}_{O}\|_\mathrm{op}+\|\Sigma_{OO}^{-1}\hat{\Sigma}_{O}-\Sigma_{OO}^{-1}\Sigma_{O}\|_\mathrm{op}\nonumber\\
    &\leq \|\left(\hat{\Sigma}_{OO}\right)^{-1}-\Sigma_{OO}^{-1}\|_\mathrm{op}\|\hat{\Sigma}_{O}\|_\mathrm{op}+\|\Sigma_{OO}^{-1}\|_\mathrm{op}\|\hat{\Sigma}_{O}-\Sigma_{O}\|_\mathrm{op}\nonumber\\
    &=  \|\Sigma_{OO}^{-1}\left(\Sigma_{OO}-\hat{\Sigma}_{OO}\right)\left(\hat{\Sigma}_{OO}\right)^{-1}\|_\mathrm{op}\|\hat{\Sigma}_{O}\|_\mathrm{op}+\|\Sigma_{OO}^{-1}\|_\mathrm{op}\|\hat{\Sigma}_{O}-\Sigma_{O}\|_\mathrm{op}\nonumber\\
    &\leq \|\Sigma_{OO}^{-1}\|_\mathrm{op}\|\hat{\Sigma}_{O}\|_\mathrm{op}\|\left(\hat{\Sigma}_{OO}\right)^{-1}\|_\mathrm{op}\|\Sigma_{OO}-\hat{\Sigma}_{OO}\|_\mathrm{op}+\|\Sigma_{OO}^{-1}\|_\mathrm{op}\|\hat{\Sigma}_{O}-\Sigma_{O}\|_\mathrm{op}\nonumber\\
    &\leq \frac{2}{\lambda_{\min}(\Sigma)}\|\hat{\Sigma}-\Sigma\|_\mathrm{op}\frac{\lambda_{\max}(\hat{\Sigma})}{\lambda_{\min}(\hat{\Sigma})}.
\end{align}
Defining
\begin{equation*}
    \hat{B}=\sum_{i=1}^{n_{\mathcal{L}}}\hat{w}_{i}\hat{V}_{i}\hat{V}_{i}^T,
\end{equation*}
we decompose the error as follows 
\begin{align}\label{eq:OSS structured error decomposition}
    \|\hat{\beta}-\beta^*\|_{2} &= \left\|\hat{B}^{-1}\sum_{i=1}^{n_{\mathcal{L}}}\hat{w}_{i}\mathbb{E}\left[\hat{V}_{i}\gamma_{i}|\hat{\Sigma}\right]+\hat{B}^{-1}\sum_{i=1}^{n_{\mathcal{L}}}\hat{w}_{i}\delta_{i}\right\|_{2}\nonumber\\
    & \leq \left\|\hat{B}^{-1}\sum_{i=1}^{n_{\mathcal{L}}}\hat{w}_{i}\mathbb{E}\left[\hat{V}_{i}\gamma_{i}|\hat{\Sigma}\right]\right\|_{2}+\left\|\hat{B}^{-1}\sum_{i=1}^{n_{\mathcal{L}}}\hat{w}_{i}\delta_{i}\right\|_{2}\nonumber\\
    & \equiv \circled{1}+\circled{2}.
\end{align}
We control both of these terms in turn. Throughout $F=F(\lambda_{+},\lambda_{-},C_{X},\chi,C,\kappa_{\epsilon},K)$ denotes a constant depending only on $\lambda_{+},\lambda_{-},C_{X},\chi,C,\kappa_{\epsilon}$ and $K$ whose value may change from line to line. Since our covariates are sub-Gaussian, they are satisfy $(8,2,C_{8})-$hypercontractivity for some $C_{8}=C_{8}(C_{X},\lambda_{-})$
\subsubsection*{Controlling $\circled{1}$}
Applying Lemma \ref{lemma:squared matrix bound} with $C_{j} = \left(\hat{\Sigma}_{O_{j}O_{j}}\right)^{-1}\frac{X_{O_{j}}^TX_{O_{j}}}{n_{j}}\left(\hat{\Sigma}_{O_{j}O_{j}}\right)^{-1}$, $t_{j}=n_{j}\hat{D}_{j}$, $R_{j}=\hat{\Sigma}_{O_{j}}^T$, $ \mu_{C}=\frac{\hat{\lambda}_{\min}}{\lambda_{\max}^2(\hat{\Sigma})}$ and $  \Omega_{C} = \frac{\hat{\lambda}_{\max}}{\lambda_{\min}^2(\hat{\Sigma})}$ yields that for $j \in [K]$
\begin{equation}\label{eq:applying squared matrix bound 1}
    \|\hat{B}^{-1}\hat{\Sigma}_{O_{j}}^T\|_\mathrm{op} \leq F\frac{\lambda_{\max}^3(\hat{\Sigma})}{\lambda_{\min}^4(\hat{\Sigma})}\frac{1}{n_{j}\hat{D}_{j}}\frac{\max\{1,\Omega_{C}^{K}\}}{\min\{1,\mu_{C}^{K}\}}.
\end{equation}
We bound the first term in the \eqref{eq:OSS structured error decomposition} as follows
\begin{align*}
    &\left\|\hat{B}^{-1}\sum_{i=1}^{n_{\mathcal{L}}}\hat{w}_{i}\mathbb{E}\left[\hat{V}_{i}\gamma_{i}|\hat{\Sigma}\right]\right\|_{2}\\
    &\leq \sum_{k=1}^K\left\|\hat{B}^{-1}\sum_{i\in\mathcal{I}_{k}}\hat{w}_{i}\mathbb{E}\left[\hat{V}_{i}\gamma_{i}|\hat{\Sigma}\right]\right\|_{2}\\
    &\leq \sum_{k=1}^K\left\|\hat{D}_{k}\hat{B}^{-1}\hat{P}_{k}^T\sum_{i\in\mathcal{I}_{k}}\Sigma_{O_{k}O_{k}}(P_{k}-\hat{P}_{k})\beta^*\right\|_{2}\\
    &\leq \sum_{k=1}^K \hat{D}_{k}\|\hat{B}^{-1}\hat{P}_{k}^T\|_\mathrm{op}\sum_{i \in \mathcal{I}_{k}}\|\Sigma_{O_{k}O_{k}}(P_{k}-\hat{P}_{k})\beta^*\|_{2}\\
    &\leq \sum_{k=1}^K \hat{D}_{k}\|\hat{B}^{-1}\hat{\Sigma}_{O_{k}}^T\|_\mathrm{op}\|\hat{\Sigma}_{O_{k}O_{k}}^{-1}\|_\mathrm{op}\sum_{i \in \mathcal{I}_{k}}\|\Sigma_{O_{k}O_{k}}\|_\mathrm{op}\|P_{k}-\hat{P}_{k}\|_\mathrm{op}\|\beta^*\|_{2}\\
    &\leq \sum_{k=1}^K \hat{D}_{k}F\frac{\lambda_{\max}^3(\hat{\Sigma})}{\lambda_{\min}^5(\hat{\Sigma})n_{k}\hat{D}_{k}}\left(\max\left\{1,\frac{\hat{\lambda}_{\max}}{\lambda_{\min}^2(\hat{\Sigma})}\right\}\max\left\{1,\frac{\lambda_{\max}^2(\hat{\Sigma})}{\hat{\lambda}_{\min}}\right\}\right)^{K}\sum_{i \in \mathcal{I}_{k}}\lambda_{\max}(\Sigma)\|P_{k}-\hat{P}_{k}\|_\mathrm{op}\|\beta^*\|_{2}\\
    &\leq \sum_{k=1}^K F\frac{\lambda_{\max}^3(\hat{\Sigma})}{\lambda_{\min}^5(\hat{\Sigma})n_{k}}\left(\max\left\{1,\frac{\hat{\lambda}_{\max}}{\lambda_{\min}^2(\hat{\Sigma})}\right\}\max\left\{1,\frac{\lambda_{\max}^2(\hat{\Sigma})}{\hat{\lambda}_{\min}}\right\}\right)^{K}n_{k}\|\hat{\Sigma}-\Sigma\|_\mathrm{op}\frac{\lambda_{\max}(\hat{\Sigma})}{\lambda_{\min}(\hat{\Sigma})}\|\beta^*\|_{2}\\
    &= F\frac{\lambda_{\max}^4(\hat{\Sigma})}{\lambda_{\min}^6(\hat{\Sigma})}\left(\max\left\{1,\frac{\hat{\lambda}_{\max}}{\lambda_{\min}^2(\hat{\Sigma})}\right\}\max\left\{1,\frac{\lambda_{\max}^2(\hat{\Sigma})}{\hat{\lambda}_{\min}}\right\}\right)^{K}\|\hat{\Sigma}-\Sigma\|_\mathrm{op}\|\beta^*\|_{2},
\end{align*}
where the first and third inequalities follow from the triangle inequality; the fourth inequality follows from the submultiplicativity of the operator norm; the fifth inequality follows from \eqref{eq:applying squared matrix bound 1}; the sixth inequality follows from \eqref{eq:P hat P upper bound}. Squaring the above display and taking expectations yields
\begin{align*}
    &\mathbb{E}\left[\left\|\hat{B}^{-1}\sum_{i=1}^{n_{\mathcal{L}}}\hat{w}_{i}\mathbb{E}\left[\hat{V}_{i}\gamma_{i}|\hat{\Sigma}\right]\right\|_{2}^2\right]\\
    &\leq F\mathbb{E}\left[\frac{\lambda_{\max}^{8}(\hat{\Sigma})}{\lambda_{\min}^{12}(\hat{\Sigma})}\left(\max\left\{1,\frac{\hat{\lambda}_{\max}}{\lambda_{\min}^2(\hat{\Sigma})}\right\}\max\left\{1,\frac{\lambda_{\max}^2(\hat{\Sigma})}{\hat{\lambda}_{\min}}\right\}\right)^{2K}\|\hat{\Sigma}-\Sigma\|_\mathrm{op}^2\right]\|\beta^*\|_{2}^2\\
    &\leq F\zeta\|\beta^*\|_{2}^2,
\end{align*}
where the final line follows from \eqref{eq:Complicated OSS master covariance deviation bound}. 

\subsubsection*{Controlling $\circled{2}$}
We first define, for $l \in [L]$, the random variable $\hat{\alpha}^{(L_{l})} = \max_{k \in [K]}\{\hat{D}_{k}n_{k}\mathbbm{1}_{\{L_{l}\cap O_{k}\neq \emptyset\}}\}$ and its population counterpart $\alpha^{(L_{l})} = \max_{k \in [K]}\{D_{k}^*n_{k}\mathbbm{1}_{\{L_{l}\cap O_{k}\neq \emptyset}\}\}$. Note that \eqref{eq:applying squared matrix bound 1} implies
\begin{equation}\label{eq:applying squared matrix bound to modalities}
    \|\hat{B}^{-1}\hat{\Sigma}^T_{L_{l}}\|_\mathrm{op} \leq F\frac{\lambda_{\max}^3(\hat{\Sigma})}{\lambda_{\min}^4(\hat{\Sigma})}\frac{1}{\hat{\alpha}^{(L_{l})}}\frac{\max\{1,\Omega_{C}^{K}\}}{\min\{1,\mu_{C}^{K}\}}.
\end{equation}
Recall that $\delta_{i}=\hat{V}_{i}\gamma_{i}-\mathbb{E}\left[\hat{V}_{i}\gamma_{i}|\hat{\Sigma}\right]+\hat{V}_{i}\eta_{i}$. For $i \in [n_{\mathcal{L}}]$, define 
\begin{equation*}
    \tau_{i}=(\hat{\Sigma}_{O_{\Xi(i)}O_{\Xi(i)}})^{-1}\left((X_{i})_{O_{\Xi(i)}}\gamma_{i}-\mathbb{E}\left[\gamma_{i}(X_{i})_{O_{\Xi(i)}}|\hat{\Sigma}\right]+(X_{i})_{O_{\Xi(i)}}\eta_{i}\right) \in \mathbb{R}^{|O_{\Xi(i)}|},
\end{equation*} such that $\delta_{i} = \hat{\Sigma}_{O_{\Xi(i)}}^T\tau_{i}.$ Additionally, for $l \in [L]$, define $\tau_{i}^{(l)} \in \mathbb{R}^{|L_{l}| }$ such that $\tau_{i}^{(l)}= 0$ if $O_{\Xi(i)}\cap L_{l}=\emptyset$ and $\delta_{i}=\sum_{l=1}^L\hat{\Sigma}^T_{L_{l}}\tau_{i}^{(l)}$ (where $\tau_{i}^{(l)}$ is the component of $\tau_{i}$ corresponding to modality $l$). 
It follows that
\begin{align*}
    &\left\|\hat{B}^{-1}\sum_{i=1}^{n_{\mathcal{L}}}\hat{w}_{i}\delta_{i}\right\|_{2}^2 \\
    &\leq F\sum_{k=1}^K\hat{D}_{k}^2\left\|\sum_{i \in \mathcal{I}_{k}}\hat{B}^{-1}\delta_{i}\right\|_{2}^2\\
    &\leq F\sum_{k=1}^K\hat{D}_{k}^2\left\|\sum_{l=1}^L\sum_{i \in \mathcal{I}_{k}}\hat{B}^{-1}\hat{\Sigma}^T_{L_{l}}\tau_{i}^{(l)}\right\|_{2}^2\\
    &\leq F\sum_{k=1}^K\hat{D}_{k}^2\sum_{l=1}^L\left\|\sum_{i \in \mathcal{I}_{k}}\hat{B}^{-1}\hat{\Sigma}^T_{L_{l}}\tau_{i}^{(l)}\right\|_{2}^2\\
    &\leq F\sum_{k=1}^K\hat{D}_{k}^2\sum_{l=1}^L\lambda_{\max}\left(\hat{\Sigma}_{L_{l}}\hat{B}^{-2}\hat{\Sigma}^T_{L_{l}}\right)\left\|\sum_{i \in \mathcal{I}_{k}}\tau_{i}^{(l)}\right\|_{2}^2\\
    &\leq F\sum_{k=1}^K\hat{D}_{k}^2\sum_{l=1}^L\frac{\lambda_{\max}^6(\hat{\Sigma})}{\lambda_{\min}^8(\hat{\Sigma})(\hat{\alpha}^{(L_{l})})^2}\left(\max\left\{1,\frac{\hat{\lambda}_{\max}}{\lambda_{\min}^2(\hat{\Sigma})}\right\}\max\left\{1,\frac{\lambda_{\max}^2(\hat{\Sigma})}{\hat{\lambda}_{\min}}\right\}\right)^{2K}\left\|\sum_{i \in \mathcal{I}_{k}}\tau_{i}^{(l)}\right\|_{2}^2,
\end{align*}
where the first and third inequalities follow by the triangle inequality and the fact that for $q \in \mathcal{N}$ and $a_{1},\cdots, a_{q} \in \mathbb{R}, (a_{1}+\ldots + a_{q})^2 \leq q(a_{1}^2+\ldots+ a_{q}^2)$; the final line follows from \eqref{eq:applying squared matrix bound to modalities}. Therefore, by Cauchy-Schwarz, it holds that
\begin{align}\label{eq:pre tau and cov bounds}
&\mathbb{E}\left[\left\|\hat{B}^{-1}\sum_{i=1}^{n_{\mathcal{L}}}\hat{w}_{i}\delta_{i}\right\|_{2}^2\right]\nonumber\\&\leq F\sum_{k=1}^K\sum_{l=1}^L\mathbb{E}\left[\hat{D}_{k}^4\frac{\lambda_{\max}^{12}(\hat{\Sigma})}{\lambda_{\min}^{16}(\hat{\Sigma})(\hat{\alpha}^{(L_{l})})^4}\left(\max\left\{1,\frac{\hat{\lambda}_{\max}}{\lambda_{\min}^2(\hat{\Sigma})}\right\}\max\left\{1,\frac{\lambda_{\max}^2(\hat{\Sigma})}{\hat{\lambda}_{\min}}\right\}\right)^{4K}\right]^{\frac{1}{2}}\mathbb{E}\left[\left\|\sum_{i \in \mathcal{I}_{k}}\tau_{i}^{(l)}\right\|_{2}^4\right]^{\frac{1}{2}}.
\end{align}
We now control the second factor in the above display. For some $t \in \mathcal{I}_{k}$, it holds that
\begin{align}\label{eq:tau fourth moment bound}
    \mathbb{E}\left[\left\|\sum_{i \in \mathcal{I}_{k}}\tau_{i}^{(l)}\right\|_{2}^4\right]&=\sum_{i,j,k,r}\mathbb{E}\left[\mathbb{E}\left[\langle\tau_{i}^{(l)},\tau_{j}^{(l)}\rangle\langle\tau_{k}^{(l)},\tau_{r}^{(l)}\rangle\big|\hat{\Sigma}\right]\right]\nonumber\\
    &=\sum_{i}\mathbb{E}\left[\|\tau_{i}^{(l)}\|_{2}^4\right]+4\sum_{i<j}\mathbb{E}\left[\langle\tau_{i}^{(l)},\tau_{j}^{(l)}\rangle^2\right]+2\sum_{i<j}\mathbb{E}\left[\|\tau_{i}^{(l)}\|_{2}^2\|\tau_{j}^{(l)}\|_{2}^2\right]\nonumber\\
    &\leq \sum_{i}\mathbb{E}\left[\|\tau_{i}^{(l)}\|_{2}^4\right]+6\sum_{i<j}\mathbb{E}\left[\|\tau_{i}^{(l)}\|_{2}^2\|\tau_{j}^{(l)}\|_{2}^2\right]\nonumber\\
    &\leq \sum_{i}\mathbb{E}\left[\|\tau_{i}^{(l)}\|_{2}^4\right]+6\sum_{i<j}\mathbb{E}\left[\|\tau_{i}^{(l)}\|_{2}^4\right]^{\frac{1}{2}}\mathbb{E}\left[\|\tau_{j}^{(l)}\|_{2}^4\right]^{\frac{1}{2}}\nonumber\\
    &\leq 7n_{k}^2\mathbb{E}\left[\|\tau_{t}^{(l)}\|_{2}^4\right],
\end{align}
where the first line follows by the tower law; the second line follows from the fact that the $\tau_i$ are conditionally independent given $\hat{\Sigma}$ and $\mathbb{E}(\tau_i | \hat{\Sigma})=0$ and the third and fourth lines follow from Cauchy-Schwarz. Since for real $a,b,c$ we have $(a+b+c)^4 \leq 3^3(a^4+b^4+c^4)$, it holds for any $r \in [p]$ that
\begin{align}\label{eq:overall tau bound}
     &\mathbb{E}\left[\langle  e_{r},\tau_{i}\rangle^4\right]\nonumber\\
     & \leq 3^3\left(\mathbb{E}\left[\langle  e_{r},(\hat{\Sigma}_{O_{\Xi(i)}O_{\Xi(i)}})^{-1}(X_{i})_{O_{\Xi(i)}}\gamma_{i}\rangle^4\right]+\mathbb{E}\left[\langle  e_{r},(\hat{\Sigma}_{O_{\Xi(i)}O_{\Xi(i)}})^{-1}\mathbb{E}\left[(X_{i})_{O_{\Xi(i)}}\gamma_{i}|\hat{\Sigma}\right]\rangle^4\right]\right.\nonumber\\
     & \hspace{100pt} \left.+\mathbb{E}\left[\langle  e_{r},(\hat{\Sigma}_{O_{\Xi(i)}O_{\Xi(i)}})^{-1}(X_{i})_{O_{\Xi(i)}}\eta_{i}\rangle^4\right]\right).
\end{align}
We now focus on controlling the contribution to the error for each of the terms in the above display. Recalling that $\eta_{i} = \epsilon_{i}+m_{i}$, we further decompose the final term of \eqref{eq:overall tau bound} as follows
\begin{align}\label{eq:eta decomposed bound}
    &\mathbb{E}\left[\langle  e_{r},(\hat{\Sigma}_{O_{\Xi(i)}O_{\Xi(i)}})^{-1}(X_{i})_{O_{\Xi(i)}}\eta_{i}\rangle^4\right]\nonumber\\
    &\leq 2^3\left(\mathbb{E}\left[\langle  e_{r},(\hat{\Sigma}_{O_{\Xi(i)}O_{\Xi(i)}})^{-1}(X_{i})_{O_{\Xi(i)}}\epsilon_{i}\rangle^4\right]+\mathbb{E}\left[\langle  e_{r},(\hat{\Sigma}_{O_{\Xi(i)}O_{\Xi(i)}})^{-1}(X_{i})_{O_{\Xi(i)}}m_{i}\rangle^4\right]\right).
\end{align}
We control each of the terms in \eqref{eq:eta decomposed bound} in turn. The first term can be bounded by
\begin{align}\label{eq:epsilon decomposed bound}
    \mathbb{E}\left[\langle  e_{r},(\hat{\Sigma}_{O_{\Xi(i)}O_{\Xi(i)}})^{-1}(X_{i})_{O_{\Xi(i)}}\epsilon_{i}\rangle^4\right]&\leq \mathbb{E}\left[\mathbb{E}\left[\langle  (\hat{\Sigma}_{O_{\Xi(i)}O_{\Xi(i)}})^{-1}e_{r},(X_{i})_{O_{\Xi(i)}}\rangle^8\big|\hat{\Sigma}\right]\right]^{\frac{1}{2}}\mathbb{E}\left[\epsilon_{i}^8\right]^{\frac{1}{2}}\nonumber\\
    &\leq F\sigma^4\mathbb{E}\left[\mathbb{E}\left[\langle  (\hat{\Sigma}_{O_{\Xi(i)}O_{\Xi(i)}})^{-1}e_{r},(X_{i})_{O_{\Xi(i)}}\rangle^2\big|\hat{\Sigma}\right]^4\right]^{\frac{1}{2}}\nonumber\\
    &\leq F\sigma^4\mathbb{E}\left[\|(\hat{\Sigma}_{O_{\Xi(i)}O_{\Xi(i)}})^{-1}\|_{\mathrm{op}}^{8}\right]\nonumber\\
    &\leq F\sigma^4,
\end{align}
where the first line follows from Cauchy-Schwarz and the tower law, the second line follows from hypercontractivity and the assumptions of the Theorem; the final line follows from \eqref{eq:Complicated OSS master covariance evals bound}. We bound the second term as follows
\begin{align}\label{eq:m decomposed bound}
    \mathbb{E}\left[\langle  e_{r},(\hat{\Sigma}_{O_{\Xi(i)}O_{\Xi(i)}})^{-1}(X_{i})_{O_{\Xi(i)}}m_{i}\rangle^4\right] &\leq \mathbb{E}\left[\mathbb{E}\left[\langle  (\hat{\Sigma}_{O_{\Xi(i)}O_{\Xi(i)}})^{-1}e_{r},(X_{i})_{O_{\Xi(i)}}\rangle^8\big|\hat{\Sigma}\right]\right]^{\frac{1}{2}}\mathbb{E}\left[m_{i}^8\right]^{\frac{1}{2}}\nonumber\\
    &\leq F\mathbb{E}\left[\mathbb{E}\left[\langle  (\hat{\Sigma}_{O_{\Xi(i)}O_{\Xi(i)}})^{-1}e_{r},(X_{i})_{O_{\Xi(i)}}\rangle^2\big|\hat{\Sigma}\right]^4\right]^{\frac{1}{2}}\mathbb{E}\left[m_{i}^2\right]^{2}\nonumber\\
    &\leq F\mathbb{E}\left[\|  (\hat{\Sigma}_{O_{\Xi(i)}O_{\Xi(i)}})^{-1}\|_{\mathrm{op}}^8\right]^{\frac{1}{2}}\mathbb{E}\left[m_{i}^2\right]^{2}\nonumber\\
    &\leq F((\beta^{*}_{M_{\Xi(i)}})^TS_{M_{\Xi(i)}}(\beta^{*}_{M_{\Xi(i)}}))^2,
\end{align}
where the first line follows from Cauchy-Schwarz and the tower law, the second line follows from hypercontractivity and the final line follows from \eqref{eq:Complicated OSS master covariance evals bound}. We bound the first term of \eqref{eq:overall tau bound} as follows
\begin{align}\label{eq:gamma decomposed bound}
    &\mathbb{E}\left[\langle  e_{r},(\hat{\Sigma}_{O_{\Xi(i)}O_{\Xi(i)}})^{-1}(X_{i})_{O_{\Xi(i)}}\gamma_{i}\rangle^4\right]\nonumber\\
    &\leq \mathbb{E}\left[\langle  e_{r},(\hat{\Sigma}_{O_{\Xi(i)}O_{\Xi(i)}})^{-1}(X_{i})_{O_{\Xi(i)}}\rangle^8\right]^{\frac{1}{2}}\mathbb{E}\left[\gamma_{i}^8\right]^{\frac{1}{2}}\nonumber\\
    &= \mathbb{E}\left[\mathbb{E}\left[\langle  e_{r},(\hat{\Sigma}_{O_{\Xi(i)}O_{\Xi(i)}})^{-1}(X_{i})_{O_{\Xi(i)}}\rangle^8\big|\hat{\Sigma}\right]\right]^{\frac{1}{2}}\mathbb{E}\left[\mathbb{E}\left[((X_{i})_{O_{\Xi(i)}}^T(P_{O_{\Xi(i)}}-\hat{P}_{O_{\Xi(i)}})\beta^*)^8\big|\hat{\Sigma}\right]\right]^{\frac{1}{2}}\nonumber\\
    &\leq F\mathbb{E}\left[\mathbb{E}\left[\langle  e_{r},(\hat{\Sigma}_{O_{\Xi(i)}O_{\Xi(i)}})^{-1}(X_{i})_{O_{\Xi(i)}}\rangle^2\big|\hat{\Sigma}\right]^4\right]^{\frac{1}{2}}\mathbb{E}\left[\mathbb{E}\left[((X_{i})_{O_{\Xi(i)}}^T(P_{O_{\Xi(i)}}-\hat{P}_{O_{\Xi(i)}})\beta^*)^2\big|\hat{\Sigma}\right]^4\right]^{\frac{1}{2}}\nonumber\\
    &\leq F\mathbb{E}\left[\|(\hat{\Sigma}_{O_{\Xi(i)}O_{\Xi(i)}})^{-1}\|_{\mathrm{op}}^8\right]^{\frac{1}{2}}\mathbb{E}\left[\|P_{O_{\Xi(i)}}-\hat{P}_{O_{\Xi(i)}}\|_{\mathrm{op}}^8\|\beta^*_{M_{\Xi(i)}}\|_{2}^8\right]^{\frac{1}{2}}\nonumber\\
    &\leq F\mathbb{E}\left[\|(\hat{\Sigma}_{O_{\Xi(i)}O_{\Xi(i)}})^{-1}\|_{\mathrm{op}}^8\right]^{\frac{1}{2}}\mathbb{E}\left[\left(\frac{2}{{\lambda_{\min}}(\Sigma)}\|\hat{\Sigma}-\Sigma\|_\mathrm{op}\frac{\lambda_{\max}(\hat{\Sigma})}{\lambda_{\min}(\hat{\Sigma})}\right)^8\|\beta^*_{M_{\Xi(i)}}\|_{2}^8\right]^{\frac{1}{2}}\nonumber\\
    &\leq F\|\beta^*_{M_{\Xi(i)}}\|^4\zeta^2\nonumber\\
    &\leq F((\beta^{*}_{M_{\Xi(i)}})^TS_{M_{\Xi(i)}}(\beta^{*}_{M_{\Xi(i)}}))^2,
\end{align}
where the second line follows from Cauchy-Schwarz, the third line follows from the tower law, the fourth line follows from hypercontractivity and the sixth line follows from \eqref{eq:P hat P upper bound}; the penultimate line follows from \eqref{eq:Complicated OSS master covariance deviation bound} and \eqref{eq:Complicated OSS master covariance evals bound}; the final line follows since $\zeta \leq 1$ and the fact that the eigenvalues of $S_{M_{\Xi(i)}}$ can be bounded below by $\lambda_-$. Finally, we bound the second term of \eqref{eq:overall tau bound} as follows
\begin{align}\label{eq:decomposed condtional expecation gamma bound}
    &\mathbb{E}\left[\langle  e_{r},(\hat{\Sigma}_{O_{\Xi(i)}O_{\Xi(i)}})^{-1}\mathbb{E}\left[\gamma_{i}(X_{i})_{O_{\Xi(i)}}|\hat{\Sigma}\right]\rangle^4\right]\nonumber\\
    &=\mathbb{E}\left[\langle  e_{r},(\hat{\Sigma}_{O_{\Xi(i)}O_{\Xi(i)}})^{-1}\Sigma_{O_{\Xi(i)}O_{\Xi(i)}}\left(P_{O_{\Xi(i)}}-\hat{P}_{O_{\Xi(i)}}\right)\beta^*\rangle^4\right]\nonumber\\
    &\leq F\mathbb{E}\left[\|(\hat{\Sigma}_{O_{\Xi(i)}O_{\Xi(i)}})^{-1}\|_{\mathrm{op}}^8\right]^{\frac{1}{2}}\mathbb{E}\left[\|P_{O_{\Xi(i)}}-\hat{P}_{O_{\Xi(i)}}\|_{\mathrm{op}}^8\|\beta^*_{M_{\Xi(i)}}\|_{2}^8\right]^{\frac{1}{2}}\nonumber\\
    &\leq F\mathbb{E}\left[\|(\hat{\Sigma}_{O_{\Xi(i)}O_{\Xi(i)}})^{-1}\|_{\mathrm{op}}^8\right]^{\frac{1}{2}}\mathbb{E}\left[\left(\frac{2}{{\lambda_{\min}(\Sigma)}}\|\hat{\Sigma}-\Sigma\|_\mathrm{op}\frac{\lambda_{\max}(\hat{\Sigma})}{\lambda_{\min}(\hat{\Sigma})}\right)^8\|\beta^*_{M_{\Xi(i)}}\|_{2}^8\right]^{\frac{1}{2}}\nonumber\\
    &\leq F\|\beta^*_{M_{\Xi(i)}}\|^4\zeta^2\nonumber\\
    &\leq F((\beta^{*}_{M_{\Xi(i)}})^TS_{M_{\Xi(i)}}(\beta^{*}_{M_{\Xi(i)}}))^2,
\end{align}
where the third line follows from Cauchy-Schwarz and the fourth line follows from \eqref{eq:P hat P upper bound}; the penultimate line follows from \eqref{eq:Complicated OSS master covariance deviation bound} and \eqref{eq:Complicated OSS master covariance evals bound}; the final line follows since $\zeta \leq 1$ and the fact that the eigenvalues of $S_{M_{\Xi(i)}}$ can be bounded below by $\lambda_-$.
Combining \eqref{eq:decomposed condtional expecation gamma bound}, \eqref{eq:gamma decomposed bound}, \eqref{eq:m decomposed bound}, \eqref{eq:epsilon decomposed bound} and \eqref{eq:eta decomposed bound} into \eqref{eq:overall tau bound} yields that 
\begin{align}\label{eq:tau inner product bound}
    \mathbb{E}\left[\langle  e_{r},\tau_{i}\rangle^4\right] 
    \leq F\left(\sigma^4+((\beta^{*}_{M_{\Xi(i)}})^TS_{M_{\Xi(i)}}(\beta^{*}_{M_{\Xi(i)}}))^2\right)  
    \leq F\left(\frac{\sigma^4}{(D_{\Xi(i)}^*)^2}\right).
\end{align}
For $l \in [L],$  when $ L_{l}\subseteq O_{\Xi(i)}$, we have that 
\begin{equation}\label{eq: tau ell decomposition}
    \|\tau_{i}^{(l)}\|_{2}^2=\sum_{\substack{r \in [|O_{\Xi(i)}|]\\ \text{corresponding to } L_{l}}}\langle  e_{r},\tau_{i}\rangle^2,
\end{equation} 
where the sum is taken over the elements of $[|O_{\Xi(i)}|]$ that correspond to elements of $L_{l}$ based on the decomposition $\delta_{i} = \hat{\Sigma}_{O_{\Xi(i)}}^T\tau_{i} =\sum_{l=1}^L\hat{\Sigma}^T_{L_{l}}\tau_{i}^{(l)}$. We have that
\begin{align*}
    \mathbb{E}\left[\|\tau_{i}^{(l)}\|_{2}^4\right]&=\mathbb{E}\left[\left(\sum_{\substack{r \in [|O_{\Xi(i)}|]\\ \text{corresponding to } L_{l}}}\langle  e_{r},\tau_{i}\rangle^2\right)^2\right]
    \\
    &=\sum_{\substack{r \in [|O_{\Xi(i)}|]\\ \text{corresponding to } L_{l}}}\sum_{\substack{r' \in [|O_{\Xi(i)}|]\\ \text{corresponding to } L_{l}}}\mathbb{E}\left[\langle  e_{r},\tau_{i}\rangle^2\langle  e_{r'},\tau_{i}\rangle^2\right]
    \\
    &=\sum_{\substack{r \in [|O_{\Xi(i)}|]\\ \text{corresponding to } L_{l}}}\sum_{\substack{r' \in [|O_{\Xi(i)}|]\\ \text{corresponding to } L_{l}}}\mathbb{E}\left[\langle  e_{r},\tau_{i}\rangle^4\right]^{\frac{1}{2}}\mathbb{E}\left[\langle  e_{r'},\tau_{i}\rangle^4\right]^{\frac{1}{2}}
    \\
    &\leq \left(\frac{\sigma^4}{(D_{\Xi(i)}^*)^2}\right)|L_{l}|^2,
\end{align*}
where the first line follows from \eqref{eq: tau ell decomposition}, the penultimate line follows from Cauchy-Schwarz and the final bound follows from \eqref{eq:tau inner product bound}. Plugging the above display into \eqref{eq:tau fourth moment bound} yields
\begin{equation}\label{eq:tau sum final bound}
    \mathbb{E}\left[\left\|\sum_{i \in \mathcal{I}_{k}}\tau_{i}^{(l)}\right\|_{2}^4\right] \leq Fn_{k}^2\left(\frac{\sigma^2}{D_{k}^*}|L_{l}|\right)^2\mathbbm{1}_{\{L_{l}\cap O_{k}\neq \emptyset\}}.
\end{equation}
We now return to control the first factor in \eqref{eq:pre tau and cov bounds}. By Generalised H\"older, it holds that 
\begin{align}\label{eq:pre-generalised holder hat alpha,hat D control}
    &\mathbb{E}\left[\hat{D}_{k}^4\frac{\lambda_{\max}^{12}(\hat{\Sigma})}{\lambda_{\min}^{16}(\hat{\Sigma})(\hat{\alpha}^{(L_{l})})^4}\left(\max\left\{1,\frac{\hat{\lambda}_{\max}}{\lambda_{\min}^2(\hat{\Sigma})}\right\}\max\left\{1,\frac{\lambda_{\max}^2(\hat{\Sigma})}{\hat{\lambda}_{\min}}\right\}\right)^{4K}\right] \nonumber\\&\leq \mathbb{E}\left[\hat{D}_{k}^{16}\right]^{\frac{1}{4}}\mathbb{E}\left[\frac{1}{(\hat{\alpha}^{(L_{l})})^{16}}\right]^{\frac{1}{4}}\mathbb{E}\left[\frac{\lambda_{\max}^{24}(\hat{\Sigma})}{\lambda_{\min}^{32}(\hat{\Sigma})}\left(\max\left\{1,\frac{\hat{\lambda}_{\max}}{\lambda_{\min}^2(\hat{\Sigma})}\right\}\max\left\{1,\frac{\lambda_{\max}^2(\hat{\Sigma})}{\hat{\lambda}_{\min}}\right\}\right)^{8K}\right]^{\frac{1}{2}}.
\end{align}
We control each of these factors in turn. By assumption, it holds that
\begin{equation}\label{eq:hat D control}
    \mathbb{E}\left[\hat{D}_{k}^{16}\right]=\mathbb{E}\left[\frac{\hat{D}_{k}^{16}}{(D_{k}^*)^{16}}\right](D_{k}^*)^{16} \leq F(D_{k}^*)^{16}.
\end{equation}
Additionally, we have that
\begin{align*}
    \hat{\alpha}^{(L_{l})} \geq \frac{1}{K}\sum_{\substack{k\in[K]:\\L_{l}\cap O_{k} \neq \emptyset}}\hat{D}_{k}n_{k}\geq \frac{1}{K}\sum_{\substack{k\in[K]:\\L_{l}\cap O_{k} \neq \emptyset}}\frac{\hat{D}_{k}}{D_{k}^*}D_{k}^*n_{k}
    \geq \frac{1}{K}\min_{k \in [K]}\left\{\frac{\hat{D}_{k}}{D_{k}^*}\right\}\alpha^{(L_{l})}.
\end{align*}
By inverting the above inequality, it holds that
\begin{align}\label{eq:hat alpha control}
    \mathbb{E}\left[\frac{1}{(\hat{\alpha}^{(L_{l})})^{16}}\right] \leq \left(\frac{K}{\alpha^{(L_{l})}}\right)^{16}\mathbb{E}\left[\max_{k \in [K]}\left\{\frac{D_{k}^*}{\hat{D}_{k}}\right\}^{16}\right]
    \leq \left(\frac{K}{\alpha^{(L_{l})}}\right)^{16}\mathbb{E}\left[\sum_{k=1}^{K}\left(\frac{D_{k}^*}{\hat{D}_{k}}\right)^{16}\right]
    \leq \frac{F}{(\alpha^{(L_{l})})^{16}}.
\end{align}
The final factor in \eqref{eq:pre-generalised holder hat alpha,hat D control} is controlled by assumption. Plugging \eqref{eq:hat alpha control} and \eqref{eq:hat D control} into \eqref{eq:pre-generalised holder hat alpha,hat D control} yields that
\begin{equation}\label{eq:weights final bound}
    \mathbb{E}\left[\hat{D}_{k}^4\frac{\lambda_{\max}^{12}(\hat{\Sigma})}{\lambda_{\min}^{16}(\hat{\Sigma})(\hat{\alpha}^{(L_{l})})^4}\left(\max\left\{1,\frac{\hat{\lambda}_{\max}}{\lambda_{\min}^2(\hat{\Sigma})}\right\}\max\left\{1,\frac{\lambda_{\max}^2(\hat{\Sigma})}{\hat{\lambda}_{\min}}\right\}\right)^{4K}\right] \leq F\frac{(D_{k}^*)^{4}}{(\alpha^{(L_{l})})^{4}}.
\end{equation}
Combining \eqref{eq:tau sum final bound}  and \eqref{eq:weights final bound} into \eqref{eq:pre tau and cov bounds} yields that $\circled{2}$ can be controlled as
\begin{align*}
\mathbb{E}\left[\left\|\hat{B}^{-1}\sum_{i=1}^{n_{\mathcal{L}}}\hat{w}_{i}\delta_{i}\right\|_{2}^2\right] &\leq F\sum_{k=1}^K\sum_{l=1}^L\frac{(D_{k}^*)^2}{(\alpha^{(L_{l})})^2}\frac{\sigma^2|L_{l}|}{D_{k}^*}n_{k}\mathbbm{1}_{\{L_{l}\cap O_{k}\neq \emptyset\}}\\
&\leq F\sigma^2\sum_{l=1}^L\frac{|L_{l}|}{(\alpha^{(L_{l})})^2}\sum_{k=1}^Kn_{k}D_{k}^*\mathbbm{1}_{\{L_{l}\cap O_{k}\neq \emptyset\}}\\
&\leq F\sigma^2\sum_{l=1}^L\frac{|L_{l}|}{\alpha^{(L_{l})}} \\
&\leq F\sigma^2\sum_{i=1}^p\frac{1}{\alpha_{i}},
\end{align*}
where the final line follows since $\alpha^{(L_{l})}=\max_{k \in [K]}\{D_{k}^*n_{k}\mathbbm{1}_{\{L_{l}\cap O_{k}\neq \emptyset\}}\}\geq \frac{1}{K}\sum_{k=1}^KD_{k}^*n_{k}\mathbbm{1}_{\{L_{l}\cap O_{k}\neq \emptyset\}}=\frac{\alpha_{i}}{K}$ for $i \in L_{l}$. Combining both our bounds for terms $\circled{1}$ and $\circled{2}$ gives the result. 
\end{proof}

We recall the following definitions 
\begin{align*}
\hat{\lambda}_{\min} \equiv \min_{j\in[K]} \lambda_{\min} \left( \frac{X_{O_{j}}^T X_{O_{j}}}{n_{j}}\right), \quad\quad\quad \hat{\lambda}_{\max} \equiv \max_{j\in[K]} \lambda_{\max} \left( \frac{X_{O_{j}}^T X_{O_{j}}}{n_{j}}\right),
\end{align*} 
where $X_{O_{j}}$ is the observed design matrix in the $j^{th}$ missingness pattern.

\begin{proofof}{thm:Master OSS with estimating weights}
    All that remains be shown is that condition \eqref{eq:OSS master covariance evals bound} implies conditions \eqref{eq:Complicated OSS master covariance evals bound} and \eqref{eq:Complicated OSS master covariance deviation bound}. Applying, the independence of $\hat{\Sigma}$ and the labelled data, Cauchy-Schwarz and Jensen's inequality, we have that \eqref{eq:Complicated OSS master covariance evals bound} and \eqref{eq:Complicated OSS master covariance deviation bound} are satisfied provided
    \begin{align*}
    &\mathbb{E}\left[\frac{\max\{1,\lambda_{\max}(\hat{\Sigma})\}^{24+16K}}{\min\{1,\lambda_{\min}(\hat{\Sigma})\}^{32+16K}}\right]\leq F, \quad 
    &\mathbb{E}\left[\|\hat{\Sigma}-\Sigma\|_{\mathrm{op}}^{16}\right]^{\frac{1}{8}} \leq F\zeta,
    \end{align*}
    and
    \begin{align*}
    \mathbb{E}\left[\left(\frac{\max\{1,\hat{\lambda}_{\max}\}}{\min\{1,\hat{\lambda}_{\min}\}}\right)^{8K}\right]\leq F,
    \end{align*}
    for some $F=F(\lambda_{+},\lambda_{-},C_{X},\chi,C,K,\kappa_{\epsilon})$. The first two of the above equations are implied by the assumptions of the Theorem. 
    By Cauchy-Schwarz, the final of the above equations are implied by
    \begin{align}\label{eq:needed eval bounds}
        \mathbb{E}\left[(\max\{1,\hat{\lambda}_{\max}\})^{16K}\right] \leq F  &\quad\text{ and }\quad \mathbb{E}\left[(\max\{1,\hat{\lambda}_{\min}^{-1}\})^{16K}\right] \leq F.
    \end{align}
    The second equation in \eqref{eq:needed eval bounds} is holds by Lemma \ref{lemma:eigenval control} and the fact that $n_{\min} \geq H p$ for some $H=H(\lambda_{+},\lambda_{-},C_{X},\chi,C,K,\kappa_{\epsilon})$. The first equation in \eqref{eq:needed eval bounds} is implied by Lemma \ref{lemma:subgaussian tail control}
\end{proofof}

\begin{proofof}{thm:weights initialisation guarantee}
    Since the distribution of the covariates satisfies Assumption \ref{assump:subgauss}, it also satisfies $(8q,2,C_{8q})$--hypercontractivity for some constant $C_{8q}=C_{8q}(C_{X},\lambda_{-},q)$. Throughout $F=F(\lambda_{+},\lambda_{-},C,\chi,q,C_{8q},\kappa_{\epsilon})$ denotes a constant depending only on $\lambda_{+},\lambda_{-},C,\chi,q,C_{8q}$ and $\kappa_{\epsilon}$ that may change from line to line. It suffices to prove \eqref{eq:weights initialisation conclusion} for one missingness pattern, so we return to the notation before the statement of the theorem. Let $O \subseteq[p]$ and $M = [p]\setminus O$, $d = \sigma^2+(\beta^*_{M})^TS_{M}\beta^*_{M}$ and $D= \frac{\sigma^2}{d}$. We recall that at the population level, 
    \begin{align*}
    Y=X^T\beta^*+\epsilon = X_{O}^TP\beta^*+X^T\beta^*-X_{O}^TP\beta^*+\epsilon \equiv X_{O}^T\phi^* + \eta,
    \end{align*}
    where $\phi^*=P\beta^*$ and $\eta=X^T\beta^*-X_{O}^TP\beta^*+\epsilon\equiv m+\epsilon$. Note that  $\mathbb{E}\left[\eta X_{O}\right]=0$ and $\mathbb{E}\left[\eta^2\right]=\sigma^2+(\beta^*_{M})^TS_{M}\beta^*_{M}$. For $1\leq t \leq 4q$, by the triangle inequality, hypercontractivity and the condition on $\epsilon$ in the Theorem, we have that
    \begin{align}\label{eq:eta moment bound}
        \mathbb{E}\left[\eta^{2t}\right] &\leq F\left(\mathbb{E}\left[\epsilon^{2t}\right]+\mathbb{E}\left[m^{2t}\right]\right)
        \leq F\left(\sigma^{2t}+\|\beta^*_{M}\|_{2}^{2t}\right)\leq Fd^{t}.
    \end{align}
    Additionally, by Cauchy-Schwarz, Lemma \ref{lemma:hyper norm control} and hypercontractivity, we have that, for $1\leq t\leq 4q$,
    \begin{align}\label{eq: Xo eta moment bound}
        \mathbb{E}\left[(\|X_{O}\|_{2}\eta)^t\right] & \leq \mathbb{E}\left[\|X_{O}\|_{2}^{2t}\right]^{\frac{1}{2}}\mathbb{E}\left[\eta^{2t}\right]^\frac{1}{2}  \leq F|O|^{\frac{t}{2}}d^{\frac{t}{2}}.
    \end{align}
    
    Returning to the sample level, denote by $X_{O}$ the portion of the design matrix observed in this missingness pattern, $Y_{O}$ the corresponding responses and similarly define $\epsilon_{O},\eta_{O},m_{O}$ and let $n$ be the number of samples in this pattern. Set $\hat{\phi}=(X_{O}^TX_{O})^{-1}X_{O}^TY_{O}$ and $\hat{d} = \frac{1}{n}\|Y_{O}-X_{O}\hat{\phi}\|_{2}^2$. Our final estimate of $d$ is given by 
    \begin{equation*}
        \hat{d}_{F} =
    \begin{cases}
    \kappa_{L} &\text{if } \hat{d}\leq \frac{\kappa_{L}}{2},\\
    \hat{d} & \text{if } \frac{\kappa_{L}}{2}<\hat{d}< 2\kappa_{U}, \\
    \kappa_{U} & \text{if } \hat{d} \geq 2\kappa_{U}.
    \end{cases}
    \end{equation*}
    We then set $\hat{D} = \frac{\sigma^2}{\hat{d}_{F}}$. It suffices to bound $\mathbb{E}\left[(\hat{D}/D)^{16}+(D/\hat{D})^{16}\right]$. We introduce the `hat' matrix $H=X_{O}(X_{O}^TX_{O})^{-1}X_{O}^T$ which allows us to write
    \begin{equation}\label{eq:estimating weights: hat decomposition}
        \hat{d}=\frac{1}{n}\|(I-H)Y_{O}\|_{2}^2=\frac{1}{n}\|(I-H)\eta_{O}\|_{2}^2=\frac{1}{n}\|\eta_{O}\|_{2}^2-\frac{1}{n}\eta_{O}^TH\eta_{O}.
    \end{equation}  
    We control the first term in the above decomposition as follows
    \begin{align}\label{eq:eta first concentation bound}
        \mathbb{P}\left(\left|\frac{\|\eta_{O}\|_{2}^2}{nd}-1\right|\geq t\right) &\leq \frac{\mathbb{E}\left[\left|\frac{\|\eta_{O}\|_{2}^2}{nd}-1\right|^{4q}\right]}{t^{4q}}\nonumber\\
        & = \frac{\mathbb{E}\left[\left|\sum_{i=1}^n\left(\frac{(\eta_{O})_{i}^2}{d}-1\right)\right|^{4q}\right]}{t^{4q}n^{4q}}\nonumber\\
        &\leq \frac{F}{t^{4q}n^{4q}}\left(n\mathbb{E}\left[\left(\frac{\eta^2}{d}\right)^{4q}\right]+n^{2q}\mathbb{E}\left[\frac{\eta^4}{d^2}\right]^{2q}\right)\nonumber\\
        &\leq \frac{F}{t^{4q}n^{2q}},
    \end{align}
    where the first line follows from Markov's inequality; the third line follows from Rosenthal's inequality \citep[]{boucheron2003concentration} and the final line follows from \eqref{eq:eta moment bound}. 
    
    We bound the second term in \eqref{eq:estimating weights: hat decomposition} as follows
    \begin{align}\label{eq:second eta concentration bound}
        \mathbb{P}\left(\left|\frac{\eta_{O}^TH\eta_{O}}{nd}\right|\geq t\right) &\leq \frac{1}{t^q}\mathbb{E}\left[\left|\frac{\eta_{O}^TH\eta_{O}}{nd}\right|^q\right]\nonumber\\
        & = \frac{\mathbb{E}\left[\left|\frac{\eta_{O}^TX_{O}(X_{O}^TX_{O})^{-1}X_{O}^T\eta_{O}}{nd}\right|^q\right]}{t^q}\nonumber\\
        &\leq \frac{1}{t^q}\mathbb{E}\left[\lambda_{\min}\left(\frac{X_{O}^TX_{O}}{n}\right)^{-q}\frac{\|X_{O}^T\eta_{O}\|_{2}^{2q}}{(n^2d)^{q}}\right]\nonumber\\
        &\leq \frac{1}{t^q}\mathbb{E}\left[\lambda_{\min}\left(\frac{X_{O}^TX_{O}}{n}\right)^{-2q}\right]^{\frac{1}{2}}\mathbb{E}\left[\frac{\|X_{O}^T\eta_{O}\|_{2}^{4q}}{(n^2d)^{2q}}\right]^{\frac{1}{2}}\nonumber\\
        &\leq \frac{F}{t^q}\mathbb{E}\left[\frac{\|X_{O}^T\eta_{O}\|_{2}^{4q}}{(n^2d)^{2q}}\right]^{\frac{1}{2}}\nonumber\\
        &\leq \frac{F}{t^q}\mathbb{E}\left[\frac{\left(\|X_{O}^T\eta_{O}\|_{2}-\mathbb{E}\left[\|X_{O}^T\eta_{O}\|_{2}\right]\right)^{4q}}{(n^2d)^{2q}}\right]^{\frac{1}{2}}+\frac{F}{t^q}\mathbb{E}\left[\frac{\mathbb{E}\left[\|X_{O}^T\eta_{O}\|_{2}\right]^{4q}}{(n^2d)^{2q}}\right]^{\frac{1}{2}}\nonumber\\
        &\leq \frac{F}{t^q}
        \left(
          \frac{n|O|^{2q}d^{2q} + n^{2q}|O|^{2q}d^{2q}}{(n^2 d)^{2q}}
        \right)^{\frac{1}{2}} + \frac{F}{t^q}\left(\frac{|O|n}{n^2}\right)^q\nonumber\\
        &\leq \frac{F|O|^q}{t^qn^q},
    \end{align}
    where the first line is due to Markov's inequality; the fourth line is due to Cauchy-Schwarz; the fifth line is due to~\citet[ Corollary 4]{mourtada2022exact}; the sixth line holds by the fact that for $a,b \in \mathbb{R}$ and $r \in \mathbb{N}$ we have $(a+b)^{2r}\leq 2^{2r}(a^{2r}+b^{2r})$; the penultimate line follows from the generalisation of Rosenthal's inequality in~\citet{de1981inequalities} and \eqref{eq: Xo eta moment bound}.
    Setting $t = \frac{1}{4}$ in both \eqref{eq:eta first concentation bound} and \eqref{eq:second eta concentration bound} yields an event $A$ such that $\mathbb{P}\left(A\right) \geq 1-F\frac{|O|^q}{n^q}$ and on $A$
    \begin{equation*}
        \left|\frac{\eta_{O}^TH\eta_{O}}{nd}\right| \leq \frac{1}{4} \text{  and  } \left|\frac{\|\eta_{O}\|_{2}^2}{nd}-1\right| \leq \frac{1}{4}.
    \end{equation*}
    Therefore, on this event, our initial estimator $\hat{d}$ satisfies 
    \begin{equation*}
        \frac{1}{2}\leq \frac{\hat{d}}{d} \leq \frac{3}{2}.
    \end{equation*}
    Hence, the final estimate of $D$ satisfies 
    \begin{align*}
        \mathbb{E}\left[\left(\frac{\hat{D}}{D}\right)^{16}+\left(\frac{D}{\hat{D}}\right)^{16}\right] & = \mathbb{E}\left[\left(\frac{d}{\hat{d}_{F}}\right)^{16}+\left(\frac{\hat{d}_{F}}{d}\right)^{16}\right]\\
        &= \mathbb{E}\left[\left(\frac{d}{\hat{d}_{F}}\right)^{16}+\left(\frac{\hat{d}_{F}}{d}\right)^{16}\bigg| A\right]\mathbb{P}(A)+\mathbb{E}\left[\left(\frac{d}{\hat{d}_{F}}\right)^{16}+\left(\frac{\hat{d}_{F}}{d}\right)^{16}\bigg|A^c\right]\mathbb{P}(A^c)\\
        &\leq 2\times 2^{16} + F\frac{|O|^q}{n^q}\left(\left(\frac{d}{\kappa_{L}}\right)^{16}+\left(\frac{\kappa_{U}}{d}\right)^{16}\right)\\
        &\leq H,
    \end{align*}
    where the second line follows from the law of total expectation; the penultimate line follows from properties of $A$ and the final line follows from condition \eqref{eq:weights tuning sample size}.
    \end{proofof}

\begin{lemma}\label{lemma:supervised Covariance Matrix Estimation}
    Fix $A \in \mathbb{R}^{p\times p}$ assumed to be symmetric. For $r,l \in [L]$, let $E_{L_{r}L_{l}} \in \mathbb{R}^{|L_{r}|\times |L_{l}|}$ denote the block of $A$ corresponding to the relationship between modalities $r$ and $l$. Then
    \begin{equation*}
        \|A\|_\mathrm{op} \leq \sum_{r,l=1}^L\|E_{L_{r}L_{l}}\|_\mathrm{op}.
    \end{equation*}
\end{lemma}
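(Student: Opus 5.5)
The plan is to decompose $A$ into block-supported pieces and apply the triangle inequality for the operator norm. Since the modalities $L_1,\ldots,L_L$ partition $[p]$, for each pair $(r,l)\in[L]^2$ define $A^{(r,l)}\in\mathbb{R}^{p\times p}$ as the matrix that agrees with $A$ on rows in $L_r$ and columns in $L_l$ and is zero elsewhere. Every entry $(i,j)$ of $A$ lies in exactly one block, namely $r=\xi(i)$ and $l=\xi(j)$, so $A=\sum_{r,l=1}^L A^{(r,l)}$. The triangle inequality then gives $\|A\|_{\mathrm{op}}\le\sum_{r,l}\|A^{(r,l)}\|_{\mathrm{op}}$.

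The remaining step is to show that $\|A^{(r,l)}\|_{\mathrm{op}}=\|E_{L_rL_l}\|_{\mathrm{op}}$. For $x\in\mathcal{S}^{p-1}$, the vector $A^{(r,l)}x$ is supported on $L_r$, and its restriction to $L_r$ is $E_{L_rL_l}x_{L_l}$. Hence
\[
\|A^{(r,l)}x\|_2=\|E_{L_rL_l}x_{L_l}\|_2\le\|E_{L_rL_l}\|_{\mathrm{op}}\|x_{L_l}\|_2\le\|E_{L_rL_l}\|_{\mathrm{op}}.
\]
For the reverse inequality, which is not actually needed for the bound, one pads a maximising unit vector of $E_{L_rL_l}$ with zeros outside $L_l$. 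Summing over $(r,l)$ gives the claim.

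There is no real obstacle here. The only point needing care is bookkeeping: the blocks must be extracted after permuting coordinates so that each modality is contiguous. Permutation matrices are orthogonal, so this does not change operator norms, and in any case the argument above is written directly in terms of index sets, so no permutation is required. Symmetry of $A$ is not used.
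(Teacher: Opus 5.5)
Your argument is correct and is essentially the paper's: both split $A$ along the modality blocks and bound each block's contribution by $\|E_{L_rL_l}\|_{\mathrm{op}}$. The one difference is that the paper uses the symmetric characterisation $\|A\|_{\mathrm{op}}=\sup_{\|u\|_2=1}|u^TAu|$ and bounds each bilinear term by $|u_{L_r}^TE_{L_rL_l}u_{L_l}|\le\|E_{L_rL_l}\|_{\mathrm{op}}$. You instead apply the triangle inequality to the zero-padded decomposition $A=\sum_{r,l}A^{(r,l)}$, which, as you note, makes the symmetry hypothesis unnecessary.
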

\begin{proof}
    Properties of the operator norm yields that 
    \begin{align*}
        \|A\|_\mathrm{op} &= \sup_{u \in \mathbb{R}^p:\|u\|_{2}=1}\{|u^TAu|\}\leq \sup_{u \in \mathbb{R}^p:\|u\|_{2}=1}\left\{\sum_{r,l=1}^L|u_{L_{r}}^TE_{L_{r}L_{l}}u_{L_{l}}|\right\}\leq \sum_{r,l=1}^L\|E_{L_{r}L_{l}}\|_\mathrm{op}.
    \end{align*}
\end{proof}

We now explain how to construct an estimate of the covariance in the supervised case that satisfies Theorem \ref{thm:Master OSS with estimating weights} and Theorem \ref{thm:OSS balanced}. Recall $\xi : [p] \rightarrow [L], \text{ where for } j \in L_{i}, \space \xi(j) = i$. Define $\hat{\Sigma}\in \mathbb{R}^{p\times p}$ by, for $(i,j) \in [p]^2$,
\begin{equation}\label{eq:LD structured inital cov}
    \hat{\Sigma}_{ij} = \frac{1}{n_{\xi(i),\xi(j)}}\left(\sum_{k \in \mathcal{I}_{\mathcal{L}}}(X_{k})_{i}(X_{k})_{j}\mathbbm{1}_{\{\{i,j\}\subseteq O_{\Xi(k)}\}}\right).
\end{equation}
We form a clipped version of the above estimator
\begin{equation}\label{eq:LD clippled covariance estimate}
    \hat{\Sigma}^{C}\equiv \begin{cases}
        \hat{\Sigma} & \text{ if } \frac{\lambda_{-}}{2} \leq \lambda_{\min}(\hat{\Sigma}) \leq \lambda_{\max}(\hat{\Sigma}) \leq 2 \lambda_{+},\\
        \lambda_{-} I_{p} & \text{ otherwise.}
    \end{cases}
\end{equation}
The following proposition establishes that \eqref{eq:LD clippled covariance estimate} satisfies the assumptions of Theorem \ref{thm:Master OSS with estimating weights} in the blockwise missing supervised case. 
\begin{proposition}\label{Prop:Supervised covariance blocks}
    Consider the estimator \eqref{eq:LD clippled covariance estimate}. Further assume that the distribution of the covariates satisfies Assumptions \ref{assump:subgauss} and \ref{assump:eigen}. For all $a\geq 1$, there exists $C=C(\lambda_{-},\lambda_{+},C_{X},a,L)$ and $H=H(\lambda_{-},\lambda_{+},C_{X},L,a)$ such that, provided $\min_{(g,h)\in[L]^2}n_{g,h}\geq Hp$, we have
    \begin{align*}
        \mathbb{E}\left[\left\|\hat{\Sigma}^{C}-\Sigma\right\|_{\mathrm{op}}^{a}\right] \leq C\max_{(g,h)\in[L]^2}\left\{\left(\frac{|L_{g}|}{n_{g,h}}\right)^{\frac{a}{2}}\right\}, \quad \text{and} \quad \mathbb{E}\left[\left(\frac{\max\{1,\lambda_{\max}(\hat{\Sigma}^{C})\}}{\min\{1,\lambda_{\min}(\hat{\Sigma}^{C})\}}\right)^{a}\right] \leq C.
    \end{align*}
\end{proposition}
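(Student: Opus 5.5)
The proof reduces to controlling the unclipped pairwise estimator \eqref{eq:LD structured inital cov} block by block, then transferring the bounds to the clipped estimator \eqref{eq:LD clippled covariance estimate}. Fix modalities $(g,h)\in[L]^2$ and write $E_{gh}=\hat{\Sigma}_{L_gL_h}-\Sigma_{L_gL_h}$. The entries of this block are built from the same $n_{g,h}$ samples, namely those $k$ with $L_g\cup L_h\subseteq O_{\Xi(k)}$. So $E_{gh}$ is the off-diagonal (or diagonal, if $g=h$) block of the deviation of a sample second moment matrix of the vector $(X_{L_g},X_{L_h})\in\mathbb{R}^{|L_g|+|L_h|}$ computed from $n_{g,h}$ i.i.d.\ copies. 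By MCAR these copies are distributed as the marginal of $P_X$.

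This vector inherits sub-Gaussianity with parameter $C_X$ and has second moment matrix eigenvalues in $[\lambda_-,\lambda_+]$. We therefore apply Lemma~\ref{lemma:subgaussian tail control} in dimension $|L_g|+|L_h|\le n_{g,h}$, which holds since $n_{g,h}\ge Hp$. This gives
\begin{equation*}
\mathbb{E}\bigl[\|E_{gh}\|_{\mathrm{op}}^a\bigr]\le C\Bigl(\tfrac{|L_g|+|L_h|}{n_{g,h}}\Bigr)^{a/2}\le C\max_{(g',h')}\Bigl(\tfrac{|L_{g'}|}{n_{g',h'}}\Bigr)^{a/2},
\end{equation*}
where we use $n_{g,h}=n_{h,g}$ to absorb $|L_h|/n_{g,h}$ into the maximum. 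Lemma~\ref{lemma:supervised Covariance Matrix Estimation} then bounds $\|\hat{\Sigma}-\Sigma\|_{\mathrm{op}}\le\sum_{g,h}\|E_{gh}\|_{\mathrm{op}}$. Together with $(\sum_{j=1}^{L^2}x_j)^a\le L^{2(a-1)}\sum_j x_j^a$, this gives the required moment bound for the unclipped $\hat{\Sigma}$, with a constant depending on $L$. The case $a\in[1,2)$ follows from the case $a=2$ by Jensen.

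For the clipped estimator we split on the event $\mathcal{E}=\{\lambda_-/2\le\lambda_{\min}(\hat{\Sigma})\le\lambda_{\max}(\hat{\Sigma})\le2\lambda_+\}$. On $\mathcal{E}$ we have $\hat{\Sigma}^C=\hat{\Sigma}$, which is handled by the previous step. On $\mathcal{E}^c$, $\|\hat{\Sigma}^C-\Sigma\|_{\mathrm{op}}\le\lambda_-+\lambda_+$ is a deterministic constant, so it suffices to show that $\mathbb{P}(\mathcal{E}^c)\le C\max(|L_g|/n_{g,h})^{a/2}$. Since $\mathcal{E}^c\subseteq\{\|\hat{\Sigma}-\Sigma\|_{\mathrm{op}}\ge\lambda_-/2\}$ by Weyl's inequality, Markov's inequality together with the $a$-th moment bound already proven gives exactly this rate.

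The second claim is immediate. By construction, $\lambda_{\min}(\hat{\Sigma}^C)\ge\lambda_-/2$ and $\lambda_{\max}(\hat{\Sigma}^C)\le\max\{2\lambda_+,\lambda_-\}$ deterministically, so the ratio is bounded by a constant depending on $\lambda_\pm$. Symmetry of $\hat{\Sigma}^C$ is clear, and its positive-definiteness follows from the clipping. The role of $H$ is only to ensure $n_{g,h}\ge|L_g|+|L_h|$, which Lemma~\ref{lemma:subgaussian tail control} requires.

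The only genuinely delicate step is the first: checking that each block deviation is controlled by a single sample covariance built from a common set of samples. This is what lets us invoke Lemma~\ref{lemma:subgaussian tail control} with effective dimension $|L_g|+|L_h|$, rather than handling entrywise estimates with differing denominators.
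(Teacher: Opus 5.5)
Your proof is correct and follows the same route as the paper. You reduce to blocks via Lemma~\ref{lemma:supervised Covariance Matrix Estimation}, apply Lemma~\ref{lemma:subgaussian tail control} to each $(g,h)$ block built from its common $n_{g,h}$ samples, and split on whether clipping is triggered. The one real difference is how you bound the clipping probability. The paper uses the exponential tail bound of Lemma~\ref{lemma:subgaussian tail control} at threshold $\lambda_-/(2L^2)$ per block, which is what forces $H=H(\lambda_\pm,C_X,L)$ to be large. Your Markov-plus-moment argument is enough for the stated polynomial rate and only needs $n_{g,h}\ge|L_g|+|L_h|$. Your Jensen step for $a\in[1,2)$ also fills in a detail the paper leaves implicit, since the lemma is stated only for $a\ge 2$.
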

\begin{proof}
    Define $\hat{E}=\hat{\Sigma}-\Sigma$, where $\hat{\Sigma}$ is defined in \eqref{eq:LD structured inital cov}. Note that, for $(r,l)\in[L]^2$, $\hat{E}_{L_{r}L_{l}}$ is a sub-matrix of a regular sample second moment matrix. By Lemma \ref{lemma:subgaussian tail control}, it holds for $n_{l,r} \geq H p$ that, for some  $F=F(\lambda_{-},\lambda_{+},C_{X},L)>0$ and $H=H(\lambda_{-},\lambda_{+},C_{X},L)$,
    \begin{equation*}
     \mathbb{P}\left(\|\hat{E}_{L_{r}L_{l}}\|_{\mathrm{op}}\geq \frac{\lambda_{-}}{2L^2}\right) \leq 2e^{-n_{l,r}F}.
    \end{equation*}
    Define the intersection of the complement of all these events to be $G$. On $G$, we have, by Lemma \ref{lemma:supervised Covariance Matrix Estimation}, that $\|\hat{\Sigma}-\Sigma\|_{\mathrm{op}} \leq \frac{\lambda_{-}}{2}$. Thus, on $G$, we have $\hat{\Sigma}=\hat{\Sigma}^{C}$. For $C=C(\lambda_{-},\lambda_{+},L,a)$, it follows that 
    \begin{align*}
        \mathbb{E}\left[\|\hat{\Sigma}^{C}-\Sigma\|_{\mathrm{op}}^{a}\right] &=\mathbb{E}\left[\|\hat{\Sigma}^{C}-\Sigma\|_{\mathrm{op}}^{a}\mathbbm{1}_{G}\right] +\mathbb{E}\left[\|\hat{\Sigma}^{C}-\Sigma\|_{\mathrm{op}}^{a}\mathbbm{1}_{G^c}\right]\\
        & =\mathbb{E}\left[\|\hat{\Sigma}-\Sigma\|_{\mathrm{op}}^{a}\mathbbm{1}_{G}\right] +C\mathbb{E}\left[\mathbbm{1}_{G^c}\right]\\
        &\leq \mathbb{E}\left[\|\hat{\Sigma}-\Sigma\|_{\mathrm{op}}^{a}\right] +C\mathbb{P}\left(G^c\right)\\
        &\leq \sum_{(r,l)\in[L]^2}\mathbb{E}\left[\|\hat{E}_{L_{r}L_{l}}\|_{\mathrm{op}}^{a}\right]+\sum_{(r,l)\in [L]^2}2e^{-n_{l,r}F}\\
        &\leq C\max_{(g,h)\in [L]^2}\left\{\left(\frac{|L_{g}|}{n_{g,h}}\right)^{\frac{a}{2}}\right\},
    \end{align*}
    where the final line follows from Lemma \ref{lemma:subgaussian tail control}. The second statement is now immediate by the construction of $\hat{\Sigma}^{C}$.
\end{proof}

We give an additional analysis of this estimator in the unstructured setting of Example \ref{examp:unstructured pattern}.

\begin{proposition}\label{Prop: unstructured cov estimation theorem}
    Suppose our distribution of covariates is formed via $X=\Sigma^{\frac{1}{2}}Z$, where the coordinates of $Z$ are independent, mean-zero, have unit variance and are sub-Gaussian with parameter $C_{X}$ and $\Sigma$ satisfies Assumption \ref{assump:eigen}. Consider \eqref{eq:LD clippled covariance estimate} defined above and assume that for all distinct $(g,h) \in [L]^2$,  $\rho^2n_{\mathcal{L}}\leq n_{g,h} \leq C_{\rho}\rho^2n_{\mathcal{L}}$. Provided $\rho^2n_{\mathcal{L}} \geq H_{1}p$, then 
    \begin{align*}
        \mathbb{P}\left(\left\|\hat{\Sigma}^{C}-\Sigma\right\|_{\mathrm{op}} \geq \min\left\{1,\frac{\lambda_{-}}{2}\right\}\right)\leq e^{-F_{1}\rho^2n_{\mathcal{L}}} \quad \text{and} \quad 
        \mathbb{E}\left[\left\|\hat{\Sigma}^{C}-\Sigma\right\|_{\mathrm{op}}^{a}\right]\leq F_{2}\left(\frac{p}{\rho^2n_{\mathcal{L}}}\right)^{\frac{a}{2}},
    \end{align*}
    where $F_{1}=F_{1}(C_{X},\lambda_{+},\lambda_{-})>0$, $F_{2}=F_{2}(C_{X},\lambda_{+},\lambda_{-},a)$ and $H_{1}=H_{1}(C_{X},\lambda_{+},\lambda_{-})$.
\end{proposition}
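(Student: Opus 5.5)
The plan is to reduce both claims to a high-probability operator-norm bound on the unclipped pairwise estimator $\hat{\Sigma}$ in \eqref{eq:LD structured inital cov}. The clipping in \eqref{eq:LD clippled covariance estimate} then handles the rare bad event, as in the proof of Proposition \ref{Prop:Supervised covariance blocks}. The block-decomposition route of that proposition (Lemma \ref{lemma:supervised Covariance Matrix Estimation}) cannot be used here. In the unstructured setting $L$ may be as large as $p$, so summing $L^2$ block norms loses a factor of order $p$. Instead I would control $\hat{E}=\hat{\Sigma}-\Sigma$ directly, and write it as a weighted sum of independent random matrices. For each labelled sample $k$, let $W_k=(X_k)_{O_{\Xi(k)}}$ zero-padded to $\mathbb{R}^p$. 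Let $\Lambda$ be the $p\times p$ matrix with entries $\Lambda_{ij}=1/n_{\xi(i),\xi(j)}$, and write $\circ$ for the Hadamard product. Then
\begin{equation*}
\hat{\Sigma}=\sum_{k\in\mathcal{I}_{\mathcal{L}}}\Lambda\circ (W_kW_k^T), \qquad \mathbb{E}\hat{\Sigma}=\Sigma,
\end{equation*}
since $\mathbb{E}[(X_k)_i(X_k)_j]\mathbbm{1}\{i,j\in O_{\Xi(k)}\}$ summed over $k$ equals $n_{\xi(i),\xi(j)}\Sigma_{ij}$.

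\textbf{Step 1 (high-probability bound).} I would show that, for $t\ge C\sqrt{p/(\rho^2 n_{\mathcal{L}})}$ with $t\le 1$,
\begin{equation*}
\mathbb{P}\bigl(\|\hat{E}\|_{\mathrm{op}}\ge t\bigr)\le 2e^{-c\rho^2 n_{\mathcal{L}}t^2}.
\end{equation*}
The approach is an $\varepsilon$-net argument over $\mathcal{S}^{p-1}$, which has $9^p$ points. For fixed $u$, the quantity $u^T\hat{E}u=\sum_k (Z_k^T A_k Z_k-\mathbb{E}Z_k^TA_kZ_k)$ is a sum of independent quadratic forms in the independent sub-Gaussian coordinates of $Z_k$, where $A_k=\Sigma^{1/2}(\Lambda\circ D_k u u^T D_k)\Sigma^{1/2}$ and $D_k$ is the diagonal selector of $O_{\Xi(k)}$. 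This is exactly why the proposition assumes $X=\Sigma^{1/2}Z$: it makes Hanson--Wright available. Applying Hanson--Wright to the block-diagonal form in $(Z_k)_k$ gives a tail bound of the form
\begin{equation*}
\exp\Bigl(-c\min\Bigl\{t^2\Big/\sum_k\|A_k\|_F^2,\ t\Big/\max_k\|A_k\|_{\mathrm{op}}\Bigr\}\Bigr).
\end{equation*}
The balancing assumption gives $\Lambda_{ij}\le 1/(\rho^2 n_{\mathcal{L}})$, so $\max_k\|A_k\|_{\mathrm{op}}\lesssim \lambda_+/(\rho^2n_{\mathcal{L}})$.

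The key computation, and the main obstacle, is bounding the Frobenius term:
\begin{equation*}
\sum_k\|\Lambda\circ D_kuu^TD_k\|_F^2=\sum_{i,j}u_i^2u_j^2\,\frac{n_{\xi(i),\xi(j)}}{n_{\xi(i),\xi(j)}^2}\le \frac{1}{\rho^2 n_{\mathcal{L}}}.
\end{equation*}
The diagonal terms $i=j$ (or same modality) use $h_g\ge \rho n_{\mathcal{L}}\ge\rho^2n_{\mathcal{L}}$ and are fine. The delicate point is the conjugation by $\Sigma^{1/2}$. I would bound it using $\|\Sigma^{1/2}M\Sigma^{1/2}\|_F\le\lambda_+\|M\|_F$, which keeps the total variance proxy of order $\lambda_+^2/(\rho^2n_{\mathcal{L}})$. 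A union bound over the net then needs $\rho^2n_{\mathcal{L}}t^2\gtrsim p$, which holds under $\rho^2n_{\mathcal{L}}\ge H_1p$.

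\textbf{Step 2 (conclusions).} Setting $t=\min\{1,\lambda_-/2\}$ shows that, with probability at least $1-e^{-F_1\rho^2n_{\mathcal{L}}}$, the estimator is unclipped ($\hat\Sigma^C=\hat\Sigma$), its eigenvalues lie in $[\lambda_-/2,2\lambda_+]$, and $\|\hat\Sigma^C-\Sigma\|_{\mathrm{op}}<t$. This gives the first display. For the moment bound, I would split on that event $G$. On $G^c$ the error is bounded by the deterministic constant $\lambda_-+\lambda_+$ (from clipping) times $e^{-F_1\rho^2n_{\mathcal{L}}}$, which is at most $F(p/(\rho^2n_{\mathcal{L}}))^{a/2}$. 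On $G$ I would integrate the Step 1 tail exactly as in the proof of Lemma \ref{lemma:subgaussian tail control}, giving $\mathbb{E}[\|\hat E\|^a_{\mathrm{op}}\mathbbm{1}_G]\lesssim (p/(\rho^2n_{\mathcal{L}}))^{a/2}$.
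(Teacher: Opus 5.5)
Your proposal is correct and is essentially the paper's proof: a net argument over $\mathcal{S}^{p-1}$, then Hanson--Wright applied to the block-diagonal quadratic form in the stacked $Z_k$, with the same bounds $\|A\|_F^2\le\lambda_+^2/(\rho^2n_{\mathcal{L}})$ and $\|A\|_{\mathrm{op}}\le\lambda_+/(\rho^2n_{\mathcal{L}})$, followed by the clipping event $G$ and tail integration for the moment bound. Two cosmetic corrections: the proposition does not assume $h_g\ge\rho n_{\mathcal{L}}$, but $h_g\ge n_{g,h}\ge\rho^2n_{\mathcal{L}}$ already handles the same-modality entries; and on $G^c$ the deterministic bound should be $2\lambda_+$ rather than $\lambda_-+\lambda_+$, because an unclipped $\hat\Sigma^C$ can have eigenvalues up to $2\lambda_+$.
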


\begin{proof}
A standard argument \citep[Chapter 6]{wainwright2019high} yields that
\begin{equation}\label{eq:cov est Wainwright}
    \mathbb{P}\left(\left\|\hat{\Sigma}-\Sigma\right\|_{\mathrm{op}}\geq t\right) \leq 17^{p}\sup_{\theta \in \mathcal{S}^{p-1}}\mathbb{P}\left(\theta^T(\hat{\Sigma}-\Sigma)\theta\geq \frac{t}{2}\right).
\end{equation}
Fixing $\theta \in \mathcal{S}^{p-1}$, we focus on controlling $\mathbb{P}(\theta^T(\hat{\Sigma}-\Sigma)\theta \geq \frac{t}{2})$. We have that 
\begin{equation*}
    \theta^T\hat{\Sigma}\theta = \sum_{k \in \mathcal{I}_\mathcal{L}}Z_{k}^T\Sigma^{\frac{1}{2}}B_{k}(\theta)\Sigma^{\frac{1}{2}}Z_{k}, 
\end{equation*}
where $B_{k}(\theta)\in \mathbb{R}^{p \times p}$ satisfies for $(r,l)\in [p]^2$
\begin{equation*}
    \{B_{k}(\theta)\}_{r,l}=\frac{\theta_{r}\theta_{l}\mathbbm{1}_{\left\{\{r,l\}\subseteq O_{\Xi(k)}\right\}}}{n_{\xi(r)\xi(l)}},
\end{equation*}
and for $k \in [n_{\mathcal{L}}]$, we have $Z_{k}=\Sigma^{-\frac{1}{2}}X_{k}$. Define $Z \equiv (Z_{1}^T,\cdots,Z_{n_{\mathcal{L}}}^T)^T\in \mathbb{R}^{n_{\mathcal{L}}p}$ and the block-diagonal matrix $A \in \mathbb{R}^{n_{\mathcal{L}}p\times n_{\mathcal{L}}p}$ as

\begin{equation*}
A =
\begin{pmatrix}
\Sigma^{1/2} B_{1}(\theta)\Sigma^{1/2} &  & \\
&  \ddots & \\
& & \Sigma^{1/2} B_{n_{\mathcal{L}}}(\theta)\Sigma^{1/2}
\end{pmatrix}.
\end{equation*}
It therefore holds that $\theta^T\hat{\Sigma} \theta=Z^TAZ$.We have that
\begin{align*}
    \left\|A\right\|_{F}^2 = \Tr(AA^T) &= \sum_{k=1}^{n_{\mathcal{L}}}\Tr(\Sigma^{\frac{1}{2}}B_{k}(\theta)\Sigma B_{k}(\theta)\Sigma^{\frac{1}{2}})
    \leq \lambda_{+}^2\sum_{k=1}^{n_{\mathcal{L}}}\Tr(B_{k}(\theta)^2)\\
    & = \lambda_{+}^2\sum_{k=1}^{n_{\mathcal{L}}}\sum_{l=1}^p\sum_{r=1}^p\frac{\theta_{r}^2\theta_{l}^2\mathbbm{1}_{\left\{\{r,l\}\subseteq O_{\Xi(k)}\right\}}}{n_{\xi(r)\xi(l)}^2}\leq \frac{\lambda_{+}^2}{\rho^2n_{\mathcal{L}}}.
\end{align*}
Additionally, since $A$ is symmetric and block-diagonal, it holds that
\begin{equation*}
    \|A\|_{\mathrm{op}}\leq \max_{k \in [n_{\mathcal{L}}]}\{\|\Sigma^{\frac{1}{2}}B_{k}(\theta)\Sigma^{\frac{1}{2}}\|_{\mathrm{op}}\} \leq \lambda_{+}\max_{k \in [n_{\mathcal{L}}]}\{\|B_{k}(\theta)\|_{\mathrm{op}}\}\leq \lambda_{+}\max_{k \in [n_{\mathcal{L}}]}\{\|B_{k}(\theta)\|_{F}\} \leq \frac{\lambda_{+}}{\rho^2n_{\mathcal{L}}},
\end{equation*}
where $\|\cdot\|_{F}$ denotes the Frobenius norm of a matrix. 
By the Hanson-Wright inequality \citep[Theorem 6.2.2]{vershynin2025HDP}, there exists a universal constant $c$ such that
\begin{equation*}
    \mathbb{P}\left(|\theta^T\hat{\Sigma}\theta-\theta^T\Sigma\theta| \geq t\right) \leq 2e^{-c \min\left\{\frac{\rho^2n_{\mathcal{L}}t^2}{C_{X}^4\lambda_{+}^2}, \frac{\rho^2n_{\mathcal{L}}t}{C_{X}^2\lambda_{+}}\right\}}.
\end{equation*}
It follows, by \eqref{eq:cov est Wainwright}, that
\begin{equation*}
    \mathbb{P}\left(\left\|\hat{\Sigma}-\Sigma\right\|_{\mathrm{op}}\geq t\right) \leq 17^{p}\cdot 2e^{-c \min\left\{\frac{\rho^2n_{\mathcal{L}}t^2}{C_{X}^4\lambda_{+}^2}, \frac{\rho^2n_{\mathcal{L}}t}{C_{X}^2\lambda_{+}}\right\}}
\end{equation*}
Setting $t = FC_{X}^2\lambda_{+}\left(\frac{p}{\rho^2n_{\mathcal{L}}}+\sqrt{\frac{p}{\rho^2n_{\mathcal{L}}}}+u\right)$ for a sufficiently large universal constant $F$ yields that
\begin{equation}\label{eq:Cov norm high prob bound}
    \mathbb{P}\left(\left\|\hat{\Sigma}-\Sigma\right\|_{\mathrm{op}}\geq FC_{X}^2\lambda_{+}\left(\frac{p}{\rho^2n_{\mathcal{L}}}+\sqrt{\frac{p}{\rho^2n_{\mathcal{L}}}}+u\right) \right) \leq 2e^{-\rho^2n_{\mathcal{L}}\min\{u,u^2\}}.
\end{equation}
The first statement of the proposition follows immediately. In particular, we define the event $G=\{\|\hat{\Sigma}-\Sigma\|_{\mathrm{op}}\leq \min \{1,\frac{\lambda_{-}}{2}\}\}$ which satisfies $\mathbb{P}(G)\geq 1 - e^{-F_{1}\rho^2n_{\mathcal{L}}}$ where $F_{1}=F_{1}(C_{X},\lambda_{+},\lambda_{-})>0$. On $G$ we have that $\hat{\Sigma}=\hat{\Sigma}^{C}$. Furthermore, we define $R=FC_{X}^2\lambda_{+}\left(\frac{p}{\rho^2n_{\mathcal{L}}}+\sqrt{\frac{p}{\rho^2n_{\mathcal{L}}}}\right)$ and the random variable $W=(\|\hat{\Sigma}-\Sigma\|_{\mathrm{op}}-R)_{+}$ such that $\|\hat{\Sigma}-\Sigma\|_{\mathrm{op}}\leq R+W$. It follows that
\begin{equation}\label{eq: cov est decomp}
    \|\hat{\Sigma}-\Sigma\|_{\mathrm{op}}^{a}\leq 2^{a-1}\left(R^{a}+W^{a}\right).
\end{equation}
Additionally, for some $H=H(C_{X},\lambda_{+},\lambda_{-},a)$ and $F_{2}=F_{2}(C_{X},\lambda_{+},\lambda_{-},a)>0$, 
\begin{align*}
    \mathbb{E}\left[W^{a}\right] &= \int_{0}^{\infty}\mathbb{P}\left(W \geq t^{\frac{1}{a}}\right) dt\\
    & = a \int_{0}^{\infty}\mathbb{P}\left(W \geq u\right)u^{a-1}du\\
    &\leq 2a\int_{0}^{\infty}e^{-F_{2}\rho^2n_{\mathcal{L}}\min\{u,u^2\}}u^{a-1}du\\
    &\leq 2a\int_{0}^{\infty}e^{-F_{2}\rho^2n_{\mathcal{L}}u}u^{a-1}du+2a\int_{0}^{\infty}e^{-F_{2}\rho^2n_{\mathcal{L}}u^2}u^{a-1}du\\
    &\leq F_{2}\left(\frac{1}{\rho^2n_{\mathcal{L}}}\right)^{\frac{a}{2}},
\end{align*}
where the third line follows from \eqref{eq:Cov norm high prob bound} and the final line follows by integrating by substitution and properties of the $\Gamma$ function. It follows from the previous display and \eqref{eq: cov est decomp} that
\begin{equation*}
    \mathbb{E}\left[\|\hat{\Sigma}-\Sigma\|_{\mathrm{op}}^a\right] \leq F_{2}\left(\frac{p}{\rho^2n_{\mathcal{L}}}\right)^{\frac{a}{2}}.
\end{equation*}
Using properties of $G$, Cauchy-Schwarz and the previous display, the second claim of the Theorem follows
\begin{align*}
    \mathbb{E}\left[\|\hat{\Sigma}^{C}-\Sigma\|_{\mathrm{op}}^a\right] &= \mathbb{E}\left[\|\hat{\Sigma}^{C}-\Sigma\|_{\mathrm{op}}^a\mathbbm{1}_{G}\right] +\mathbb{E}\left[\|\hat{\Sigma}^{C}-\Sigma\|_{\mathrm{op}}^a\mathbbm{1}_{G^c}\right]\\
    &\leq \mathbb{E}\left[\|\hat{\Sigma}-\Sigma\|_{\mathrm{op}}^a\right]+\mathbb{P}\left(G^{c}\right)^{\frac{1}{2}}\mathbb{E}\left[\|\hat{\Sigma}^{C}-\Sigma\|_{\mathrm{op}}^{2a}\right]^{\frac{1}{2}}\\
    &\leq F_{2}\left(\frac{p}{\rho^2n_{\mathcal{L}}}\right)^{\frac{a}{2}} + F_{2}e^{-F_{1}\rho^2n_{\mathcal{L}}}\\
    &\leq F_{2}\left(\frac{p}{\rho^2n_{\mathcal{L}}}\right)^{\frac{a}{2}}.
\end{align*}
\end{proof}
We now explain how to construct an estimate of the covariance in the OSS case that satisfies the conditions of Theorem \ref{thm:OSS balanced}. Define $\hat{\Sigma}\in \mathbb{R}^{p\times p}$ by, for $(i,j) \in [p]^2$,
\begin{equation*}
    \hat{\Sigma}_{ij} = \frac{1}{N}\sum_{k\in \mathcal{I}_{\mathcal{U}}}(X_{k})_{i}(X_{k})_{j}.
\end{equation*}
We form a clipped version of the above estimator
\begin{equation}\label{eq:LD OSS clippled covariance estimate}
    \hat{\Sigma}^{C}\equiv \begin{cases}
        \hat{\Sigma} & \text{ if } \frac{\lambda_{-}}{2} \leq \lambda_{\min}(\hat{\Sigma}) \leq \lambda_{\max}(\hat{\Sigma}) \leq 2 \lambda_{+},\\
        \lambda_{-} I_{p} & \text{ otherwise.}
    \end{cases}
\end{equation}
The following proposition establishes that \eqref{eq:LD OSS clippled covariance estimate} satisfies the assumptions of Theorem \ref{thm:OSS balanced} in the supervised case. 
\begin{proposition}\label{Prop:OSS covariance blocks}
    Consider the estimator \eqref{eq:LD OSS clippled covariance estimate}. Further assume that the distribution of the covariates satisfies Assumptions \ref{assump:subgauss} and \ref{assump:eigen}. For all $a\geq 1$, there exists $C=C(\lambda_{-},\lambda_{+},C_{X},a)$ and $H=H(\lambda_{-},\lambda_{+},C_{X},a)$ such that, provided $N\geq Hp$, we have
    \begin{align*}
        \mathbb{E}\left[\left\|\hat{\Sigma}^{C}-\Sigma\right\|_{\mathrm{op}}^{a}\right] \leq C\max_{(g,h)\in[L]^2}\left\{\left(\frac{p}{N}\right)^{\frac{a}{2}}\right\} \quad \text{and} \quad \mathbb{E}\left[\left(\frac{\max\{1,\lambda_{\max}(\hat{\Sigma}^{C})\}}{\min\{1,\lambda_{\min}(\hat{\Sigma}^{C})\}}\right)^{a}\right] \leq C.
    \end{align*}
\end{proposition}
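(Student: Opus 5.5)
The plan is to reproduce the argument of Proposition~\ref{Prop:Supervised covariance blocks}, using a single block in place of the $L^2$ modality blocks, since here $\hat{\Sigma}$ is an ordinary sample second moment matrix of the $N$ i.i.d.\ unlabelled covariates. The quantity $\max_{(g,h)}(p/N)^{a/2}$ is simply $(p/N)^{a/2}$, so the target is $\mathbb{E}[\|\hat{\Sigma}^{C}-\Sigma\|_{\mathrm{op}}^{a}]\leq C(p/N)^{a/2}$, together with a bounded condition-number moment.

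\emph{Good event.} Let $G=\{\|\hat{\Sigma}-\Sigma\|_{\mathrm{op}}\leq \lambda_{-}/2\}$. On $G$, Weyl's inequality and Assumption~\ref{assump:eigen} give $\lambda_{\min}(\hat{\Sigma})\geq \lambda_{-}/2$ and $\lambda_{\max}(\hat{\Sigma})\leq \lambda_{+}+\lambda_{-}/2\leq 2\lambda_{+}$. Hence $\hat{\Sigma}^{C}=\hat{\Sigma}$ on $G$. To bound $\mathbb{P}(G^c)$, I invoke the tail bound in Lemma~\ref{lemma:subgaussian tail control} with $n=N$, $A=cC_X^2\lambda_{+}/\lambda_{-}$ and $b=\sqrt{p/N}+p/N$. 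If $N\geq Hp$ with $H$ large enough that $Ab\leq \lambda_{-}/4$, then taking $x=\lambda_{-}/2$ gives $x/A-b\geq \lambda_{-}/(4A)$. This yields $\mathbb{P}(G^c)\leq 2e^{-F N}$ for some $F=F(\lambda_{-},\lambda_{+},C_X)>0$.

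\emph{Moment bound.} Split on $G$ and its complement:
\begin{equation*}
\mathbb{E}\bigl[\|\hat{\Sigma}^{C}-\Sigma\|_{\mathrm{op}}^{a}\bigr]\leq \mathbb{E}\bigl[\|\hat{\Sigma}-\Sigma\|_{\mathrm{op}}^{a}\bigr]+\mathbb{E}\bigl[\|\hat{\Sigma}^{C}-\Sigma\|_{\mathrm{op}}^{a}\mathbbm{1}_{G^c}\bigr].
\end{equation*}
For the first term, Lemma~\ref{lemma:subgaussian tail control} applies for $a\geq 2$ and gives $C(p/N)^{a/2}$. For $1\leq a<2$, I use Jensen's inequality from the $a=2$ case. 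For the second term, $\hat{\Sigma}^{C}$ is either $\lambda_{-}I_p$ or a matrix with spectrum in $[\lambda_{-}/2,2\lambda_{+}]$, so deterministically $\|\hat{\Sigma}^{C}-\Sigma\|_{\mathrm{op}}\leq 3\lambda_{+}$. The second term is therefore at most $(3\lambda_{+})^a\cdot 2e^{-FN}$.

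\emph{Main obstacle.} The only delicate step is absorbing $e^{-FN}$ into $(p/N)^{a/2}$. Since $p\geq 1$, it suffices that $e^{-FN}\leq C N^{-a/2}$, i.e.\ $\sup_{N}N^{a/2}e^{-FN}<\infty$. This supremum is a constant depending on $a$ and $F$, which is acceptable because $C$ may depend on $a$. I expect no further difficulty here.

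\emph{Condition number.} This is immediate from the clipping. In both branches $\lambda_{\min}(\hat{\Sigma}^{C})\geq \lambda_{-}/2$ and $\lambda_{\max}(\hat{\Sigma}^{C})\leq 2\lambda_{+}$ (recall $\lambda_{-}\leq\lambda_{+}$). The ratio $\max\{1,\lambda_{\max}\}/\min\{1,\lambda_{\min}\}$ is therefore deterministically bounded by $\max\{1,2\lambda_{+}\}/\min\{1,\lambda_{-}/2\}$. Its $a$-th power is a constant $C(\lambda_{-},\lambda_{+},a)$, which completes the proposition.
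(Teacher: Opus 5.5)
Your proposal is correct and is essentially the paper's argument. The paper's proof simply defers to Proposition~\ref{Prop:Supervised covariance blocks}: a good event on which $\hat{\Sigma}^{C}=\hat{\Sigma}$, an exponentially small bound on its complement via Lemma~\ref{lemma:subgaussian tail control}, the moment bound from the same lemma, and deterministic eigenvalue bounds from the clipping, which is exactly your single-block version (and you additionally make explicit the Jensen step for $1\leq a<2$ and the absorption of $e^{-FN}$ into $(p/N)^{a/2}$, which the paper leaves implicit).
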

\begin{proof}
    The proof follows exactly the same argument as Proposition \ref{Prop:Supervised covariance blocks}.
\end{proof}

\subsection{OSS proofs under a balancing assumption}\label{A:OSS Proofs under a balancing assumption}
This section consists solely of proving an upper bound for our estimator in the unstructured setting (Theorem \ref{thm:OSS balanced}).

\begin{proofof}{thm:OSS balanced}
    We first note that the sub-Gaussian assumption on the covariates allows us to assume that the distribution satisfies $(4,2,C_{4})$--hypercontractivity for some constant $C_{4}=C_{4}(C_{X},\lambda_{-})$. The first half of the proof involves showing that $\hat{B}^{C}$ is equal to $\hat{B}$ with high probability, allowing us to control the effect of replacing the latter with the former in the estimator.
    We define the event $E_{1}$ to be $\|\hat{\Sigma}-\Sigma\|_\mathrm{op}\leq \mu\equiv \min\{1,\frac{\lambda_{-}}{2}\}$. Assumption \eqref{eq:complicated eval assumption 2} yields that for a constant $F_{1}=F_{1}(C_{X},C_{\rho},\lambda_{+},\lambda_{-})$,
    \begin{equation*}
        \mathbb{P}\left(E_{1}\right) \geq 1-\frac{F_{1}}{(\rho^2n_{\mathcal{L}}+N)^{8}}.
    \end{equation*}
    We additionally have $\frac{\lambda_{-}}{2}I_{p}\preceq \hat{\Sigma}\preceq 2\lambda_{+}I_{p}$ on $E_{1}$. The next step of the proof is to define a high probability event $E_{2}$ such that we have $\hat{B}=\hat{B}^{C}$ on $E_{2}$ provided $\hat{\Sigma}$ is such that $E_{1}$ holds. We begin by controlling $\mathbb{E}[\hat{B}\big|\hat{\Sigma}]$ for $\hat{\Sigma}$ such that $E_{1}$ holds. We have that 
    \begin{align*}
        \mathbb{E}\left[\hat{\Sigma}^{-1}\hat{B}\hat{\Sigma}^{-1}\big|\hat{\Sigma}\right] & = \hat{\Sigma}^{-1}\sum_{i=1}^{n_{\mathcal{L}}}\hat{\Sigma}_{O_{\Xi(i)}}^T\hat{\Sigma}_{O_{\Xi(i)}O_{\Xi(i)}}^{-1}\Sigma_{O_{\Xi(i)O_{\Xi(i)}}}\hat{\Sigma}_{O_{\Xi(i)}O_{\Xi(i)}}^{-1}\hat{\Sigma}_{O_{\Xi(i)}}\hat{\Sigma}^{-1}.
    \end{align*}
    We therefore have on the event $E_1$ that 
    \begin{align*}
        \frac{\lambda_{-}}{4\lambda_{+}^2}\hat{\Sigma}^{-1}\sum_{i=1}^{n_{\mathcal{L}}}\hat{\Sigma}_{O_{\Xi(i)}}^T\hat{\Sigma}_{O_{\Xi(i)}}\hat{\Sigma}^{-1}\preceq \mathbb{E}\left[\hat{\Sigma}^{-1}\hat{B}\hat{\Sigma}^{-1}\big|\hat{\Sigma}\right] \preceq \frac{4\lambda_{+}}{\lambda_{-}^2}\hat{\Sigma}^{-1}\sum_{i=1}^{n_{\mathcal{L}}}\hat{\Sigma}_{O_{\Xi(i)}}^T\hat{\Sigma}_{O_{\Xi(i)}}\hat{\Sigma}^{-1}.
    \end{align*}
    The above display and the conditions on the set $\{h_{g}\}_{g=1}^L$ in the Theorem yield that 
    \begin{align*}
        \frac{\lambda_{-}}{4\lambda_{+}^2}\rho n_{\mathcal{L}}I_{p}\preceq \mathbb{E}\left[\hat{\Sigma}^{-1}\hat{B}\hat{\Sigma}^{-1}\big|\hat{\Sigma}\right] \preceq \frac{4\lambda_{+}C_{\rho}}{\lambda_{-}^2}\rho n_{\mathcal{L}}I_{p}.
    \end{align*}
    Pre- and post-multiplying by $\hat{\Sigma}$ yields that, on the event $E_1$,
    \begin{align}\label{eq:E1 event properties}
          \frac{\lambda_{-}^3}{16\lambda_{+}^2}\rho n_{\mathcal{L}}I_{p}\preceq \mathbb{E}\left[\hat{B}\big|\hat{\Sigma}\right] \preceq \frac{16\lambda_{+}^3C_{\rho}}{\lambda_{-}^2}\rho n_{\mathcal{L}}I_{p}.
    \end{align}
    To simplify notation, define $\mu_{+}=\frac{16\lambda_{+}^3C_{\rho}}{\lambda_{-}^2}$ and $\mu_{-}=\frac{\lambda_{-}^3}{16\lambda_{+}^2}$ and set $\omega = \min\{1,\frac{\mu_{-}}{2},\mu_{+}\}$. Additionally, define for $i \in [n_{\mathcal{L}}],      W_{i}=\hat{P}_{\Xi(i)}^T(X_{i})_{O_{\Xi(i)}}(X_{i})_{O_{\Xi(i)}}^T\hat{P}_{\Xi(i)} $ such that $\hat{B}=\sum_{i=1}^{n_{\mathcal{L}}}W_{i}$. For $i \in [n_{\mathcal{L}}]$, we also define $A_{i}^2=p\hat{\Sigma}_{O_{\Xi(i)}}^T
    \hat{\Sigma}_{O_{\Xi(i)}}$. For the subsequent result, we restrict to the $\hat{\Sigma}$ such that $E_{1}$ holds. For some universal constant $H_{1}>0$ and $R_{X}= R_{X}(\lambda_{-},\lambda_{+},C_{X})$ some constant depending only on $\lambda_{-},\lambda_{+}$ and $C_{X}$, we have that for $\theta \in \mathbb{R}^{p}$ and for $q = 2,3,\ldots$
    \begin{align*}
    &\theta^T\mathbb{E}\!\left[W_{i}^q|\hat{\Sigma}\right]\theta\\
    & =\mathbb{E}\left[\|\hat{P}_{\Xi(i)}^T(X_{i})_{O_{\Xi(i)}}\|^{2q-2}\left(\theta^T\hat{P}_{\Xi(i)}^T(X_{i})_{O_{\Xi(i)}}\right)^2\big|\hat{\Sigma}\right]\\
    &\leq 
    \mathbb{E}\!\left[
        \|\hat{P}_{\Xi(i)}\|_\mathrm{op}^{4q-4}
        \|(X_{i})_{O_{\Xi(i)}}\|_{2}^{4q-4}
        \,\big|\hat{\Sigma}
    \right]^{\frac{1}{2}}
    \mathbb{E}\!\left[
        \left(\theta^T\hat{P}_{\Xi(i)}(X_{i})_{O_{\Xi(i)}}\right)^4
        \,\big|\hat{\Sigma}
    \right]^{\frac{1}{2}}\\[2mm]
    &\leq 
    \left(\frac{4\lambda_{+}}{\lambda_{-}}\right)^{2q-2}
    \mathbb{E}\!\left[
        \|(X_{i})_{O_{\Xi(i)}}\|_{2}^{4q-4}
    \right]^{\frac{1}{2}}
    C_{4}^2
    \mathbb{E}\!\left[
        \left(\theta^T\hat{P}_{\Xi(i)}(X_{i})_{O_{\Xi(i)}}\right)^2
        \,\big|\hat{\Sigma}
    \right]\\[2mm]
    &\leq 
    \left(\frac{4\lambda_{+}}{\lambda_{-}}\right)^{2q-2}\frac{4\lambda_{+}}{\lambda_{-}^2}
    C_{4}^2(4p)^{q-1}
    \left((H_{1}C_{X})^{(4q-4)}\sqrt{4q-4}^{(4q-4)}+(H_{1}\lambda_{+})^{2q-2}\right)^{\frac{1}{2}}
    \theta^T
    \hat{\Sigma}_{O_{\Xi(i)}}^T
    \hat{\Sigma}_{O_{\Xi(i)}}\theta\\
    &\leq \frac{q!}{2}(R_{X}p)^{q-2}\theta^TA_{i}^2\theta,
\end{align*}
 where the first inequality holds by Cauchy-Schwarz and the submultiplicativity of the operator norm; the second inequality holds by hypercontractivity and properties of $E_{1}$; the third inequality holds via a standard bound on the norm of a sub-Gaussian random vector e.g., \cite[Lemmas~1 and~2]{jin2019short}. Define $\sigma_{X}^2 = \left\|\sum_{i=1}^{n_{\mathcal{L}}}A_{i}^2\right\|_\mathrm{op}$, so that on $E_{1}$ we have 
\begin{align*}
    \sigma_{X}^2&= p\left\|\hat{\Sigma}\hat{\Sigma}^{-1}\left(\sum_{i=1}^{n_{\mathcal{L}}}\hat{\Sigma}_{O_{\Xi(i)}}^T\hat{\Sigma}_{O_{\Xi(i)}}\right)\hat{\Sigma}^{-1}\hat{\Sigma}\right\|_\mathrm{op}\leq C_{\rho}\rho n_{\mathcal{L}}p\left\|\hat{\Sigma}^2\right\|_\mathrm{op}\leq 4C_{\rho}\lambda_{+}^2p\rho n_{\mathcal{L}}.
\end{align*}
Therefore, by the matrix Bernstein inequality~\citep[Lemma 3]{zhu2022high}, we have  for some constant $F_{3}=F_{3}(C_{X},\lambda_{+},\lambda_{-},C_{\rho})$ that 
\begin{align}\label{eq:E2 event definition}
        \mathbb{P}\left(\left\|\frac{1}{\rho n_{\mathcal{L}}}\left(\hat{B}-\mathbb{E}\left[\hat{B}\big|\hat{\Sigma}\right]\right)\right\|_\mathrm{op}\geq \omega\bigg|\hat{\Sigma}\right) \leq 8pe^{-\frac{F_{3}\rho n_{\mathcal{L}}\omega^2}{p}}.
\end{align}
    Define the complement of this event to be $E_{2}$ such that, on $E_{1}$, we have $\mathbb{P}(E_{2}^c|\hat{\Sigma})\leq 8pe^{-\frac{F_{3}\rho n_{\mathcal{L}}\omega^2}{p}}$. Further, define $A$ to be the event $E_{1}\cap E_{2}$. We have by the above results that
    \begin{equation*}
        \mathbb{P}\left(A\right) = \mathbb{P}(E_1)-\mathbb{P}(E_1 \cap E_2^c) = 1 - \mathbb{P}(E_1^c) - \mathbb{E}\{\mathbb{P}(E_2^c | \hat{\Sigma}) \mathbbm{1}_{E_1}\} \geq 1-8pe^{-\frac{F_{3}\rho n_{\mathcal{L}}\omega^2}{p}}-\frac{F_{1}}{(\rho^2n_{\mathcal{L}}+N)^{8}}.
    \end{equation*}
    Combining \eqref{eq:E1 event properties} and \eqref{eq:E2 event definition} yields that on $A$
    \begin{align*}
        \frac{\lambda_{-}^3}{32\lambda_{+}^2}\rho n_{\mathcal{L}} \leq \lambda_{\min}(\hat{B}) \leq \lambda_{\max}(\hat{B}) \leq \frac{32\lambda_{+}^3C_{\rho}}{\lambda_{-}^2}\rho n_{\mathcal{L}}.
    \end{align*}
    It follows that $\hat{B}=\hat{B}^{C}$ on $A$. We additionally have that
    \begin{align}\label{eq:B hat crude upper bound}
        \|\hat{B}\|_\mathrm{op} \leq \frac{\lambda_{\max}^2(\hat{\Sigma})}{\lambda_{\min}^2(\hat{\Sigma})}\sum_{i=1}^{n_{\mathcal{L}}}\|X_{O_{\Xi(i)}}\|_{2}^2.
    \end{align}
    By Jensen's inequality, it follows that
    \begin{align*}
        \|\hat{B}\|_\mathrm{op}^4\leq \frac{\lambda_{\max}^8(\hat{\Sigma})}{\lambda_{\min}^8(\hat{\Sigma})}\left(\frac{\sum_{i=1}^{n_{\mathcal{L}}}\|X_{O_{\Xi(i)}}\|_{2}^2}{n_{\mathcal{L}}}\right)^4n_{\mathcal{L}}^4 \leq \frac{\lambda_{\max}^8(\hat{\Sigma})}{\lambda_{\min}^8(\hat{\Sigma})}\left(\sum_{i=1}^{n_{\mathcal{L}}}\|X_{i}\|_{2}^8\right)n_{\mathcal{L}}^3.
    \end{align*}
    Throughout the rest of the proof $F=F(C_{X},C_{\rho},C_{*},\lambda_{-},\lambda_{+})$ denotes a constant depending only on $C_{X},C_{\rho},C_{*},\lambda_{-}$ and $\lambda_{+}$.
    For later use, we bound the following quantity 
    \begin{align}\label{eq:hat B minus hat BF bound}
        &\mathbb{E}\left[\left\|\frac{\hat{B}-\hat{B}^{C}}{\rho n_{\mathcal{L}}}\right\|_\mathrm{op}^2\left\|\left(\frac{\hat{B}^{C}}{\rho n_{\mathcal{L}}}\right)^{-1}\right\|_\mathrm{op}^2\right]\nonumber\\ & = \mathbb{E}\left[\left\|\frac{\hat{B}-\hat{B}^{C}}{\rho n_{\mathcal{L}}}\right\|_\mathrm{op}^2\left\|\left(\frac{\hat{B}^{C}}{\rho n_{\mathcal{L}}}\right)^{-1}\right\|_\mathrm{op}^2\mathbbm{1}_{A}\right]+\mathbb{E}\left[\left\|\frac{\hat{B}-\hat{B}^{C}}{\rho n_{\mathcal{L}}}\right\|_\mathrm{op}^2\left\|\left(\frac{\hat{B}^{C}}{\rho n_{\mathcal{L}}}\right)^{-1}\right\|_\mathrm{op}^2\mathbbm{1}_{A^c}\right]\nonumber\\
        &\leq 0 + F\mathbb{P}\left(A^c\right)^{\frac{1}{2}}\mathbb{E}\left[\left\|\frac{\hat{B}-\hat{B}^{C}}{\rho n_{\mathcal{L}}}\right\|_\mathrm{op}^4\right]^{\frac{1}{2}}\nonumber\\
        &\leq F\left(pe^{-\frac{F_{3}\rho n_{\mathcal{L}}\omega^2}{p}}+\frac{1}{(\rho^2n_{\mathcal{L}}+N)^{8}}\right)^{\frac{1}{2}}\left(\mathbb{E}\left[\left\|\frac{\hat{B}}{\rho n_{\mathcal{L}}}\right\|_\mathrm{op}^4\right]+\mathbb{E}\left[\left\|\frac{\hat{B}^{C}}{\rho n_{\mathcal{L}}}\right\|_\mathrm{op}^4\right]\right)^{\frac{1}{2}}\nonumber\\
        &\leq F\max\left\{p^{\frac{1}{2}}e^{-\frac{F_{3}\rho n_{\mathcal{L}}\omega^2}{2p}},\frac{1}{(\rho^2n_{\mathcal{L}}+N)^{4}}\right\}\left(\mathbb{E}\left[\frac{\lambda_{\max}^8(\hat{\Sigma})}{\lambda_{\min}^8(\hat{\Sigma})}\left(\sum_{i=1}^{n_{\mathcal{L}}}\|X_{i}\|_{2}^8\right)\frac{n_{\mathcal{L}}^3}{(\rho n_{\mathcal{L}})^4}\right]+1\right)^{\frac{1}{2}}\nonumber\\
        &\leq F\max\left\{p^{\frac{1}{2}}e^{-\frac{F_{3}\rho n_{\mathcal{L}}\omega^2}{2p}},\frac{1}{(\rho^2n_{\mathcal{L}}+N)^{4}}\right\}\left(\frac{p^4}{\rho^4}+1\right)^{\frac{1}{2}}\nonumber\\
        &\leq F\max\left\{\max\left\{\frac{p^{\frac{5}{2}}}{\rho^2},p^{\frac{1}{2}}\right\}e^{-\frac{F_{3}\rho n_{\mathcal{L}}\omega^2}{2p}},\max\left\{\frac{p^2}{\rho^2},1\right\}\frac{1}{(\rho^2n_{\mathcal{L}}+N)^{4}}\right\}\nonumber\\
        &\leq F\left(\frac{p}{\rho n_{\mathcal{L}}}+\frac{p}{\rho^2n_{\mathcal{L}}+N}\right),
    \end{align}
    where the first inequality uses properties of $A$ and Cauchy-Schwarz; the second inequality uses the fact that $(a+b)^4 \leq 8(a^4+b^4)$; the third inequality uses the fact that for $a,b>0$, $\sqrt{a+b}\leq \sqrt{a}+\sqrt{b}$ and \eqref{eq:B hat crude upper bound}; the fourth inequality uses a standard bound on the norm of a sub-Gaussian random vector e.g., \cite[Lemmas~1 and~2]{jin2019short}; the final line follows since $\min\{\rho^2n_{\mathcal{L}}+N,\rho n_{\mathcal{L}}\}\geq G\log(\frac{p}{\rho})\max\{p,\frac{1}{\rho}\}$ for some $G=G(C_{X},C_{\rho},C_{*},\lambda_{-},\lambda_{+})$.

We note that by construction 
\begin{align*}
    \hat{B}^C \succeq \frac{\lambda_{-}^3}{32\lambda_{+}^2}\rho n_{\mathcal{L}}I_{p}.
\end{align*}
It follows for $k \in \mathbb{N}$, that we have 
\begin{equation}\label{eq:hat B F bound}
    \frac{1}{\lambda_{\min}^{k}(\hat{B}^C)} \leq \frac{F^k}{(\rho n_{\mathcal{L}})^{k}}.
\end{equation}

Having established important properties of $\hat{B}^\mathrm{C}$, we may now turn to decomposing the error. We now recall some additional notation also used in the proof of Theorem \ref{thm:Complicated Master OSS with estimating weights}.
For some $P=P_{j}$ and $(X,Y)$ independent of $\hat{\Sigma}$, it holds that
\begin{align*}
    Y &= X^T\beta^*+\epsilon = X_{O}^TP\beta^*+X^T\beta^*-X_{O}^TP\beta^*+\epsilon\\
    &= X_{O}^T\hat{P}\beta^*+X_{O}^T(P-\hat{P})\beta^*+X^T\beta^*-X_{O}^TP\beta^*+\epsilon\\
    &\equiv \hat{V}^T\beta^* + \gamma + \eta,
\end{align*}
where $\gamma = X_{O}^T(P-\hat{P})\beta^*$. Therefore, we have that
\begin{align*}
    \hat{V}_{i}Y_{i} &= \hat{V}_{i}\hat{V}_{i}^T\beta^*+\hat{V}_{i}\gamma_{i}+\hat{V}_{i}\eta_{i}\\
    & = \hat{V}_{i}\hat{V}_{i}^T\beta^*+\mathbb{E}\left[\hat{V}_{i}\gamma_{i}|\hat{\Sigma}\right]+\hat{V}_{i}\gamma_{i}-\mathbb{E}\left[\hat{V}_{i}\gamma_{i}|\hat{\Sigma}\right]+\hat{V}_{i}\eta_{i}\\
    &\equiv \hat{V}_{i}\hat{V}_{i}^T\beta^*+\mathbb{E}\left[\hat{V}_{i}\gamma_{i}|\hat{\Sigma}\right] + \delta_{i}.
\end{align*}
Similar to \eqref{eq:OSS structured error decomposition}, we can decompose the error in general as follows 
\begin{align}
    \|\hat{\beta}-\beta^*\|_{2}^2 &= \left\|(\hat{B}^C)^{-1}\sum_{i=1}^{n_{\mathcal{L}}}\hat{V}_{i}Y_{i}-\beta^*\right\|_{2}^2\nonumber\\
    &= \left\|(\hat{B}^C)^{-1}\sum_{i=1}^{n_{\mathcal{L}}}\left(\mathbb{E}\left[\hat{V}_{i}\gamma_{i}|\hat{\Sigma}\right] + \delta_{i}\right)+(\hat{B}^C)^{-1}\hat{B}\beta^*-\beta^*\right\|_{2}^2\nonumber\\
    &\leq 2\left\|(\hat{B}^{C})^{-1}\sum_{i=1}^{n_{\mathcal{L}}}\mathbb{E}\left[\hat{V}_{i}\gamma_{i}|\hat{\Sigma}\right]+(\hat{B}^{C})^{-1}\sum_{i=1}^{n_{\mathcal{L}}}\delta_{i}\right\|_{2}^2+\left\|\left(\hat{B}^{C}\right)^{-1}\left(\hat{B}-\hat{B}^{C}\right)\right\|_\mathrm{op}^2\|\beta^*\|_{2}^2\nonumber\\
    & = 2\left\|(\hat{B}^{C})^{-1}\sum_{i=1}^{n_{\mathcal{L}}}\mathbb{E}\left[\hat{V}_{i}\gamma_{i}|\hat{\Sigma}\right]+(\hat{B}^{C})^{-1}\sum_{i=1}^{n_{\mathcal{L}}}\left(\hat{V}_{i}\gamma_{i}-\mathbb{E}\left[\hat{V}_{i}\gamma_{i}|\hat{\Sigma}\right]+\hat{V}_{i}\eta_{i}\right)\right\|_{2}^2 \nonumber\\
    &\hspace{4mm}+2\left\|\left(\hat{B}^{C}\right)^{-1}\left(\hat{B}-\hat{B}^{C}\right)\right\|_\mathrm{op}^2\|\beta^*\|_{2}^2\nonumber\\
    & \leq 6\left\|(\hat{B}^{C})^{-1}\sum_{i=1}^{n_{\mathcal{L}}}\mathbb{E}\left[\hat{V}_{i}\gamma_{i}|\hat{\Sigma}\right]\right\|_{2}^2+6\left\|(\hat{B}^{C})^{-1}\sum_{i=1}^{n_{\mathcal{L}}}\left(\hat{V}_{i}\gamma_{i}-\mathbb{E}\left[\hat{V}_{i}\gamma_{i}|\hat{\Sigma}\right]\right)\right\|_{2}^2\nonumber\\
    \hspace{4mm}&+6\left\|(\hat{B}^{C})^{-1}\sum_{i=1}^{n_{\mathcal{L}}}\hat{V}_{i}\eta_{i}\right\|_{2}^2+2\left\|\left(\hat{B}^{C}\right)^{-1}\left(\hat{B}-\hat{B}^{C}\right)\right\|_\mathrm{op}^2\|\beta^*\|_{2}^2\nonumber\\
    & \equiv \circled{1}+\circled{2}+\circled{3}+\circled{4}\label{eq:balanced error decomposition},
\end{align}
where the final inequality follows from the fact that for $a,b,c\in \mathbb{R}$ we have $(a+b+c)^2\leq 3(a^2+b^2+c^2)$. We now begin bounding each term in the decomposition \eqref{eq:balanced error decomposition} in turn. 
\subsubsection*{Controlling $\circled{1}$}
We control $\circled{1}$ as follows 
    \begin{align}\label{eq: circled 1 preliminary bound}
    \mathbb{E}\left[\left\|(\hat{B}^{C})^{-1} \sum_{i=1}^{n_{\mathcal{L}}}  \mathbb{E}\left[\hat{V}_{i} \gamma_{i} \mid \hat{\Sigma}\right]\right\|_{2}^2\right]\nonumber
    &\leq\frac{F}{n_{\mathcal{L}}^2\rho^2}
    \left( \mathbb{E}\left[ \left\| \sum_{i=1}^{n_{\mathcal{L}}}\mathbb{E}\left[\hat{V}_{i}(X_{i})_{O_{\Xi(i)}}^T(P_{\Xi(i)}-\hat{P}_{\Xi(i)})\beta^*|\hat{\Sigma}\right] \right\|_{2}^2 \right] \right)\nonumber\\
    &=\frac{F}{n_{\mathcal{L}}^2\rho^2}
    \left( \mathbb{E}\left[ \left\| \sum_{i=1}^{n_{\mathcal{L}}}\hat{P}_{\Xi(i)}^T\Sigma_{O_{\Xi(i)}O_{\Xi(i)}}(P_{\Xi(i)}-\hat{P}_{\Xi(i)})\beta^* \right\|_{2}^2 \right]\right),
\end{align}
where the first line follows from \eqref{eq:hat B F bound}.
For $i \in [n_{\mathcal{L}}]$, define $S_{\Xi(i)}\in \mathbb{R}^{ p\times|O_{\Xi(i)}|}$ via $S_{\Xi(i)}\equiv\hat{\Sigma}^{-1}\hat{\Sigma}_{O_{\Xi(i)}}^T=\Sigma^{-1}\Sigma_{O_{\Xi(i)}}^T$. Here, $S_{\Xi(i)}$  `selects' certain columns of $\Sigma$ to produce $\Sigma_{O_{\Xi(i)}}^T$. We note the following decomposition
\begin{align*}
    &\hat{P}_{\Xi(i)}^T\Sigma_{O_{\Xi(i)}O_{\Xi(i)}}(P_{\Xi(i)}-\hat{P}_{\Xi(i)})\beta^* \\& = \hat{\Sigma}S_{\Xi(i)}\hat{\Sigma}_{O_{\Xi(i)}O_{\Xi(i)}}^{-1}\Sigma_{O_{\Xi(i)}O_{\Xi(i)}}\Sigma_{O_{\Xi(i)}O_{\Xi(i)}}^{-1}S_{\Xi(i)}^T\Sigma\beta^* - \hat{\Sigma}S_{\Xi(i)}\hat{\Sigma}_{O_{\Xi(i)}O_{\Xi(i)}}^{-1}\Sigma_{O_{\Xi(i)}O_{\Xi(i)}}\hat{\Sigma}_{O_{\Xi(i)}O_{\Xi(i)}}^{-1}S_{\Xi(i)}^T\hat{\Sigma}\beta^* \\
    & = \hat{\Sigma}S_{\Xi(i)}\hat{\Sigma}_{O_{\Xi(i)}O_{\Xi(i)}}^{-1}S_{\Xi(i)}^T(\Sigma-\hat{\Sigma})\beta^*+\hat{\Sigma}S_{\Xi(i)}\left(\hat{\Sigma}_{O_{\Xi(i)}O_{\Xi(i)}}^{-1}-\hat{\Sigma}_{O_{\Xi(i)}O_{\Xi(i)}}^{-1}\Sigma_{O_{\Xi(i)}O_{\Xi(i)}}\hat{\Sigma}_{O_{\Xi(i)}O_{\Xi(i)}}^{-1}\right)S_{\Xi(i)}^T\hat{\Sigma}\beta^*.
\end{align*}
By the previous identity, it holds that
\begin{align*}
    &\left\| \sum_{i=1}^{n_{\mathcal{L}}}\hat{P}_{\Xi(i)}^T\Sigma_{O_{\Xi(i)}O_{\Xi(i)}}(P_{\Xi(i)}-\hat{P}_{\Xi(i)})\beta^* \right\|_{2}^2\\
    & \leq 2\left\|\hat{\Sigma}\left(\sum_{i=1}^{n_{\mathcal{L}}}S_{\Xi(i)}\left(\hat{\Sigma}_{O_{\Xi(i)}O_{\Xi(i)}}^{-1}-\hat{\Sigma}_{O_{\Xi(i)}O_{\Xi(i)}}^{-1}\Sigma_{O_{\Xi(i)}O_{\Xi(i)}}\hat{\Sigma}_{O_{\Xi(i)}O_{\Xi(i)}}^{-1}\right)S_{\Xi(i)}^T\right)\hat{\Sigma}\beta^*\right\|_{2}^2\\
    &\hspace{4mm}+2\left\|\hat{\Sigma}\left(\sum_{i=1}^{n_{\mathcal{L}}}S_{\Xi(i)}\hat{\Sigma}_{O_{\Xi(i)}O_{\Xi(i)}}^{-1}S_{\Xi(i)}^T\right)(\Sigma-\hat{\Sigma})\beta^*\right\|_{2}^2\\
    &\leq 2\lambda_{\max}^4(\hat{\Sigma})\|\beta^*\|_{2}^2\left\|\sum_{i=1}^{n_{\mathcal{L}}}S_{\Xi(i)}\left(\hat{\Sigma}_{O_{\Xi(i)}O_{\Xi(i)}}^{-1}-\hat{\Sigma}_{O_{\Xi(i)}O_{\Xi(i)}}^{-1}\Sigma_{O_{\Xi(i)}O_{\Xi(i)}}\hat{\Sigma}_{O_{\Xi(i)}O_{\Xi(i)}}^{-1}\right)S_{\Xi(i)}^T\right\|_\mathrm{op}^2\\
    &\hspace{4mm}+2\lambda_{\max}^2(\hat{\Sigma})\|\Sigma-\hat{\Sigma}\|_\mathrm{op}^2\|\beta^*\|_{2}^2\left\|\sum_{i=1}^{n_{\mathcal{L}}}S_{\Xi(i)}\hat{\Sigma}_{O_{\Xi(i)}O_{\Xi(i)}}^{-1}S_{\Xi(i)}^T\right\|_\mathrm{op}^2\\
    &= 2\lambda_{\max}^4(\hat{\Sigma})\|\beta^*\|_{2}^2\sup_{\theta \in \mathcal{S}^{p-1}}\left\{\left|\sum_{i=1}^{n_{\mathcal{L}}}\theta^TS_{\Xi(i)}\left(\hat{\Sigma}_{O_{\Xi(i)}O_{\Xi(i)}}^{-1}-\hat{\Sigma}_{O_{\Xi(i)}O_{\Xi(i)}}^{-1}\Sigma_{O_{\Xi(i)}O_{\Xi(i)}}\hat{\Sigma}_{O_{\Xi(i)}O_{\Xi(i)}}^{-1}\right)S_{\Xi(i)}^T\theta\right|\right\}^2\\
    &\hspace{4mm}+2\lambda_{\max}^2(\hat{\Sigma})\|\Sigma-\hat{\Sigma}\|_\mathrm{op}^2\|\beta^*\|_{2}^2\sup_{\theta \in \mathcal{S}^{p-1}}\left\{\sum_{i=1}^{n_{\mathcal{L}}}\theta^TS_{\Xi(i)}\hat{\Sigma}_{O_{\Xi(i)}O_{\Xi(i)}}^{-1}S_{\Xi(i)}^T\theta\right\}^2\\
    &\leq 2\lambda_{\max}^4(\hat{\Sigma})\|\beta^*\|_{2}^2\sup_{\theta \in \mathcal{S}^{p-1}}\left\{\sum_{i=1}^{n_{\mathcal{L}}}\left|\theta^TS_{\Xi(i)}\left(\hat{\Sigma}_{O_{\Xi(i)}O_{\Xi(i)}}^{-1}-\hat{\Sigma}_{O_{\Xi(i)}O_{\Xi(i)}}^{-1}\Sigma_{O_{\Xi(i)}O_{\Xi(i)}}\hat{\Sigma}_{O_{\Xi(i)}O_{\Xi(i)}}^{-1}\right)S_{\Xi(i)}^T\theta\right|\right\}^2\\
    &\hspace{4mm}+2\lambda_{\max}^2(\hat{\Sigma})\|\Sigma-\hat{\Sigma}\|_\mathrm{op}^2\|\beta^*\|_{2}^2\sup_{\theta \in \mathcal{S}^{p-1}}\left\{\sum_{i=1}^{n_{\mathcal{L}}}\theta^TS_{\Xi(i)}\hat{\Sigma}_{O_{\Xi(i)}O_{\Xi(i)}}^{-1}S_{\Xi(i)}^T\theta\right\}^2\\
    &\leq 2\lambda_{\max}^4(\hat{\Sigma})\|\beta^*\|_{2}^2\sup_{\theta \in \mathcal{S}^{p-1}}\left\{\sum_{i=1}^{n_{\mathcal{L}}}\theta^TS_{\Xi(i)}\left(\|\hat{\Sigma}_{O_{\Xi(i)}O_{\Xi(i)}}^{-1}\left(\hat{\Sigma}_{O_{\Xi(i)}O_{\Xi(i)}}-\Sigma_{O_{\Xi(i)}O_{\Xi(i)}}\right)\hat{\Sigma}_{O_{\Xi(i)}O_{\Xi(i)}}^{-1}\|_\mathrm{op}\right)S_{\Xi(i)}^T\theta\right\}^2\\
    &\hspace{4mm}+2\lambda_{\max}^2(\hat{\Sigma})\|\Sigma-\hat{\Sigma}\|_\mathrm{op}^2\|\beta^*\|_{2}^2\sup_{\theta \in \mathcal{S}^{p-1}}\left\{\sum_{i=1}^{n_{\mathcal{L}}}\theta^TS_{\Xi(i)}\|\hat{\Sigma}_{O_{\Xi(i)}O_{\Xi(i)}}^{-1}\|_\mathrm{op}S_{\Xi(i)}^T\theta\right\}^2\\
    &\leq 2\frac{\lambda_{\max}^4(\hat{\Sigma})}{\lambda_{\min}^4(\hat{\Sigma})}\|\hat{\Sigma}-\Sigma\|_\mathrm{op}^2\|\beta^*\|_{2}^2\sup_{\theta \in \mathcal{S}^{p-1}}\left\{\sum_{i=1}^{n_{\mathcal{L}}}\theta^TS_{\Xi(i)}S_{\Xi(i)}^T\theta\right\}^2\\
    &\hspace{4mm}+2\frac{\lambda_{\max}^2(\hat{\Sigma})}{\lambda_{\min}^2(\hat{\Sigma})}\|\Sigma-\hat{\Sigma}\|_\mathrm{op}^2\|\beta^*\|_{2}^2\sup_{\theta \in \mathcal{S}^{p-1}}\left\{\sum_{i=1}^{n_{\mathcal{L}}}\theta^TS_{\Xi(i)}S_{\Xi(i)}^T\theta\right\}^2\\
    & = \left(2\frac{\lambda_{\max}^4(\hat{\Sigma})}{\lambda_{\min}^4(\hat{\Sigma})}\|\hat{\Sigma}-\Sigma\|_\mathrm{op}^2\|\beta^*\|_{2}^2+2\frac{\lambda_{\max}^2(\hat{\Sigma})}{\lambda_{\min}^2(\hat{\Sigma})}\|\Sigma-\hat{\Sigma}\|_\mathrm{op}^2\|\beta^*\|_{2}^2\right)\left\|\sum_{i=1}^{n_{\mathcal{L}}}S_{\Xi(i)}S_{\Xi(i)}^T\right\|_\mathrm{op}^2\\
    & = \left(2\frac{\lambda_{\max}^4(\hat{\Sigma})}{\lambda_{\min}^4(\hat{\Sigma})}\|\hat{\Sigma}-\Sigma\|_\mathrm{op}^2\|\beta^*\|_{2}^2+2\frac{\lambda_{\max}^2(\hat{\Sigma})}{\lambda_{\min}^2(\hat{\Sigma})}\|\Sigma-\hat{\Sigma}\|_\mathrm{op}^2\|\beta^*\|_{2}^2\right)\left\|\sum_{i=1}^{n_{\mathcal{L}}}\sum_{r=1}^p\mathbbm{1}_{\{r \in O_{\Xi(i)}\}}e_{r}e_{r}^T\right\|_\mathrm{op}^2\\
    &\leq 4C_{\rho}^2\frac{\lambda_{\max}^4(\hat{\Sigma})}{\lambda_{\min}^4(\hat{\Sigma})}\|\hat{\Sigma}-\Sigma\|_\mathrm{op}^2\|\beta^*\|_{2}^2\rho^2n_{\mathcal{L}}^2,
\end{align*}
where the first and third inequalities follow from the triangle inequality; the second and fifth inequalities follow from the submultiplicativity of the operator norm and the final line follows from the balancing condition on the modalities. 

Applying the previous bound to \eqref{eq: circled 1 preliminary bound} and using the conditions of the theorem yields
\begin{align}\label{eq:balanced circled 1 final bound}
    \mathbb{E}\left[\left\|(\hat{B}^C)^{-1} \sum_{i=1}^{n_{\mathcal{L}}}  \mathbb{E}\left[\hat{V}_{i} \gamma_{i} \mid \hat{\Sigma}\right]\right\|_{2}^2\right] &\leq \frac{F}{\rho^2n_{\mathcal{L}}^2}\mathbb{E}\left[\frac{\lambda_{\max}^4(\hat{\Sigma})}{\lambda_{\min}^4(\hat{\Sigma})}\|\hat{\Sigma}-\Sigma\|_\mathrm{op}^2\|\beta^*\|_{2}^2\rho^2n_{\mathcal{L}}^2\right]\nonumber\\
    &\leq F\frac{p}{\rho^2n_{\mathcal{L}}+N}\|\beta^*\|_{2}^2.
\end{align}

\subsubsection*{Controlling $\circled{2}$}
We now bound $\circled{2}$ as follows
\begin{align}\label{eq:balanced circled 2 final bound}
    &\mathbb{E}\left[\left\|(\hat{B}^{C})^{-1}\sum_{i=1}^{n_{\mathcal{L}}}\hat{V}_{i}\gamma_{i}-\mathbb{E}\left[\hat{V}_{i}\gamma_{i}|\hat{\Sigma}\right]\right\|_{2}^2\right]\nonumber\\
    &\leq \mathbb{E}\left[\lambda_{\max}^2\left((\hat{B}^{C})^{-1}\right)\left\|\sum_{i=1}^{n_{\mathcal{L}}}\hat{V}_{i}\gamma_{i}-\mathbb{E}\left[\hat{V}_{i}\gamma_{i}|\hat{\Sigma}\right]\right\|_{2}^2\right]\nonumber\\
    &\leq \frac{F}{\rho^2n_{\mathcal{L}}^2}\mathbb{E}\left[\left\|\sum_{i=1}^{n_{\mathcal{L}}}\hat{V}_{i}\gamma_{i}-\mathbb{E}\left[\hat{V}_{i}\gamma_{i}|\hat{\Sigma}\right]\right\|_{2}^2\right]\nonumber\\
    & =\frac{F}{\rho^2n_{\mathcal{L}}^2}\mathbb{E}\left[\sum_{i=1}^{n_{\mathcal{L}}}\left\|\hat{V}_{i}\gamma_{i}-\mathbb{E}\left[\hat{V}_{i}\gamma_{i}|\hat{\Sigma}\right]\right\|_{2}^2\right]\nonumber\\
    &\leq \frac{F}{\rho^2n_{\mathcal{L}}^2}\sum_{i=1}^{n_{\mathcal{L}}}\mathbb{E}\left[\left\|\hat{V}_{i}\gamma_{i}\right\|_{2}^2+\left\|\mathbb{E}\left[\hat{V}_{i}\gamma_{i}\big|\hat{\Sigma}\right]\right\|_{2}^2\right]\nonumber\\
    &\leq \frac{F}{\rho^2n_{\mathcal{L}}^2}\sum_{i=1}^{n_{\mathcal{L}}}\mathbb{E}\left[\left\|\hat{V}_{i}\gamma_{i}\right\|_{2}^2+\mathbb{E}\left[\left\|\hat{V}_{i}\gamma_{i}\right\|_{2}^2\big|\hat{\Sigma}\right]\right]\nonumber\\
    &\leq \frac{F}{\rho^2n_{\mathcal{L}}^2}\sum_{i=1}^{n_{\mathcal{L}}}\mathbb{E}\left[\left\|\hat{V}_{i}\gamma_{i}\right\|_{2}^2\right]\nonumber\\
    &\leq \frac{F}{\rho^2n_{\mathcal{L}}^2}\sum_{i=1}^{n_{\mathcal{L}}}\mathbb{E}\left[\|\hat{V}_{i}\|_{2}^4\right]^{\frac{1}{2}}\mathbb{E}\left[\gamma_{i}^4\right]^{\frac{1}{2}}\nonumber\\
    &= \frac{F}{\rho^2n_{\mathcal{L}}^2}\sum_{i=1}^{n_{\mathcal{L}}}\mathbb{E}\left[\|\hat{P}_{O_{\Xi(i)}}^T (X_{i})_{O_{\Xi(i)}}\|_{2}^4\right]^{\frac{1}{2}}\mathbb{E}\left[\mathbb{E}\left[\left( (X_{i})_{O_{\Xi(i)}}^{\top} 
        \bigl(P_{O_{\Xi(i)}} - \hat{P}_{O_{\Xi(i)}}\bigr)\beta^* \right)^4\big|\hat{\Sigma}\right]\right]^{\frac{1}{2}}\nonumber\\
    &\leq \frac{F}{\rho^2n_{\mathcal{L}}^2}\sum_{i=1}^{n_{\mathcal{L}}}\mathbb{E}\left[\frac{\lambda_{\max}^4(\hat{\Sigma})}{\lambda_{\min}^4(\hat{\Sigma})}\|(X_{i})_{O_{\Xi(i)}}\|_{2}^4\right]^{\frac{1}{2}}\mathbb{E}\left[\left((\beta^*)^T  
    \bigl(P_{O_{\Xi(i)}} - \hat{P}_{O_{\Xi(i)}}\bigr)^T\Sigma_{O_{\Xi(i)\Xi(i)}}\bigl(P_{O_{\Xi(i)}} - \hat{P}_{O_{\Xi(i)}}\bigr)\beta^* \right)^2\right]^{\frac{1}{2}}\nonumber\\
    &\leq \frac{F}{\rho^2n_{\mathcal{L}}^2}\sum_{i=1}^{n_{\mathcal{L}}}|O_{\Xi(i)}|\|\beta^*\|_{2}^2\lambda_{\max}^2(\Sigma)\mathbb{E}\left[\frac{16}{\lambda_{\min}(\Sigma)^4}\|\hat{\Sigma}-\Sigma\|_{\mathrm{op}}^4\frac{\lambda_{\max}(\hat{\Sigma})^4}{\lambda_{\min}(\hat{\Sigma})^4}\right]^{\frac{1}{2}}\nonumber\\
    &\leq \frac{p}{\rho^2n_{\mathcal{L}}+N}\frac{F\|\beta^*\|_{2}^2}{\rho^2n_{\mathcal{L}}^2}\sum_{i=1}^{n_{\mathcal{L}}}|O_{\Xi(i)}|\nonumber\\
    &\leq F\frac{p\|\beta^*\|_{2}^2}{\rho^2n_{\mathcal{L}}+N},
\end{align}
where the second inequality follows from \eqref{eq:hat B F bound}; the fourth line follows from the conditional orthogonality of $\{\hat{V}_{i}\gamma_{i}-\mathbb{E}[\hat{V}_{i}\gamma_{i}|\hat{\Sigma}]\}_{i=1}^{n_{\mathcal{L}}}$; the fifth line follows from the triangle inequality and the fact that for $a,b \in \mathbb{R}$, $(a+b)^2\leq2(a^2+b^2)$; the sixth line follows from the conditional version of Jensen's inequality; the seventh line follows from the tower law; the eighth line follows by Cauchy-Schwarz; the ninth line follows by the tower law; the tenth line follows from hypercontractivity; the eleventh line follows from Lemma~\ref{lemma:hyper norm control} and \eqref{eq:P hat P upper bound}; the penultimate line follows from \eqref{eq:complicated eval assumption 1} and the final line follows from the balancing condition on the sizes of the modalities and the fact that $\rho n_{\mathcal{L}} \geq p$.

\subsubsection*{Controlling $\circled{3}$}
We control $\circled{3}$, beginning with the following decomposition
\begin{align}\label{eq:balanced circled 3 decomposition}
    \mathbb{E}\left[\left\|(\hat{B}^{C})^{-1}\sum_{i=1}^{n_{\mathcal{L}}}\hat{V}_{i}\eta_{i}\right\|_{2}^2\right]& = \mathbb{E}\left[\left\|(\hat{B}^{C})^{-1}\sum_{i=1}^{n_{\mathcal{L}}}\hat{V}_{i}\epsilon_{i}\right\|_{2}^2\right]+\mathbb{E}\left[\left\|(\hat{B}^{C})^{-1}\sum_{i=1}^{n_{\mathcal{L}}}\hat{V}_{i}m_{i}\right\|_{2}^2\right].
\end{align}
The first term satisfies 
\begin{align}
\label{eq:balanced B and epsilon bound}
    \mathbb{E}\left[\left\|(\hat{B}^{C})^{-1}\sum_{i=1}^{n_{\mathcal{L}}}\hat{V}_{i}\epsilon_{i}\right\|_{2}^2\right] & =\mathbb{E}\left[\left(\sum_{i=1}^{n_{\mathcal{L}}}\hat{V}_{i}\epsilon_{i}\right)^T(\hat{B}^{C})^{-2}\left(\sum_{i=1}^{n_{\mathcal{L}}}\hat{V}_{i}\epsilon_{i}\right)\right] \nonumber\\
    &\leq \frac{F}{\rho^2n_{\mathcal{L}}^2}\mathbb{E}\left[\left\|\sum_{i=1}^{n_{\mathcal{L}}}\hat{V}_{i}\epsilon_{i}\right\|_{2}^2\right]\nonumber\\ 
    & \leq \frac{F\sigma^2}{\rho^2n_{\mathcal{L}}^2}\mathbb{E}\left[\sum_{i=1}^{n_{\mathcal{L}}}\|\hat{V}_{i}\|_{2}^2\right]\nonumber\\
    & = \frac{F\sigma^2}{\rho^2n_{\mathcal{L}}^2}\sum_{i=1}^{n_{\mathcal{L}}}\mathbb{E}\left[\|\hat{P}_{\Xi(i)}^TX_{O_{\Xi(i)}}\|_{2}^2\right]\nonumber\\
    & \leq \frac{F\sigma^2}{\rho^2n_{\mathcal{L}}^2}\sum_{i=1}^{n_{\mathcal{L}}}\mathbb{E}\left[\frac{\lambda_{\max}^2(\hat{\Sigma})}{\lambda_{\min}^2(\hat{\Sigma})}\|X_{O_{\Xi(i)}}\|_{2}^2\right]\nonumber\\
    & \leq \frac{F\sigma^2}{\rho^2n_{\mathcal{L}}^2}\sum_{i=1}^{n_{\mathcal{L}}}|O_{\Xi(i)}|\nonumber\\
    &\leq \frac{F\sigma^2 p}{\rho n_{\mathcal{L}}},
\end{align}
where the second line follows from \eqref{eq:hat B F bound}; the penultimate line follows from Lemma \ref{lemma:hyper norm control} and \eqref{eq:complicated eval assumption 1}; the final line follows from the conditions of the Theorem.  
The second term in \eqref{eq:balanced circled 3 decomposition} satisfies 
\begin{align}
    \label{eq:balanced circled 3 middle bound}
    \mathbb{E}\left[\left\|(\hat{B}^C)^{-1}\sum_{i=1}^{n_{\mathcal{L}}}\hat{V}_{i}m_{i}\right\|_{2}^2\right] & = \mathbb{E}\left[\left(\sum_{i={1}}^{n_{\mathcal{L}}}\hat{V}_{i}m_{i}\right)^T(\hat{B}^C)^{-2}\left(\sum_{i={1}}^{n_{\mathcal{L}}}\hat{V}_{i}m_{i}\right)\right]\nonumber\\
    &\leq \frac{F}{\rho^2n_{\mathcal{L}}^2}\mathbb{E}\left[\left\|\sum_{i={1}}^{n_{\mathcal{L}}}\hat{V}_{i}m_{i}\right\|^2_{2}\right]\nonumber\\
    & = \frac{F}{\rho^2n_{\mathcal{L}}^2}\sum_{i={1}}^{n_{\mathcal{L}}}\mathbb{E}\left[\left\|\hat{V}_{i}m_{i}\right\|^2_{2}\right]\nonumber\\
    &\leq \frac{F}{\rho^2n_{\mathcal{L}}^2}\sum_{i={1}}^{n_{\mathcal{L}}}\mathbb{E}\left[\left\|\hat{V}_{i}\right\|_{2}^4\right]^{\frac{1}{2}}\mathbb{E}\left[m_{i}^4\right]^{\frac{1}{2}}\nonumber\\
    &\leq \frac{F}{\rho^2n_{\mathcal{L}}^2}\sum_{i={1}}^{n_{\mathcal{L}}}\mathbb{E}\left[\frac{\lambda_{\max}^4(\hat{\Sigma})}{\lambda_{\min}^4(\hat{\Sigma})}\left\|X_{O_{\Xi(i)}}\right\|_{2}^4\right]^{\frac{1}{2}}\mathbb{E}\left[m_{i}^2\right]\nonumber\\
    &\leq \frac{F}{\rho^2n_{\mathcal{L}}^2}\sum_{i={1}}^{n_{\mathcal{L}}}|O_{\Xi(i)}|(\beta^*_{M_{\Xi(i)}})^TS_{M_{\Xi(i)}}(\beta^*_{M_{\Xi(i)}})\nonumber\\
    &\leq \frac{Fp\sigma^2}{\rho n_{\mathcal{L}}},
\end{align}
where the second line follows from \eqref{eq:hat B F bound}; the third line follows from the conditional orthogonality of $\{m_{i}\hat{V}_{i}\}_{i=1}^{n_{\mathcal{L}}}$; the fourth line follows from Cauchy-Schwarz; the fifth line follows from hypercontractivity; the penultimate line follows from Lemma \ref{lemma:hyper norm control} and the final line follows from the conditions of the Theorem. 
Combining \eqref{eq:balanced circled 3 middle bound} with \eqref{eq:balanced B and epsilon bound} shows that $\circled{3}$ can be bound as 
\begin{equation}\label{eq:balanced circled 3 final bound}
    \mathbb{E}\left[\left\|(\hat{B}^C)^{-1}\sum_{i=1}^{n_{\mathcal{L}}}\hat{V}_{i}\eta_{i}\right\|_{2}^2\right] \leq \frac{Fp\sigma^2}{\rho n_{\mathcal{L}}}.
\end{equation}
Finally, $\circled{4}$ can be bounded from \eqref{eq:hat B minus hat BF bound} to give
\begin{equation}
    \label{eq:balanced circled 4 bound}\mathbb{E}\left[\left\|\left(\hat{B}^{C}\right)^{-1}\left(\hat{B}-\hat{B}^{C}\right)\right\|_\mathrm{op}^2\|\beta^*\|_{2}^2\right]\leq F\|\beta^*\|_{2}^2\left(\frac{p}{\rho n_{\mathcal{L}}}+\frac{p}{\rho^2n_{\mathcal{L}}+N}\right).
\end{equation}
 Combining the bounds \eqref{eq:balanced circled 1 final bound}, \eqref{eq:balanced circled 2 final bound}, \eqref{eq:balanced circled 3 final bound} and \eqref{eq:balanced circled 4 bound} into the decomposition  \eqref{eq:balanced error decomposition} yields that, 
\begin{align*}
    \mathbb{E}\left[\|\hat{\beta}-\beta^*\|_{2}^2\right] &\leq F\left(\frac{p\sigma^2}{\rho n_{\mathcal{L}}}+\frac{p\|\beta^*\|_{2}^2}{\rho^2n_{\mathcal{L}}+N}\right). 
\end{align*}

\end{proofof}

\subsection{Proofs of lower bounds}\label{A:Lower Bounds}
The goal of this section is to prove Theorems \ref{thm:LD structured lower bound} and \ref{thm:LD unstruc lower}. Both of these minimax lower bounds decouple into two separate terms. These parts follow straightforwardly from Propositions \ref{prop:LD ISS lower bound} and \ref{prop:LD OSS lower bound}. We begin by proving the first terms in each of these lower bounds. This follows by constructing a finite packing of distributions and applying Assouad's lemma. We begin by describing the form these distributions can take.

We define for $\beta \in \mathbb{R}^p, \sigma^2_{\beta}>0$ and $O \subseteq [p]$, 
\begin{align*}
    \Omega_{\beta}^{O} = \begin{pmatrix}
        \Sigma_{OO} & \Sigma_{O}\beta\\
        \beta^T\Sigma_{O}^T & \beta^T\Sigma\beta+\sigma_{\beta}^2
        \end{pmatrix}.
\end{align*}
With parameters $\beta$ and $\sigma_{\beta}$, when $\epsilon \sim N(0,\sigma^2_{\beta})$, $X \sim N(0,\Sigma)$ and $\epsilon \indep X$, it holds that for $O \subseteq [p]$
\begin{equation*}
    \begin{pmatrix}
        X_{O}\\
        Y
    \end{pmatrix} \sim N(0,\Omega_{\beta}^{O}).
\end{equation*}
We require the following lemma to control the KL divergence between two distributions in our class. 
\begin{lemma}\label{Lemma:First KL lemma}
    Recall that for $O \subseteq [p]$ and $M = [p]\setminus O$, $S_{M} = \Sigma_{MM}-\Sigma_{MO}\Sigma_{OO}^{-1}\Sigma_{OM}$. For $\beta, \gamma \in \mathbb{R}^p $ and $\sigma^2_{\gamma},\sigma^2_{\beta} > 0$,
\begin{align*}
    & \KL(N(0,\Omega^O_{\beta}),N(0,\Omega^O_{\gamma})) \\&\leq \frac{1}{2}\left\{\frac{{\sigma_{\gamma}^2+\gamma_M^TS_M\gamma_M}}{{\sigma_{\beta}^2+\beta_M^TS_M\beta_M}}+\frac{{\sigma_{\beta}^2+\beta_M^TS_M\beta_M}}{\sigma_{\gamma}^2+\gamma_M^TS_M\gamma_M}-2 + \frac{(\beta-\gamma)^T\Sigma_O^T\Sigma_{OO}^{-1}\Sigma_O(\beta-\gamma)}{\sigma_{\gamma}^2+\gamma_M^TS_M\gamma_M}\right\}.
\end{align*}
 
\end{lemma}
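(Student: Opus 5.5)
The plan is to factor each Gaussian $N(0,\Omega^O_\beta)$ into the marginal of $X_O$ times the conditional of $Y$ given $X_O$, and apply the chain rule for KL divergence. Under both $\Omega^O_\beta$ and $\Omega^O_\gamma$ the marginal of $X_O$ is $N(0,\Sigma_{OO})$, which does not depend on the regression parameter. The marginal term in the chain rule therefore vanishes, and
\[
\KL(N(0,\Omega^O_\beta),N(0,\Omega^O_\gamma))=\mathbb{E}_{X_O\sim N(0,\Sigma_{OO})}\bigl[\KL(P_{Y|X_O}^{\beta},P_{Y|X_O}^{\gamma})\bigr].
\]

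Next I compute the conditionals by Gaussian conditioning. Given $X_O$, the variable $Y$ is normal with mean $X_O^T\Sigma_{OO}^{-1}\Sigma_O\beta$. Its variance is
\[
\beta^T\Sigma\beta+\sigma_\beta^2-\beta^T\Sigma_O^T\Sigma_{OO}^{-1}\Sigma_O\beta=\sigma_\beta^2+\beta_M^TS_M\beta_M\equiv v_\beta.
\]
The one nontrivial algebraic step is the identity $\Sigma-\Sigma_O^T\Sigma_{OO}^{-1}\Sigma_O=\begin{pmatrix}0&0\\0&S_M\end{pmatrix}$ in $(O,M)$ block coordinates, which is a direct block computation. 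The same formulas hold for $\gamma$, giving mean $X_O^T\Sigma_{OO}^{-1}\Sigma_O\gamma$ and variance $v_\gamma$.

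I then apply the univariate Gaussian KL formula:
\[
\KL(N(\mu_1,v_1),N(\mu_2,v_2))=\tfrac12\Bigl(\tfrac{v_1}{v_2}-1-\log\tfrac{v_1}{v_2}+\tfrac{(\mu_1-\mu_2)^2}{v_2}\Bigr).
\]
Taking expectation over $X_O$, the mean-difference term becomes
\[
\frac{\mathbb{E}[(X_O^T\Sigma_{OO}^{-1}\Sigma_O(\beta-\gamma))^2]}{v_\gamma}=\frac{(\beta-\gamma)^T\Sigma_O^T\Sigma_{OO}^{-1}\Sigma_O(\beta-\gamma)}{v_\gamma}.
\]
Finally I bound $-\log x\le \frac1x-1$ with $x=v_\beta/v_\gamma$, which gives $\frac{v_\beta}{v_\gamma}-1-\log\frac{v_\beta}{v_\gamma}\le \frac{v_\beta}{v_\gamma}+\frac{v_\gamma}{v_\beta}-2$. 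This yields exactly the stated bound.

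There is no real obstacle here. The only care needed is in the Schur-complement identity for the conditional variance.
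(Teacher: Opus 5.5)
Your proof is correct. Before the final $\log$ inequality it reaches the same exact identity as the paper, but by a different route.

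The paper plugs into the closed-form multivariate Gaussian KL,
$\frac12\{\log(\det\Omega^O_\gamma/\det\Omega^O_\beta)-(|O|+1)+\Tr((\Omega^O_\gamma)^{-1}\Omega^O_\beta)\}$. It computes the determinant ratio with the Schur-complement determinant formula, giving $\det\Omega^O_\beta=\det(\Sigma_{OO})(\sigma_\beta^2+\beta_M^TS_M\beta_M)$. It then evaluates the trace with the block-inverse formula for $(\Omega^O_\gamma)^{-1}$; only the last row and column matter, because $\Omega^O_\beta-\Omega^O_\gamma$ is supported there. Finally it simplifies using the same identity $\Sigma-\Sigma_O^T\Sigma_{OO}^{-1}\Sigma_O=\mathrm{diag}(0,S_M)$ that you use.

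Your chain-rule factorisation splits each law into the common $N(0,\Sigma_{OO})$ marginal of $X_O$ and the univariate conditional law of $Y$ given $X_O$. This avoids all matrix inversion and makes the structure of the bound transparent. The variance-ratio terms are the penalty for mismatched conditional variances, and the quadratic form is the squared difference of conditional means averaged over $X_O$.

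The cost is that your simplification relies on the $X_O$-marginal being identical under both hypotheses. The paper's determinant-and-trace template carries over directly to Lemmas~\ref{lemma: KL2 miss} and~\ref{lemma:KL3}. There the covariate covariance itself changes across hypotheses, so the marginal term in your decomposition would no longer vanish.
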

\begin{proof}
Applying Exercise 15.13 of~\citet{wainwright2019high} yields that
    \begin{equation}\label{eq:KL1 defn}
    \KL (N(0,\Omega^O_{\beta}),N(0,\Omega^O_{\gamma})) = \frac{1}{2}\left\{ \log\left(\frac{\det(\Omega^O_{\gamma})}{\det(\Omega^O_{\beta})}\right)-(|O|+1)+\Tr((\Omega^O_{\gamma})^{-1}\Omega^O_{\beta})\right\}.
\end{equation}
Without loss of generality, we set $O=\{1,\ldots,p-p_{0}\}$ for some $p_{0}<p$. It holds that
\begin{align}
    \Sigma_{O}^T\Sigma_{OO}^{-1}\Sigma_{O} &= \Sigma_{O}^T\Sigma_{OO}^{-1}\begin{pmatrix}
        \Sigma_{OO} & \Sigma_{OM}
    \end{pmatrix}\nonumber\\
    &=\begin{pmatrix}
        \Sigma_{OO}\\
        \Sigma_{MO}
    \end{pmatrix}\begin{pmatrix}
        I_{|O|} & \Sigma_{OO}^{-1}\Sigma_{OM}
    \end{pmatrix}\nonumber\\
    &= \Sigma - \begin{pmatrix}
        0 & 0 \\
        0 & S_M
    \end{pmatrix}. \label{eq:KL1 Schur}
\end{align}
Therefore, by 9.1.2 of~\citet{petersen2008matrix}, we have that
\begin{align*}
    \det(\Omega^O_{\beta}) &= \det(\Sigma_{OO})(\beta^T\Sigma\beta + \sigma_{\beta}^2 - \beta^T\Sigma_{O}^T\Sigma_{OO}^{-1}\Sigma_{O}\beta)\\
    &= \det(\Sigma_{OO})(\sigma^2_{\beta}+\beta_{M}^TS_M\beta_{M}).
\end{align*}
Using a very similar formula for $\det(\Omega^O_{\gamma})$, it follows that
\begin{equation}\label{eq:KL1 log term}
    \log \left(\frac{\det(\Omega^O_{\gamma})}{\det(\Omega^O_{\beta})}\right) = \log\left(\frac{\sigma^2_{\gamma}+\gamma_{M}^TS_M\gamma_{M}}{\sigma^2_{\beta}+\beta_{M}^TS_M\beta_{M}}\right).
\end{equation}
Observe that 
\begin{equation*} 
    \Omega^O_{\beta} = \Omega^O_{\gamma} + \begin{pmatrix}
        0 & \Sigma_{O}(\beta-\gamma)\\
        (\beta-\gamma)^T\Sigma_{O}^T & \Delta
    \end{pmatrix},
\end{equation*}
where $\Delta = \beta^T\Sigma\beta+\sigma_{\beta}^2-\gamma^T\Sigma\gamma-\sigma_{\gamma}^2$. Using 9.1.3 of~\citet{petersen2008matrix}, there exists $A_\gamma\in\mathbb{R}^{|O|\times |O|}$ such that
\begin{equation*}
    (\Omega_{\gamma}^O)^{-1} = \begin{pmatrix}
        A_{\gamma}& \frac{-\Sigma_{OO}^{-1}\Sigma_{O}\gamma}{\sigma_{\gamma}^2+\gamma_{M}^TS_M\gamma_M}\\
        \frac{-\gamma^T\Sigma_{O}^T\Sigma_{OO}^{-1}}{\sigma_{\gamma}^2+\gamma_{M}^TS_M\gamma_M} & \frac{1}{\sigma_{\gamma}^2+\gamma_{M}^TS_M\gamma_M}
    \end{pmatrix}.
\end{equation*}
It follows that
\begin{align}\label{eq:KL1 Trace term}
    &\Tr((\Omega^O_\gamma)^{-1}\Omega^O_{\beta}) -(|O|+1)\nonumber\\ 
    &= \Tr\left(\begin{pmatrix}
        A_{\gamma} & \frac{-\Sigma_{OO}^{-1}\Sigma_{O}\gamma}{\sigma_{\gamma}^2+\gamma_{M}^TS_M\gamma_M}\\
        \frac{-\gamma^T\Sigma_{O}^T\Sigma_{OO}^{-1}}{\sigma_{\gamma}^2+\gamma_{M}^TS_M\gamma_M} & \frac{1}{\sigma_{\gamma}^2+\gamma_{M}^TS_M\gamma_M}
    \end{pmatrix}\begin{pmatrix}
        0 & \Sigma_{O}(\beta-\gamma)\\
        (\beta-\gamma)^T\Sigma_{O}^T & \Delta
    \end{pmatrix}\right)\nonumber \\
    & =  \frac{\Delta-2\gamma^T\Sigma_{O}^T\Sigma_{OO}^{-1}\Sigma_{O}(\beta-\gamma)}{\sigma_{\gamma}^2+\gamma_M^TS_M\gamma_M}\nonumber\\
    &= \frac{\sigma_\beta^2 - \sigma_\gamma^2 +\beta^T \Sigma \beta - \gamma^T \Sigma \gamma - 2\gamma^T\Sigma_{O}^T\Sigma_{OO}^{-1}\Sigma_{O}(\beta-\gamma)}{\sigma_{\gamma}^2+\gamma_M^TS_M\gamma_M}\nonumber\\
    &=\frac{\sigma_\beta^2 - \sigma_\gamma^2 + \beta_M^T S_M \beta_M - \gamma_M^T S_M \gamma_M + \beta^T \Sigma_O^T \Sigma_{OO}^{-1} \Sigma_O \beta - \gamma^T \Sigma_O^T \Sigma_{OO}^{-1} \Sigma_O \gamma - 2\gamma^T\Sigma_{O}^T\Sigma_{OO}^{-1}\Sigma_{O}(\beta-\gamma)}{\sigma_{\gamma}^2+\gamma_M^TS_M\gamma_M}\nonumber\\
    &=\frac{\sigma_\beta^2 - \sigma_\gamma^2 + \beta_M^T S_M \beta_M - \gamma_M^T S_M \gamma_M + (\beta-\gamma)^T\Sigma_O^T\Sigma_{OO}^{-1}\Sigma_O(\beta-\gamma)}{\sigma_{\gamma}^2+\gamma_M^TS_M\gamma_M},
\end{align}
where the penultimate line follows from \eqref{eq:KL1 Schur}. Therefore, combining \eqref{eq:KL1 log term}  and \eqref{eq:KL1 Trace term} into \eqref{eq:KL1 defn} yields
\begin{align*}
    &\KL(N(0,\Omega^O_{\beta}),N(0,\Omega^O_{\gamma})) \\
    &= \frac{1}{2}\left\{ \log\left(\frac{\sigma_{\gamma}^2+\gamma_{M}^TS_M\gamma_M}{\sigma_{\beta}^2+\beta_{M}^TS_M\beta_M}\right)+\frac{\sigma_{\beta}^2+\beta_M^TS_M\beta_M + (\beta-\gamma)^T\Sigma_O^T\Sigma_{OO}^{-1}\Sigma_O(\beta-\gamma)-\sigma_{\gamma}^2-\gamma_M^TS_M\gamma_M}{\sigma_{\gamma}^2+\gamma_M^TS_M\gamma_M}\right\}\\
    &\leq \frac{1}{2}\left\{\frac{{\sigma_{\gamma}^2+\gamma_M^TS_M\gamma_M}}{{\sigma_{\beta}^2+\beta_M^TS_M\beta_M}}+\frac{{\sigma_{\beta}^2+\beta_M^TS_M\beta_M}}{\sigma_{\gamma}^2+\gamma_M^TS_M\gamma_M}-2 + \frac{(\beta-\gamma)^T\Sigma_O^T\Sigma_{OO}^{-1}\Sigma_O(\beta-\gamma)}{\sigma_{\gamma}^2+\gamma_M^TS_M\gamma_M}\right\},
\end{align*}
where the final line follows since $\log(x) \leq x-1$ for $x > 0$.

\end{proof}

\begin{proposition}\label{prop:LD ISS lower bound}
    Suppose we have data drawn from \eqref{eq:datamech1} with distribution lying in $\mathcal{P}_{OSS}^{Gauss}$ satisfying $C_{X}\geq \sqrt{\frac{8\lambda_{+}}{3}}$, $\kappa_{\epsilon}\geq 105^{\frac{1}{4}}$. 
    If \begin{equation*}
        \sum_{i=1}^p\frac{1}{h_{i}} \leq \min\left\{2,\frac{4B^2\lambda_{-}}{R^2}\right\},
    \end{equation*} 
    then it holds that
    \begin{equation*}
        \inf_{\hat{\beta}\in \hat{\Theta}_{OSS}}\sup_{P \in \mathcal{P}_{OSS}^{Gauss}}\mathbb{E}\left[\|\hat{\beta}-\beta^*(P)\|^2_{2}\right] \geq \frac{R^2}{32\lambda_{-}}\sum_{i=1}^p\frac{1}{h_{i}}.
    \end{equation*}
    For each $z \in \mathbb{R}^p$ and $j \in [K]$ define $D_{j}(z) = \frac{R^2}{R^2+\|z_{M_{j}}\|_{2}^2}$ and $\alpha_{i}(z) = \sum_{j=1}^Kn_{j}D_{j}(z)\mathbbm{1}_{\left\{i \in O_{j}\right\}}$. Alternatively, if
    \begin{equation}\label{eq:min alpha size ISS lower bound}
        \inf_{\substack{z \in \mathbb{R}^p\\
        \|z\|_{2}\leq B}}\left\{\alpha_{i}(z)\right\} \geq \frac{1}{2}\max\left\{\frac{1}{\lambda_{-}},\lambda_{-}\right\}\max\left\{\frac{R^2}{B^2}, \frac{B^2}{R^2}\right\}p,
    \end{equation} for all $i \in [p]$, then provided $|L_{i}| \geq 2$ for $i \in [L]$, it holds that
    \begin{equation*}
        \inf_{\hat{\beta}\in \hat{\Theta}_{OSS}}\sup_{P \in \mathcal{P}_{OSS}^{Gauss}}\mathbb{E}\left[\|\hat{\beta}-\beta^*(P)\|^2_{2}\right] \gtrsim\sup_{\substack{z \in \mathbb{R}^p\\
        \|z\|_{2}\leq B}}\left\{ \frac{R^2}{\max\{1,\lambda_{-}\}}\sum_{i=1}^p\frac{1}{\alpha_{i}(z)}\right\}.
    \end{equation*}
\end{proposition}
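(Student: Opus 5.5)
Both parts go through Assouad's lemma applied to a hypercube of Gaussian distributions in $\mathcal{P}_{OSS}^{Gauss}$. Each distribution has $X\sim N(0,\Sigma)$ with a fixed $\Sigma$ (for instance $\Sigma=\lambda_{-}I_p$, which makes the constraints of the class easy to check), $\epsilon\sim N(0,\sigma_\beta^2)$ and $\beta$ ranging over a set $\{\beta_\omega:\omega\in\{0,1\}^p\}$. Because $\Sigma$ is fixed and common to all hypotheses, the unlabelled data has the same law under every hypothesis and contributes nothing to the KL divergences. The divergence between two hypotheses is therefore a sum over labelled patterns, $\sum_j n_j\KL(N(0,\Omega^{O_j}_{\beta}),N(0,\Omega^{O_j}_{\gamma}))$, and each term is controlled by Lemma~\ref{Lemma:First KL lemma}. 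The lower bound will follow once neighbouring vertices (differing in one coordinate $i$) have KL divergence at most a constant while $\|\beta_\omega-\beta_{\omega'}\|_2^2$ is of order the per-coordinate term.

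\textbf{First statement.} Take $\beta_\omega=\sum_i\omega_i\delta_ie_i$ with $\delta_i^2=c R^2/(\lambda_{-}h_{i})$. Adjust the noise variance so that $\sigma_{\beta}^2+\beta_M^TS_M\beta_M$ is constant across the hypercube for each pattern, say $\sigma_{\beta_\omega}^2=R^2-\lambda_{-}\|\beta_\omega\|_2^2$ or a similar choice keeping $\sigma\le R$. The first two terms in Lemma~\ref{Lemma:First KL lemma} then cancel, and only the quadratic term remains. For $\Sigma=\lambda_{-}I$ this term is $\lambda_{-}\delta_i^2\mathbbm{1}\{i\in O_j\}/(\text{variance})$. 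Summing over the samples gives a KL of order $h_i\lambda_{-}\delta_i^2/R^2\lesssim 1$. The condition $\sum_i 1/h_i\le\min\{2,4B^2\lambda_{-}/R^2\}$ ensures $\|\beta_\omega\|_2^2\le B^2$ and keeps the variance positive, of order $R^2$. Assouad's lemma then gives $\gtrsim\sum_i\delta_i^2$, and I would match the constant $1/32$.

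\textbf{Second statement.} Fix $z$ with $\|z\|_2\le B$ and build the hypercube around it, $\beta_\omega=z'+\sum_i\omega_i\delta_ie_i$. Here $z'$ is a slight shrinkage of $z$, chosen so that the perturbations stay inside the $B$-ball. The perturbation sizes are $\delta_i^2\asymp R^2/(\max\{1,\lambda_{-}\}\alpha_i(z))$. The noise is $\sigma^2=R^2$, or is tuned so that the pattern-$j$ residual variance is close to $R^2+\lambda_{-}\|z_{M_j}\|^2$. The quadratic term for a neighbour pair is then $\sum_j n_j\lambda_{-}\delta_i^2\mathbbm{1}\{i\in O_j\}/(R^2+\lambda_{-}\|z_{M_j}\|^2)\asymp\lambda_{-}\delta_i^2\alpha_i(z)/R^2$ after rescaling, which is $O(1)$. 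The extra factor $\max\{1,\lambda_{-}\}$ absorbs the mismatch between $\|z_{M_j}\|^2$ and $z_M^TS_Mz_M$.

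\textbf{Main obstacle.} The difficulty is the residual-variance mismatch. When $\beta$ and $\gamma$ differ in coordinate $i\in M_j$, the first two terms of Lemma~\ref{Lemma:First KL lemma} no longer cancel. Their sum is about $(\Delta v)^2/v^2$ with $\Delta v\approx 2\lambda_{-}z_i\delta_i+\lambda_{-}\delta_i^2$, and multiplied by $n_j$ this may be large. I would first try to pair coordinates within each modality; this is presumably why the statement assumes $|L_i|\ge2$. For each pair I flip $+\delta$ in one coordinate and $-\delta$ in the partner, chosen so that $\beta_{M_j}^TS_M\beta_{M_j}$ is exactly invariant. With $\Sigma$ diagonal this works by using perturbations orthogonal to $z$ within a modality, for example by placing $z'$ on coordinates where the perturbation is paired so that $\|z'_{M}+\text{pert}\|^2$ is constant. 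Then Assouad is run on a cube of dimension about $p/2$. If exact invariance fails, I fall back on bounding the mismatch term by $n_j\lambda_{-}^2B^2\delta_i^2/R^4$ together with the sample-size condition \eqref{eq:min alpha size ISS lower bound}. That condition, with its $\max\{R^2/B^2,B^2/R^2\}$ and $\max\{1/\lambda_-,\lambda_-\}$ factors, appears designed exactly to make these second-order contributions and the $\delta_i^2\le B^2/p$ feasibility constraint harmless. Taking the supremum over $z$ completes the proof.
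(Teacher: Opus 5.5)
Your first part follows the paper ($\Sigma=\lambda_-I_p$, $\delta_i^2=R^2/(4\lambda_-h_i)$, $\sigma_\phi^2=R^2-\lambda_-\|\beta_\phi\|_2^2$, Assouad plus Pinsker), with one inaccuracy. The variance terms of Lemma~\ref{Lemma:First KL lemma} do not cancel: the pattern-$j$ residual variance is $R^2-\lambda_-\|(\beta_\phi)_{O_j}\|_2^2$, which moves by $\lambda_-\delta_i^2$ whenever $i\in O_j$. This is harmless. By $x+1/x\le (x-1)^2+2$ the leftover is at most $(2\lambda_-\delta_i^2/R^2)^2\le 2\lambda_-\delta_i^2/R^2$ per labelled sample observing $i$. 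It is therefore dominated by the quadratic term, and the KL between neighbours is still at most $2\lambda_-\delta_i^2h_i/R^2=1/2$.

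The second part has a genuine gap. Neither of your mechanisms controls the variance mismatch in patterns where the flipped coordinate $i$ is \emph{missing}, and the number of such samples, $n_{\mathcal L}-h_i$, is tied to $\alpha_i(z)$ by no hypothesis. Take the simple monotonic pattern with $i\in L_2$. Then $\alpha_i(z)=h_i=n_1$ for every $z$ and $n_{\mathcal L}-h_i=n_2$, while \eqref{eq:min alpha size ISS lower bound} only forces $n_1\gtrsim p$ and leaves $n_2$ free.
\begin{itemize}
\item Your fallback bound $\sum_j n_j\lambda_-^2B^2\delta_i^2/R^4$ is of order $n_2B^2/(n_1R^2)$ for fixed $\lambda_-$. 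The true cross-term contribution is that large when $z$ puts mass of order $B$ on the flipped coordinate, so the fallback fails.
\item Your primary route does not give exact invariance either. A perturbation $u\perp z'$ removes the cross term $2\lambda_-z_i\delta_i$, but $\|z'+u\|_2^2=\|z'\|_2^2+\|u\|_2^2$. With $\sigma^2=R^2$ fixed, each pattern missing $i$ still sees a change $\lambda_-\delta_i^2$ in residual variance. For $B\lesssim R$ this contributes order $n_2/n_1^2$ to the KL, which is unbounded when $n_2\gg n_1^2$.
\end{itemize}

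Either of two fixes is needed.
\begin{itemize}
\item \textbf{The paper's fix.} Split each modality into halves $F_r^{(1)},F_r^{(2)}$; this is where $|L_r|\ge 2$ enters, as you guessed. Put a base point $y$ on $F^{(2)}$ with $|y_i|\ge |z_i|/4$ on every coordinate that is ever missing. Run the hypercube on $F^{(1)}$, so there is no cross term. Set $\sigma_\phi^2=R^2/2-\lambda_-(\|\beta_\phi\|_2^2-\|y\|_2^2)$. The pattern-$j$ residual variance becomes $R^2/2+\lambda_-\|y_{M_j}\|_2^2-\lambda_-\|(\beta_\phi-y)_{O_j}\|_2^2$. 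It changes only in patterns where $i$ is observed, where the change is dominated by the quadratic term, and it stays at least $R^2/4+\lambda_-\|z_{M_j}\|_2^2/16$. This gives $\KL\le 16\max\{1,\lambda_-\}\delta_i^2\alpha_i(z)/R^2$.
\item \textbf{Repairing your pairing.} Make it genuinely norm-preserving with sign flips $\pm\delta(e_a-e_b)$ on disjoint pairs $\{a,b\}\subseteq L_r$ on which $z'_a=z'_b$. Then every $\|\beta_{L_r}\|_2$, hence every $\beta_{M_j}^TS_{M_j}\beta_{M_j}$, is constant, and $\sigma^2=R^2$ can be held fixed. This avoids the noise compensation entirely.
\end{itemize}
In both cases you also need that $\alpha_i(z)$ depends on $z$ only through $(\|z_{L_r}\|_2)_{r\in[L]}$. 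An arbitrary $z$ can then be replaced by one with the same modality norms, supported on the base coordinates. This is the paper's ``by symmetry'' step. A mere shrinkage of $z$ does not achieve it, and this step is what justifies taking the supremum over all $z$.
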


\begin{proof}
    We begin by proving the first statement of the proposition. This is proved by applying Assouad's Lemma -- a standard technique for proving minimax lower bounds~\citep[Chapter 8]{samworth2024statistics}. We begin by constructing appropriate distributions in $\mathcal{P}_{OSS}^{Gauss}$. We first restrict to the Gaussian sub-class where $\epsilon \sim N(0,\sigma^2), X \sim N(0,\lambda_{-}I_{p})$ and $\epsilon \indep X$. These distributions are parametrised by $\sigma$ and $\beta^*$. Since we are proving a lower bound that holds in the ISS case, we can only vary $\sigma$ and $\beta^*$. For such a  distribution in $\mathcal{P}_{OSS}^{Gauss}$, our data is indexed by $\beta^*$ and $\sigma_{\beta^*}$ and consists of 
\begin{equation*}
    P_{\beta^*} = \prod_{j=1}^{K} N(0,\Omega^{O_{j}}_{\beta^*})^{\otimes n_{j}}.
\end{equation*}
     We now construct the distributions indexed by the hypercube. For $i \in [p]$, set $\delta_{i}=\frac{R}{2\sqrt{\lambda_{-}h_{i}}}$. Set $\Phi = \{0,1\}^p$ and for $\phi \in \Phi$, let $\beta_{\phi}=\sum_{i=1}^p\delta_{i}\phi_{i}e_{i}$ and $\sigma_{\phi}^2=R^2-\lambda_{-}\|\beta_{\phi}\|_{2}^2$. By the conditions of the theorem, we have that $ \sum_{j=1}^p\delta_{j}^2\leq \min\{\frac{R^2}{2\lambda_{-}},B^2\}.$ It follows that $\frac{R^2}{2}\leq \sigma_{\phi}^2\leq R^2$ and $\|\beta_{\phi}\|_{2}^2 \leq B^2$. Note that Gaussian covariates satisfy Assumption \ref{assump:small-ball} with $\chi =1$ and $C=1$; they also are sub-Gaussian with parameter $C_{X}$. By construction, the eigenvalues of $\Sigma$ satisfy $\lambda_{\min}(\Sigma)=\lambda_{\max}(\Sigma)=\lambda_{-}$. These choices do, therefore, define valid distributions in $\mathcal{P}_{OSS}^{Gauss}$. We now control the $\mathrm{KL}$ divergence between neighbouring distributions. Let $\phi$ and $\phi'$ differ in exactly the $i^{th}$ coordinate. It holds that 
     \begin{align}\label{eq:first KL bound first minimax lower bound}
          \KL\left(P_{\phi},P_{\phi'}\right) &= \sum_{j=1}^{K}n_{j}\KL\left(N(0,\Omega^{O_{j}}_{\beta_{\phi}}),N(0,\Omega^{O_{j}}_{\beta_{\phi'}})\right)\nonumber\\
          &\leq\frac{1}{2}\sum_{j=1}^Kn_{j}\left\{\frac{R^2-\lambda_{-}\|(\beta_{\phi'})_{O_{j}}\|_{2}^2}{R^2-\lambda_{-}\|(\beta_{\phi})_{O_{j}}\|_{2}^2}+\frac{R^2-\lambda_{-}\|(\beta_{\phi})_{O_{j}}\|_{2}^2}{R^2-\lambda_{-}\|(\beta_{\phi'})_{O_{j}}\|_{2}^2}-2+\frac{\lambda_{-}\|(\beta_{\phi})_{O_{j}}-(\beta_{\phi'})_{O_{j}}\|_{2}^2}{{R^2-\lambda_{-}\|(\beta_{\phi'})_{O_{j}}\|_{2}^2}}\right\}\nonumber\\
          &\leq \frac{1}{2}\sum_{j=1}^Kn_{j}\left\{\left(\frac{\lambda_{-}\|(\beta_{\phi})_{O_{j}}\|_{2}^2-\lambda_{-}\|(\beta_{\phi'})_{O_{j}}\|_{2}^2}{\frac{R^2}{2}}\right)^2+\frac{\lambda_{-}\|(\beta_{\phi})_{O_{j}}-(\beta_{\phi'})_{O_{j}}\|_{2}^2}{\frac{R^2}{2}}\right\}\nonumber\\
          &= \frac{1}{2}\sum_{j=1}^Kn_{j}\mathbbm{1}_{\{i \in O_{j}\}}\left\{\left(\frac{2\lambda_{-}\delta_{i}^2}{R^2}\right)^2+\left(\frac{2\lambda_{-}\delta_{i}^2}{R^2}\right)\right\}\nonumber\\
          &\leq \frac{2\lambda_{-}\delta_{i}^2}{R^2}\sum_{j=1}^Kn_{j}\mathbbm{1}_{\{i \in O_{j}\}}\nonumber\\
          & = \frac{2\lambda_{-}\delta_{i}^2h_{i}}{R^2}\nonumber\\
          &= \frac{1}{2},
     \end{align}
     where the first inequality follows from Lemma \ref{Lemma:First KL lemma}; the second inequality follows from the fact that $x+\frac{1}{x} \leq (x-1)^2+2$ for $ x \geq 1$ and the fact that for $\phi \in \Phi$, $2\lambda_{-}\|\beta_{\phi}\|_{2}^2 \leq R^2$; the final inequality follows from the fact that $2\lambda_{-}\delta_{i}^2 \leq R^2$. We apply Assouad's lemma~\cite[Lemma 8.7]{samworth2024statistics} and Pinsker's inequality to show 
\begin{align*}
    \inf_{\hat{\beta}\in \hat{\Theta}_{OSS}}\sup_{P \in \mathcal{P}_{OSS}^{Gauss}}\mathbb{E}\left[\|\hat{\beta}-\beta^*(P)\|^2_{2}\right]&\geq \frac{1}{4}\left\{1-\max_{\substack{\phi,\phi'\in\Phi\\\phi \sim \phi'}} \TV(P_{\beta_{\phi}},P_{{\beta}_{\phi'}})\right\}\sum_{i=1}^p\delta_{i}^2\\
    &\geq \frac{1}{4}\left\{1-\max_{\substack{\phi,\phi'\in\Phi\\\phi \sim \phi'}} \sqrt{\frac{\KL(P_{\beta_{\phi}},P_{{\beta}_{\phi'}})}{2}}\right\}\sum_{i=1}^p\frac{R^2}{4\lambda_{-}h_{i}}\\
    &\geq \frac{R^2}{32\lambda_{-}}\sum_{i=1}^p\frac{1}{h_{i}},
\end{align*}
where the final inequality follows from \eqref{eq:first KL bound first minimax lower bound}. 

We now prove the second statement of the proposition. Again this is based on a reduction to a finite number of hypotheses and an application of Assouad's lemma. In order to describe the effects of $B$ and $R$, we instead have to pack the signal parameters away from zero. Define $Q_{1}=0$ and for $r \in [L+1]\setminus\{1\}$ define $Q_{r}=|L_{1}|+\cdots+|L_{r-1}|$ such that $L_{r}=[Q_{r+1}]\setminus [Q_{r}]$. Additionally, define $F_{r}^{(1)}=\{Q_{r}+1,\ldots,Q_{r}+\lceil |L_{r}|/2 \rceil\}$ and $F_{r}^{(2)}=L_{r}\setminus F_{r}^{(1)}$. Since $|L_{r}| \geq 2$ for all $r$, these sets are non-empty. Finally, we define two further sets $F^{(1)}=\cup_{r=1}^LF_{r}^{(1)}$ and $F^{(2)}=\cup_{r=1}^LF_{r}^{(2)}$. Fix $z \in \mathbb{R}^{p}$ with $\|z\|_{2}\leq B$ such that for $i \in [p]$ we have $z_{i}=0$ if $i \in F^{(1)}$. Set $M=\cup_{k=1}^KM_{k}$ and define $y \in \mathbb{R}^p$ by
\begin{equation*}
    y_{i}  =
\begin{cases}
\frac{z_{i}}{4}, & \text{if } |z_{i}| > \frac{B}{4\sqrt{p}} \text{ and } i \in M \\
\frac{B}{4\sqrt{p}}, & \text{if } |z_{i}| \leq \frac{B}{4\sqrt{p}} \text{ and } i \in F^{(2)}\\
0, & \text{otherwise. }
\end{cases}
\end{equation*}
Define $\delta \in \mathbb{R}^p$ such that for $i \in F^{(2)}$ we have $\delta_{i}=0$ and for $i \in F^{(1)}$ we have 
\begin{equation*}
    \delta_{i}=\frac{R}{16\max\{1,\sqrt{\lambda_{-}\}}}\sqrt{\left(\sum_{j=1}^Kn_{j}D_{j}(z)\mathbbm{1}_{\{i \in O_{j}\}}\right)^{-1}}.
\end{equation*} We are now ready to construct our hypotheses. We first restrict to the Gaussian sub-class where $\epsilon \sim N(0,\sigma^2), X \sim N(0,\lambda_{-}I_{p})$ and $\epsilon \indep X$. These distributions are parametrised by $\sigma$ and $\beta^*$. Since we are proving a lower bound that holds in the ISS case, we can only vary $\sigma$ and $\beta^*$. For such a  distribution in $\mathcal{P}_{OSS}^{Gauss}$ our data is indexed by $\beta^*$ and $\sigma_{\beta^*}$ and consists of 
\begin{equation*}
    P_{\beta^*} = \prod_{j=1}^{K} N(0,\Omega^{O_{j}}_{\beta^*})^{\otimes n_{j}}.
\end{equation*} Set $\Phi = \{0,1\}^p$ and for $\phi \in \Phi$ define, 
\begin{align*}
    \beta_{\phi}&=\sum_{r=1}^{L}\left\{\sum_{j\in F_{r}^{(1)}} \delta_{j}\phi_{j}e_{j}+\sum_{j\in F_{r}^{(2)}}y_{j}e_{j}\right\},\\
    \sigma_{\phi}^2&=\frac{R^2}{2}-\lambda_{-}\|\beta_{\phi}\|_{2}^2+\lambda_{-}\|y\|_{2}^2.
\end{align*}
We have by \eqref{eq:min alpha size ISS lower bound} that 
\begin{align*}
    \|\beta_{\phi}\|_{2}^2 \leq \sum_{i=1}^p\delta_{i}^2+\|y\|_{2}^2 &\leq \frac{B^2}{2}+\frac{B^2}{2}=B^2. 
\end{align*}
Additionally, we have that
\begin{align*}
    \left|\sigma_{\phi}^2-\frac{R^2}{2}\right| &=\lambda_{-}|\|\beta_{\phi}\|_{2}^2-\|y\|_{2}^2|\nonumber\\
    &=\lambda_{-}|\|\beta_{\phi}\|_{2}+\|y\|_{2}||\|\beta_{\phi}\|_{2}-\|y\|_{2}|\nonumber\\
    &\leq \frac{3B\lambda_{-}}{2}\|\beta_{\phi}-y\|_{2}\nonumber\\
    &\leq \frac{3B}{2}\lambda_{-}\left(\sum_{i=1}^p\delta_{i}^2\right)^{\frac{1}{2}}\nonumber\\
    &\leq \frac{R^2}{4},
\end{align*}
where the final inequality follows from \eqref{eq:min alpha size ISS lower bound}. It follows that $0\leq \sigma_{\phi}^2 \leq R^2$. Note that Gaussian covariates satisfy Assumption \ref{assump:small-ball} with $\chi =1$ and $C=1$; they are sub-Gaussian with parameter $C_{X}$ and $\mathbb{E}[\epsilon^8]^{\frac{1}{4}}\leq 105^\frac{1}{4}\sigma^2$. By construction, the eigenvalues of $\Sigma$ satisfy $\lambda_{\min}(\Sigma)=\lambda_{\max}(\Sigma)=\lambda_{-}$. We therefore have that these define valid distributions in $\mathcal{P}_{OSS}^{Gauss}$. We next control the KL divergence between neighbouring distributions in $\Phi$. Let $\phi$ and $\phi'$ differ in exactly the $i^{th}$ coordinate. It holds that

\begin{align}\label{eq: ISS lower bound KL control}
    &\KL\left(P_{\phi},P_{\phi'}\right) \nonumber\\
    &\hspace{4mm}= \sum_{j=1}^{K}n_{j}\KL\left(N(0,\Omega^{O_{j}}_{\beta_{\phi}}),N(0,\Omega^{O_{j}}_{\beta_{\phi'}})\right)\nonumber\\
    &\hspace{4mm} \leq \frac{1}{2} \sum_{j=1}^{K} n_{j}\left\{\frac{\sigma_{\phi'}^2+\lambda_{-}\|(\beta_{\phi'})_{M_{j}}\|_{2}^2}{\sigma_{\phi}^2+\lambda_{-}\|(\beta_{\phi})_{M_{j}}\|_{2}^2}+\frac{\sigma_{\phi}^2+\lambda_{-}\|(\beta_{\phi})_{M_{j}}\|_{2}^2}{\sigma_{\phi'}^2+\lambda_{-}\|(\beta_{\phi'})_{M_{j}}\|_{2}^2}-2+\frac{\lambda_{-}\|(\beta_{\phi})_{O_{j}}-(\beta_{\phi'})_{O_{j}}\|_{2}^2}{\sigma_{\phi'}^2+\lambda_{-}\|(\beta_{\phi'})_{M_{j}}\|_{2}^2}\right\}\nonumber\\
    &\hspace{4mm} \leq \frac{1}{2} \sum_{j=1}^{K} n_{j}\left\{\left(\frac{\lambda_{-}\|(\beta_{\phi'})_{O_{j}}\|_{2}^2-\lambda_{-}\|(\beta_{\phi})_{O_{j}}\|_{2}^2}{\frac{R^2}{4}+\lambda_{-}\|y_{M_{j}}\|_{2}^2}\right)^2+\frac{\lambda_{-}\|(\beta_{\phi})_{O_{j}}-(\beta_{\phi'})_{O_{j}}\|_{2}^2}{\frac{R^2}{4}+\lambda_{-}\|y_{M_{j}}\|_{2}^2}\right\} \nonumber\\
    &\hspace{4mm}= \frac{1}{2} \sum_{j=1}^K n_{j}\mathbbm{1}_{\left\{i \in O_{j}\right\}}\left\{\frac{\lambda_{-}\delta_{i}^2}{\frac{R^2}{4}+\lambda_{-}\|y_{M_{j}}\|_{2}^2}+\left(\frac{\lambda_{-}\delta_{i}^2}{\frac{R^2}{4}+\lambda_{-}\|y_{M_{j}}\|_{2}^2}\right)^2\right\}\nonumber\\
    &\hspace{4mm}\leq \sum_{j=1}^K n_{j}\mathbbm{1}_{\left\{i \in O_{j}\right\}}\left\{\frac{\lambda_{-}\delta_{i}^2}{\frac{R^2}{4}+\lambda_{-}\|y_{M_{j}}\|_{2}^2}\right\}\nonumber\\
    &\hspace{4mm}\leq \sum_{j=1}^K n_{j}\mathbbm{1}_{\left\{i \in O_{j}\right\}}\left\{\frac{\lambda_{-}\delta_{i}^2}{\frac{R^2}{4}+\frac{\lambda_{-}\|z_{M_{j}}\|_{2}^2}{16}}\right\}\nonumber\\
    &\hspace{4mm}\leq \frac{16\max\{1,\lambda_{-}\}\delta_{i}^2}{R^2}\sum_{j=1}^K n_{j}D_{j}\mathbbm{1}_{\left\{i \in O_{j}\right\}}\nonumber\\
    &\hspace{4mm} \leq \frac{1}{2}, 
\end{align}
where the first inequality follows from Lemma \ref{Lemma:First KL lemma}; the second inequality follows from the fact that $x+\frac{1}{x} \leq (x-1)^2+2$ for $ x \geq 1$ and the fact that $\sigma_{\phi}^2 \geq \frac{R^2}{4}$; the third inequality follows from the fact that 
\begin{equation*}
    \frac{\lambda_{-}\delta_{i}^2}{\frac{R^2}{4}} \leq \min\{1,\lambda_{-}\}\left(\sum_{j=1}^Kn_{j}D_{j}\mathbbm{1}_{\{i \in O_{j}\}}\right)^{-1}\leq 1,
\end{equation*} by \eqref{eq:min alpha size ISS lower bound}.

For $\phi,\phi' \in \Phi$, we write $\phi \sim \phi'$ if $\phi$ and $\phi'$ differ in precisely one coordinate. We apply Assouad's lemma~\cite[Lemma 8.7]{samworth2024statistics} and Pinsker's inequality to show 
\begin{align*}
    &\inf_{\hat{\beta}\in \hat{\Theta}_{OSS}}\sup_{P \in \mathcal{P}_{OSS}^{Gauss}}\mathbb{E}\left[\|\hat{\beta}-\beta^*(P)\|^2_{2}\right]\\
    &\geq \frac{1}{4}\left\{1-\max_{\substack{\phi,\phi'\in\Phi\\\phi \sim \phi'}} \TV(P_{\beta_{\phi}},P_{{\beta}_{\phi'}})\right\}\sum_{i=1}^p\delta_{i}^2\\
    &\geq \frac{1}{4}\left\{1-\max_{\substack{\phi,\phi'\in\Phi\\\phi \sim \phi'}} \sqrt{\frac{\KL(P_{\beta_{\phi}},P_{{\beta}_{\phi'}})}{2}}\right\}\sum_{i\in F^{(1)}}\frac{R^2}{256\max\{1,\lambda_{-}\}}\left(\sum_{j=1}^Kn_{j}D_{j}\mathbbm{1}_{\left\{i \in O_{j}\right\}}\right)^{-1}\\
    &\gtrsim \frac{R^2}{\max\{1,\lambda_{-}\}}\sum_{i=1}^p\frac{1}{\sum_{j=1}^Kn_{j}D_{j}\mathbbm{1}_{\left\{i \in O_{j}\right\}}},
\end{align*}
where the final inequality follows from \eqref{eq: ISS lower bound KL control}, the fact that for $r \in [L]$, $|F_{r}^{(1)}| \geq \frac{|L_{r}|}{2}$ and the fact that $\sum_{j=1}^Kn_{j}D_{j}\mathbbm{1}_{\left\{i \in O_{j}\right\}}$ depends only on the modality of $i$. This establishes the lower bound for all $z \in \mathbb{R}^{p}$ with $\|z\|_{2}\leq B$ such that for $i \in [p]$ we have $z_{i}=0$ if $i \in F^{(1)}$. The lower bound for all $z \in \mathbb{R}^{p}$ with $\|z\|_{2}\leq B$ then follows by symmetry and adjusting constants. 
\end{proof}

We now prove that the second terms in Theorems \ref{thm:LD structured lower bound} and \ref{thm:LD unstruc lower} lower bound the minimax risk. Again, this follows from constructing a finite packing of distributions and applying Assouad's lemma. We now describe the general form these distributions will take -- these distributions will crop up again in the high-dimensional setting. We then control the KL divergence between these distributions for both the labelled (Lemma \ref{lemma: KL2 miss}) and unlabelled data (Lemma \ref{lemma:KL3}). Finally, we prove the desired lower bound.

For $\lambda_{-}\leq\lambda_{+}$, let $\tau=\tau(\lambda_{-},\lambda_{+})=\frac{\lambda_{-}+\lambda_{+}}{2}$. For $x \in \mathbb{R}^{p-1}$, define $\beta_{x}\in \mathbb{R}^p$ via
\begin{equation*}
    \beta_{x} = \left(- \frac{x}{\tau}, \frac{B}{2} \right)^T.
\end{equation*}
Additionally, we define $\Sigma_{x}\in \mathbb{R}^{p\times p}$ as
\begin{equation*}
    \Sigma_{x} = \begin{pmatrix}
        \tau I_{p-1} & \frac{2x}{B}\\
        \frac{2x^T}{B} & \tau
    \end{pmatrix}.
\end{equation*}
 Note that $\Sigma_{x}$ has $p-2$ eigenvalues equal to $\tau$ with corresponding eigenvectors of the form $(a^T,0)^T$ where $a \in \mathbb{R}^{p-1}$ and $a^Tx =0$. Its two final eigenvalues are $\tau\pm\frac{2\|x\|_{2}}{B}$ with corresponding eigenvectors $(\pm \frac{x^T}{\|x\|_{2}},1)^T$. In particular we see that $\|x\|_{2} \leq \frac{B(\lambda_{+}-\lambda_{-})}{4}$ is a sufficient condition to ensure the eigenvalues of $\Sigma_{x}$ are bounded above by $\lambda_{+}$ and below by $\lambda_{-}$ and that $\|\beta_{x}\|_{2}^2  \leq B^2$. We set $\sigma_{x}=R$ and note that $\Delta_{x} \equiv \beta_{x}^T\Sigma_{x}\beta_{x}+\sigma_{x}^2=\frac{\tau B^2}{4}-\frac{\|x\|_{2}^2}{\tau}+R^2$ only depends on $x$ through $\|x\|_{2}$. Additionally, we define \begin{equation*}
    \Omega_{x} \equiv \begin{pmatrix}
        \Sigma_{x}& \Sigma_{x}\beta_{x} \\
        (\Sigma_{x}\beta_{x})^T & \Delta_{x}
    \end{pmatrix}=  \begin{pmatrix}
        \tau I_{p-1} & \frac{2x}{B} & 0\\
        \frac{2x^T}{B} & \tau & \frac{B\tau}{2}-\frac{2\|x\|_{2}^2}{B\tau}\\
        0 & \frac{B\tau}{2}-\frac{2\|x\|_{2}^2}{B\tau} & \Delta_{x}
    \end{pmatrix},
\end{equation*}
and for $O \subseteq [p]$, let
\begin{equation*}
    \Omega_{x}^O = \begin{pmatrix}
        (\Sigma_{x})_{OO}& (\Sigma_{x})_{O}\beta_{x} \\
        ((\Sigma_{x})_{O}\beta_{x})^T & \Delta_{x}
    \end{pmatrix}.
\end{equation*}
Similar to the proof of Theorem \ref{thm:LD structured lower bound}, we will construct a finite packing of the data-generating mechanisms such that the distribution with parameter $\beta_{x}$ will be $(X,Y) \sim N(0,\Omega_{x})$.
We begin with some lemmas to control the KL divergence between hypotheses of this form. For $O \subseteq [p]$, we let $O' = O\setminus\{p\}$.
\begin{lemma} \label{lemma: KL2 miss} Fix $B,R>0$ and $\lambda_{-}\leq\lambda_{+}$ with $\tau=\tau(\lambda_{-},\lambda_{+})=\frac{\lambda_{-}+\lambda_{+}}{2}$. Fix $x_{1},x_{2} \in \mathbb{R}^{p-1}$ with $\|x_{1}\|_{2}=\|x_{2}\|_{2} \leq \frac{B(\lambda_{+}-\lambda_{-})}{8}$ and $O \subseteq [p]$. Given the above definitions, 
    \begin{align*}
    \KL&(N(0,\Omega_{x_{1}}^O),N(0,\Omega_{x_{2}}^O)) \\&\leq10\max\left\{1,\frac{1}{\tau^2}\right\}\max\left\{\frac{1}{B^2},\frac{1}{R^2}\right\}\left\{\left|\|(x_{1})_{O'}\|_{2}^2-\|(x_{2})_{O'}\|_{2}^2\right|+\sum_{j=1}^{|O'|}|\left\{\left(x_{1}-x_{2}\right)_{O'}\right\}_{j}||\{(x_{2})_{O'}\}_{j}| \right\}.
\end{align*}
Furthermore, if $p \notin O$, then the left-hand side of the above display vanishes. 
\end{lemma}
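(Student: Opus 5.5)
Since both $\Omega_{x_1}^O$ and $\Omega_{x_2}^O$ are centred Gaussian covariances, I would start from the exact formula used in the proof of Lemma~\ref{Lemma:First KL lemma},
\[
\KL(N(0,\Omega_{x_1}^O),N(0,\Omega_{x_2}^O))=\tfrac12\bigl\{\log\det(\Omega_{x_2}^O)-\log\det(\Omega_{x_1}^O)-(|O|+1)+\Tr((\Omega_{x_2}^O)^{-1}\Omega_{x_1}^O)\bigr\}.
\]
I would then control it through $\Delta\equiv\Omega_{x_1}^O-\Omega_{x_2}^O$. The starting observation is that $\Delta_x$ depends on $x$ only through $\|x\|_2$, and $\|x_1\|_2=\|x_2\|_2$. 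Hence the bottom-right entry of $\Delta$ vanishes, and so does the entry linking coordinate $p$ to $Y$, namely $\frac{B\tau}{2}-\frac{2\|x\|_2^2}{B\tau}$.

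\emph{The case $p\notin O$.} Here $(\Sigma_x)_{OO}=\tau I_{|O|}$. The cross-covariance $(\Sigma_x)_O\beta_x$ has entries $-x_j+x_j=0$ for $j\in O$, because the $j$th row of $\Sigma_x\beta_x$ equals $\tau(-x_j/\tau)+\frac{2x_j}{B}\cdot\frac{B}{2}=0$. The variance of $Y$ is $\Delta_x$, which is common to both. So $\Omega_{x_1}^O=\Omega_{x_2}^O$ and the KL divergence vanishes, which gives the final claim.

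\emph{The case $p\in O$.} Order the coordinates as $(O',p,Y)$. Then $\Omega_x^O$ equals the matrix $\Omega_x$ with $x$ replaced by $x_{O'}$, except that the $(Y,Y)$ entry is still $\Delta_x$ and the $(p,Y)$ entry still uses $\|x\|_2$. The only nonzero entries of $\Delta$ are therefore the two symmetric blocks $\frac{2}{B}(x_1-x_2)_{O'}$ between $O'$ and $p$.

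\emph{Reduction via the standard bound.} I would use the standard Gaussian bound
\[
\KL\le \tfrac12\bigl\{\Tr(\Omega_{x_2}^{-1}\Delta)-\log\det(I+\Omega_{x_2}^{-1}\Delta)\bigr\}.
\]
Write $M=\Omega_{x_2}^{-1/2}\Delta\,\Omega_{x_2}^{-1/2}$. If $\|M\|_{\mathrm{op}}\le 1/2$, the right-hand side is at most $\tfrac12\|M\|_F^2\le \tfrac12\|\Omega_{x_2}^{-1}\|_{\mathrm{op}}^2\|\Delta\|_F^2$. The bound $\|\Omega_{x_2}^{-1}\|_{\mathrm{op}}$ is handled by a Schur complement argument: the conditional variance of $Y$ given $X$ is $R^2$, and $\lambda_{\min}(\Sigma_x)\ge\lambda_-$ since $\|x\|\le B(\lambda_+-\lambda_-)/8$. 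This gives $\lambda_{\min}(\Omega_x)\gtrsim\min\{\tau,R^2\}/(1+\|\beta_x\|^2)$. A cleaner route is to write $\Omega_x^{-1}$ explicitly through the block inverse, as in the proof of Lemma~\ref{Lemma:First KL lemma}.

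\emph{Why the stated linear bound is needed.} A $\|\Delta\|_F^2\asymp\|x_1-x_2\|^2/B^2$ bound is quadratic, whereas the lemma claims something linear, namely $|\|(x_1)_{O'}\|^2-\|(x_2)_{O'}\|^2|+\sum_j|(x_1-x_2)_j||(x_2)_j|$. That linear expression is not bounded below by $\|x_1-x_2\|^2$ in general (for instance when $x_2$ is small), so the quadratic bound does not suffice on its own. I would therefore compute the trace and log-det terms exactly rather than through the operator-norm bound. By the Schur complement,
\[
\det\Omega^O_x=\tau^{|O'|}\bigl(\tau-\tfrac{4\|x_{O'}\|^2}{\tau B^2}\bigr)R^2 ,
\]
after identifying the conditional variance. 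The log-det difference is then controlled by $\frac{4}{\tau^2B^2}\bigl|\|(x_1)_{O'}\|^2-\|(x_2)_{O'}\|^2\bigr|$, up to constants, using that the denominators stay at least $\tau/2$.

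\emph{The trace term.} Using the explicit inverse, $\Tr(\Omega_{x_2}^{-1}\Omega_{x_1})-(|O|+1)=\Tr(\Omega_{x_2}^{-1}\Delta)=2\langle(\Omega_{x_2}^{-1})_{O',p},\tfrac{2}{B}(x_1-x_2)_{O'}\rangle$. The $(O',p)$ block of the inverse is proportional to $(x_2)_{O'}$, with coefficient bounded by $\max\{1,\tau^{-2}\}\max\{B^{-1},BR^{-2}\}$ up to constants. Together with the factor $2/B$, this produces a term of order $\max\{1,\tau^{-2}\}\max\{B^{-2},R^{-2}\}\sum_j|(x_1-x_2)_j||(x_2)_j|$.

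\emph{Main obstacle.} The hardest step is this explicit inversion of $\Omega_{x_2}^O$ and the bookkeeping of constants so that everything fits under the factor $10\max\{1,\tau^{-2}\}\max\{B^{-2},R^{-2}\}$. The plan is to invert the $(p,Y)$ part first, via $2\times2$ Schur complements, and then use that the $O'$ block is $\tau I$ plus a rank-one correction along $(x_2)_{O'}$.
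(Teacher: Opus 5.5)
Your overall route is the paper's: treat $p\notin O$ by noting that $(\Sigma_x)_O\beta_x$ vanishes on $O'$, then compute the log-determinant via Schur complements and the trace via the block inverse. Your handling of $p\notin O$, of the sparsity pattern of $\Omega^O_{x_1}-\Omega^O_{x_2}$, and of the trace term is sound; for the trace you only need a lower bound on the $Y$-Schur complement.

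\textbf{The gap is in the log-determinant step.} The Schur complement of the $Y$-entry of $\Omega^O_x$ is the conditional variance of $Y$ given $X_O$. By the computation in Lemma~\ref{Lemma:First KL lemma} (with $S_M$ formed from $\Sigma_x$ and $M=[p]\setminus O$), this is $R^2+(\beta_x)_M^TS_M(\beta_x)_M$, not $R^2$. Write $c=\frac{B\tau}{2}-\frac{2\|x\|_2^2}{B\tau}$ and $d_x=\tau-\frac{4\|x_{O'}\|_2^2}{\tau B^2}$. Then the Schur complement is $\Delta_x-c^2/d_x$, which equals $R^2$ only when $\|x_{O'}\|_2=\|x\|_2$. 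The reason is the one you noticed yourself: the $(p,Y)$ and $(Y,Y)$ entries involve $\|x\|_2$, not $\|x_{O'}\|_2$.

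So $\det\Omega^O_x=\tau^{|O'|}d_xR^2$ holds only for $O=[p]$, where $\|(x_1)_{O'}\|_2=\|(x_2)_{O'}\|_2$ and the log-determinant difference is trivially zero. In every nontrivial case you drop $\log\{(\Delta-c^2/d_{x_2})/(\Delta-c^2/d_{x_1})\}$. This term is generally nonzero: $c$ and $\Delta$ are common to $x_1,x_2$ because $\|x_1\|_2=\|x_2\|_2$, but $d_{x_1}\neq d_{x_2}$.

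\textbf{The repair is what the paper does.} Both factors are at least $R^2$. Since $\|x_{O'}\|_2\le\|x\|_2\le B\tau/4$, we have $0\le c\le\frac{B}{2}\min\{d_{x_1},d_{x_2}\}$, and therefore
\[
c^2\bigl|d_{x_1}^{-1}-d_{x_2}^{-1}\bigr|=\frac{4c^2}{\tau B^2d_{x_1}d_{x_2}}\bigl|\|(x_1)_{O'}\|_2^2-\|(x_2)_{O'}\|_2^2\bigr|\le\frac{1}{\tau}\bigl|\|(x_1)_{O'}\|_2^2-\|(x_2)_{O'}\|_2^2\bigr|.
\]
Hence the missing term is at most $\frac{1}{\tau R^2}\bigl|\|(x_1)_{O'}\|_2^2-\|(x_2)_{O'}\|_2^2\bigr|$. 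This is where the $R^{-2}$ part of the log-determinant constant comes from; your version only produces the $(\tau B)^{-2}$ part.

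\textbf{Your reason for discarding the Frobenius route is also wrong,} though this does not affect your final plan. The identity $\|(x_1-x_2)_{O'}\|_2^2=\|(x_1)_{O'}\|_2^2-\|(x_2)_{O'}\|_2^2+2\langle(x_2)_{O'},(x_2-x_1)_{O'}\rangle$ shows that the bracket in the lemma always dominates $\frac12\|(x_1-x_2)_{O'}\|_2^2$. Only $(x_1-x_2)_{O'}$ enters $\Omega^O_{x_1}-\Omega^O_{x_2}$. What actually defeats that route is parameter dependence. For small $x_2$, the vector $(0,\dots,0,\frac{B}{2},-1)$ gives quadratic form about $R^2$ against squared norm $1+\frac{B^2}{4}$. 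So $\|(\Omega^O_{x_2})^{-1}\|_{\mathrm{op}}$ can be of order $B^2/R^2$, and the crude bound loses a factor of that order when $B\gg R$.
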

\begin{proof}
The second statement in the lemma is clear since if $p \notin O$ then $\Omega_{x_{1}}^O=\Omega_{x_{2}}^O$. Therefore, without loss of generality, we set $p \in O$ and $O = \{p_{0}+1,\ldots,p\}$. Applying Exercise 15.13 of~\citet{wainwright2019high} yields that 
    \begin{equation}\label{eq:KL2 miss formula}
    \KL(N(0,\Omega_{x_{1}}^O),N(0,\Omega_{x_{2}}^O))=\frac{1}{2}\left\{\log\left(\frac{\det(\Omega_{x_{2}}^O)}{\det(\Omega_{x_{1}}^O)}\right)-(|O|+1)+\Tr((\Omega_{x_{2}}^O)^{-1}\Omega_{x_{1}}^O) \right\}.
\end{equation}
By~\citet{petersen2008matrix} 9.1.2, we have that
\begin{align}\label{eq:kl2 det blockwise definition}
    \det(\Omega_{x_{2}}^O) &= \det((\Sigma_{x_{2}})_{OO})(\Delta_{x_{2}} - (\Sigma_{x_{2}}\beta_{x_{2}})_{O}^T((\Sigma_{x_{2}})_{OO})^{-1}(\Sigma_{x_{2}}\beta_{x_{2}})_{O}).
\end{align}
Recalling that
\begin{equation*}
    (\Sigma_{x_{2}})_{OO}= \begin{pmatrix}
    \tau I_{|O'|} & (\frac{2x_{2}}{B})_{O'}\\
    (\frac{2x_{2}}{B})_{O'}^T & \tau
\end{pmatrix},
\end{equation*}
it follows, via~\citet{petersen2008matrix} 9.1.2 and 9.1.3, that
\begin{align*}
    \det((\Sigma_{x_{2}})_{OO}) = \tau^{|O'|-1}\left(\tau^2 - \frac{4\|(x_{2})_{O'}\|_{2}^2}{B^2}\right), &\quad \quad \left(\left\{\left(\Sigma_{x_{2}}\right)_{OO}\right\}^{-1}\right)_{p-p_{0},p-p_{0}} = \frac{1}{\tau-\frac{4\|(x_{2})_{O'}\|_{2}^2}{\tau B^2}}.
\end{align*}
From \eqref{eq:kl2 det blockwise definition}, it follows that
\begin{equation*}
    \det(\Omega_{x_{2}}^O) = \tau^{|O'|-1}\left(\tau^2-\frac{4\|(x_{2})_{O'}\|_{2}^2}{B^2}\right)\left(\Delta_{x_{2}} - \frac{(\frac{B\tau}{2}-\frac{2\|x_{2}\|_{2}^2}{B\tau})^2}{\tau-\frac{4\|(x_{2})_{O'}\|_{2}^2}{\tau B^2}}\right).
\end{equation*}
Note that $\Delta_{x_{1}}=\Delta_{x_{2}}$, so for convenience set $\Delta = \Delta_{x_{1}}$. It follows from the above display that
\begin{align}
\label{eq:KL2 miss det bound}
    &\left|\log \left(\frac{\det(\Omega_{x_{2}}^O)}{\det(\Omega_{x_{1}}^O)}\right)\right|\nonumber\\ 
    &\leq \left|\log\left(\frac{\tau^2-\frac{4\|(x_{2})_{O'}\|_{2}^2}{B^2}}{\tau^2-\frac{4\|(x_{1})_{O'}\|_{2}^2}{B^2}}\right)\right|+\left|\log\left(\frac{\Delta - \frac{(\frac{B\tau}{2}-\frac{2\|x_{2}\|_{2}^2}{B\tau})^2}{\tau-\frac{4\|(x_{2})_{O'}\|_{2}^2}{\tau B^2}}}{\Delta - \frac{(\frac{B\tau}{2}-\frac{2\|x_{1}\|_{2}^2}{B\tau})^2}{\tau-\frac{4\|(x_{1})_{O'}\|_{2}^2}{\tau B^2}}}\right)\right|\nonumber\\
    &\leq \left|\frac{\frac{4\|(x_{1})_{O'}\|_{2}^2}{ B^2}-\frac{4\|(x_{2})_{O'}\|_{2}^2}{ B^2}}{\tau^2-\frac{4\|x_{1}\|_{2}^2}{B^2}}\right|+\left|\frac{\frac{(\frac{B\tau}{2}-\frac{2\|x_{1}\|_{2}^2}{B\tau})^2}{\tau-\frac{4\|(x_{1})_{O'}\|_{2}^2}{\tau B^2}}-\frac{(\frac{B\tau}{2}-\frac{2\|x_{2}\|_{2}^2}{B\tau})^2}{\tau-\frac{4\|(x_{2})_{O'}\|_{2}^2}{\tau B^2}}}{\Delta - \frac{(\frac{B\tau}{2}-\frac{2\|x_{1}\|_{2}^2}{B\tau})^2}{\tau-\frac{4\|(x_{1})_{O'}\|_{2}^2}{\tau B^2}}}\right|\nonumber\\
    &\leq \frac{8}{B^2\tau^2}\left|\|(x_{1})_{O'}\|_{2}^2-\|(x_{2})_{O'}\|_{2}^2\right| + \frac{(\frac{B\tau}{2}-\frac{2\|x_{1}\|_{2}^2}{B\tau})^2\left(\frac{\frac{1}{\tau }|\|(x_{1})_{O'}\|_{2}^2-\|(x_{2})_{O'}\|_{2}^2|}{\left(\frac{B\tau}{2}-\frac{2\|(x_{2})_{O'}\|_{2}^2}{\tau B}\right)\left(\frac{B\tau}{2}-\frac{2\|(x_{1})_{O'}\|_{2}^2}{\tau B}\right)}\right)}{R^2}\nonumber\\
    &\leq 10\max\left\{\frac{1}{B^2\tau^2},\frac{1}{R^2\tau}\right\}\left|\|(x_{1})_{O'}\|_{2}^2-\|(x_{2})_{O'}\|_{2}^2\right|,
\end{align}
where the first inequality follows from the triangle inequality; the second inequality follows from the fact that $\log(1+x) \leq x$ for $x > -1$; the third and fourth inequalities follow from the fact $\|x_{1}\|_{2}\leq \frac{B\tau}{4}$ and since $\frac{B\tau}{2}-\frac{2\|x_{1}\|_{2}^2}{B\tau}\leq \frac{B}{2}\left(\tau-\frac{4\|(x_{1})_{O'}\|_{2}^2}{\tau B^2}\right)$. We now bound the trace term in \eqref{eq:KL2 miss formula}. We have that
\begin{equation*}
    \Omega_{x_{1}}^O = \Omega_{x_{2}}^O + \begin{pmatrix}
        0_{|O'|} & \frac{2\left(x_{1}-x_{2}\right)_{O'}}{B}& 0\\
       \frac{2(x_{1}-x_{2})_{O'}^T}{B} & 0 & 0\\
        0 & 0 & 0
    \end{pmatrix}.
\end{equation*}
It follows that 
\begin{align}\label{eq:KL2 trace expansion}
    \Tr((\Omega_{x_{2}}^O)^{-1}\Omega_{x_{1}}^O)& = \Tr((\Omega_{x_{2}}^O)^{-1}(\Omega_{x_{2}}^O+\Omega_{x_{1}}^O-\Omega_{x_{2}}^O))\nonumber\\
    &=|O|+1 + 4\left\{\sum_{j=1}^{|O'|}\left(\frac{\left(x_{1}-x_{2}\right)_{O'}}{B}\right)_{j}((\Omega_{x_{2}}^O)^{-1})_{j|O|} \right\}.
\end{align}
   It remains to calculate for each $j \in O', ((\Omega_{x_{2}}^O)^{-1})_{j|O|}$. From~\citet{petersen2008matrix} 9.1.3 and 3.2.4, it holds that
   \begin{align}\label{eq:KL2 inverse element decomposition}
       ((\Omega_{x_{2}}^O)^{-1})_{j|O|} &= \left(\left\{(\Sigma_{x_{2}})_{OO}-\frac{(\Sigma_{x_{2}}\beta_{x_{2}})_{O}((\Sigma_{x_{2}}\beta_{x_{2}})_{O})^T}{\Delta}\right\}^{-1}\right)_{j|O|}\nonumber\\
       &= \left(((\Sigma_{x_{2}})_{OO})^{-1}+\frac{((\Sigma_{x_{2}})_{OO})^{-1}(\Sigma_{x_{2}}\beta_{x_{2}})_{O}((\Sigma_{x_{2}}\beta_{x_{2}})_{O})^T((\Sigma_{x_{2}})_{OO})^{-1}}{\Delta-((\Sigma_{x_{2}}\beta_{x_{2}})_{O})^T((\Sigma_{x_{2}})_{OO})^{-1}(\Sigma_{x_{2}}\beta_{x_{2}})_{O}}\right)_{j|O|}.
   \end{align}
   Note that by~\citet{petersen2008matrix} 9.1.3, we have that, for some $A_{1} \in \mathbb{R}^{|O'|\times|O'|}$ and $A_{2} \in \mathbb{R}^{1\times |O'|}$,
   \begin{equation}\label{eq:Sigma OO inverse formula}
       ((\Sigma_{x_{2}})_{OO})^{-1} = \begin{pmatrix}
           A_{1} &  - \frac{2(x_{2})_{O'}}{\tau B\left(\tau-\frac{4\|(x_{2})_{O'}\|_2^2}{B^2\tau}\right)}\\
           A_{2} & \frac{1}{\tau-\frac{4\|(x_{2})_{O'}\|_2^2}{B^2\tau}}
       \end{pmatrix}.
   \end{equation}
   Since $\|x_{2}\|_{2}\leq \frac{\tau B}{4}$, it follows that, for $j \in O'$, 
   \begin{equation}\label{eq:KL2 inverse element first bound}
       |\{(\Sigma_{x_{2}})_{OO}\}^{-1}_{j|O|}| \leq \frac{4|\{(x_{2})_{O'}\}_{j}|}{B\tau^2}.
   \end{equation} Note also that 
   \begin{align}\label{eq:KL2 inverse element second bound}
       &\left|\left(\frac{((\Sigma_{x_{2}})_{OO})^{-1}(\Sigma_{x_{2}}\beta_{x_{2}})_{O}((\Sigma_{x_{2}}\beta_{x_{2}})_{O})^T((\Sigma_{x_{2}})_{OO})^{-1}}{\Delta-((\Sigma_{x_{2}}\beta_{x_{2}})_{O})^T((\Sigma_{x_{2}})_{OO})^{-1}(\Sigma_{x_{2}}\beta_{x_{2}})_{O}}\right)_{j|O|}\right|\nonumber\\
       &= \left|\frac{\{((\Sigma_{x_{2}})_{OO})^{-1}(\Sigma_{x_{2}}\beta_{x_{2}})_{O}\}_{j}\{((\Sigma_{x_{2}})_{OO})^{-1}(\Sigma_{x_{2}}\beta_{x_{2}})_{O}\}_{|O|}}{R^2+\frac{\tau B^2}{4}-\frac{\|x_{2}\|_{2}^2}{\tau}-\left(\frac{B\tau}{2}-\frac{2\|x_{2}\|_{2}^2}{B\tau}\right)^2\left(\frac{1}{\tau-\frac{4\|(x_{2})_{O'}\|_2^2}{B^2\tau}}\right)}\right|\nonumber\\
       &\leq \left|\frac{\left(\frac{B\tau}{2}-\frac{2\|x_{2}\|_{2}^2}{B\tau}\right)^2\frac{2((x_{2})_{O'})_{j}}{\tau B\left(\tau-\frac{4\|(x_{2})_{O'}\|_2^2}{B^2\tau}\right)}\frac{1}{\tau-\frac{4\|(x_{2})_{O'}\|_2^2}{B^2\tau}}}{R^2}\right|\nonumber\\
       &\leq \left|\frac{B((x_{2})_{O'})_{j}}{2R^2\tau}\right|,
   \end{align}
   where the first inequality follows from \eqref{eq:Sigma OO inverse formula} and since $\frac{B\tau}{2}-\frac{2\|x_{2}\|_{2}^2}{B\tau}\leq \frac{B}{2}\left(\tau-\frac{4\|(x_{2})_{O'}\|_{2}^2}{\tau B^2}\right)$. Combining \eqref{eq:KL2 inverse element first bound} and \eqref{eq:KL2 inverse element second bound} into \eqref{eq:KL2 inverse element decomposition} yields that
   \begin{align*}
       |(\{\Omega_{x_{2}}^{O}\}^{-1})_{j|O|}| \leq 5\max\left\{1,\frac{1}{\tau^2}\right\}\max\left\{\frac{1}{B},\frac{B}{R^2}\right\}|((x_{2})_{O'})_{j}|. 
   \end{align*}
   Plugging this bound into \eqref{eq:KL2 trace expansion}, it follows that
   \begin{align}\label{eq:KL2 miss trace bound}
       \Tr((\Omega_{x_{2}}^O)^{-1}\Omega_{x_{1}}^O) - |O|-1 \leq  20\max\left\{1,\frac{1}{\tau^2}\right\}\max\left\{\frac{1}{B^2},\frac{1}{R^2}\right\}\left\{\sum_{j=1}^{|O'|}|\left(\left(x_{1}-x_{2}\right)_{O'}\right)_{j}||\{(x_{2})_{O'}\}_{j}| \right\}.
   \end{align}
   Combining \eqref{eq:KL2 miss det bound} and \eqref{eq:KL2 miss trace bound} into \eqref{eq:KL2 miss formula} yields
\begin{align*}
    &\KL(N(0,\Omega_{x_{1}}^O),N(0,\Omega_{x_{2}}^O))\\&\leq 10\max\left\{1,\frac{1}{\tau^2}\right\}\max\left\{\frac{1}{B^2},\frac{1}{R^2}\right\}\left\{\left|\|(x_{1})_{O'}\|_{2}^2-\|(x_{2})_{O'}\|_{2}^2\right|+\sum_{j=1}^{|O'|}|\left(\left(x_{1}-x_{2}\right)_{O'}\right)_{j}||((x_{2})_{O'})_{j}| \right\}.
\end{align*}
\end{proof}

We need the following similar lemma to control the KL divergence for the unlabelled data.

\begin{lemma}\label{lemma:KL3}
Fix $B>0$ and $\lambda_{-}\leq\lambda_{+}$ with $\tau=\tau(\lambda_{-},\lambda_{+})=\frac{\lambda_{-}+\lambda_{+}}{2}$. Fix $x_{1},x_{2} \in \mathbb{R}^{p-1}$ with $\|x_{1}\|_{2}=\|x_{2}\|_{2} \leq \frac{B(\lambda_{+}-\lambda_{-})}{8}$. Consider $\Sigma_{x_{1}}$ and $\Sigma_{x_{2}}$ defined prior to Lemma \ref{lemma: KL2 miss}.
Then we have 
\begin{align*}
    \KL\left(N(0,\Sigma_{x_{1}}),N(0,\Sigma_{x_{2}})\right) &= \frac{4}{B^2\tau}\frac{x_{2}^T(x_{2}-x_{1})}{\tau-\frac{4\|x_{2}\|_{2}^2}{\tau B^2}}\\
    &\leq \frac{8x_{2}^T(x_{2}-x_{1})}{B^2\tau^2}.
\end{align*}

\end{lemma}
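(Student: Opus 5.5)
We apply the Gaussian KL formula (Exercise 15.13 of~\citet{wainwright2019high}), exactly as in Lemmas~\ref{Lemma:First KL lemma} and~\ref{lemma: KL2 miss}:
\begin{equation*}
\KL(N(0,\Sigma_{x_1}),N(0,\Sigma_{x_2}))=\tfrac12\left\{\log\tfrac{\det\Sigma_{x_2}}{\det\Sigma_{x_1}}-p+\Tr(\Sigma_{x_2}^{-1}\Sigma_{x_1})\right\}.
\end{equation*}
The determinant term vanishes. By the block determinant formula (9.1.2 of~\citet{petersen2008matrix}), $\det\Sigma_x=\tau^{p-2}(\tau^2-4\|x\|_2^2/B^2)$, which depends on $x$ only through $\|x\|_2$. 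Since $\|x_1\|_2=\|x_2\|_2$, the log-ratio is zero. This matches the spectral description given before Lemma~\ref{lemma: KL2 miss}.

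The trace term carries the content. We write $\Sigma_{x_1}=\Sigma_{x_2}+E$, where $E$ is zero except in its last column and last row, whose first $p-1$ entries are $2(x_1-x_2)/B$ and $2(x_1-x_2)^T/B$. Then
\begin{equation*}
\Tr(\Sigma_{x_2}^{-1}\Sigma_{x_1})-p=\Tr(\Sigma_{x_2}^{-1}E)=\frac{4}{B}\sum_{j=1}^{p-1}(x_1-x_2)_j(\Sigma_{x_2}^{-1})_{jp},
\end{equation*}
using the symmetry of $\Sigma_{x_2}^{-1}$. The off-diagonal column of the inverse comes from the block inverse (9.1.3 of~\citet{petersen2008matrix}), exactly as in \eqref{eq:Sigma OO inverse formula} with $O=[p]$:
\begin{equation*}
(\Sigma_{x_2}^{-1})_{jp}=-\frac{2(x_2)_j}{\tau B\left(\tau-\frac{4\|x_2\|_2^2}{\tau B^2}\right)}.
\end{equation*}
Substituting and multiplying by $\tfrac12$ gives
\begin{equation*}
\KL=\frac{4}{B^2\tau}\cdot\frac{x_2^T(x_2-x_1)}{\tau-\frac{4\|x_2\|_2^2}{\tau B^2}},
\end{equation*}
which is the claimed identity. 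The only care needed here is the sign bookkeeping and the factor of two from the two symmetric off-diagonal blocks.

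For the inequality, note that $x_2^T(x_2-x_1)=\tfrac12\|x_1-x_2\|_2^2\ge0$ because the two norms are equal, so the expression is nonnegative. It then suffices to lower-bound the denominator by $\tau/2$, that is, to show $4\|x_2\|_2^2/(\tau B^2)\le\tau/2$. The hypothesis gives $\|x_2\|_2\le B(\lambda_+-\lambda_-)/8\le B\tau/4$, since $\lambda_+-\lambda_-\le\lambda_++\lambda_-=2\tau$. Hence $4\|x_2\|_2^2/(\tau B^2)\le\tau/4\le\tau/2$, and this yields the bound $8x_2^T(x_2-x_1)/(B^2\tau^2)$.

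I expect no real obstacle. The main risk is an algebra slip in the inverse entry, and I would check it directly: verify that $\Sigma_{x_2}$ times the proposed last column of the inverse equals $e_p$.
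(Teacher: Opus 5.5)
Your proposal is correct and matches the paper's proof step for step: the Gaussian KL formula, the determinants cancelling because $\det\Sigma_x$ depends on $x$ only through $\|x\|_2$, the trace term computed from the last column of the block inverse of $\Sigma_{x_2}$, and the Schur complement bounded below via $\|x_2\|_2\le B\tau/4$. Your explicit check that $x_2^T(x_2-x_1)=\tfrac12\|x_1-x_2\|_2^2\ge 0$ is a welcome addition, since the final inequality requires a nonnegative numerator and the paper leaves this implicit.
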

\begin{proof}
Using 9.1.3 from~\citet{petersen2008matrix} and the Sherman-Morrison formula, we have
\begin{align}\label{eq:Sigma x2 inverse}
    \Sigma_{x_{2}}^{-1} &= \begin{pmatrix}
        \left(\tau I_{p-1}-\frac{4x_{2}x_{2}^T}{B^2\tau}\right)^{-1} & -\frac{2x_{2}}{B\tau}\frac{1}{\tau-\frac{4\|x_{2}\|_{2}^2}{\tau B^2}} \\ -\frac{2x_{2}^T}{B\tau}\frac{1}{\tau-\frac{4\|x_{2}\|_{2}^2}{\tau B^2}} & \frac{1}{\tau-\frac{4\|x_{2}\|_{2}^2}{\tau B^2}}
    \end{pmatrix}\nonumber\\
     &= \begin{pmatrix}
        \frac{1}{\tau}I_{p-1}+\frac{4x_{2}x_{2}^T}{B^2\tau^2}\frac{1}{\tau-\frac{4\|x_{2}\|_{2}^2}{B^2\tau}} & -\frac{2x_{2}}{B\tau}\frac{1}{\tau-\frac{4\|x_{2}\|_{2}^2}{\tau B^2}} \\ -\frac{2x_{2}^T}{B\tau}\frac{1}{\tau-\frac{4\|x_{2}\|_{2}^2}{\tau B^2}} & \frac{1}{\tau-\frac{4\|x_{2}\|_{2}^2}{\tau B^2}}
    \end{pmatrix}.
\end{align}
Since 
\begin{align*}
    \Sigma_{x_{1}}-\Sigma_{x_{2}} = \begin{pmatrix}
    0_{p-1} & \frac{2(x_{1}-x_{2})}{B}\\
    \frac{2(x_{1}-x_{2})^T}{B} & 0
    \end{pmatrix},
\end{align*} 
it follows from \eqref{eq:Sigma x2 inverse} that
\begin{align*}\Tr\left(\Sigma_{x_{2}}^{-1}\Sigma_{x_{1}}\right) & =\Tr\left(\Sigma_{x_{2}}^{-1}\left(\Sigma_{x_{1}}-\Sigma_{x_{2}}+\Sigma_{x_{2}}\right)\right)\nonumber\\
        & = p+\frac{8}{B^2\tau}\frac{x_{2}^T(x_{2}-x_{1})}{\tau-\frac{4\|x_{2}\|_{2}^2}{\tau B^2}}.
\end{align*}
Using Exercise 15.13~\citep{wainwright2019high}, the previous display and the fact that $\det(\Sigma_{x_{1}})=\det(\Sigma_{x_{2}})$, we see that
\begin{align*}
    \mathrm{KL}\bigl(N(0,\Sigma_{x_{1}}),N(0,\Sigma_{x_{2}}) \bigr) &= \frac{1}{2} \biggl\{ \log \biggl( \frac{\det(\Sigma_{x_{2}})}{\det(\Sigma_{x_{1}})} \biggr)-p+\mathrm{Tr}( \Sigma_{x_{2}}^{-1} \Sigma_{x_{1}}) \biggr\}  \\
    & = \frac{4}{B^2\tau}\frac{x_{2}^T(x_{2}-x_{1})}{\tau-\frac{4\|x_{2}\|_{2}^2}{\tau B^2}}\\
    &\leq \frac{8x_{2}^T(x_{2}-x_{1})}{B^2\tau^2},
\end{align*}
where the final inequality follows since $\|x_{2}\|_{2}\leq \frac{\tau B}{4}$.
\end{proof}

We now are ready to prove the desired minimax lower bound.

\begin{proposition}\label{prop:LD OSS lower bound}
    Suppose we have data drawn from \eqref{eq:datamech1} with distribution lying in $\mathcal{P}_{OSS}^{Gauss}$ satisfying $C_{X}\geq \sqrt{\frac{8\lambda_{+}}{3}}$, $\kappa_{\epsilon}\geq 105^{\frac{1}{4}}$ and $\lambda_{-}<\lambda_{+}$. For $p \geq 2$ and provided that, for all $i \in [p]$,
    \begin{equation*}
        \min\{1,\lambda_{+}^2\}\sum_{j \in [p]\setminus\{i\}} \min\left\{\frac{B^2}{N+n_{\xi(j),\xi(i)}}, \frac{R^2}{n_{\xi(j),\xi(i)}}\right\} \leq 2B^2(\lambda_{+}-\lambda_{-})^2, 
    \end{equation*}
    then, 
    \begin{equation}\label{eq:OSS lower bound equation}
    \inf_{\hat{\beta} \in \hat{\Theta}_{OSS}} 
    \sup_{P \in \mathcal{P}_{OSS}^{Gauss}} 
    \mathbb{E} \left[ \|\hat{\beta} - \beta^*(P)\|_2^2 \right] 
    \geq 
    \frac{\min\{\lambda_{+}^{-2},1\}}{256}\max_{i \in [p]} 
     \sum_{j\in [p]\setminus \{i\}} \min\left\{\frac{B^2}{N+n_{\xi(j),\xi(i)}}, \frac{R^2}{n_{\xi(j),\xi(i)}}\right\}.
\end{equation}
\end{proposition}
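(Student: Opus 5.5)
We prove \eqref{eq:OSS lower bound equation} with Assouad's lemma applied to the family $\{N(0,\Omega_x)\}$ built just before Lemma~\ref{lemma: KL2 miss}. Fix $i\in[p]$ and relabel coordinates so that $i=p$; the maximum over $i$ is then free. This puts the ``anchor'' coefficient $B/2$ on coordinate $i$. The perturbation $x\in\mathbb{R}^{p-1}$ lives on the remaining coordinates $j\neq i$. Under $N(0,\Omega_x)$ we have $\beta^*=\beta_x=(-x/\tau,B/2)$, so recovering $x$ is equivalent to recovering $\beta^*$ up to the factor $1/\tau$. For each $j\ne i$ set
\[
\delta_j=c\min\{1,\lambda_+\}\min\Bigl\{\frac{B}{\sqrt{N+n_{\xi(j),\xi(i)}}},\frac{R}{\sqrt{n_{\xi(j),\xi(i)}}}\Bigr\}
\]
for a small constant $c$. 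Index the hypotheses by $\phi\in\{-1,1\}^{p-1}$ and put $x_\phi=(\phi_j\delta_j)_{j}$. All hypotheses then share the norm $\|x_\phi\|_2^2=\sum_j\delta_j^2$. The sample-size hypothesis is exactly what makes $\|x_\phi\|_2\le B(\lambda_+-\lambda_-)/8$. This gives membership in $\mathcal{P}^{Gauss}_{OSS}$, because the eigenvalues of $\Sigma_x$ lie in $[\lambda_-,\lambda_+]$, $\|\beta_x\|_2\le B$ and $\sigma=R$. It also places us within the scope of Lemmas~\ref{lemma: KL2 miss} and~\ref{lemma:KL3}. Gaussianity takes care of the small-ball, sub-Gaussian ($C_X\ge\sqrt{8\lambda_+/3}$) and moment ($\kappa_\epsilon\ge105^{1/4}$) conditions.

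Next we bound the KL divergence between neighbours $\phi\sim\phi'$ that differ only in coordinate $j$. The total KL splits additively over the unlabelled sample and the $K$ labelled patterns.

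For the unlabelled part, Lemma~\ref{lemma:KL3} gives
\[
N\cdot\frac{8x_{\phi'}^T(x_{\phi'}-x_\phi)}{B^2\tau^2}=N\cdot\frac{16\delta_j^2}{B^2\tau^2}.
\]

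For the labelled part, Lemma~\ref{lemma: KL2 miss} gives zero for every pattern with $i\notin O_k$. For patterns with $i\in O_k$ there are two cases.
\begin{itemize}
\item If $j\notin O_k$, then $(x_\phi)_{O'}=(x_{\phi'})_{O'}$ and the bound is zero.
\item If $j\in O_k$, the squared-norm difference on $O'$ vanishes, since only the sign changes. What remains is $|2\delta_j|\,\delta_j$, so each such observation contributes at most $20\max\{1,\tau^{-2}\}\max\{B^{-2},R^{-2}\}\delta_j^2$.
\end{itemize}
Summing over observations, the total labelled contribution is at most $C\,n_{\xi(j),\xi(i)}\max\{1,\tau^{-2}\}\max\{B^{-2},R^{-2}\}\delta_j^2$.

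This is where I expect the main obstacle: the labelled bound carries $\max\{B^{-2},R^{-2}\}$, which is weaker than the rate suggests. We need the total KL to be at most $1/2$ with $\delta_j^2$ as large as $\min\{B^2/(N+n),R^2/n\}$. The rough bound only handles $\delta_j^2\lesssim\min\{B^2,R^2\}/n$. The fix is to sharpen the trace term in the proof of Lemma~\ref{lemma: KL2 miss}. The first entry-bound \eqref{eq:KL2 inverse element first bound} has scale $1/(B\tau^2)$ and therefore contributes a $\delta_j^2/B^2$ term. The second, \eqref{eq:KL2 inverse element second bound}, has scale $B/R^2$ and contributes $\delta_j^2/R^2$. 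The refinement I would attempt is to show that these two pieces enter with opposite signs. Indeed $(\Omega^O)^{-1}_{j,|O|+1}$ is, up to sign, the regression coefficient of $Y$ on $X_j$ given the other observed variables, and that coefficient is $\beta_j/\sigma^2$-type. This would give a labelled KL of order $n\,\delta_j^2/R^2$ after using $\tau\asymp\lambda_+$. If the cancellation fails, I would instead redefine $\sigma_x^2$ so that the pattern residual variance is exactly $R^2$ for every $x$ of the given norm. The constant $\min\{1,\lambda_+^2\}$ in $\delta_j$ absorbs the $\tau^{-2}$ factors in both cases, and the KL on the unlabelled side is then at most $1/2$ by the choice of $\delta_j$.

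Finally, Assouad's lemma with Pinsker, as in the proof of Proposition~\ref{prop:LD ISS lower bound}, gives risk at least
\[
\tfrac14\Bigl(1-\tfrac12\Bigr)\sum_{j\ne i}\frac{(2\delta_j)^2}{4\tau^2}.
\]
Substituting $\delta_j$ yields \eqref{eq:OSS lower bound equation} with constant $\min\{\lambda_+^{-2},1\}/256$, up to tuning $c$.
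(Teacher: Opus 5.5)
\paragraph{Assessment.} Your construction matches the paper's proof: anchor $B/2$ on coordinate $i$, perturbations $x_\phi=(\phi_j\delta_j)_{j\ne i}$ of common norm, $\sigma=R$, membership via the sample-size hypothesis, the additive KL split through Lemmas~\ref{lemma: KL2 miss} and~\ref{lemma:KL3}, then Assouad with Pinsker. The gap is at the step you call the main obstacle. There is no obstacle there, and the remedy you propose would fail.

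\paragraph{The crude bound already suffices.} Write $n=n_{\xi(j),\xi(i)}$. Since $B^2/(N+n)\le B^2/n$, we always have
\[
\min\Bigl\{\frac{B^2}{N+n},\frac{R^2}{n}\Bigr\}\le\frac{\min\{B^2,R^2\}}{n},
\qquad\text{hence}\qquad
n\max\{B^{-2},R^{-2}\}\min\Bigl\{\frac{B^2}{N+n},\frac{R^2}{n}\Bigr\}\le 1 .
\]
So the crude bound from Lemma~\ref{lemma: KL2 miss} closes the argument. The paper takes
\[
\delta_j^2=\frac{\min\{\tau^2,1\}}{128}\min\Bigl\{\frac{B^2}{N+n},\frac{R^2}{n}\Bigr\}.
\]
Because $\min\{\tau^2,1\}\max\{1,\tau^{-2}\}=1$, the labelled contribution is at most $20/128$. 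The unlabelled contribution is $16N\delta_j^2/(B^2\tau^2)\le 16/128$. The total KL is therefore below $1/2$.

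\paragraph{Constants.} The constant $c$ cannot be tuned freely. Membership forces $c^2\le 1/128$. With $\min\{1,\lambda_+\}$ in place of $\min\{1,\tau\}$, the factor $\max\{1,\tau^{-2}\}\min\{1,\lambda_+^2\}$ can approach $4$. Also, the per-coordinate Assouad factor is $\tfrac14(2\delta_j/\tau)^2$, not $\tfrac14(2\delta_j)^2/(4\tau^2)$. With $\min\{1,\tau^2\}$ and the correct factor, $\tau\le\lambda_+$ gives exactly $\min\{\lambda_+^{-2},1\}/256$.

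\paragraph{The cancellation you hope for does not exist.} For $j\ne i$ we have $(\Sigma_x\beta_x)_j=-x_j+x_j=0$. Moreover, $\Delta_x$ and $(\Sigma_x\beta_x)_i$ depend only on $\|x\|_2$. Hence $\Omega^O_{x_\phi}-\Omega^O_{x_{\phi'}}$ is supported on the $(X_j,X_i)$ entries, not on the $(X_j,Y)$ entries. The precision entry in the trace is therefore the $(j,i)$ entry of the covariate block, at position $|O|$ rather than $|O|+1$; it is not a regression coefficient of $Y$. Both pieces of that entry in \eqref{eq:KL2 inverse element decomposition} are negative multiples of $(x_{\phi'})_j$, so they add rather than cancel.

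Equivalently, by the chain rule a complete labelled case contributes
\[
\KL\bigl(N(0,\Sigma_{x_\phi}),N(0,\Sigma_{x_{\phi'}})\bigr)+\frac{1}{2R^2}(\beta_\phi-\beta_{\phi'})^T\Sigma_{x_\phi}(\beta_\phi-\beta_{\phi'})\approx\frac{8\delta_j^2}{B^2\tau^2}+\frac{2\delta_j^2}{\tau R^2},
\]
which is a sum of two nonnegative terms. The $B^{-2}$ term is genuine: it is the information the labelled covariates carry about $\Sigma_x$. It is also exactly why the rate contains $B^2/(N+n)$ rather than $B^2/N$.

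Your fallback of redefining $\sigma_x^2$ fails for the same reason. The marginal law of $X_O$ does not involve $\sigma_x$, so its KL, of order $\delta_j^2/(B^2\tau^2)$, is unaffected.
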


\begin{proof}
We prove that the bound in the proposition holds for $i=p$. The $\max$ then holds by symmetry. We begin by constructing hypotheses in $\mathcal{P}_{OSS}^{Gauss}$. Let $\Phi = \left\{0,1\right\}^{p-1}$ and for each $\phi \in \Phi$, we define $x_{\phi}\in \mathbb{R}^{p-1}$ as
\begin{equation*}
    x_{\phi} = \begin{pmatrix}
    \delta_{1}(2\phi_{1}-1)\\
    \delta_{2}(2\phi_{2}-1)\\
    \vdots\\
    \delta_{p-1}(2\phi_{p-1}-1)
\end{pmatrix}
\end{equation*} where $\delta_{j} =\sqrt{\frac{\min\{\tau^2,1\}}{128}\min\left\{\frac{B^2}{N+n_{\xi(j),\xi(p)}}, \frac{R^2}{n_{\xi(j),\xi(p)}}\right\}} \in \mathbb{R}$. We return to the construction outlined in the paragraphs preceding Lemma \ref{lemma: KL2 miss}. For convenience, we re-index everything according to $\phi$ and not $x_{\phi}$. In particular for $\phi \in \Phi$, we let $\beta_{\phi}\equiv\beta_{x_{\phi}}=\left(- \frac{x_{\phi}}{\tau}, \frac{B}{2} \right)^T$, $\sigma_{\phi}^2=R^2, \Delta_{\phi} = \frac{\tau B^2}{4}-\frac{\|x_{\phi}\|_{2}^2}{\tau}+R^2$ and we define $\Sigma_{\phi}\in \mathbb{R}^{p\times p}$ as
\begin{equation*}
    \Sigma_{\phi} \equiv  \begin{pmatrix}
        \tau I_{p-1} & \frac{2x_{\phi}}{B}\\
        \frac{2x_{\phi}^T}{B} & \tau
    \end{pmatrix},
\end{equation*} 
and for $O \subseteq [p]$, set 
\begin{equation*}
    \Omega_{\phi}^O\equiv \begin{pmatrix}
        (\Sigma_{\phi})_{OO}& (\Sigma_{\phi})_{O}\beta_{\phi} \\
        ((\Sigma_{\phi})_{O}\beta_{\phi})^T & \Delta_{\phi}
    \end{pmatrix}.
\end{equation*}
For $\phi \in \Phi$, we let our data be distributed according to $P_{\phi}=\prod_{k=1}^{K} N(0,\Omega^{O_{k}}_{\phi})^{\otimes n_{k}}\otimes N(0,\Sigma_{\phi})^{\otimes N}$. We verify that this distribution lies in $\mathcal{P}_{OSS}^{Gauss}$. This distribution satisfies Assumption \ref{assump:small-ball} with $C=1$ and $\chi =1$ and is sub-Gaussian with parameter $C_{X}$. Additionally, $\mathbb{E}[\epsilon^8]^{\frac{1}{4}}\leq 105^\frac{1}{4}\sigma^2$. Since $\|x_{\phi}\|_{2}^2 \leq \frac{B^2(\lambda_{+}-\lambda_{-})^2}{64}$, this distribution satisfies Assumption \ref{assump:eigen} with parameters $\lambda_{+}$ and $\lambda_{-}$. Additionally, by the discussion preceding Lemma \ref{lemma: KL2 miss}, it holds that $\|\beta_{\phi}\|_{2}^2 \leq B^2$ and $\sigma_{\phi}^2 \leq R^2$. Therefore, these choices define valid hypotheses in $\mathcal{P}_{OSS}^{Gauss}$. Fix $\phi$ and $\phi'$ in $\Phi$ that only differ in the $j^{th}$ coordinate, we have that
    \begin{align*}
        \KL\left(P_{\phi},P_{\phi'}\right)&=\KL\left(\prod_{k=1}^{K} N(0,\Omega^{O_{k}}_{\phi})^{\otimes n_{k}}\otimes N(0,\Sigma_{\phi})^{\otimes N},\prod_{k=1}^{K} N(0,\Omega^{O_{k}}_{\phi'})^{\otimes n_{k}}\otimes N(0,\Sigma_{\phi'})^{\otimes N}\right)\\
        &= \sum_{k=1}^K n_k\KL\left(N(0,\Omega^{O_{k}}_{\phi}),N(0,\Omega^{O_{k}}_{\phi'})\right)\mathbbm{1}_{\left\{\{j,p\} \subseteq O_{k}\right\}}+N\times\KL\left(N(0,\Sigma_{\phi}),N(0,\Sigma_{\phi'})\right)\\
        & \leq n_{\xi(j),\xi(p)}\KL\left(N(0,\Omega_{\phi}),N(0,\Omega_{\phi'})\right) + N\times\KL\left(N(0,\Sigma_{\phi}),N(0,\Sigma_{\phi'})\right) \\
        &\leq 20n_{\xi(j),\xi(p)}\delta_{j}^2\max\left\{1,\frac{1}{\tau^2}\right\}\max\left\{\frac{1}{B^2},\frac{1}{R^2}\right\}+\frac{16N\delta_{j}^2}{B^2\tau^2} \\
        & \leq \frac{1}{2},
    \end{align*}
where the second line follows from the tensorisation of the KL divergence and the fact that $\Omega^{O_{k}}_{\beta_{\phi}}$ and $\Omega^{O_{k}}_{\beta_{\phi'}}$ differ only if $\{j,p\}\subseteq O_{k}$; the third line follows from the data processing inequality; the penultimate line follows from 
applying Lemmas \ref{lemma: KL2 miss} and \ref{lemma:KL3} and the final line follows since $\delta_{j}^2=\frac{\min\{\tau^2,1\}}{128}\min\left\{\frac{B^2}{N+n_{\xi(j),\xi(p)}}, \frac{R^2}{n_{\xi(j),\xi(p)}}\right\}$.

For $\phi,\phi' \in \Phi$, write $\phi \sim \phi'$ if $\phi$ and $\phi'$ differ in precisely one coordinate. By Assouad's lemma~\cite[Lemma 8.7]{samworth2024statistics} and Pinsker's inequality, we have that
\begin{align*}
     \inf_{\hat{\beta}\in \hat{\Theta}_{OSS}}\sup_{P \in \mathcal{P}_{OSS}^{Gauss}}\mathbb{E}\left[\|\hat{\beta}-\beta^*(P)\|^2_{2}\right] 
     &\geq \frac{1}{4}\left(1-\max_{\substack{\phi,\phi' \in \Phi\\\phi \sim \phi'}}\TV(P_{\phi},P_{\phi'})\right)\sum_{j=1}^{p-1}\frac{4\delta_{j}^2}{\tau^2}\\
     &\geq \frac{1}{4}\left(1-\max_{\substack{\phi,\phi' \in \Phi\\\phi \sim \phi'}}\sqrt{\frac{\KL(P_{\phi},P_{\phi'})}{2}}\right)\sum_{j=1}^{p-1}\frac{4\delta_{j}^2}{\tau^2}\\
     \\&\geq \frac{\min\{\tau^{-2},1\}}{256} \sum_{j=1}^{p-1} \min\left\{\frac{B^2}{N+n_{\xi(j),\xi(p)}}, \frac{R^2}{n_{\xi(j),\xi(p)}}\right\}.
\end{align*}
The supremum in \eqref{eq:OSS lower bound equation} for arbitrary $i$ follows by symmetry.
\end{proof}
\begin{proof}[Proof of Theorems~\ref{thm:LD structured lower bound} and~\ref{thm:LD unstruc lower}]
    These follow immediately from Propositions \ref{prop:LD ISS lower bound} and \ref{prop:LD OSS lower bound}.
\end{proof}

\section{High-dimensional proofs}\label{sec:High-Dimensional Proofs}
Appendix \ref{sec:High-Dimensional Proofs} is split into three parts. In Section \ref{B:High-dimensional unstructured lower bound}, we prove the lower bound in the unstructured setting (Theorem \ref{thm:balancing lower bound}). In Section \ref{B:High-Dimensional upper bound proofs}, we prove upper bounds in both the structured (Theorem \ref{thm:high dimensional rate structured}) and the unstructured setting (Theorem \ref{thm:high dimensional upper bound unstructured}). In Section \ref{B:High-dimensional structured lower bound}, we prove the lower bound in the structured setting (Theorem \ref{thm:High-dimensional lower bound}). 
\subsection{High-dimensional unstructured lower bound}\label{B:High-dimensional unstructured lower bound}
The goal of this section is to prove Theorem \ref{thm:balancing lower bound} which gives a lower bound in the unstructured setting. To do this we first derive an extension of the sparse Gilbert-Varshamov theorem (Lemma \ref{lemma:high-dimensional packing}) via the techniques of~\citet{rigollet201518}. We define $\{0,1\}_{s}^p = \{x \in \{0,1\}^p: \|x\|_{0} = s\}$
\begin{lemma}\label{lemma:high-dimensional packing}
    Let $p$ and $s$ be integers with $1 \leq s \leq \frac{p}{32}$ and let $d_{H}$ denote the Hamming distance between two binary vectors. There exists $w_{1},\ldots,w_{M}$ in $\{0,1\}_{s}^p$, where the following hold
    \begin{align*}
        d_{H}(w_{i},w_{j}) &\geq \frac{s}{2}\\
        M &\geq \left\lfloor \left(1+\frac{p}{2s}\right)^{\frac{s}{8}}\right\rfloor.
    \end{align*}
    If in addition, 
    \begin{equation}\label{eq:min dimension sparsity}
        \left(\frac{p}{2s}+1\right)^{\frac{s}{8}} \geq 2 + \frac{p^2}{2s^2}(p+2)\log(2),
    \end{equation}
    then $w_{1},\ldots,w_{M}$ can be chosen such that for all $A \subseteq [p]$,
    \begin{equation}\label{eq:HD packing new conclusion}
        \frac{1}{M}\sum_{j=1}^M \|(w_{j})_{A}\|_{0}\leq \frac{2|A|s}{p}.
    \end{equation}
\end{lemma}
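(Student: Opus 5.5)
The first part (Hamming separation and cardinality) I would prove by the standard greedy/random-coding argument of the sparse Gilbert--Varshamov lemma as in \citet{rigollet201518}; the new averaged-support condition \eqref{eq:HD packing new conclusion} I would get by a probabilistic refinement of that same construction. Concretely, I draw $w_{1},\ldots,w_{M'}$ i.i.d.\ uniformly from $\{0,1\}_{s}^p$ with $M'$ a bit larger than the target $M$. For two independent uniform $s$-sparse vectors, $d_H(w_i,w_j)<s/2$ forces their supports to overlap in more than $3s/4$ coordinates. The overlap size is hypergeometric with mean $s^2/p\le s/32$. A Chernoff/Hoeffding bound for the hypergeometric distribution (it is dominated by the binomial in convex order), together with $s\le p/32$, gives
\[
\mathbb{P}\bigl(d_H(w_i,w_j)<s/2\bigr)\le \Bigl(1+\tfrac{p}{2s}\Bigr)^{-s/4}.
\]
A union bound over the $\binom{M'}{2}$ pairs then shows that with $M'\approx(1+p/2s)^{s/8}$ the expected number of bad pairs is below $1/2$. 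This is the usual argument and yields the first two claims.

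For \eqref{eq:HD packing new conclusion}, note that $\frac1M\sum_j\|(w_j)_A\|_0=\sum_{i\in A}\frac1M\sum_j (w_j)_i$. It therefore suffices, and is also necessary up to taking $A$ to be the set of heavy coordinates, to control the coordinate frequencies $f_i=\frac1M\sum_j(w_j)_i$ and show $f_i\le 2s/p$ for every $i\in[p]$; summing over $i\in A$ then gives the claim simultaneously for all $2^p$ sets $A$. Each $f_i$ is an average of $M$ i.i.d.\ Bernoulli$(s/p)$ variables, so Hoeffding gives
\[
\mathbb{P}\bigl(f_i>2s/p\bigr)\le \exp\bigl(-2M s^2/p^2\bigr).
\]
A union bound over the $p$ coordinates, combined with the pair-failure probability from the first part, shows that the random family simultaneously satisfies the separation and all the frequency bounds with positive probability. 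The positivity requires $M s^2/p^2\gtrsim \log p$, i.e.\ $M$ large relative to $p^2/s^2$, which is exactly what \eqref{eq:min dimension sparsity} provides. The $(p+2)\log 2$ factor there suggests the authors instead union-bounded over all $2^p$ sets $A$ directly, which also works and matches the stated constant; I would follow whichever bookkeeping reproduces $2+\frac{p^2}{2s^2}(p+2)\log 2$.

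The main obstacle is doing both requirements at once without expurgation destroying the frequency bound. The standard proof deletes one vector from each bad pair, and those deletions could bias the frequencies. I would avoid deletion altogether: fix $M=\lfloor(1+p/2s)^{s/8}\rfloor$ exactly and bound
\[
\mathbb{P}(\text{some pair bad})\le \binom{M}{2}\Bigl(1+\tfrac{p}{2s}\Bigr)^{-s/4}<\tfrac12,
\]
using the sharper $s/4$ exponent from the hypergeometric tail. Separately I need $\mathbb{P}(\text{some frequency bound fails})<\tfrac12$, which holds once $M\ge 2+\frac{p^2}{2s^2}(p+2)\log 2$. That is precisely \eqref{eq:min dimension sparsity}, where the additive $2$ absorbs the floor. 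Since the two failure probabilities sum to less than $1$, a single realisation satisfies everything. The delicate part is getting the hypergeometric tail exponent to $s/4$ under only $s\le p/32$, so that $M$ can be kept at the full size rather than shrunk by expurgation.
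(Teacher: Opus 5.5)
Your proposal is correct and is essentially the paper's argument. Both draw $M=\lfloor(1+p/2s)^{s/8}\rfloor$ i.i.d.\ uniform vectors from $\{0,1\}_s^p$, union-bound over pairs with a Chernoff bound on the support overlap, and use Hoeffding for \eqref{eq:HD packing new conclusion}; the two failure probabilities sum to less than one, so no expurgation is needed. For the overlap, the paper uses sequential sampling with conditional success probabilities at most $32s/(31p)$, where you use the hypergeometric--binomial comparison.

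The one real difference is in the second part. The paper union-bounds over all $2^p$ sets $A$, which is where the $(p+2)\log 2$ in \eqref{eq:min dimension sparsity} comes from. Your reduction to the singleton frequencies $f_i$ gives the same per-event bound $\exp(-2Ms^2/p^2)$ at a cost of only a factor $p$, so it would even allow \eqref{eq:min dimension sparsity} to be weakened to roughly $M\gtrsim (p^2/s^2)\log p$.
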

The first two conditions on the packing are standard. The new element is \eqref{eq:HD packing new conclusion}. This is useful because we can choose $A$ to be the missingness patterns so that when Fano's lemma is applied the KL divergence averaged over the packing is well-behaved for each missingness pattern. \eqref{eq:min dimension sparsity} is satisfied for $s$ sufficiently large and $p$ much larger than $s$.

\begin{proof}
    We use the probabilistic method as in~\citet[Lemma 4.14]{rigollet201518} and the first part of the argument follows similarly. Let  $w_{1},\ldots,w_{M}$ be i.i.d.~and uniformly distributed over $\{0,1\}_{s}^p$. Fix an $x_{0} \in \{0,1\}_{s}^p$ and let $w$ be uniformly distributed on $\{0,1\}_{s}^p$. By a union bound and symmetry, we have that 
    \begin{align}\label{eq:sparse packing initial upper bound}
    \mathbb{P}\left(\exists w_{j} \neq w_{k} : d_{H}(w_{j},w_{k}) < \frac{s}{2}\right) &= \mathbb{P}\left(\bigcup_{\substack{\{j,k\}\subseteq [M]^2\\j \neq k}}\left\{d_{H}(w_{j},w_{k}) < \frac{s}{2}\right\}\right)\nonumber\\
    &\leq M^2\mathbb{P}\left(d_{H}(w,x_{0}) < \frac{s}{2}\right).
    \end{align}
    We generate $w$ as follows. Let $U_{1},\ldots,U_{s}$ be $s$ random variables such that $U_{1}$ is drawn uniformly at random from $[p]$ and for $i \in [s]\setminus\{1\}$, conditional on $U_{1},\ldots,U_{i-1}, $\space$ U_{i}$ is drawn randomly from $[p]\setminus\{U_{1},\ldots,U_{i-1}\}$. We then define 
    \begin{align*}
    w_{i}&=
    \begin{cases}
          1 \text{ if } i \in \{U_{1},\ldots,U_{s}\}\\
          0 \text{ otherwise.}
    \end{cases}
    \end{align*}
    For $i \in [s]$, let $Z_{i}=\mathbbm{1}_{\{U_{i} \text{ is in the support of }x_{0}\}}$. We have by construction that
    \begin{align*}
        d_{H}(w,x_{0})= 2s -2\sum_{i=1}^s Z_{i}.
    \end{align*}
     We have that $Z_{1} \sim \Ber(\frac{s}{p})$ and for $i \in [s]\setminus\{1\}$, conditionally on $Z_{1},\ldots,Z_{i-1}$, we have that $Z_{i} \sim \Ber(Q_{i})$, where
    \begin{equation}\label{eq:sparse packing Qi bounds}
        Q_{i} = \frac{s -\sum_{l=1}^{i-1}Z_{l}}{p-(i-1)} \leq \frac{s}{p-s} \leq \frac{32s}{31p},
    \end{equation}
    since $32s \leq p$. We can apply a Chernoff bound to get that for arbitrary $\mu >0$,
    \begin{equation}\label{eq:sparse packing chernoff bound}
        \mathbb{P}\left(d_{H}(w,x_{0}) < \frac{s}{2}\right) \leq \mathbb{P}\left(\sum_{i=1}^s Z_{i} > \frac{3s}{4}\right) \leq \mathbb{E}\left[\exp\left(\mu \sum_{i=1}^s Z_{i}\right)\right]\exp\left(-\frac{3\mu s}{4}\right).
    \end{equation}
    The above MGF can be controlled inductively by the tower law and \eqref{eq:sparse packing Qi bounds}. Setting $\mu = \log\left(1+\frac{31p}{32s}\right)$, we have that
    \begin{align*}
        \mathbb{E}\left[\exp\left(\mu\sum_{i=1}^sZ_{i}\right)\right] & = \mathbb{E}\left[\exp\left(\mu\sum_{i=1}^{s-1}Z_{i}\right)\mathbb{E}\left[\exp(\mu Z_{s}|Z_{1},\ldots Z_{s-1})\right]\right]\\
        &=\mathbb{E}\left[\exp\left(\mu\sum_{i=1}^{s-1}Z_{i}\right)\left(Q_{s}(e^{\mu}-1)+1\right)\right]\\
        &\leq \mathbb{E}\left[\exp\left(\mu\sum_{i=1}^{s-1}Z_{i}\right)\right]\left(\frac{32s}{31p}(e^{\mu}-1)+1\right)\\
        &\quad\vdots\\
        &\leq \left(\frac{32s}{31p}(e^{\mu}-1)+1\right)^s\\
        &\leq 2^{s}.
    \end{align*}
    Combining this and \eqref{eq:sparse packing chernoff bound} into \eqref{eq:sparse packing initial upper bound} yields that
    \begin{align*}
        \mathbb{P}\left(\exists w_{j} \neq w_{k} : d_{H}(w_{j},w_{k}) < \frac{s}{2}\right) &\leq \exp\left(2\log(M)+s\log(2)-\frac{3s}{4}\log\left(1+\frac{31p}{32s}\right)\right)\\
        &< \exp\left(2\log(M)-\frac{s}{2}\log\left(1+\frac{31p}{32s}\right)\right)\\
        & \leq 32^{-\frac{1}{4}}\\
        & < \frac{43}{100},
    \end{align*}
    where the penultimate line follows by setting $M =\lfloor \left(1+\frac{p}{2s}\right)^{\frac{s}{8}}\rfloor$, fact that $p \geq 32s$ and $s\geq 1$. This establishes that with probability at least $\frac{57}{100}$, a random packing satisfies the appropriate separation condition. 
    
    We now turn to prove the second part of the statement, under assumption \eqref{eq:min dimension sparsity}. We again use the probabilistic method and generate $w_{1},\cdots,w_{M}$ as before. 
    Fix an  $A \subseteq [p]$ with $A \neq \emptyset$ and define for each $j \in [M]$
    \begin{equation*}
        X_{j}^A = \|(w_{j})_{A}\|_{0} = \sum_{i \in A}\mathbbm{1}_{\{(w_{j})_{i}\neq 0\}}.
    \end{equation*}
    By the linearity of expectation, we have that
    \begin{equation*}
        \mathbb{E}\left[X_{j}^A\right] = \frac{|A|s}{p} \equiv \mu_{A}.
    \end{equation*}
    Additionally, $X_{j}^A \in [0,|A|]$ for each $j$. Hence, we have that
    \begin{align*}
        \mathbb{P}\left(\frac{1}{M}\sum_{j=1}^{M}X_{j}^A \geq 2 \mu_{A}\right) &\leq \exp \left(\frac{-2M\mu_{A}^2}{|A|^2}\right)\\
        &= \exp \left(\frac{-2Ms^2}{p^2}\right)  \\
        &\leq \frac{1}{2^{p+2}}.
    \end{align*}
    where the first line follows from Hoeffding's inequality and the final line follows from \eqref{eq:min dimension sparsity}.
    Via a union bound, it follows from the above display that 
    \begin{equation*}
            \mathbb{P}\left(\bigcup_{A \subseteq [p]}\left\{\frac{1}{M}\sum_{j=1}^{M}X_{j}^A\geq 2 \mu_{A}\right\}\right) \leq2^{p}\times \frac{1}{2^{p+2}} \leq \frac{1}{4}.
    \end{equation*}
    Therefore, with probability at least $1-\frac{1}{4}-\frac{43}{100}>0$, $w_{1},\ldots, w_{M}$ satisfy the conditions of the lemma. Hence, existence is proven. 
\end{proof}

\begin{proofof}{thm:balancing lower bound}
     Similar to the proof of Proposition \ref{prop:LD OSS lower bound}, this proof involves a joint packing of the space of covariance matrices alongside the space of regression coefficients. We begin by recalling the general form that the hypotheses will take. Without loss of generality, we take $j=p$ in the statement of the Theorem. We define $\lambda_{-}=\Phi_{\Sigma}$, $\lambda_{+}=\frac{3R_{X}^2}{8}$ and set $\tau=\tau(\lambda_{-},\lambda_{+})=\frac{\lambda_{-}+\lambda_{+}}{2}$. For $x \in \mathbb{R}^{p-1}$, define $\beta_{x}\in \mathbb{R}^p$ via
\begin{equation*}
    \beta_{x} = \left(- \frac{x}{\tau}, \frac{B}{2} \right)^T.
\end{equation*}
Additionally, for $x$ such that $\|x\|_2 \leq B \tau/2$, we define $\Sigma_{x}\in \mathbb{R}^{p\times p}$ as
\begin{equation*}
    \Sigma_{x} = \begin{pmatrix}
        \tau I_{p-1} & \frac{2x}{B}\\
        \frac{2x^T}{B} & \tau
    \end{pmatrix}.
\end{equation*}
 Note that $\Sigma_{x}$ has $p-2$ eigenvalues equal to $\tau$ and two final eigenvalues $\tau\pm\frac{2\|x\|_{2}}{B}$. In particular we see that $\|x\|_{2} \leq \frac{B(\lambda_{+}-\lambda_{-})}{4}$ is a sufficient condition  to ensure the eigenvalues of $\Sigma_{x}$ are bounded above by $\lambda_{+}$ and below by $\lambda_{-}$ and that $\|\beta_{x}\|_{2}^2  \leq B^2$. We set $\sigma_{x}=R$ and recall that $\Delta_{x} \equiv \beta_{x}^T\Sigma_{x}\beta_{x}+\sigma_{x}^2$ only depends on $x$ through $\|x\|_{2}$. Additionally, we define \begin{equation*}
    \Omega_{x} \equiv \begin{pmatrix}
        \Sigma_{x}& \Sigma_{x}\beta_{x} \\
        (\Sigma_{x}\beta_{x})^T & \Delta_{x}
    \end{pmatrix}=  \begin{pmatrix}
        \tau I_{p-1} & \frac{2x}{B} & 0\\
        \frac{2x^T}{B} & \tau & \frac{B\tau}{2}-\frac{2\|x\|_{2}^2}{B\tau}\\
        0 & \frac{B\tau}{2}-\frac{2\|x\|_{2}^2}{B\tau} & \Delta_{x}
    \end{pmatrix},
\end{equation*}
and for $O \subseteq [p]$, let
\begin{equation*}
    \Omega_{x}^O = \begin{pmatrix}
        (\Sigma_{x})_{OO}& (\Sigma_{x})_{O}\beta_{x} \\
        ((\Sigma_{x})_{O}\beta_{x})^T & \Delta_{x}
    \end{pmatrix}.
\end{equation*}
    We now split the proof of the theorem into two separate cases. We first assume that $s \geq 33$. Since $p-2 \geq 24(s-1)^2$, we have that
    \begin{align*}
        \left(\frac{p-2}{2(s-1)}+1\right)^{\frac{s-1}{8}} \geq\left(\frac{p-2}{2(s-1)}\right)^{4}=\frac{(p-2)^2}{16(s-1)^2}\frac{p-2}{(s-1)^2}(p-2) \geq 2+\frac{\log(2)}{2}\frac{(p-2)^2}{(s-1)^2}p.
    \end{align*}
    Therefore, for $M= \left\lfloor \left(1+\frac{p-2}{2(s-1)}\right)^{\frac{s-1}{8}}\right\rfloor$, we can construct $w_{1}, \ldots, w_{M}$ from Lemma \ref{lemma:high-dimensional packing} with $p$ and $s$ replaced by $p-2$ and $s-1$ respectively.
    We define for $j \in [M]$, $x_{j} = (\delta w_{j},0)\in \mathbb{R}^{p-1}$ where $\delta=\sqrt{\frac{\min\left\{1,\tau^2\right\}\min\left\{R^2,B^2\right\}\log(M)}{256s\left(C_{\rho}\rho^2n_{\mathcal{L}}+N\right)}}$. For $i \in [M]$, by \eqref{eq: balancing minimum sample size}, we have $\|x_{i}\|_{2}^2\leq s\delta^2 \leq \frac{B^2(\lambda_{-}-\lambda_{+})^2}{64}$, so  $\|\beta_{x_{i}}\|_{2}\leq B $ and $\Phi_{\Sigma}\leq\lambda_{\min}(\Sigma_{x_{i}})\leq\lambda_{\max}(\Sigma_{x_{i}})\leq \frac{3}{8}R_{X}^2$. The lower inequality guarantees that Assumption \ref{assump:restricted eigenvalue} holds. From the upper inequality it follows that if $X \sim N(0,\Sigma_{x_{i}})$ for some $i \in [M]$, then
    \begin{align*}
        \|X\|_{\psi_{2}}  &=\sup_{u \in \mathcal{S}^{p-1}} \|\langle u , X \rangle \|_{\psi_{2}}  = \sup_{u \in \mathcal{S}^{p-1}}\inf_{c>0}\left\{c: \mathbb{E}\left[\exp{\frac{\langle \Sigma_{x_{i}}^{\frac{1}{2}}u ,\Sigma_{x_{i}}^{-\frac{1}{2}} X \rangle^2}{c^2}}\right]\leq 2\right\} \\&=\sup_{u \in \mathcal{S}^{p-1}}\inf_{c>0}\left\{c: \left(1-2\frac{u^T\Sigma_{x_{i}}u}{c^2}\right)^{-\frac{1}{2}}\leq 2\right\}\\
        & = \sqrt{\frac{8\lambda_{\max}(\Sigma_{x_{i}})}{3}} \leq R_{X}.
    \end{align*}
    Therefore, Assumption \ref{assump:sub-Gaussian distribution} is satisfied. Note also that for $i \in [M]$, $\|\beta_{x_{i}}\|_{0}\leq s$, so these hypotheses lie in $\mathcal{P}_{HD}$. Note that our choices guarantee that
    \begin{align*}
        d_{H}(w_{i},w_{j}) &\geq \frac{s-1}{2} \text{ for all } i \neq j,\\
        \|w_{j}\|_{0} &= s-1 \text{ for all } j,\\ 
         \frac{1}{M}\sum_{j=1}^M \|(w_{j})_{A}\|_{0} &\leq \frac{2|A|(s-1)}{p-2} \text{ for all } A \subseteq [p-2].
    \end{align*}
    For $O_{k} \subseteq [p]$, let $O_{k}' = O_{k}\setminus \{p\}$ and $O_{k}''=O_{k}'\setminus \{p-1\}$. Define an additional hypothesis via $x_{0}=(0,\delta\sqrt{s-1})\in \mathbb{R}^{p-1}$ and denote the corresponding distribution by $P_{0}$. We now control the KL divergence between appropriate hypotheses. Denote by $P_{r}$ the distribution $\prod_{k=1}^{K} N(0,\Omega^{O_{k}}_{x_{r}})^{\otimes n_{k}}\otimes N(0,\Sigma_{x_{r}})^{\otimes N}$. We have that for $r \in [M]$
    \begin{align*}
    &\KL\left(P_{r},P_{0}\right)\\
    &=\KL\left(\prod_{k=1}^{K} N(0,\Omega^{O_{k}}_{x_{r}})^{\otimes n_{k}}\otimes N(0,\Sigma_{x_{r}})^{\otimes N},\prod_{k=1}^{K} N(0,\Omega^{O_{k}}_{x_{0}})^{\otimes n_{k}}\otimes N(0,\Sigma_{x_{0}})^{\otimes N}\right)\\
        &= \sum_{k=1}^K n_k\KL\left(N(0,\Omega^{O_{k}}_{x_{r}}),N(0,\Omega^{O_{k}}_{x_{0}})\right)+N\times\KL\left(N(0,\Sigma_{x_{r}}),N(0,\Sigma_{x_{0}})\right)\\
        &\leq 10\max\left\{1,\frac{1}{\tau^2}\right\}\max\left\{\frac{1}{B^2},\frac{1}{R^2}\right\}\sum_{k=1}^K n_k \left|\|(x_{r})_{O'_{k}}\|_{2}^2-\|(x_{0})_{O'_{k}}\|_{2}^2\right|\mathbbm{1}_{\{p \in O_{k}\}}\\
        &\hspace{4mm}+10\max\left\{1,\frac{1}{\tau^2}\right\}\max\left\{\frac{1}{B^2},\frac{1}{R^2}\right\}\sum_{k=1}^Kn_{k}\sum_{j=1}^{|O'_{k}|}\left|\left\{\left(x_{r}-x_{0}\right)_{O'_{k}}\right\}_{j}\right||\{(x_{0})_{O'_{k}}\}_{j}| \mathbbm{1}_{\{p \in O_{k}\}}\\
        &\hspace{4mm}+ \frac{8N}{B^2\tau^2}x_{0}^T(x_{0}-x_{r})\\
        &= 10\max\left\{1,\frac{1}{\tau^2}\right\}\max\left\{\frac{1}{B^2},\frac{1}{R^2}\right\}\sum_{k=1}^K n_k \left|\|(x_{r})_{O'_{k}}\|_{2}^2-\|(x_{0})_{O'_{k}}\|_{2}^2\right|\mathbbm{1}_{\{p \in O_{k}\}}\mathbbm{1}_{\{p-1 \in O_{k}\}}\\
        &\hspace{4mm}+10\max\left\{1,\frac{1}{\tau^2}\right\}\max\left\{\frac{1}{B^2},\frac{1}{R^2}\right\}\sum_{k=1}^Kn_{k}\sum_{j=1}^{|O'_{k}|}\left|\left\{\left(x_{r}-x_{0}\right)_{O'_{k}}\right\}_{j}\right||\{(x_{0})_{O'_{k}}\}_{j}| \mathbbm{1}_{\{p \in O_{k}\}}\mathbbm{1}_{\{p-1 \in O_{k}\}}\\
        &\hspace{4mm}+10\max\left\{1,\frac{1}{\tau^2}\right\}\max\left\{\frac{1}{B^2},\frac{1}{R^2}\right\}\sum_{k=1}^K n_k \left|\|(x_{r})_{O'_{k}}\|_{2}^2-\|(x_{0})_{O'_{k}}\|_{2}^2\right|\mathbbm{1}_{\{p \in O_{k}\}}\mathbbm{1}_{\{p-1 \notin O_{k}\}}\\
        &\hspace{4mm}+10\max\left\{1,\frac{1}{\tau^2}\right\}\max\left\{\frac{1}{B^2},\frac{1}{R^2}\right\}\sum_{k=1}^Kn_{k}\sum_{j=1}^{|O'_{k}|}\left|\left\{\left(x_{r}-x_{0}\right)_{O'_{k}}\right\}_{j}\right||\{(x_{0})_{O'_{k}}\}_{j}| \mathbbm{1}_{\{p \in O_{k}\}}\mathbbm{1}_{\{p-1 \notin O_{k}\}}\\
        &\hspace{4mm}+ \frac{8N}{B^2\tau^2}\|x_{0}\|_{2}^2\\
        &\leq 10\max\left\{1,\frac{1}{\tau^2}\right\}\max\left\{\frac{1}{B^2},\frac{1}{R^2}\right\}\sum_{k=1}^K2n_{k}\delta^2s\mathbbm{1}_{\{p \in O_{k}\}}\mathbbm{1}_{\{p-1 \in O_{k}\}}\\
        &\hspace{4mm}+10\max\left\{1,\frac{1}{\tau^2}\right\}\max\left\{\frac{1}{B^2},\frac{1}{R^2}\right\}\sum_{k=1}^K n_{k} \delta^2s\mathbbm{1}_{\{p \in O_{k}\}}\mathbbm{1}_{\{p-1 \in O_{k}\}} \\
        &\hspace{4mm}+10\max\left\{1,\frac{1}{\tau^2}\right\}\max\left\{\frac{1}{B^2},\frac{1}{R^2}\right\}\sum_{k=1}^K n_k \|(x_{r})_{O'_{k}}\|_{2}^2\mathbbm{1}_{\{p \in O_{k}\}}\\
        &\hspace{4mm}+ \frac{8N}{B^2\tau^2}\|x_{0}\|_{2}^2\\
        &\leq 10\max\left\{1,\frac{1}{\tau^2}\right\}\max\left\{\frac{1}{B^2},\frac{1}{R^2}\right\}\left(3(C_{\rho}\rho^2n_{\mathcal{L}}+N)\delta^2s+\sum_{k=1}^K n_k\delta^2 \|(w_{r})_{O''_{k}}\|_{0}\mathbbm{1}_{\{p \in O_{k}\}}\right)\\
        &\hspace{4mm}+ \frac{8N\delta^2s}{B^2\tau^2},
    \end{align*}
    where the third line follows from the tensorisation of the KL divergence and the fourth line from Lemmas \ref{lemma: KL2 miss} and \ref{lemma:KL3}.
    Applying the previous display, we have that
    \begin{align*}
    \frac{1}{M}\sum_{j=1}^M\KL\left(P_{j},P_{0}\right)
    &\leq 10\delta^2\max\left\{1,\frac{1}{\tau^2}\right\} \max\left\{ \frac{1}{B^2}, \frac{1}{R^2} \right\}
    \cdot \frac{1}{M} \sum_{j=1}^M \sum_{k=1}^K
    n_k \|(w_{j})_{O''_{k}}\|_{0}
    \mathbbm{1}_{\{p \in O_k\}}\\
    &\hspace{4mm}+30\max\left\{1,\frac{1}{\tau^2}\right\}\max\left\{\frac{1}{B^2},\frac{1}{R^2}\right\}(C_{\rho}\rho^2n_{\mathcal{L}}+N)\delta^2s+\frac{8N\delta^2s}{B^2\tau^2} \\
    &\leq 10\delta^2\max\left\{1,\frac{1}{\tau^2}\right\} \max\left\{ \frac{1}{B^2}, \frac{1}{R^2} \right\}
    \cdot\sum_{k=1}^Kn_k\mathbbm{1}_{\{p \in O_k\}} \frac{1}{M} \sum_{j=1}^M 
     \|(w_{j})_{O''_{k}}\|_{0}
    \\
    &\hspace{4mm}+30\max\left\{1,\frac{1}{\tau^2}\right\}\max\left\{\frac{1}{B^2},\frac{1}{R^2}\right\}(C_{\rho}\rho^2n_{\mathcal{L}}+N)\delta^2s+\frac{8N\delta^2s}{B^2\tau^2} \\
    &\leq 10\delta^2\max\left\{1,\frac{1}{\tau^2}\right\} \max\left\{ \frac{1}{B^2}, \frac{1}{R^2} \right\}
    \cdot\sum_{k=1}^Kn_k\mathbbm{1}_{\{p \in O_k\}} \left(\frac{2|O_{k}''|s}{p-2}\right)
    \\
    &\hspace{4mm}+30\max\left\{1,\frac{1}{\tau^2}\right\}\max\left\{\frac{1}{B^2},\frac{1}{R^2}\right\}(C_{\rho}\rho^2n_{\mathcal{L}}+N)\delta^2s+\frac{8N\delta^2s}{B^2\tau^2} \\
    &= \frac{20s}{p-2}\delta^2\max\left\{1,\frac{1}{\tau^2}\right\} \max\left\{ \frac{1}{B^2}, \frac{1}{R^2} \right\}
    \cdot\sum_{k=1}^Kn_k\mathbbm{1}_{\{p \in O_k\}}\sum_{j=1}^{p-2}\mathbbm{1}_{\{j \in O_{k}\}}
    \\
    &\hspace{4mm}+30\max\left\{1,\frac{1}{\tau^2}\right\}\max\left\{\frac{1}{B^2},\frac{1}{R^2}\right\}(C_{\rho}\rho^2n_{\mathcal{L}}+N)\delta^2s+\frac{8N\delta^2s}{B^2\tau^2} \\
    &= \frac{20s}{p-2}\delta^2\max\left\{1,\frac{1}{\tau^2}\right\} \max\left\{ \frac{1}{B^2}, \frac{1}{R^2} \right\}
    \cdot\sum_{j=1}^{p-2}\sum_{k=1}^Kn_k\mathbbm{1}_{\{p \in O_k\}}\mathbbm{1}_{\{j \in O_{k}\}}
    \\
    &\hspace{4mm}+30\max\left\{1,\frac{1}{\tau^2}\right\}\max\left\{\frac{1}{B^2},\frac{1}{R^2}\right\}(C_{\rho}\rho^2n_{\mathcal{L}}+N)\delta^2s+\frac{8N\delta^2s}{B^2\tau^2} \\
    &\leq \frac{20s}{p-2}\delta^2\max\left\{1,\frac{1}{\tau^2}\right\} \max\left\{ \frac{1}{B^2}, \frac{1}{R^2} \right\}
    \cdot\sum_{j=1}^{p-2}C_{\rho}\rho^2n_{\mathcal{L}}\\
    &\hspace{4mm}+30\max\left\{1,\frac{1}{\tau^2}\right\}\max\left\{\frac{1}{B^2},\frac{1}{R^2}\right\}(C_{\rho}\rho^2n_{\mathcal{L}}+N)\delta^2s+\frac{8N\delta^2s}{B^2\tau^2} \\
    &\leq 50\max\left\{1,\frac{1}{\tau^2}\right\}\max\left\{\frac{1}{B^2},\frac{1}{R^2}\right\}(C_{\rho}\rho^2n_{\mathcal{L}}+N)\delta^2s+\frac{8N\delta^2s}{B^2\tau^2} \\
    &\leq 64 s \delta^2\max\left\{1,\frac{1}{\tau^2}\right\} \max\left\{ \frac{1}{B^2}, \frac{1}{R^2} \right\} \left(C_{\rho}\rho^2n_{\mathcal{L}}+N\right)\\
    &= \frac{\log(M)}{4},
\end{align*}
where the third inequality follows from properties of the packing and the last line follows from the fact that $\delta^2 = \frac{\min\left\{1,\tau^2\right\}\min\left\{R^2,B^2\right\}\log(M)}{256s\left(C_{\rho}\rho^2n_{\mathcal{L}}+N\right)}$. Using the last display, we have that
\begin{align}\label{eq: pre-fano control}
    \frac{\frac{1}{M}\sum_{j=1}^M\KL(P_{j},P_{0})+\log(2-\frac{1}{M})}{\log(M)} &\leq \frac{\log(M)}{4}\frac{1}{\log(M)}+\frac{\log(2)}{\log\left(\left\lfloor \left(1+\frac{p-2}{2(s-1)}\right)^{\frac{s-1}{8}}\right\rfloor\right)}\nonumber\\
    &\leq \frac{1}{4}+\frac{\log(2)}{20}\leq \frac{1}{2},
\end{align}
where the final line uses the fact that $p-1\geq 24(s-1)^2$ and $s \geq 33$. It follows that 
\begin{align}\label{eq:unstructured minimax lower bound}
     \inf_{\hat{\beta}\in\hat{\Theta}_{HD}}\sup_{P \in \mathcal{P}_{HD}}\mathbb{E}\left[\|\hat{\beta}-\beta^*\|_{2}^2\right] &\geq \frac{\delta^2}{\tau^2}\frac{s-1}{8}\left(1-\frac{\frac{1}{M}\sum_{j=1}^M\KL(P_{j},P_{0})+\log(2-\frac{1}{M})}{\log(M)}\right)\nonumber\\
     & \geq \frac{\delta^2}{\tau^2}\frac{s-1}{16}\nonumber\\
     &\geq \frac{\min\left\{1,\tau^{-2}\right\}\min\left\{R^2,B^2\right\}s\log(\frac{p-1}{2(s-1)})}{2^{17}\left(C_{\rho}\rho^2n_{\mathcal{L}}+N\right)},
\end{align}
where the first line follows from Fano's method~\cite[e.g.][Lemmas 8.1, 8.11]{samworth2024statistics}, the second line follows from \eqref{eq: pre-fano control} and the final line from the fact that $\lfloor x\rfloor \geq \frac{x}{2}$ for $x \geq 2$. 

It remains to prove the case $2\leq s \leq 32$. We do this for $s=2$ and then argue that the remaining cases follow by adjusting the universal constants. In this case, we apply the same construction as before but with different $\{w_{j}\}_{j=1}^M$. We set for $j \in [p-1]$, $w_{j}=e_{j}, x_{j} = \delta w_{j}$ and $\delta^2 = \frac{\log(p-2)\min\{1,\tau^2\}\min\{R^2,B^2\}}{256(C_{\rho}\rho^2n_{\mathcal{L}}+N)}$. Similar to before, since $\|x_{j}\|_{2}^2=\delta^2\leq \frac{B^2\left(\frac{3R_{X}^2}{8}-\Phi_{\Sigma}\right)^2}{64}$, these choices define distributions in $\mathcal{P}_{HD}$. Via the same argument as before, for $r,l \in [p-1]$, we have that 
    \begin{align*}
        &\KL\left(\prod_{k=1}^{K} N(0,\Omega^{O_{k}}_{x_{r}})^{\otimes n_{k}}\otimes N(0,\Sigma_{x_{r}})^N,\prod_{k=1}^{K} N(0,\Omega^{O_{k}}_{x_{l}})^{\otimes n_{k}}\otimes N(0,\Sigma_{x_{l}})^N\right)\\
        &= \sum_{k=1}^K n_k\KL\left(N(0,\Omega^{O_{k}}_{x_{r}}),N(0,\Omega^{O_{k}}_{x_{l}})\right)+N\cdot\KL\left(N(0,\Sigma_{x_{r}}),N(0,\Sigma_{x_{l}})\right) \\
        &\leq 20\max\left\{1,\frac{1}{\tau^2}\right\}\max\left\{\frac{1}{B^2},\frac{1}{R^2}\right\}\sum_{k=1}^K n_k \delta^2d_{H}((w_{r})_{O_{k}'},(w_{l})_{O_{k}'})\mathbbm{1}_{\{p \in O_{k}\}}+\frac{8N}{B^2\tau^2}\delta^2d_{H}(w_{r},w_{l})\\
        &\leq 20\max\left\{1,\frac{1}{\tau^2}\right\}\max\left\{\frac{1}{B^2},\frac{1}{R^2}\right\}\sum_{k=1}^K n_k \delta^2\left(\mathbbm{1}_{\{r \in O_{k}\}}+\mathbbm{1}_{\{l \in O_{k}\}}\right)\mathbbm{1}_{\{p \in O_{k}\}}+\frac{16N\delta^2}{\tau^2B^2}\\
        &\leq 64\max\left\{1,\frac{1}{\tau^2}\right\}\max\left\{\frac{1}{B^2},\frac{1}{R^2}\right\}\delta^2(C_{\rho}\rho^2n_{\mathcal{L}}+N)\\
        &\leq \frac{\log(p-2)}{4}.
    \end{align*}
    It follows that, for $M=p-1$,
    \begin{align}\label{eq: pre-fano control fixed s}
    \frac{\frac{1}{M}\sum_{j=1}^M\KL(P_{j},P_{1})+\log(2-\frac{1}{M})}{\log(M)} &\leq \frac{\log(p-2)}{4\log(p-1)}+\frac{\log(2)}{\log(p-1)}\nonumber\\
    &\leq \frac{1}{4}+\frac{\log(2)}{\log(24)}\leq \frac{1}{2},
    \end{align}
    where the final line uses the fact that $p-1\geq 24(s-1)^2 = 24$. It follows that 
\begin{align}\label{eq:unstructured minimax lower bound s = 2}
     \inf_{\hat{\beta}\in\hat{\Theta}_{HD}}\sup_{P \in \mathcal{P}_{HD}}\mathbb{E}\left[\|\hat{\beta}-\beta^*\|_{2}^2\right] &\geq \frac{\delta^2}{\tau^2}\frac{1}{2}\left(1-\frac{\frac{1}{M}\sum_{j=1}^M\KL(P_{j},P_{1})+\log(2-\frac{1}{M})}{\log(M)}\right)\nonumber\\
     & \geq \frac{\delta^2}{4\tau^2}\nonumber\\
     &\geq \frac{\min\left\{1,\tau^{-2}\right\}\min\left\{R^2,B^2\right\}\log(p-2)}{2^{10}\left(C_{\rho}\rho^2n_{\mathcal{L}}+N\right)},
\end{align}
where the first line follows from Fano's method~\cite[e.g.][Lemmas 8.1, 8.11]{samworth2024statistics} and the second line follows from \eqref{eq: pre-fano control fixed s}. For $2 \leq s \leq 32$, \eqref{eq:unstructured minimax lower bound s = 2} implies the following
\begin{align}\label{eq:unstructured minimax lower bound fixed s}
     \inf_{\hat{\beta}\in\hat{\Theta}_{HD}}\sup_{P \in \mathcal{P}_{HD}}\mathbb{E}\left[\|\hat{\beta}-\beta^*\|_{2}^2\right] 
     &\geq s\frac{\min\left\{1,\tau^{-2}\right\}\min\left\{R^2,B^2\right\}\log(p-2)}{2^{15}\left(C_{\rho}\rho^2n_{\mathcal{L}}+N\right)}.
\end{align}
Combining this with \eqref{eq:unstructured minimax lower bound} yields the result. 
\end{proofof}
\subsection{High-dimensional upper bound proofs}\label{B:High-Dimensional upper bound proofs}
The goal of this section is to prove Theorems \ref{thm:high dimensional upper bound unstructured} and \ref{thm:high dimensional rate structured}. These give upper bounds for our estimator in both the unstructured setting and the blockwise-missing setting. We begin with three preliminary lemmas that apply to both of these settings (Lemmas \ref{lemma: gamma control}, \ref{lemma: hat sigma control} and \ref{lemma:restricted eigenvalue}). These lemmas may be of independent interest and may be applicable to LASSO based estimator e.g. \cite{loh2011high}. Following the structure of~\citet{bickel2009simultaneous}, we prove a master theorem (Proposition \ref{thm:HD master theorem}) that then specialises to give Theorems \ref{thm:high dimensional upper bound unstructured} and \ref{thm:high dimensional rate structured}. 

 To prove an upper bound for both the structured and unstructured settings we begin by establishing deviation inequalities for $\|\hat{\gamma}-\gamma\|_{\infty}$ and $\|(\hat{\Sigma}-\Sigma)\beta^*\|_{\infty}$, where $\hat{\gamma}$ and $\hat{\Sigma}$ are defined in equations \eqref{eq:gamma estimate definition} and \eqref{eq:covariance estimate definition}. We then show, in either case, the restricted eigenvalue conditions hold with high probability. Throughout this section $F$ and $c$ denote universal constants.
 We begin by controlling $\|\hat{\gamma}-\gamma\|_{\infty}$ for both the structured and unstructured cases.
\begin{lemma}\label{lemma: gamma control}
For both parts of this lemma, we work under Assumption \ref{assump:sub-Gaussian distribution}. 
\begin{enumerate}[label=(\roman*)]
    \item Assume that for all $l \in [L]$,
    \begin{equation*}
        h_{l} \geq \frac{2\log\left( 2L |L_{l}|\right)}{c}.
    \end{equation*}
    Then for all $\lambda$ such that 
    \begin{equation*}
        2R_{X}\geq \frac{\lambda}{\sigma+R_{X}\|\beta^*\|_{2} } \geq \sqrt{\max_{l \in [L]}\left\{\frac{8R_{X}^2\log\left(2L|L_{l}|\right)}{h_{l}c}\right\}},
    \end{equation*} 
    the following holds 
    \begin{equation*}
        \mathbb{P}\left(\|\hat{\gamma}-\gamma\|_{\infty} \geq \frac{\lambda}{2}\right) \leq \frac{1}{2L\min_{l \in [L]}\left\{|L_{l}|\right\}}.
    \end{equation*}

    \item Alternatively, fix a $\rho \in (0,1)$ such that $\min_{g \in [L]}h_{g} \geq \rho n_{\mathcal{L}}$. Then for all $\lambda$ such that 
    \begin{equation*}
        2R_{X} \geq \frac{\lambda}{\sigma+R_{X}\|\beta^*\|_{2}} \geq \sqrt{\frac{8R_{X}^2\log\left(2p\right)}{\rho n_{\mathcal{L}}c}},
    \end{equation*}
    it holds that
    \begin{equation*}
        \mathbb{P}\left(\|\hat{\gamma}-\gamma\|_{\infty} \geq \frac{\lambda}{2}\right) \leq \frac{1}{2p}.
    \end{equation*}
\end{enumerate}
\end{lemma}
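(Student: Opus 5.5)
The plan is to treat each coordinate $\hat{\gamma}_j-\gamma_j$ as a normalised sum of i.i.d. centred sub-exponential variables, apply Bernstein's inequality, and then take a union bound over coordinates. Fix $j\in[p]$ with $j\in L_l$, so that $l=\xi(j)$. By \eqref{eq:gamma estimate definition}, $\hat{\gamma}_j$ is the average of $(X_i)_jY_i$ over exactly the $h_l$ labelled samples whose pattern contains $L_l$. Here I use that if $j\in O_k$ then $L_l\subseteq O_k$, since each pattern is a union of modalities. Under MCAR these are i.i.d. copies of $X_jY$, and $\mathbb{E}[X_jY]=\gamma_j$. Hence $\hat{\gamma}_j-\gamma_j=h_l^{-1}\sum_{i}\{(X_i)_jY_i-\gamma_j\}$, a mean of $h_l$ i.i.d. centred variables.

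Next I bound the $\psi_1$ norm of each summand. Under Assumption~\ref{assump:sub-Gaussian distribution}, $\|X_j\|_{\psi_2}\le R_X$. Also $\|Y\|_{\psi_2}\le\|X^T\beta^*\|_{\psi_2}+\|\epsilon\|_{\psi_2}\lesssim R_X\|\beta^*\|_2+\sigma$. The product of two sub-Gaussians is sub-exponential with $\|X_jY\|_{\psi_1}\le\|X_j\|_{\psi_2}\|Y\|_{\psi_2}\lesssim R_X(\sigma+R_X\|\beta^*\|_2)$, and centring changes this only by a constant. Bernstein's inequality then gives, with $K_0=R_X(\sigma+R_X\|\beta^*\|_2)$ and a universal $c$,
\[
\mathbb{P}\Bigl(|\hat{\gamma}_j-\gamma_j|\ge t\Bigr)\le 2\exp\Bigl(-c\,h_l\min\Bigl\{\tfrac{t^2}{K_0^2},\tfrac{t}{K_0}\Bigr\}\Bigr).
\]
Set $t=\lambda/2$. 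The upper constraint $\lambda\le 2R_X(\sigma+R_X\|\beta^*\|_2)=2K_0$ gives $t\le K_0$, so the minimum is the quadratic term. This is the role of the left-hand inequality in the hypothesis. The lower constraint gives $c\,h_l t^2/K_0^2\ge 2\log(2L|L_l|)$, so each coordinate in modality $l$ fails with probability at most $2(2L|L_l|)^{-2}$.

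For part (i), I take a union bound over $j\in L_l$ and then over $l\in[L]$:
\[
\sum_{l=1}^L|L_l|\cdot\frac{2}{(2L|L_l|)^2}=\sum_{l=1}^L\frac{1}{2L^2|L_l|}\le\frac{1}{2L\min_l|L_l|}.
\]
The stated requirement $h_l\ge 2\log(2L|L_l|)/c$ is what makes the admissible range of $\lambda$ non-empty, since it ensures the lower bound on $\lambda/(\sigma+R_X\|\beta^*\|_2)$ does not exceed $2R_X$. Part (ii) is identical except that I use $h_{\xi(j)}\ge\rho n_{\mathcal{L}}$ in Bernstein for every $j$ and union bound over all $p$ coordinates. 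This gives the bound $p\cdot 2(2p)^{-2}=1/(2p)$.

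The main point needing care is the conventions rather than any deep step. I need to check that the Bernstein constant $c$ and the $\psi_2$ normalisation of $\epsilon$ (Definition 10.6.1 in \citet{samworth2024statistics} versus Vershynin's convention) can both be absorbed into the universal $c$ in the statement. I also need to confirm that the factor $8$ in the lower bound on $\lambda$ matches the squared-$t$ computation with $t=\lambda/2$: indeed $c h_l(\lambda/2)^2/K_0^2\ge 2\log(2L|L_l|)$ is equivalent to $\lambda^2\ge 8K_0^2\log(2L|L_l|)/(c h_l)$, which matches the hypothesis.
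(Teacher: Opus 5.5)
Your proposal is correct and follows essentially the same route as the paper: bound $\|(X_i)_jY_i-\gamma_j\|_{\psi_1}\lesssim R_X(\sigma+R_X\|\beta^*\|_2)$, apply Bernstein with $t=\lambda/2$ in the quadratic regime guaranteed by $\lambda\le 2R_X(\sigma+R_X\|\beta^*\|_2)$, then union bound over coordinates grouped by modality in part (i) and over all $p$ coordinates using $h_{\xi(j)}\ge\rho n_{\mathcal{L}}$ in part (ii). You also make explicit two points the paper leaves implicit: that $j\in O_k$ iff $L_{\xi(j)}\subseteq O_k$, which makes the normalisation by $h_{\xi(j)}$ exact, and that the condition $h_l\ge 2\log(2L|L_l|)/c$ is precisely what makes the admissible range of $\lambda$ non-empty.
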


\begin{proof}
    Since $X$ is sub-Gaussian with parameter $R_{X}$ and $\epsilon$ is sub-Gaussian with parameter $\sigma$, we have that
    \begin{align*}
        \|Y_{i}(X_{i})_{j}\|_{\psi_{1}} & =\|(X_{i}^T\beta^*+\epsilon_{i})(X_{i})_{j}\|_{\psi_{1}} \\
        & \leq \|X_{i}^T\beta^*\|_{\psi_{2}}\|(X_{i})_{j}\|_{\psi_{2}} + \|\epsilon_{i}\|_{\psi_{2}}\|(X_{i})_{j}\|_{\psi_{2}}  \\
        &\lesssim R_{X}^2\|\beta^*\|_{2}+\sigma R_{X},
    \end{align*}
    where the second line follows from~\citet[][Lemma 2.8.6]{vershynin2025HDP} and the final line follows from~\citet[][Proposition 2.6.6]{vershynin2025HDP}. Hence, by~\citet[][Exercise 2.44]{vershynin2025HDP}, we have that
    \begin{equation*}
        \|Y_{i}(X_{i})_{j}-\mathbb{E}\left[(X_{i})_{j}Y_{i}\right]\|_{\psi_{1}} \lesssim R_{X}^2\|\beta^*\|_{2}+\sigma R_{X}.
    \end{equation*}
    For each $j \in [p]$, 
    \begin{equation*}
        \hat{\gamma}_{j}-\gamma_{j} = \frac{1}{h_{\xi(j)}}\sum_{k\in\mathcal{I}_{\mathcal{L}}}\left((X_{k})_{j}Y_{k}-\mathbb{E}\left[(X_{k})_{j}Y_{k}\right]\right)\mathbbm{1}_{\{j \in O_{\Xi(k)}\}},
    \end{equation*}
    so for some universal $c>0$, by Bernstein's inequality~\citet[][Theorem 2.9.1]{vershynin2025HDP}, 
    \begin{align}\label{eq:hat gamma-gamma control for first prelim lemma}
        \mathbb{P}\left(|\hat{\gamma}_{j}-\gamma_{j}|\geq t\right) &\leq 2 \exp\left(-c\min\left\{\frac{t^2h_{\xi(j)}}{R_{X}^2(\sigma+R_{X}\|\beta^*\|_{2})^2},\frac{th_{\xi(j)}}{R_{X}(\sigma+R_{X}\|\beta^*\|_{2})}\right\}\right) \nonumber\\
        &\leq 2\exp \left(\frac{-ct^2h_{\xi(j)}}{R_{X}^2(\sigma+R_{X}\|\beta^*\|_{2})^2}\right),
    \end{align}
    where the final line holds provided $t\leq R_{X}(\sigma+R_{X}\|\beta^*\|_{2})$. Applying the above for $t = \frac{\lambda}{2}$ and a union bound yields 
    \begin{align}
        \mathbb{P}\left(\|\hat{\gamma}-\gamma\|_{\infty} \geq \frac{\lambda}{2}\right) & \leq 2 \sum_{j=1}^p \exp\left(\frac{-c\lambda^2h_{\xi(j)}}{4R_{X}^2(\sigma+R_{X}\|\beta^*\|_{2})^2}\right)\nonumber\\
        & = 2\sum_{l=1}^L|L_{l}|\exp\left(\frac{-c\lambda^2h_{l}}{4R_{X}^2(\sigma+R_{X}\|\beta^*\|_{2})^2}\right)\nonumber\\
        &\leq \frac{1}{2L\min_{l \in [L]}\left\{|L_{l}|\right\}}\nonumber, 
    \end{align}
    where the final line holds since
    \begin{equation*}
        \lambda \geq \sqrt{\max_{l \in [L]}\left\{\frac{8R_{X}^2(\sigma+R_{X}\|\beta^*\|_{2})^2\log\left(2L|L_{l}|\right)}{h_{l}c}\right\}}.
    \end{equation*}

    The second statement of the lemma also follows from \eqref{eq:hat gamma-gamma control for first prelim lemma}, since 
    \begin{align*}
        \mathbb{P}\left(\|\hat{\gamma}-\gamma\|_{\infty} \geq \frac{\lambda}{2}\right) & \leq 2 \sum_{j=1}^p \exp\left(\frac{-c\lambda^2h_{\xi(j)}}{4R_{X}^2(\sigma+R_{X}\|\beta^*\|_{2})^2}\right)\\
        & \leq \frac{1}{2p},
    \end{align*}
    where the final line follows from the fact that $\lambda \geq \sqrt{\frac{8R_{X}^2(\sigma+R_{X}\|\beta^*\|_{2})^2\log\left(2p\right)}{\rho n_{\mathcal{L}}c}}$ and that $\min_{g \in [L]}h_{g} \geq \rho n_{\mathcal{L}}$.
\end{proof}
\begin{lemma}\label{lemma: hat sigma control}
    For both parts of this lemma, we work under Assumption \ref{assump:sub-Gaussian distribution}. \begin{enumerate}[label=(\roman*)]
        \item For all $\lambda$ such that
        \begin{align}
        \label{eq: hat sigma control lambda size blockwise}
        2LR_{X}^2\|\beta^*\|_{2}\geq \lambda \geq \max_{h}\left\{2L\sqrt{\frac{3R_{X}^4\|\beta^*\|_{2}^2}{c(N+\min_{g}\{n_{g,h}\})}\log\left(2L|L_{h}|\right)}\right\},
        \end{align}
        it holds that
        \begin{align*}
            \mathbb{P}\left(\|(\hat{\Sigma}-\Sigma)\beta^*\|_{\infty}\geq \frac{\lambda}{2}\right) \leq  \frac{1}{4L\min_{g \in [L]}|L_{g}|}.
        \end{align*} 

        \item Alternatively, for any $\rho \in(0,1)$ such that for all  $(g,h) \in [L]^2$, $\rho^2n_{\mathcal{L}}\leq n_{g,h}$ then for all $\lambda$ such that
        \begin{equation*}
            2FR_{X}^2\|\beta^*\|_{2}\geq \lambda \geq \sqrt{\frac{8F^2R_{X}^4\|\beta^*\|_{2}^2\log(p)}{c(\rho^2n_{\mathcal{L}}+N)}},
        \end{equation*}
        we have that
        \begin{equation*}
            \mathbb{P}\left(\|(\hat{\Sigma}-\Sigma)\beta^*\|_{\infty} \geq \frac{\lambda}{2}\right) \leq \frac{2}{p}.
        \end{equation*}
    \end{enumerate}
\end{lemma}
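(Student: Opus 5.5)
We control each coordinate of $(\hat{\Sigma}-\Sigma)\beta^*$ separately and then take a union bound, as in the proof of Lemma~\ref{lemma: gamma control}. For $i\in[p]$ write
\begin{equation*}
\{(\hat{\Sigma}-\Sigma)\beta^*\}_i=\sum_{h=1}^L\sum_{j\in L_h}(\hat{\Sigma}_{ij}-\Sigma_{ij})\beta^*_j\equiv\sum_{h=1}^L T_{i,h}.
\end{equation*}
For fixed $i$ and $h$, every entry $\hat{\Sigma}_{ij}$ with $j\in L_h$ has the same normaliser $n_{\xi(i),h}+N$. It is also built from the same samples, namely the unlabelled data together with those labelled samples observing both $L_{\xi(i)}$ and $L_h$. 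Hence
\begin{equation*}
T_{i,h}=\frac{1}{n_{\xi(i),h}+N}\sum_{k}\Big\{(X_k)_i(X_k)_{L_h}^T\beta^*_{L_h}-\mathbb{E}\big[X_i X_{L_h}^T\beta^*_{L_h}\big]\Big\},
\end{equation*}
where the sum runs over exactly $n_{\xi(i),h}+N$ independent summands. The expectation term vanishes on average because each summand is centred.

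The key step is bounding the $\psi_1$ norm of each summand. By \citet[][Lemma 2.8.6]{vershynin2025HDP} and Assumption~\ref{assump:sub-Gaussian distribution},
\begin{equation*}
\big\|(X_k)_i\,(X_k)_{L_h}^T\beta^*_{L_h}\big\|_{\psi_1}\le\|(X_k)_i\|_{\psi_2}\,\big\|X_k^T(0,\beta^*_{L_h})\big\|_{\psi_2}\lesssim R_X^2\|\beta^*\|_2,
\end{equation*}
and centring only changes constants. Bernstein's inequality \citep[][Theorem 2.9.1]{vershynin2025HDP} then gives, for $t\le R_X^2\|\beta^*\|_2$,
\begin{equation*}
\mathbb{P}\big(|T_{i,h}|\ge t\big)\le 2\exp\!\Big(-\frac{c\,t^2(n_{\xi(i),h}+N)}{R_X^4\|\beta^*\|_2^2}\Big).
\end{equation*}

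For part (i), take $t=\lambda/(2L)$. The upper constraint on $\lambda$ ensures $t\le R_X^2\|\beta^*\|_2$, so the sub-Gaussian branch applies. If $|T_{i,h}|<\lambda/(2L)$ for all $h$, then $|\{(\hat{\Sigma}-\Sigma)\beta^*\}_i|<\lambda/2$. Taking a union bound over $i\in[p]$ and $h\in[L]$, and grouping the index $i$ by its modality $g=\xi(i)$, gives a failure probability of at most
\begin{equation*}
2\sum_{g,h}|L_g|\exp\!\Big(-\frac{c\lambda^2(N+n_{g,h})}{4L^2R_X^4\|\beta^*\|_2^2}\Big).
\end{equation*}
The lower bound on $\lambda$ makes each exponential at most $(2L|L_h|)^{-3}$. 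The awkward bookkeeping is that the lower bound features $\log(2L|L_h|)$, indexed by the column modality, while the sum carries the multiplicity $|L_g|$ of the row modality. I would handle this using the symmetry $n_{g,h}=n_{h,g}$, applying the bound to each unordered pair through whichever ordering is favourable, so that the exponent carries $\log(2L\max\{|L_g|,|L_h|\})$. If that fails, I would instead accept a slightly weaker tail. After this step, $(2L|L_h|)^{-3}$ absorbs the factor $2L^2|L_g|$ and leaves $\le 1/(4L\min_g|L_g|)$. I expect this constant-chasing to be the main obstacle.

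For part (ii), I would not split by modality. Instead I would write $\{(\hat{\Sigma}-\Sigma)\beta^*\}_i$ as a weighted sum over samples. Each sample contributes $(X_k)_i\sum_j w_{kij}(X_k)_j\beta^*_j$, with weights $w_{kij}=\mathbbm{1}\{\{i,j\}\subseteq O_{\Xi(k)}\}/(n_{\xi(i)\xi(j)}+N)$, and every weight is at most $1/(\rho^2 n_{\mathcal{L}}+N)$. Each summand has $\psi_1$ norm $\lesssim R_X^2\|\beta^*\|_2/(\rho^2n_{\mathcal{L}}+N)$. The sum of their squares is $\lesssim R_X^4\|\beta^*\|_2^2/(\rho^2n_{\mathcal{L}}+N)$, obtained by summing the weights over $k$ and using $\sum_k w_{kij}\le 1$. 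With a constant $F$, Bernstein then gives a tail of order $2\exp(-c\lambda^2(\rho^2n_{\mathcal{L}}+N)/(4F^2R_X^4\|\beta^*\|_2^2))\le 2p^{-2}$ for the stated $\lambda$. A union bound over $i\in[p]$ yields $2/p$.
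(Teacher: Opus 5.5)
Your proposal is correct and follows essentially the same argument as the paper. For (i) you split each coordinate into column-modality blocks, apply Bernstein to each block and take union bounds; for (ii) you apply a single weighted Bernstein bound over samples, using $\sum_k w_{kij}^2\le 1/(\rho^2 n_{\mathcal{L}}+N)$.

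The symmetry fix in (i) does go through, and it is exactly what the paper does: it applies the lower bound on $\lambda$ with $h=\xi(r)$, the row modality, using $n_{g,h}=n_{h,g}$ implicitly. So your fallback to a weaker tail is unnecessary.
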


\begin{proof}
    Fix $r \in [p]$ and $v \in \mathbb{R}^p$ and note that $e_{r}^T(\hat{\Sigma}-\Sigma)v$ is centred. For $k \in \mathcal{I}_{\mathcal{U}}$, define $O_{\Xi(k)}=[p]$. Further, we have the following decomposition
    \begin{align}
        e_{r}^T\hat{\Sigma}v & = \sum_{l\in[p]}\frac{v_{l}}{N+n_{\xi(r),\xi(l)}}\left(\sum_{k \in \mathcal{I}_{\mathcal{U}}}(X_{k})_{r}(X_{k})_{l}+\sum_{k \in \mathcal{I}_{\mathcal{L}}}(X_{k})_{r}(X_{k})_{l}\mathbbm{1}_{\left\{\{r,l\}\subseteq O_{\Xi(k)}\right\}}\right)\nonumber\\
        &= \sum_{g \in [L]} \frac{1}{n_{\xi(r),g}+N} 
        \sum_{\substack{k \in \mathcal{I}_{\mathcal{L}} \cup \mathcal{I}_{\mathcal{U}}}}
         \sum_{l \in L_{g}} 
        v_{l} (X_{k})_{r} (X_{k})_{l}\mathbbm{1}_{\{L_g \cup L_{\xi(r)} \subseteq O_{\Xi(k)}\}}\nonumber\\
        &= \sum_{g \in [L]} \frac{1}{n_{\xi(r),g}+N} 
        \sum_{\substack{k \in \mathcal{I}_{\mathcal{L}} \cup \mathcal{I}_{\mathcal{U}}}}
        \langle e_{r},X_{k}\rangle\langle v_{L_{g}},(X_{k})_{L_{g}}\rangle\mathbbm{1}_{\{L_g \cup L_{\xi(r)} \subseteq O_{\Xi(k)}\}}\label{eq:pre bernstein blocks}.
    \end{align}
    Since $X_{k}$ is sub-Gaussian with norm $R_{X}$, it holds by~\citet[][Lemma 2.8.6]{vershynin2025HDP} that
    \begin{align*}
        \|T_{k}^{g,r}\|_{\psi_{1}} &\equiv \|\langle e_{r},X_{k}\rangle\langle v_{L_{g}},(X_{k})_{L_{g}}\rangle\|_{\psi_{1}} \\
        & \leq \|\langle e_{r},X_{k}\rangle\|_{\psi_{2}}\|\langle v_{L_{g}},(X_{k})_{L_{g}}\rangle\|_{\psi_{2}}\\
        &\leq R_{X}^2\|v\|_{2}.
    \end{align*}
    We define for $r \in [p]$, $\delta_{\xi(r)} = \frac{1}{2L^2|L_{\xi(r)}|^3}$. Fix $x>0$ such that $\frac{R_{X}^4\|v\|_{2}^2}{c(N+\min_{g}\{n_{\xi(r),g}\})}\log\left(\frac{4L}{\delta_{\xi(r)}}\right) \leq x^2 \leq R_{X}^4\|v\|_{2}^2$. By Bernstein's inequality~\citet[][Theorem 2.9.1]{vershynin2025HDP}, we have that 
   \begin{align*}
    &\mathbb{P}\left(
        \left|
            \frac{1}{N + n_{\xi(r),g}} 
            \sum_{\substack{k \in \mathcal{I}_{\mathcal{L}} \cup \mathcal{I}_{\mathcal{U}}}}
            \left(T_{k}^{g,r} 
            - \mathbb{E}\left[T_{k}^{g,r}\right]\right)\mathbbm{1}_{\{L_g \cup L_{\xi(r)} \subseteq O_{\Xi(k)}\}}
        \right| 
        \geq x
    \right)\\
    &\leq 
    2 \exp\left(
        -c \cdot \min\left\{
            \frac{(N + n_{\xi(r),g})x^2}{R_{X}^4 \|v\|_2^2}, \,
            \frac{(N + n_{\xi(r),g})x}{R_{X}^2 \|v\|_2}
        \right\}
    \right)\\
    &\leq 2\exp\left(\frac{-c(N + n_{\xi(r),g})x^2}{R_{X}^4  \|v\|_2^2}\right) \\
    &\leq \frac{\delta_{\xi(r)}}{2L},
\end{align*}
where the penultimate line follows since $x \leq R_{X}^2\|v\|_{2}$ and the final line follows since $x^2 \geq \frac{R_{X}^4\|v\|_{2}^2}{c(N+\min_{g}\{n_{\xi(r),g}\})}\log\left(\frac{4L}{\delta_{\xi(r)}}\right)$.
Applying the above to \eqref{eq:pre bernstein blocks} with $x = \frac{\lambda}{2L}, v=\beta^*$ and a union bound over $g$ yields that 
\begin{align}\label{eq:blockwise covariance matrix high dimensional pre union bound}
    \mathbb{P}\left(|e_{r}^T\hat{\Sigma}\beta^*-e_{r}^T\Sigma \beta^*|\geq \frac{\lambda}{2}\right) & \leq \frac{\delta_{\xi(r)}}{2},
\end{align}
by condition \eqref{eq: hat sigma control lambda size blockwise}.
To conclude on bounding $\|\hat{\Sigma}\beta^*-\Sigma\beta^*\|_{\infty}$ we apply \eqref{eq:blockwise covariance matrix high dimensional pre union bound} and a union bound over $r$ yielding
\begin{align*}
    \mathbb{P}\left(\|\hat{\Sigma}\beta^*-\Sigma\beta^*\|_{\infty} \geq \frac{\lambda}{2}\right) &\leq \sum_{r=1}^p \frac{\delta_{\xi(r)}}{2} = \sum_{g=1}^L \frac{|L_{g}|\delta_{g}}{2}\leq \sum_{g=1}^L \frac{1}{4L^2|L_{g}|^2}\leq \frac{1}{4L\min_{g \in [L]}|L_{g}|},
\end{align*}
where the penultimate inequality follows since $\delta_{g} = \frac{1}{2L^2|L_{g}|^3}$.

To prove the second statement of the result, recall that for $r \in [p]$ we have 
    \begin{align*}
        e_{r}^T\hat{\Sigma}v - e_{r}^T\Sigma v& = \sum_{l\in[p]}\frac{v_{l}}{N+n_{\xi(r),\xi(l)}}\left(\sum_{k \in \mathcal{I}_{\mathcal{U}}}(X_{k})_{r}(X_{k})_{l}+\sum_{k \in \mathcal{I}_{\mathcal{L}}}(X_{k})_{r}(X_{k})_{l}\mathbbm{1}_{\left\{\{r,l\}\subseteq O_{\Xi(k)}\right\}}\right)- e_{r}^T\Sigma v.\nonumber\\
    \end{align*}
    Define $u \in \mathbb{R}^p$ via for $l \in [p]$, $u_{l} = \frac{v_{l}}{N+n_{\xi(r),\xi(l)}}$. Then, we have that
    \begin{align*}
        &e_{r}^T\hat{\Sigma}v - e_{r}^T\Sigma v \nonumber\\
        &=\sum_{k \in \mathcal{I}_{\mathcal{U}}} \langle X_{k},e_{r}\rangle\langle X_{k},u\rangle-\mathbb{E}\left[\langle X_{k},e_{r}\rangle\langle X_{k},u\rangle\right] \\
        &\hspace{4mm}+\sum_{\substack{k \in \mathcal{I}_{\mathcal{L}}\\r \in O_{\Xi(k)}}} \langle X_{k},e_{r}\rangle\langle (X_{k})_{O_{\Xi(k)}},u_{O_{\Xi(k)}}\rangle-\mathbb{E}\left[\langle X_{k},e_{r}\rangle\langle (X_{k})_{O_{\Xi(k)}},u_{O_{\Xi(k)}}\rangle\right]\nonumber\\
        & \equiv \sum_{k \in \mathcal{I}_{\mathcal{U}}}Z_{k}+\sum_{\substack{k \in \mathcal{I}_{\mathcal{L}}\\r \in O_{\Xi(k)}}}\tilde{Z}_{k}.
    \end{align*}
    As before, since the $X_{k}$ are sub-Gaussian, for some universal $F$~\citep[][Lemma 2.8.6, Exercise 2.44]{vershynin2025HDP} we have that
    \begin{align*}
        \|Z_{k}\|_{\psi_{1}} &\leq FR_{X}^2\|u\|_{2}\leq \frac{FR_{X}^2\|v\|_{2}}{\rho^2n_{\mathcal{L}}+N},\\
        \|\tilde{Z}_{k}\|_{\psi_{1}} &\leq FR_{X}^2\|u_{O_{\Xi(k)}}\|_{2}\leq \frac{FR_{X}^2\|v\|_{2}}{\rho^2n_{\mathcal{L}}+N}.
    \end{align*}
    Therefore, it follows that
    \begin{align*}
        \sum_{k \in \mathcal{I}_{\mathcal{U}}}\|Z_{k}\|_{\psi_{1}}^2+\sum_{\substack{k \in \mathcal{I}_{\mathcal{L}}\\r \in O_{\Xi(k)}}}\|\tilde{Z}_{k}\|_{\psi_{1}}^2&\leq NF^2R_{X}^4\|u\|_{2}^2+\sum_{\substack{k \in \mathcal{I}_{\mathcal{L}}\\r \in O_{\Xi(k)}}}F^2R_{X}^4\|u_{O_{\Xi(k)}}\|_{2}^2\\
        & = NF^2R_{X}^4\|u\|_{2}^2+F^2R_{X}^4\sum_{\substack{k \in \mathcal{I}_{\mathcal{L}}\\r \in O_{\Xi(k)}}}\sum_{l \in [p]}u_{l}^2\mathbbm{1}_{\{l \in O_{\Xi(k)}\}}\\
        & = NF^2R_{X}^4\|u\|_{2}^2+F^2R_{X}^4\sum_{l \in [p]}u_{l}^2\sum_{k \in \mathcal{I}_{\mathcal{L}}}\mathbbm{1}_{\{r,l \in O_{\Xi(k)}\}}\\
        & = NF^2R_{X}^4\|u\|_{2}^2+F^2R_{X}^4\sum_{l \in [p]}u_{l}^2n_{\xi(r),\xi(l)}\\
        & = NF^2R_{X}^4\|u\|_{2}^2+F^2R_{X}^4\sum_{l \in [p]}\frac{v_{l}^2n_{\xi(r),\xi(l)}}{(N+n_{\xi(r),\xi(l)})^2}\\
        & \leq NF^2R_{X}^4\|u\|_{2}^2+F^2R_{X}^4\sum_{l \in [p]}\frac{v_{l}^2}{(N+n_{\xi(r),\xi(l)})}\\
        &\leq \frac{2F^2R_{X}^4\|v\|_{2}^2}{\rho^2n_{\mathcal{L}}+N}.
    \end{align*}
    Adjusting $F$ and applying Bernstein's inequality~\citep[][Theorem 2.9.1]{vershynin2025HDP} yields for $0\leq x \leq FR_{X}^2\|v\|_{2}$,
    \begin{align*}
       \mathbb{P}\left(|e_{r}^T\hat{\Sigma}v - e_{r}^T\Sigma v|\geq x\right)&\leq 2\exp\left(-c\min\left\{\frac{x^2(\rho^2n_{\mathcal{L}}+N)}{F^2R_{X}^4\|v\|_{2}^2},\frac{x(\rho^2n_{\mathcal{L}}+N)}{FR_{X}^2\|v\|_{2}}\right\}\right)\\
       &\leq 2\exp\left(-\frac{cx^2(\rho^2n_{\mathcal{L}}+N)}{F^2R_{X}^4\|v\|_{2}^2}\right),
    \end{align*}
    where the last line holds since $x\leq FR_{X}^2\|v\|_{2}$. Applying the above concentration inequality with $x = \frac{\lambda}{2}$, $v =\beta^*$ for all $r \in [p]$ and a union bound yields
    \begin{align*}
        \mathbb{P}\left(\|\hat{\Sigma}\beta^*-\Sigma \beta^*\|_{\infty} \geq \frac{\lambda}{2}\right) &\leq 2p \exp\left(-\frac{c\lambda^2(\rho^2n_{\mathcal{L}}+N)}{4F^2R_{X}^4\|\beta^*\|_{2}^2}\right)\\
        & = 2 \exp\left(\log(p)-\frac{c\lambda^2(\rho^2n_{\mathcal{L}}+N)}{4F^2R_{X}^4\|\beta^*\|_{2}^2}\right)\\
        &\leq \frac{2}{p},
    \end{align*}
    where the final line holds provided $\lambda \geq \sqrt{\frac{8F^2R_{X}^4\|\beta^*\|_{2}^2\log(p)}{c(\rho^2n_{\mathcal{L}}+N)}}$.
\end{proof}

Finally, we verify that $\hat{\Sigma}$ satisfies the restricted eigenvalue condition with high probability. 

\begin{lemma}\label{lemma:restricted eigenvalue}
For both parts of this lemma, we work under Assumptions \ref{assump:sub-Gaussian distribution} and \ref{assump:restricted eigenvalue}. 
\begin{enumerate}[label=(\roman*)]
    \item Assume that for all $(g,h)\in [L]^2$ 
    \[
       n_{g,h} + N\geq  \frac{4}{c} \max\left\{ 
    \frac{64 R_{X}^4 s^2}{\Phi_{\Sigma}^2},\,
    \frac{8 R_{X}^2 s}{\Phi_{\Sigma}} 
    \right\}  \max\{\log(|L_{g}|),\log(|L_{h}|)\}.
    \]
     Then with probability at least
    \[
    1-2\sum_{g \in [L],\, h \in [L]}  
    \exp\!\left( 
    -\frac{c}{2} \min\!\left\{ 
    \frac{\Phi_{\Sigma}^2(n_{g,h} + N)}{64 R_{X}^4 s^2},\,
    \frac{\Phi_{\Sigma}( n_{g,h} + N)}{8 R_{X}^2 s} 
    \right\}
    \right),
    \]
    $\hat{\Sigma}$ satisfies $Re(s,\frac{\Phi_{\Sigma}}{2})$.
    
    \item Alternatively, assume there exists $\rho \in (0,1)$  such that 
    for all $(g,h) \in [L]^2$, $ n_{g,h} \geq \rho^2n_{\mathcal{L}}$. Then provided 
    \begin{equation}\label{eq:restricted eigen unstructured min sample size}
       \rho^2 n_{\mathcal{L}}+N \geq  \frac{8}{c} \max\left\{ 
    \frac{64 R_{X}^4 s^2}{\Phi_{\Sigma}^2},\,
    \frac{8 R_{X}^2 s}{\Phi_{\Sigma}} 
    \right\}\log(p),
    \end{equation}
    with probability at least $1-\frac{2}{p}$,
    $\hat{\Sigma}$ satisfies $Re(s,\frac{\Phi_{\Sigma}}{2})$.
\end{enumerate}
\end{lemma}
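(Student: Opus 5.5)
The plan is to reduce the restricted eigenvalue property of $\hat{\Sigma}$ to an entrywise deviation bound $\|\hat{\Sigma}-\Sigma\|_{\infty}\leq \frac{\Phi_{\Sigma}}{8s}$. This is a deterministic comparison in the style of \citet{bickel2009simultaneous}. Fix $S$ with $|S|\leq s$ and $\delta$ in the cone $\|\delta_{S^c}\|_{1}\leq\|\delta_S\|_{1}$. Then
\begin{equation*}
\delta^T\hat{\Sigma}\delta \geq \delta^T\Sigma\delta - \|\hat{\Sigma}-\Sigma\|_{\infty}\|\delta\|_{1}^2,
\end{equation*}
and the cone condition gives $\|\delta\|_{1}\leq 2\|\delta_S\|_{1}\leq 2\sqrt{s}\|\delta_{S}\|_{2}\leq 2\sqrt{s}\|\delta_{M(\delta)}\|_{2}$. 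Hence $\|\delta\|_1^2\leq 4s\|\delta_{M(\delta)}\|_2^2$. Combined with Assumption \ref{assump:restricted eigenvalue}, this gives $\delta^T\hat{\Sigma}\delta\geq(\Phi_{\Sigma}-4s\|\hat{\Sigma}-\Sigma\|_{\infty})\|\delta_{M(\delta)}\|_2^2$. It therefore suffices that $\|\hat{\Sigma}-\Sigma\|_{\infty}\leq \Phi_{\Sigma}/(8s)$, which yields $Re(s,\Phi_{\Sigma}/2)$.

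The entrywise bound is obtained by Bernstein's inequality, exactly as in Lemma \ref{lemma: hat sigma control}. For each pair $(i,j)\in L_g\times L_h$, $\hat{\Sigma}_{ij}-\Sigma_{ij}$ is an average of $n_{g,h}+N$ i.i.d. centred products $(X_k)_i(X_k)_j-\Sigma_{ij}$. By \citet[Lemma 2.8.6, Exercise 2.44]{vershynin2025HDP} these have $\psi_1$-norm at most a constant times $R_X^2$. Taking $t=\Phi_{\Sigma}/(8s)$, Bernstein's inequality gives
\begin{equation*}
\mathbb{P}(|\hat{\Sigma}_{ij}-\Sigma_{ij}|\geq t)\leq 2\exp\!\left(-c\min\left\{\frac{\Phi_{\Sigma}^2(n_{g,h}+N)}{64R_X^4s^2},\frac{\Phi_{\Sigma}(n_{g,h}+N)}{8R_X^2s}\right\}\right).
\end{equation*}

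For part (i), take a union bound over the $|L_g||L_h|\leq \exp(2\max\{\log|L_g|,\log|L_h|\})$ entries in block $(g,h)$. The sample size assumption makes the exponent at least $2\max\{\log|L_g|,\log|L_h|\}$ times a factor of $2$. This lets us absorb the cardinality and keep half the exponent, giving the stated failure probability after summing over $(g,h)\in[L]^2$.

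For part (ii), use $n_{g,h}+N\geq\rho^2n_{\mathcal{L}}+N$ for every entry and union bound over the $p^2$ entries. Condition \eqref{eq:restricted eigen unstructured min sample size} makes each tail at most $2\exp(-4\log p)$. The total is then at most $2p^{-2}\leq 2/p$.

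The main obstacle is bookkeeping rather than substance. The first step is checking that the cone argument is valid with the paper's enlarged index set $M(\delta)$; this holds since $S\subseteq M(\delta)$. The second step is tracking constants so that the union bound absorbs $|L_g||L_h|$ with exactly the factor $4/c$ appearing in the hypothesis.
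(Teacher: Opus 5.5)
Your proposal is correct and follows essentially the same argument as the paper. The paper also proves the deterministic bound $\phi^2(\hat{\Sigma},s)\geq \Phi_{\Sigma}-4s\|\hat{\Sigma}-\Sigma\|_{\infty}$ via the cone condition and $S\subseteq M(\delta)$, then applies entrywise Bernstein at $t=\Phi_{\Sigma}/(8s)$ with a blockwise union bound that absorbs $|L_g||L_h|$ into half the exponent.

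For part (ii), you union bound directly over the $p^2$ entries, whereas the paper deduces it from the part (i) display using $L\leq p$. This difference is inessential.
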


\begin{proof}
    First, fix a $B \in \mathbb{R}^{p\times p}$ such that $\phi^2(B,s) > 0$, $S \subset [p]$ with $|S|\leq s$ and $\delta \in \mathbb{R}^p$ with $\|\delta_{S^c}\|_{1}\leq\|\delta_{S}\|_{1}$ and $\delta \neq 0$. Recall that $M(\delta) = S \cup \{i \in S^c: |\delta_{i}| \geq |\delta_{S^c}|_{(s)}\}$. For any $A \in \mathbb{R}^{p\times p}$, we have that
    \begin{align*}
        \frac{\delta^T A \delta}{\|\delta_{M(\delta)}\|_2^2}
        &= \frac{\delta^T (B + A - B) \delta}{\|\delta_{M(\delta)}\|_2^2} \\
        &= \frac{\delta^T B \delta}{\|\delta_{M(\delta)}\|_2^2} 
           + \frac{\delta^T (A - B) \delta}{\|\delta_{M(\delta)}\|_2^2} \\
        &\geq \phi^2(B,s) - \left| \frac{\delta^T (A - B) \delta}{\|\delta_{M(\delta)}\|_2^2} \right|  \\
        &\geq \phi^2(B,s) - \frac{\|\delta\|_1^2 \cdot \|A - B\|_\infty}{\|\delta_{M(\delta)}\|_2^2} 
            \\
        &\geq \phi^2(B,s) - \frac{(\|\delta_S\|_1 + \|\delta_{S^c}\|_1)^2 \cdot \|A - B\|_\infty}{\|\delta_{M(\delta)}\|_2^2} \\
        &\geq \phi^2(B,s) - \frac{(2 \|\delta_S\|_1)^2 \cdot \|A - B\|_\infty}{\|\delta_{M(\delta)}\|_2^2} 
            \quad  \\
        &\geq \phi^2(B,s) - \frac{4 s \|\delta_S\|_2^2 \cdot \|A - B\|_\infty}{\|\delta_{M(\delta)}\|_2^2} 
            \quad  \\
        &\geq \phi^2(B,s) - 4 s \|A - B\|_\infty,
    \end{align*}
    where the third line follows from the triangle inequality; the fourth line follows from Hölder's inequality; the sixth line follows since $\|\delta_{S^c}\|_1 \leq \|\delta_S\|_1$; the penultimate line follows since $ \|\delta_S\|_1^2 \leq s \|\delta_S\|_2^2$ for $|S|\leq s$ and last line follows since $\|\delta_{M(\delta)}\|_2^2 \geq \|\delta_S\|_2^2$ because $S \subseteq M(\delta)$.
    
We apply the above result with $B = \Sigma$ and $A = \hat{\Sigma}$. If on some event $ \|\hat{\Sigma} - \Sigma \|_\infty \leq \frac{\Phi_{\Sigma}}{8s}$, then on the same event $\phi^2(A,s) \geq \frac{\Phi_{\Sigma}}{2}$.
Fixing $g,h \in [L]$, $i \in L_{g}$ and $j \in L_{h}$, we recall that 
\begin{equation*}
    \hat{\Sigma}_{ij}-\Sigma_{ij} = \frac{1}{n_{\xi(i),\xi(j)}+N}\left(\sum_{k\in \mathcal{I}_{\mathcal{U}}}(X_{k})_{i}(X_{k})_{j}+\sum_{k \in \mathcal{I}_{\mathcal{L}}}(X_{k})_{i}(X_{k})_{j}\mathbbm{1}_{\{\{i,j\}\subseteq O_{\Xi(k)}\}}\right)-\Sigma_{ij}.
\end{equation*}
We now prove the first statement of the result. Note that by~\citet[][Lemma 2.8.6]{vershynin2025HDP}, we have that
\begin{equation*}
    \|(X_{k})_{i}(X_{k})_{j}\|_{\psi_{1}} \leq \|(X_{k})_{i}\|_{\psi_{2}}\|(X_{k})_{j}\|_{\psi_{2}} \leq R_{X}^2.
\end{equation*}
Via Bernstein's inequality~\citep[][Theorem 2.9.1]{vershynin2025HDP} and the previous display, we have that for some universal constant $c$
\begin{align*}
    \mathbb{P}\left(|\hat{\Sigma}_{ij}-\Sigma_{ij}| \geq \frac{\Phi_{\Sigma}}{8s}\right) &\leq 2\exp \left(-c\min\left\{\frac{\Phi_{\Sigma}^2(n_{\xi(i),\xi(j)}+N)}{64R_{X}^4s^2},\frac{\Phi_{\Sigma}(n_{\xi(i),\xi(j)}+N)}{8R_{X}^2s}\right\}\right).
\end{align*}
By a union bound, it follows that
\begin{align*}
    &\mathbb{P}\left( \|\hat{\Sigma} - \Sigma\|_{\infty} \geq \frac{\Phi_{\Sigma}}{8s} \right) \\
    &\leq 2 \sum_{i \in [p],\, j \in [p]} 
    \exp\left( 
    -c \min\left\{ 
    \frac{\Phi_{\Sigma}^2(n_{\xi(i),\xi(j)} + N)}{64 R_{X}^4 s^2},\,
    \frac{\Phi_{\Sigma}(n_{\xi(i),\xi(j)} + N)}{8 R_{X}^2 s} 
    \right\}
    \right) \\
    &= 2 \sum_{g \in [L],\, h \in [L]} 
    |L_g|\,|L_h| 
    \exp\left( 
    -c \min\left\{ 
    \frac{\Phi_{\Sigma}^2(n_{g,h} + N)}{64 R_{X}^4 s^2},\,
    \frac{\Phi_{\Sigma}(n_{g,h} + N)}{8 R_{X}^2 s} 
    \right\}
    \right) \\
    &\leq 2\sum_{g \in [L],\, h \in [L]}  
    \exp\left( \log(|L_{g}|)+\log(|L_{h}|)
    -c \min\left\{ 
    \frac{\Phi_{\Sigma}^2(n_{g,h} + N)}{64 R_{X}^4 s^2},\,
    \frac{\Phi_{\Sigma}( n_{g,h} + N)}{8 R_{X}^2 s} 
    \right\}
    \right) \\
    &\leq 2\sum_{g \in [L],\, h \in [L]}  
    \exp\left( 
    -\frac{c}{2} \min\left\{ 
    \frac{\Phi_{\Sigma}^2(n_{g,h} + N)}{64 R_{X}^4 s^2},\,
    \frac{\Phi_{\Sigma}( n_{g,h} + N)}{8 R_{X}^2 s} 
    \right\}
    \right), 
\end{align*}
 since, for all  $(g,h) \in [L]$, we have $\quad$$
    c \min\left\{ 
    \frac{\Phi_{\Sigma}^2(n_{g,h} + N)}{64 R_{X}^4 s^2},\,
    \frac{\Phi_{\Sigma}(n_{g,h} + N)}{8 R_{X}^2 s} 
    \right\} \geq 4 \max\{\log(|L_{g}|),\log(|L_{h}|)\}$ by assumption.
The second statement of the lemma now follows immediately from the first, by applying the previous display, the fact that $L\leq p$ and \eqref{eq:restricted eigen unstructured min sample size}. 
\end{proof}

\begin{proposition}\label{thm:HD master theorem}
    Recall the definitions of $\hat{\gamma}$ and $\hat{\Sigma}$ from \eqref{eq:gamma estimate definition} and \eqref{eq:covariance estimate definition}. Let $\lambda>0$ be such that
    \begin{align*}
        \|\hat{\gamma}-\Sigma\beta^*\|_{\infty} \leq \frac{\lambda}{2} & \quad \quad \|\hat{\Sigma}\beta^*-\Sigma\beta^*\|_{\infty} \leq \frac{\lambda}{2}
    \end{align*} and assume that $\hat{\Sigma}$ satisfies $Re(s,\frac{\Phi_{\Sigma}}{2})$ and $\|\beta^*\|_{0} \leq s$. Then for $\hat{\beta}$ a solution of \eqref{eq:ModifiedDantzig}
    \begin{equation*}
        \|\hat{\beta}-\beta^*\|_{2}\leq\frac{16\lambda \sqrt{s}}{\Phi_{\Sigma}}.
    \end{equation*}
\end{proposition}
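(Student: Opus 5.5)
The argument follows the standard Dantzig selector analysis of \citet{candes2007dantzig} and \citet{bickel2009simultaneous}, with the restricted eigenvalue condition imposed on $\hat{\Sigma}$ rather than on $\Sigma$.

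\textbf{Feasibility of $\beta^*$.} By the triangle inequality,
\[
\|\hat{\Sigma}\beta^*-\hat{\gamma}\|_\infty \le \|\hat{\Sigma}\beta^*-\Sigma\beta^*\|_\infty+\|\Sigma\beta^*-\hat{\gamma}\|_\infty \le \lambda ,
\]
so $\beta^*$ is feasible for \eqref{eq:ModifiedDantzig}. Optimality of $\hat{\beta}$ then gives $\|\hat{\beta}\|_1\le\|\beta^*\|_1$. Write $\delta=\hat{\beta}-\beta^*$ and let $S$ be the support of $\beta^*$, so $|S|\le s$. The usual decomposition
\[
\|\beta^*_S\|_1-\|\delta_S\|_1+\|\delta_{S^c}\|_1\le \|\hat{\beta}\|_1\le\|\beta^*\|_1
\]
gives the cone condition $\|\delta_{S^c}\|_1\le\|\delta_S\|_1$. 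If $\delta=0$ there is nothing to prove.

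\textbf{Tube constraint.} Since both $\hat{\beta}$ and $\beta^*$ are feasible, $\|\hat{\Sigma}\delta\|_\infty\le 2\lambda$. By H\"older's inequality and the cone condition,
\[
\delta^T\hat{\Sigma}\delta\le\|\delta\|_1\|\hat{\Sigma}\delta\|_\infty\le 2\lambda\cdot 2\|\delta_S\|_1\le 4\lambda\sqrt{s}\,\|\delta_{M(\delta)}\|_2 .
\]
The last step uses $\|\delta_S\|_1\le\sqrt{s}\|\delta_S\|_2$ and $S\subseteq M(\delta)$.

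\textbf{Restricted eigenvalue.} Applying $Re(s,\Phi_\Sigma/2)$ for $\hat{\Sigma}$ to this $\delta$ and $S$ gives $\delta^T\hat{\Sigma}\delta\ge\frac{\Phi_\Sigma}{2}\|\delta_{M(\delta)}\|_2^2$. Combining with the previous display yields
\[
\|\delta_{M(\delta)}\|_2\le\frac{8\lambda\sqrt{s}}{\Phi_\Sigma}.
\]

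\textbf{Passing to the full vector, the main obstacle.} It remains to control $\|\delta_{M(\delta)^c}\|_2$, as in Bickel--Ritov--Tsybakov. Order the coordinates of $\delta_{S^c}$ by decreasing absolute value. The $k$-th largest coordinate outside $M(\delta)$ is bounded by $\|\delta_{S^c}\|_1/(s+k)$, so
\[
\|\delta_{M(\delta)^c}\|_2^2\le\|\delta_{S^c}\|_1^2\sum_{k>s}k^{-2}\le\frac{\|\delta_{S^c}\|_1^2}{s}\le\frac{\|\delta_S\|_1^2}{s}\le\|\delta_S\|_2^2\le\|\delta_{M(\delta)}\|_2^2 .
\]
This gives $\|\delta\|_2\le\sqrt{2}\,\|\delta_{M(\delta)}\|_2\le 2\|\delta_{M(\delta)}\|_2$, hence $\|\delta\|_2\le 16\lambda\sqrt{s}/\Phi_\Sigma$.

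The only delicate point is the edge case $|S^c|<s$. There $M(\delta)$ contains all of $S^c$ by the definition with $\min\{s,|S^c|\}$, so $\delta_{M(\delta)^c}=0$ and the bound is immediate.
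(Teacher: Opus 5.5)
Your proposal is correct and follows essentially the same route as the paper. Both arguments use feasibility of $\beta^*$, the cone condition $\|\delta_{S^c}\|_1\le\|\delta_S\|_1$, the bound $\|\hat{\Sigma}\delta\|_\infty\le 2\lambda$, the restricted eigenvalue condition for $\hat{\Sigma}$, and the Bickel--Ritov--Tsybakov tail estimate $\|\delta_{M(\delta)^c}\|_2\le\|\delta_{M(\delta)}\|_2$; the only difference is that the paper first proves $\|\delta\|_1\le 16s\lambda/\Phi_\Sigma$ and feeds it back through $\delta^T\hat{\Sigma}\delta\le\|\delta\|_1\|\hat{\Sigma}\delta\|_\infty$, whereas you bound $\|\delta_{M(\delta)}\|_2$ directly and use orthogonality instead of the triangle inequality, giving the slightly sharper constant $8\sqrt{2}$.
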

\begin{proof}
    By the triangle inequality, note that 
    \begin{align*}
        \|\hat{\Sigma}\beta^*-\hat{\gamma}\|_{\infty} \leq \|\hat{\Sigma}\beta^*-\Sigma\beta^*\|_{\infty} + \|\Sigma\beta^*-\hat{\gamma}\|_{\infty} \leq \lambda.
    \end{align*}
    Therefore,  $\beta^*$ is feasible for \eqref{eq:ModifiedDantzig}. Define $\delta = \hat{\beta}-\beta^*\in \mathbb{R}^p$. Recall that $S$ is the active set of $\beta^*$. We have that 
    \begin{align}
        \|\delta_{S^c}\|_{1} &= \|\hat{\beta}_{S^c}\|_{1} = \|\hat{\beta}\|_{1}-\|\hat{\beta}_{S}\|_{1} \leq \|\beta^*\|_{1}-\|\hat{\beta}_{S}\|_{1}  =\|\beta_{S}^*\|_{1}-\|\hat{\beta}_{S}\|_{1}\leq \|\beta_{S}^*-\hat{\beta}_{S}\|_{1}  = \|\delta_{S}\|_{1} \label{eq: cone condition},
    \end{align}
    where the first inequality follows from the optimality of $\hat{\beta}$ and the penultimate line follows from the triangle inequality. This calculation allows us to apply the restricted eigenvalue condition to $\delta$, so we have that 
    \begin{align}\label{eq:mid way though l1 bound}
        \phi^2(\hat{\Sigma},s)&\leq \frac{\delta^T\hat{\Sigma}\delta}{\|\delta_{M(\delta)}\|_{2}^2}\leq \frac{\delta^T\hat{\Sigma}\delta}{\|\delta_{S}\|_{2}^2}\leq \frac{\|\delta\|_{1}\|\hat{\Sigma}(\hat{\beta}-\beta^*)\|_{\infty}}{\|\delta_{S}\|_{2}^2}\nonumber\\
        &\leq \frac{\|\delta\|_{1}\left(\|\hat{\Sigma}\hat{\beta}-\hat{\gamma}\|_{\infty}+\|\hat{\gamma}-\gamma\|_{\infty}+\|\Sigma\beta^*-\hat{\Sigma}\beta^*\|_{\infty}\right)}{\|\delta_{S}\|_{2}^2}\nonumber\\
        &\leq \frac{2\|\delta\|_{1}\lambda}{\|\delta_{S}\|_{2}^2} \\
        &= \frac{2\left(\|\delta_{S}\|_{1}+\|\delta_{S^c}\|_{1}\right)\lambda}{\|\delta_{S}\|_{2}^2}\leq \frac{4\|\delta_{S}\|_{1}\lambda}{\|\delta_{S}\|_{2}^2}\leq \frac{4s\|\delta_{S}\|_{1}\lambda}{\|\delta_{S}\|_{1}^2} = \frac{4s\lambda}{\|\delta_{S}\|_{1}}\nonumber,
    \end{align}
     where the third inequality follows from H\"older's inequality; the fourth inequality follows from the triangle inequality; the fifth inequality follows from the feasibility of $\hat{\beta}$ and by assumption; the sixth inequality follows from \eqref{eq: cone condition} and the final inequality follows since $\|\delta_{S}\|_{1} \leq \sqrt{s}\|\delta_{S}\|_{2}$. Applying this inequality yields, 
    \begin{align}\label{eq: l1 error bound}
        \|\delta\|_{1} = \|\delta_{S}\|_{1}+\|\delta_{S^c}\|_{1}\leq 2 \|\delta_{S}\|_{1}\leq \frac{8s\lambda}{\phi^2(\hat{\Sigma},s)}\leq \frac{16s\lambda}{\Phi_{\Sigma}}.
    \end{align}
    where the final inequality holds by assumption. We can convert this to a bound on $\|\hat{\beta}-\beta^*\|_{2}$ as follows. Note that for $k \geq s$, we have that 
        $k|\delta_{S^c}|_{(k)} \leq \|\delta_{S^c}\|_{1}.$
    It follows that 
    \begin{equation*}
        \|\delta_{M(\delta)^c}\|_{2}^2 \leq \|\delta_{S^c}\|_{1}^2\sum_{k \geq s+1}\frac{1}{k^2} \leq \frac{\|\delta_{S^c}\|_{1}^2}{s}.
    \end{equation*}
    By \eqref{eq: cone condition}, we have that 
    \begin{equation*}
        \|\delta_{M(\delta)^c}\|_{2} \leq \frac{\|\delta_{S^c}\|_{1}}{\sqrt{s}}\leq \frac{\|\delta_{S}\|_{1}}{\sqrt{s}}\leq \|\delta_{S}\|_{2} \leq \|\delta_{M(\delta)}\|_{2}.
    \end{equation*}
    Applying the restricted eigenvalue condition and the previous display yields, 
    \begin{align*}
        \|\delta\|_{2} &\leq \|\delta_{M(\delta)}\|_{2}+\|\delta_{M(\delta)^c}\|_{2}\leq 2\|\delta_{M(\delta)}\|_{2}\leq 2\sqrt{\frac{\delta^T\hat{\Sigma}\delta}{\phi^2(\hat{\Sigma},s)}}\leq 2\sqrt{\frac{2\|\delta\|_{1}\|\hat{\Sigma}\delta\|_{\infty}}{\Phi_{\Sigma}}} \leq 2\sqrt{\frac{4\|\delta\|_{1}\lambda}{\Phi_{\Sigma}}}\leq \frac{16\lambda \sqrt{s}}{\Phi_{\Sigma}},
    \end{align*}
    where the third inequality follows from the restricted eigenvalue condition, the fourth inequality follows from H\"older's inequality, the penultimate inequality follows from the bound implicit in \eqref{eq:mid way though l1 bound} and the final line by \eqref{eq: l1 error bound}.
\end{proof}

\begin{proofof}{thm:high dimensional upper bound unstructured}
    We condition throughout the proof on the high probability events of Lemmas \ref{lemma: gamma control}, \ref{lemma: hat sigma control} and \ref{lemma:restricted eigenvalue}. These events occur with probability at least that stated in the Theorem provided $F$ and $A$ exceed a sufficiently large universal constant. Therefore, $\hat{\Sigma}$ satisfies $Re(s,\frac{\Phi_{\Sigma}}{2})$ and both the following hold
    \begin{align*}
        \|\hat{\gamma}-\Sigma\beta^*\|_{\infty} \leq \frac{\lambda}{2} & \quad \quad \|\hat{\Sigma}\beta^*-\Sigma\beta^*\|_{\infty} \leq \frac{\lambda}{2}.
    \end{align*}
    Applying Proposition \ref{thm:HD master theorem} yields the result. 
\end{proofof}
\begin{proofof}{thm:high dimensional rate structured}
    This follows in exactly the same way as the proof of Theorem \ref{thm:high dimensional upper bound unstructured}
\end{proofof}

\subsection{Proofs for high-dimensional structured lower bounds}\label{B:High-dimensional structured lower bound}
The aim of this section is to prove Theorem \ref{thm:High-dimensional lower bound} which gives a lower bound in the blockwise-missing setting. This will follow from Propositions \ref{prop:lower bound h_l} and \ref{prop:lower bound n_g1g2}.

We begin with a result characterising the dependence of the rate on $\{h_{l}\}_{l=1}^L$. This follows relatively simply, via similar arguments to before. 
\begin{proposition}\label{prop:lower bound h_l}
 Consider the setting \eqref{eq:datamech1} with distribution lying in $\mathcal{P}_{HD}$ and $\Phi_{\Sigma} < \frac{3R_{X}^2}{8}$. Assume that the size of the modalities $\{|L_{i}|\}_{i=1}^L$ and the sparsity $s$ satisfy the following
    \begin{equation*}
        \left\lfloor\left(1+\frac{|L_{i}|}{2s}\right)^{\frac{s}{8}}\right\rfloor \geq 16, \quad\quad 2 \leq s \leq \frac{|L_{i}|}{32}.
    \end{equation*}
    If in addition, for all  $i \in [L]$, 
    \begin{equation}\label{eq: structured modality sample size requirement}
    \frac{R^2s\log\left(1+\frac{|L_{i}|}{2s}\right)}{16B^2\Phi_{\Sigma}} \leq h_{i}, 
    \end{equation}
    then 
    \begin{equation*}
    \inf_{\hat{\beta}\in\hat{\Theta}_{HD}}\sup_{P \in \mathcal{P}_{HD}}\mathbb{E}\left[\|\hat{\beta}-\beta^*\|_{2}^2\right] \geq  \max_{l \in [L]}\left\{\frac{R^2s}{2^{10}h_{l}\Phi_{\Sigma}}\log\left(1+\frac{|L_{l}|}{2s}\right)\right\}.
\end{equation*}
\end{proposition}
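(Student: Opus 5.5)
Fix $l \in [L]$; the maximum over $l$ then follows by taking the best $l$. The argument is Fano's method with a sparse packing supported on modality $L_l$, in the ideal semi-supervised spirit: the covariance is fixed and known, so only the labelled samples observing $L_l$ carry information about the perturbed coordinates. Concretely, set $\Sigma=\Phi_{\Sigma}I_p$, which satisfies $Re(s,\Phi_{\Sigma})$. Gaussian covariates with this covariance have $\|X\|_{\psi_2}=\sqrt{8\Phi_\Sigma/3}\le R_X$ by the assumption $\Phi_{\Sigma}<3R_X^2/8$, as in the sub-Gaussian norm computation in the proof of Theorem~\ref{thm:balancing lower bound}.

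The packing comes from Lemma~\ref{lemma:high-dimensional packing}, applied with $p$ replaced by $|L_l|$; this is allowed since $s\le |L_l|/32$. It gives $w_1,\dots,w_M\in\{0,1\}^{|L_l|}_s$ with pairwise Hamming distance at least $s/2$ and $M\ge\lfloor(1+|L_l|/(2s))^{s/8}\rfloor\ge 16$. Embed these into the coordinates $L_l$ and define
\[
\beta_j=\delta w_j,\qquad \delta^2=\frac{c\,R^2\log M}{s\,h_l\,\Phi_{\Sigma}},
\]
with $\sigma_j^2=R^2-\Phi_{\Sigma}\|\beta_j\|_2^2$, exactly as in the first construction of Proposition~\ref{prop:LD ISS lower bound}. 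Condition \eqref{eq: structured modality sample size requirement} together with $\log M\lesssim s\log(1+|L_l|/(2s))$ should give $s\delta^2\le \min\{B^2,R^2/(2\Phi_\Sigma)\}$. Then $\|\beta_j\|_2\le B$ and $\|\beta_j\|_0=s$. Moreover $\sigma_j^2\in[R^2/2,R^2]$, and the Gaussian noise is sub-Gaussian with parameter $\sigma_j$, so every hypothesis lies in $\mathcal{P}_{HD}$.

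The KL bound uses Lemma~\ref{Lemma:First KL lemma}, following the calculation in \eqref{eq:first KL bound first minimax lower bound}. Since $\Sigma$ is diagonal, $S_M=\Phi_\Sigma I$, and the conditional law of $Y$ given $X_{O_k}$ only involves $(\beta_j)_{O_k}$. Hence patterns with $L_l\not\subseteq O_k$ contribute zero, and the unlabelled data contribute zero because $\Sigma$ is common to all hypotheses. For a pattern that does observe $L_l$, the variance-ratio terms are quadratic in $\Phi_\Sigma|\|\beta_i\|^2-\|\beta_j\|^2|/R^2$, and the mean term is $\Phi_\Sigma\|\beta_i-\beta_j\|^2/(R^2/2)$. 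Summing over the $h_l$ samples gives
\[
\mathrm{KL}(P_i,P_j)\lesssim h_l\,\Phi_\Sigma\, s\,\delta^2/R^2\le \tfrac{1}{4}\log M
\]
once $c$ is chosen small.

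The separation is $\|\beta_i-\beta_j\|_2^2\ge \delta^2 s/2$, so Fano's lemma (as used in the proof of Theorem~\ref{thm:balancing lower bound}) yields a risk of order $\delta^2 s\asymp R^2\log M/(h_l\Phi_\Sigma)\gtrsim R^2 s\log(1+|L_l|/(2s))/(h_l\Phi_\Sigma)$. Here $M\ge16$ makes the $\log 2/\log M$ term at most $1/4$.

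The main obstacle is bookkeeping rather than ideas. The KL contribution of the $\|\beta\|_2^2$ differences appearing in the conditional variances must be controlled; setting $\sigma_j^2=R^2-\Phi_\Sigma\|\beta_j\|^2$ makes the total variance identical across hypotheses, which is what tames this term. The remaining work is tracking constants to reach exactly $2^{-10}$, using $\lfloor x\rfloor\ge x/2$ so that $\log M\ge \tfrac{s}{16}\log(1+|L_l|/(2s))$.
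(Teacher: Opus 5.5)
Your overall route ($\Sigma=\Phi_\Sigma I_p$, the packing of Lemma~\ref{lemma:high-dimensional packing} placed on $L_l$, Lemma~\ref{Lemma:First KL lemma} for the KL divergence, then Fano) is the paper's. However, your noise choice $\sigma_j^2=R^2-\Phi_\Sigma\|\beta_j\|_2^2$ introduces a step that fails.

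Your hypotheses are valid only if $\Phi_\Sigma s\delta^2\le R^2/2$, i.e.\ only if $h_l\gtrsim\log M$. Condition \eqref{eq: structured modality sample size requirement} does not imply this. It only gives $h_l\ge R^2s\log(1+|L_l|/(2s))/(16B^2\Phi_\Sigma)$, which can be far smaller than $\log M$ when $B^2\Phi_\Sigma\gg R^2$. The first part of Proposition~\ref{prop:LD ISS lower bound}, which you are copying, carries the extra hypothesis $\sum_i h_i^{-1}\le 2$ precisely to secure this.

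In that regime your $\sigma_j^2$ is negative, and the problem is not one of constants. The claimed bound can then exceed $R^2/\Phi_\Sigma$. Any family with total variance pinned at $R^2$ has $\|\beta\|_2^2\le R^2/\Phi_\Sigma$, so the estimator $\hat\beta=0$ shows it can never certify more than that.

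The repair is to keep $\sigma_j=R$ for every $j$, as the paper does. Two facts make this work:
\begin{itemize}
\item Every $w_j$ has exactly $s$ ones, so $\|\beta_j\|_2^2=s\delta^2$ for all $j$.
\item Each $O_k$ either contains $L_l$ or is disjoint from it, so $\|(\beta_j)_{M_k}\|_2^2$ equals $0$ or $s\delta^2$, independently of $j$.
\end{itemize}
Hence the conditional variances $R^2+\Phi_\Sigma\|(\beta_j)_{M_k}\|_2^2$ already agree across hypotheses, and the ratio terms in Lemma~\ref{Lemma:First KL lemma} cancel exactly. Patterns disjoint from $L_l$, and the unlabelled data, contribute nothing. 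Since the entries are nonnegative, you get $\KL(P_j,P_1)\le h_l\Phi_\Sigma\|\beta_j-\beta_1\|_2^2/(2R^2)\le h_l\Phi_\Sigma s\delta^2/R^2$.

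The only remaining membership constraint is $s\delta^2\le B^2$. Condition \eqref{eq: structured modality sample size requirement} does supply this once you take $M=\lfloor(1+|L_l|/(2s))^{s/8}\rfloor$ exactly and $\delta^2=R^2\log M/(4sh_l\Phi_\Sigma)$. The rest of your Fano bookkeeping then gives the constant $2^{-10}$.

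So what tames the variance terms is the constant norm of the packing vectors, not an adjustment of the noise variance.
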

\begin{proof}
We fix an $l \in [L]$ and show the inequality holds for this $l$.
Define, for a given observation pattern $O\subseteq[p]$ and parameter $\beta \in \mathbb{R}^{p}$
\begin{align*}
    \Omega_{\beta}^{O} =  \begin{pmatrix}
        \Phi_{\Sigma}I_{O} & \Phi_{\Sigma}\beta_{O}\\
        \Phi_{\Sigma}\beta_{O}^T & \Phi_{\Sigma}\|\beta\|_{2}^2+R^2
    \end{pmatrix}.
\end{align*}
Via Lemma \ref{lemma:high-dimensional packing}, we construct $w_{1}, \ldots, w_{M} \in \left\{0,1\right\}^{|L_{l}|}$ such that
    \begin{align*}
        d_{H}(w_{i},w_{j}) &\geq \frac{s}{2} \text{ for all } i \neq j,\\
        M &= \left\lfloor\left(1+\frac{|L_{l}|}{2s}\right)^{\frac{s}{8}}\right\rfloor,\\
        \|w_{j}\|_{0} &= s \text{ for all } j \in [M]. 
    \end{align*} 
We let $\delta= \sqrt{\frac{R^2\log(M)}{4s h_{l}\Phi_{\Sigma}}}$ and construct hypotheses $\beta_{1},\ldots,\beta_{M} \in \mathbb{R}^{p}$ such that for $k \in [M]$
\begin{equation*}
    (\beta_{k})_{j} = 0 \text{ for } j \notin L_{l}, \quad\quad (\beta_{k})_{L_{l}} = \delta w_{k}.
\end{equation*}
Our hypotheses define the following distribution for $j \in [M]$
\begin{equation*}
    P_{j} = \prod_{k=1}^{K} N(0,\Omega^{O_{k}}_{\beta_{j}})^{\otimes n_{k}}\otimes N(0,\Phi_{\Sigma}I)^{\otimes N}.
\end{equation*}
Identically to the proof of Theorem \ref{thm:balancing lower bound}, our choices satisfy Assumptions \ref{assump:sub-Gaussian distribution} and \ref{assump:restricted eigenvalue}. For $i \in [M]$, we have, by the condition \eqref{eq: structured modality sample size requirement} on $\{h_{i}\}_{i=1}^L$, that $\delta^2s=\|\beta_{i}\|_{2}^2\leq B^2$. Thus, these choices do define distributions in $\mathcal{P}_{HD}$. We can bound the KL divergence between $P_{j}$ and $P_{1}$ as 
\begin{align}\label{eq:blockwise-missing pre fano modality dependence}
    \KL\left(P_{j},P_{1}\right)&=\KL\left(\prod_{k=1}^KN(0,\Omega_{\beta_{j}}^{O_{k}})^{\otimes n_{k}}\otimes N(0,\Phi_{\Sigma}I)^{\otimes N},\prod_{k=1}^KN(0,\Omega_{\beta_{1}}^{O_{k}})^{\otimes n_{k}}\otimes N(0,\Phi_{\Sigma}I)^{\otimes N}\right)\nonumber\\
    &=\sum_{k=1}^Kn_{k}\KL\left(N(0,\Omega_{\beta_{j}}^{O_{k}}),N(0,\Omega_{\beta_{1}}^{O_{k}})\right)+N\KL\left(N(0,\Phi_{\Sigma}I),N(0,\Phi_{\Sigma}I)\right)\nonumber\\
    & \leq \sum_{k=1}^K\frac{\Phi_{\Sigma}n_{k}(\beta_{j}-\beta_{1})^TI_{O_{k}}^TI_{O_{k}O_{k}}^{-1}I_{O_{k}}(\beta_{j}-\beta_{1})}{2R^2}\nonumber\\
    &\leq \sum_{k=1}^Kn_{k}\frac{\Phi_{\Sigma}(\|\beta_{j}\|_{2}^2+\|\beta_{1}\|_{2}^2)\mathbbm{1}_{\{L_{l}\subseteq O_{k}\}}}{2R^2}\nonumber\\
    &\leq \frac{\Phi_{\Sigma} h_{l}\delta^2s}{R^2}\leq \frac{\log(M)}{4},
\end{align}
where the second line follows from the tensorisation of the KL divergence, the third line follows from Lemma \ref{Lemma:First KL lemma}; the penultimate line follows since $\langle \beta_{j},\beta_{1}\rangle \geq 0$ and since if $L_{l}\cap O_{k}=\emptyset$, then $(\beta_{j})_{O_{k}}=(\beta_{1})_{O_{k}}$; the final line follows since $\delta = \sqrt{\frac{R^2\log(M)}{ 4s h_{l}\Phi_{\Sigma}}}$. It follows that 
\begin{align*}
    \inf_{\hat{\beta}\in\hat{\Theta}_{HD}}\sup_{P \in \mathcal{P}_{HD}}\mathbb{E}\left[\|\hat{\beta}-\beta^*\|_{2}^2\right] &\geq \frac{s\delta^2}{8}\left(1-\frac{\frac{1}{M}\sum_{j=1}^M\KL\left(P_{j},P_{1}\right)+\log(2-\frac{1}{M})}{\log(M)}\right)\\
    & \geq \frac{s\delta^2}{16}\\
    & \geq \frac{R^2s\log\left(1+\frac{|L_{l}|}{2s}\right)}{2^{10}h_{l}\Phi_{\Sigma}},
\end{align*}
where the first line follows from Fano's method~\cite[e.g.][Lemmas 8.1, 8.11]{samworth2024statistics}; the second line follows from \eqref{eq:blockwise-missing pre fano modality dependence} and the conditions on the sizes of the modalities and sparsity; and the final line follows from the fact that $\lfloor x\rfloor \geq \frac{x}{2}$ for $x\geq 2$. 
By symmetry, it follows that 
\begin{equation*}
    \inf_{\hat{\beta}\in\hat{\Theta}_{HD}}\sup_{P \in \mathcal{P}_{HD}}\mathbb{E}\left[\|\hat{\beta}-\beta^*\|_{2}^2\right] \geq \max_{l \in [L]}\left\{\frac{R^2s}{2^{10}h_{l}\Phi_{\Sigma}}\log\left(1+\frac{|L_{l}|}{2s}\right)\right\}.
\end{equation*}
\end{proof}
Next, we derive the lower bound dependence on $\left\{n_{g_{1},g_{2}}:g_{1},g_{2} \in [L]\right\}$. 
\begin{proposition}\label{prop:lower bound n_g1g2}
    Consider the setting \eqref{eq:datamech1} with distribution lying in $\mathcal{P}_{HD}$ and $\Phi_{\Sigma} \leq \frac{3R_{X}^2}{16}$. Assume that the size of the modalities $\{|L_{i}|\}_{i=1}^L$ and the sparsity $s$ satisfy the following
    \begin{equation*}
        1 \leq s-1 \leq \min_{l \in [L]}\frac{|L_{l}|}{32} \quad\quad \min_{h \in [L]}\left\lfloor\left(1+\frac{|L_{h}|}{2(s-1)}\right)^{\frac{(s-1)}{8}}\right\rfloor \geq 16.
    \end{equation*}
    If
    \begin{align}\label{eq: blockwise-missing cross modality minimum sample size}
        \max_{g,h}\left\{\frac{16s\log\left(1+\frac{|L_{h}|}{2(s-1)}\right)\min\left\{1,\frac{R^2}{B^2}\right\}\min\left\{1,R_{X}^{-4}\right\}}{9(n_{g,h}+N)}\right\}\leq 1,
    \end{align}
    then 
    \begin{equation*}
        \inf_{\hat{\beta}\in\hat{\Theta}_{HD}}\sup_{P \in \mathcal{P}_{HD}}\mathbb{E}\left[\|\hat{\beta}-\beta^*\|_{2}^2\right] \geq \max_{h \in [L]}\left\{\frac{s\log(1+\frac{|L_{h}|}{2(s-1)})\min\left\{1,\frac{2^6}{3^2R_{X}^4}\right\}\min\{B^2,R^2\}}{2^{17}\left(\min_{g \in [L]}\{n_{g,h}+N\}\right)}\right\}.
    \end{equation*}
\end{proposition}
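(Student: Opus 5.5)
We mimic the proof of Theorem~\ref{thm:balancing lower bound}, replacing the global dimension by a single modality. Fix $h\in[L]$ and choose $g^*\in\argmin_{g}\{n_{g,h}+N\}$. The goal is to show the bound with denominator $n_{g^*,h}+N$; the maximum over $h$ then follows by symmetry.

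\textbf{Construction.} Pick one coordinate $j^*\in L_{g^*}$ to play the role of coordinate $p$ in the earlier construction; if $g^*=h$, take $j^*\in L_h$. Set $\lambda_-=\Phi_\Sigma$, $\lambda_+=3R_X^2/8$ and $\tau=(\lambda_-+\lambda_+)/2$. Apply Lemma~\ref{lemma:high-dimensional packing}, first part only, with $p$ replaced by $|L_h|$ (or $|L_h|-1$ when $j^*\in L_h$) and $s$ replaced by $s-1$. This gives $w_1,\dots,w_M$ with pairwise Hamming distance at least $(s-1)/2$ and
\[
M=\Big\lfloor\big(1+\tfrac{|L_h|}{2(s-1)}\big)^{(s-1)/8}\Big\rfloor\ge 16 .
\]
Reorder coordinates so that $j^*$ is last. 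Embed each packing vector as $x_r\in\mathbb{R}^{p-1}$, equal to $\delta w_r$ on $L_h\setminus\{j^*\}$ and zero elsewhere. Define the hypotheses exactly as in the paragraphs preceding Lemma~\ref{lemma: KL2 miss}: $\beta_{x_r}=(-x_r/\tau,B/2)$, covariance $\Sigma_{x_r}$ and $\sigma=R$. Every $x_r$ has the same norm $\sqrt{s-1}\,\delta$, as Lemmas~\ref{lemma: KL2 miss} and~\ref{lemma:KL3} require. Take
\[
\delta^2\asymp \frac{\min\{1,\tau^2\}\min\{R^2,B^2\}\log M}{s\,(n_{g^*,h}+N)} .
\]
Condition~\eqref{eq: blockwise-missing cross modality minimum sample size} gives $s\delta^2\le B^2(\lambda_+-\lambda_-)^2/64$. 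As in the proof of Theorem~\ref{thm:balancing lower bound}, the hypotheses then lie in $\mathcal{P}_{HD}$: the eigenvalues are controlled, the $\psi_2$ norm is $\sqrt{8\lambda_{\max}/3}\le R_X$, $\|\beta\|_0\le s$ and $\|\beta\|_2\le B$. The pairwise separation is $\|\beta_{x_r}-\beta_{x_{r'}}\|_2^2\ge \delta^2(s-1)/(2\tau^2)$.

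\textbf{KL control.} Compare pairs $P_r,P_{r'}$ directly. By tensorisation, Lemma~\ref{lemma: KL2 miss} handles the labelled data: the labelled KL term vanishes unless $j^*\in O_k$. It is also zero unless $O_k$ contains the coordinates of $L_h$ where $x_r$ and $x_{r'}$ differ. Since $O_k$ is a union of modalities, a nonzero contribution requires $L_{g^*}\cup L_h\subseteq O_k$. Because $\|x_r\|=\|x_{r'}\|$, each such pattern contributes $\lesssim\max\{1,\tau^{-2}\}\max\{B^{-2},R^{-2}\}\,s\delta^2$. Summing over patterns gives a factor $n_{g^*,h}$. Lemma~\ref{lemma:KL3} bounds the unlabelled contribution by $N\cdot 16 s\delta^2/(B^2\tau^2)$. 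Altogether,
\[
\KL(P_r,P_{r'})\lesssim s\delta^2\,(n_{g^*,h}+N)\,\max\{1,\tau^{-2}\}/\min\{B^2,R^2\}\le \tfrac{\log M}{4}
\]
for a suitable constant in $\delta$. Unlike Theorem~\ref{thm:balancing lower bound}, there is no auxiliary centre $x_0$ and no averaging property of the packing. The reason is that here the relevant count $n_{g^*,h}$ is uniform over all coordinates of $L_h$.

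\textbf{Conclusion and main obstacle.} Fano's method (as in \eqref{eq:unstructured minimax lower bound}) with $M\ge16$ then gives a risk $\gtrsim (s-1)\delta^2/\tau^2$. Using $\lfloor x\rfloor\ge x/2$, $s-1\ge s/2$ and $\tau\le 3R_X^2/8$, this yields the stated $2^{-17}$ bound. The delicate step is the bookkeeping in Lemma~\ref{lemma: KL2 miss}: we must check that the terms $|\|(x_r)_{O'}\|^2-\|(x_{r'})_{O'}\|^2|$ and $\sum_j|(x_r-x_{r'})_j||(x_{r'})_j|$ vanish for patterns missing $L_h$ or $L_{g^*}$, and we must handle the case $g^*=h$. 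That case is why we drop $j^*$ from $L_h$, at the cost of replacing $|L_h|$ by $|L_h|-1$, which only affects constants.
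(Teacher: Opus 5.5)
Your proposal is correct and is essentially the paper's proof. Fix $h$, take $g$ attaining $\min_{g}\{n_{g,h}+N\}$, place the special last coordinate in $L_g$, and pack $(s-1)$-sparse vectors on $L_h$ via Lemma~\ref{lemma:high-dimensional packing}. Then bound every pairwise KL divergence by $\log(M)/4$ using Lemmas~\ref{lemma: KL2 miss} and~\ref{lemma:KL3}; only patterns containing $L_g\cup L_h$ contribute, and the norm-difference term vanishes because all $x_r$ have equal norm. Fano's method finishes the argument. Your explicit handling of $g=h$ goes slightly beyond the paper, which tacitly takes $g\neq h$. For $L\geq 2$ you never need it: since $n_{g,h}\leq n_{h,h}$, you can always pick a minimiser $g\neq h$. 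This avoids the $|L_h|-1$ adjustment, which could otherwise break the explicit side conditions $s-1\leq |L_h|/32$ and $M\geq 16$ at the boundary.
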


\begin{proof}
    We begin by fixing $g$ and $h \in [L]$. Without loss of generality, we choose to label our modalities such that $p \in L_g$ and $L_h=\{p-|L_h|,\ldots, p-1\}$. We return to the construction outlined before the proof of Lemma \ref{lemma: KL2 miss}. This involves a joint packing of the space of covariance matrices alongside the space of regression coefficients. We begin by recalling the general form that the hypotheses will take. We define $\lambda_{-}\equiv\Phi_{\Sigma}$,  $\lambda_{+}\equiv\frac{3R_{X}^2}{8}$ and set $\tau=\tau(\lambda_{-},\lambda_{+})=\frac{\lambda_{-}+\lambda_{+}}{2}$. For $x \in \mathbb{R}^{p-1}$, define $\beta_{x}\in \mathbb{R}^p$ via
\begin{equation*}
    \beta_{x} = \left(- \frac{x}{\tau}, \frac{B}{2} \right)^T.
\end{equation*}
Additionally, we define $\Sigma_{x}\in \mathbb{R}^{p\times p}$ as
\begin{equation*}
    \Sigma_{x} = \begin{pmatrix}
        \tau I_{p-1} & \frac{2x}{B}\\
        \frac{2x^T}{B} & \tau
    \end{pmatrix}.
\end{equation*}
 Note that $\Sigma_{x}$ has $p-2$ eigenvalues equal to $\tau$ and its two final eigenvalues are $\tau\pm\frac{2\|x\|_{2}}{B}$. Recall that $\|x\|_{2} \leq \frac{B(\lambda_{+}-\lambda_{-})}{4}$ is a sufficient condition to ensure that the eigenvalues of $\Sigma_{x}$ are bounded above by $\lambda_{+}$ and below by $\lambda_{-}$ and that $\|\beta_{x}\|_{2}^2  \leq B^2$. We set $\sigma_{x}=R$ and recall that $\Delta_{x} \equiv \beta_{x}^T\Sigma_{x}\beta_{x}+\sigma_{x}^2$ only depends on $x$ through $\|x\|_{2}$. Additionally, we define \begin{equation*}
    \Omega_{x} \equiv \begin{pmatrix}
        \Sigma_{x}& \Sigma_{x}\beta_{x} \\
        (\Sigma_{x}\beta_{x})^T & \Delta_{x}
    \end{pmatrix}=  \begin{pmatrix}
        \tau I_{p-1} & \frac{2x}{B} & 0\\
        \frac{2x^T}{B} & \tau & \frac{B\tau}{2}-\frac{2\|x\|_{2}^2}{B\tau}\\
        0 & \frac{B\tau}{2}-\frac{2\|x\|_{2}^2}{B\tau} & \Delta_{x}
    \end{pmatrix},
\end{equation*}
and for $O \subseteq [p]$, let
\begin{equation*}
    \Omega_{x}^O = \begin{pmatrix}
        (\Sigma_{x})_{OO}& (\Sigma_{x})_{O}\beta_{x} \\
        ((\Sigma_{x})_{O}\beta_{x})^T & \Delta_{x}
    \end{pmatrix}.
\end{equation*} Via Lemma \ref{lemma:high-dimensional packing}, we construct $w_{1}, \ldots, w_{M} \in \left\{0,1\right\}^{|L_{h}|}$ such that
    \begin{align*}
        d_{H}(w_{i},w_{j}) &\geq \frac{s-1}{2} \text{ for all } i \neq j,\\
        M &= \left\lfloor\left(1+\frac{|L_{h}|}{2(s-1)}\right)^{\frac{(s-1)}{8}}\right\rfloor,\\
        \|w_{j}\|_{0} &= s-1 \text{ for all } j \in [M]. 
    \end{align*}
    Set $\delta = \sqrt{\frac{\log(M)}{128s(N+n_{g,h})}\min\{1,\tau^2\}\min\{B^2,R^2\}}$. For $i \in [M]$, we define $x_{i}\in \mathbb{R}^{p-1}$ via 
    \begin{equation*}
        (x_{i})_{L_{h}}=\delta w_{i}, \quad (x_{i})_{L_{h}^c}=0.
    \end{equation*}
    Denote by $P_{r}$ the distribution $\prod_{k=1}^{K} N(0,\Omega^{O_{k}}_{x_{r}})^{\otimes n_{k}}\otimes N(0,\Sigma_{x_{r}})^{\otimes N}$. We check that these distributions do indeed lie in $\mathcal{P}_{HD}$. Note that by assumption \eqref{eq: blockwise-missing cross modality minimum sample size} for $i \in [M]$, $\|x_{i}\|_{2}^2=(s-1)\delta^2\leq \frac{B^2(\lambda_{+}-\lambda_{-})^2}{64}$. Therefore, by arguments analogous to those used in the proof of Theorem \ref{thm:balancing lower bound}, these define distributions in $\mathcal{P}_{HD}$. 
    Given this construction, for $r,l \in [M]$
    \begin{align*}
        &\KL\left(P_{r},P_{l}\right)=\KL\left(\prod_{k=1}^{K} N(0,\Omega^{O_{k}}_{x_{r}})^{\otimes n_{k}}\otimes N(0,\Sigma_{x_{r}})^{\otimes N},\prod_{k=1}^{K} N(0,\Omega^{O_{k}}_{x_{l}})^{\otimes n_{k}}\otimes N(0,\Sigma_{x_{l}})^{\otimes N}\right)\\
        &= \sum_{k=1}^K n_k\KL\left(N(0,\Omega^{O_{k}}_{x_{r}}),N(0,\Omega^{O_{k}}_{x_{l}})\right)+N\cdot\KL\left(N(0,\Sigma_{x_{r}}),N(0,\Sigma_{x_{l}})\right)\\
        &\leq 20n_{g,h}\max\left\{1,\frac{1}{\tau^2}\right\}\max\left\{\frac{1}{B^2},\frac{1}{R^2}\right\}\delta^2s+\frac{8Ns\delta^2}{B^2\tau^2}\\
        &\leq \frac{\log(M)}{4},
    \end{align*}
    where the second line follows from the tensorisation of the KL divergence; the third line follows from Lemmas \ref{lemma: KL2 miss} and \ref{lemma:KL3} and the fact that for $k \in [K]$ if either $L_{h} \cap O_{k} =\emptyset$ or if $p \notin O_{k}$, then $\Omega_{\beta_{x_{r}}}^{O_{k}}$ is the same for all $r \in [M]$; the final line follows from the fact that $\delta^2 = \frac{\log(M)}{128s(N+n_{g,h})}\min\{1,\tau^2\}\min\{B^2,R^2\}$.
    
    Using the last display, we have that
\begin{align}\label{eq: unstructured pre-fano control}
    \frac{\frac{1}{M}\sum_{j=1}^M\KL(P_{j},P_{M})+\log(2-\frac{1}{M})}{\log(M)} &\leq \frac{1}{4}+\frac{\log(2)}{\log(M)}\nonumber\\
    &\leq \frac{1}{4}+\frac{\log(2)}{\log(16)}\leq \frac{1}{2},
\end{align}
where the final line uses the fact that $\left\lfloor\left(1+\frac{|L_{h}|}{2(s-1)}\right)^{\frac{(s-1)}{8}}\right\rfloor \geq 16$. It follows that 
\begin{align*}
     \inf_{\hat{\beta}\in\hat{\Theta}_{HD}}\sup_{P \in \mathcal{P}_{HD}}\mathbb{E}\left[\|\hat{\beta}-\beta^*\|_{2}^2\right] &\geq \frac{\delta^2(s-1)}{8\tau^2}\left(1-\frac{\frac{1}{M}\sum_{j=1}^M\KL(P_{j},P_{M})+\log(2-\frac{1}{M})}{\log(M)}\right)\nonumber\\
     & \geq \frac{\delta^2(s-1)}{16\tau^2}\nonumber\\
     &\geq \frac{(s-1)^2\log\left(1+\frac{|L_{h}|}{2(s-1)}\right)}{2^{15}s(N+n_{g,h})}\min\{1,\tau^{-2}\}\min\{B^2,R^2\}\nonumber\\
     &\geq \frac{s\log\left(1+\frac{|L_{h}|}{2(s-1)}\right)}{2^{17}(N+n_{g,h})}\min\{1,\tau^{-2}\}\min\{B^2,R^2\}
\end{align*}
where the first line follows from Fano's method~\cite[e.g.][Lemmas 8.1, 8.11]{samworth2024statistics}; the second line follows from \eqref{eq: unstructured pre-fano control}; the third line follows from the definition of $\delta$ and the fact that $\lfloor x\rfloor\geq \frac{x}{2}$ for $x \geq 2$; the final line follows since $4(s-1)^2 \geq s^2$. Since $g$ and $h$ were arbitrary, the result is proven. 

\end{proof}

\begin{proofof}{thm:High-dimensional lower bound}
   Combining Propositions \ref{prop:lower bound h_l} and \ref{prop:lower bound n_g1g2} yields Theorem \ref{thm:High-dimensional lower bound}. The conditions only on $s$ and $\{L_{l}\}_{l \in [L]}$ in Propositions \ref{prop:lower bound h_l} and \ref{prop:lower bound n_g1g2} are implied by $2 \leq s \leq c_{1}\min_{l \in [L]}|L_{l}|$ for $c_{1}$ a sufficiently small universal constant.
\end{proofof}

\section{Additional details for Section \ref{sec:Simulations}}\label{Appendix:Simulations}

In Section \ref{sec:Comparison to existing methods}, we use an estimator based on the work of \cite{robins1994estimation}. At the population level, define $W=(X_{O},Y)$. The efficient estimator has the following estimating equation
\begin{equation*}
    \frac{n_{1}+n_{2}}{n_{1}}\sum_{i=1}^{n_{1}}X_{i}(Y_{i}-X_{i}^T\hat{\beta})-\frac{n_{2}} {n_{1}}\phi(W_{i},\hat{\beta})+\sum_{i=n_{1}+1}^{n_{1}+n_{2}}\phi(W_{i},\hat{\beta})=0,
\end{equation*}
where $\phi(W_{i},\beta)$ is an estimate of $\mathbb{E}\left[X(Y-X^T\beta)|W\right]\equiv \Phi(W,\beta)$. The Gaussian assumptions allow us to derive an explicit formula for $\Phi(W,\beta)$ in terms of the covariance of $(X,Y)$. We can then estimate the blocks of this covariance from the labelled dataset to yield a version of the estimator in \cite{robins1994estimation}. This relies heavily on the Gaussian assumptions, in contrast to \eqref{eq:OSS estimator definition initial}.

\section{Additional details for Section \ref{sec:Real World Dataset Application}}\label{Appendix:Real World Dataset Application}

We use the Kaggle version of the dataset available from~\citet{Nugent_CaliforniaHousingPrices}. After downloading the data, we created the following derived variables for each region: the average number of rooms, the average number of bedrooms, the average population and the ocean proximity. From an initial $20{,}640$ observations, we removed all 207 incomplete cases\footnote{We separately did the same analysis including the incomplete cases with nearly identical results.}. For the ocean proximity variable, we removed all $5$ samples with `island' recorded as their ocean proximity. This was done because of how infrequently it arose. We then fitted an initial linear regression using the lm function and removed the points whose Cook's distance exceeded $\frac{4}{n-p-1}$ where $n$ is the number of samples and $p$ the dimensionality. This removed $1{,}014$ points, leaving us with a complete dataset of $19{,}414$ samples. We used the lm function again to define the ground truth against which the estimator's output is compared with throughout the simulations. 

\end{document}